\numberwithin{equation}{section}
\newcommand{\st}{\h:\h}
\newcommand{\h}{\hspace{1mm}}
\newcommand{\hh}{\hspace{5mm}}
\newcommand{\efr}[2]{^{\frac{#1}{#2}}}
\newcommand{\mefr}[2]{^{-\frac{#1}{#2}}}
\theoremstyle{plain}
\newtheorem{lemma}{Lemma}[section]
\newtheorem{corollary}[lemma]{Corollary}
\newtheorem{proposition}[lemma]{Proposition}
\newtheorem{theorem}[lemma]{Theorem}
\newtheorem{remark}[lemma]{Remark}
\newtheorem{definition}[lemma]{Definition}
\def\be{\begin{eqnarray}}
\def\ee{\end{eqnarray}}
\def\beal{\begin{aligned}}
\def\enal{\end{aligned}}
\newcommand{\norm}[1]{\left\lVert#1\right\rVert}
\newcommand{\vabs}[1]{\left| #1 \right|}
\newcommand{\paren}[1]{\left(#1\right)}
\newcommand{\claus}[1]{\left\{#1\right\}}
\newcommand{\boxClaus}[1]{\left[#1\right]}
\newcommand{\ol}[1]{\overline{#1}}
\newcommand{\conj}[1]{\overline{#1}}
\newcommand{\tl}{\tilde}
\newcommand{\wt}{\widetilde}
\renewcommand{\Re}{\mathrm{Re\, }}
\renewcommand{\Im}{\mathrm{Im\,}}
\renewcommand{\arg}{\mathrm{arg\,}}
\renewcommand{\inf}{\mathrm{inf\,}}
\newcommand{\reals}{\mathbb{R}}
\newcommand{\complexs}{\mathbb{C}}
\newcommand{\torus}{\mathbb{T}}
\newcommand{\integers}{\mathbb{Z}}
\newcommand{\al}{\alpha}
\newcommand{\de}{\delta}
\newcommand{\e}{\varepsilon}
\newcommand{\la}{\lambda}
\newcommand{\La}{\Lambda}
\newcommand{\g}{\gamma}
\newcommand{\s}{\sigma}
\newcommand{\tht}{\theta}
\newcommand{\qd}{\dot{q}}
\newcommand{\AAA}{\mathcal{A}}
\newcommand{\BB}{\mathcal{B}}
\newcommand{\CC}{\mathcal{C}}
\newcommand{\FF}{\mathcal{F}}
\newcommand{\GG}{\mathcal{G}}
\newcommand{\HH}{\mathcal{H}}
\newcommand{\II}{\mathcal{I}}
\newcommand{\JJ}{\mathcal{J}}
\newcommand{\KK}{\mathcal{K}}
\newcommand{\LL}{\mathcal{L}}
\newcommand{\OO}{\mathcal{O}}
\newcommand{\PP}{\mathcal{P}}
\newcommand{\RRR}{\mathcal{R}}
\newcommand{\XX}{\mathcal{X}}
\definecolor{myGreen}{RGB}{0, 200, 0}
\definecolor{myOrange}{RGB}{255, 100, 0}
\definecolor{myYellow}{RGB}{255, 200, 0}
\definecolor{myBlue}{RGB}{0, 200, 255}
\definecolor{myPurple}{RGB}{200, 0, 200}
\newcommand{\Res}{\mathrm{Res\,}}
\newcommand{\unstable}{{\mathrm{u}}}
\newcommand{\stable}{{\mathrm{s}}}
\newcommand{\HInicial}{h}
\newcommand{\pol}{\mathrm{pol}}
\newcommand{\Poi}{\mathrm{Poi}}
\newcommand{\sca}{\mathrm{sc}}
\newcommand{\equi}{\mathrm{eq}}
\newcommand{\osc}{\mathrm{osc}}
\newcommand{\pend}{\mathrm{p}}
\newcommand{\xh}{x}
\newcommand{\xp}{x}
\newcommand{\yh}{y}
\newcommand{\yp}{y}
\newcommand{\Ltres}{\mathfrak{L}}
\newcommand{\LtresLa}{\mathfrak{L}_{\Lambda}}
\newcommand{\Ltresx}{\mathfrak{L}_x}
\newcommand{\Ltresy}{\mathfrak{L}_y}
\newcommand{\fh}{\hat{f}}
\newcommand{\proj}{\mathtt{p}}
\newcommand{\ini}{0}
\newcommand{\fin}{\mathrm{end}}
\newcommand{\argf}{\mathtt{g}}
\newcommand{\inn}{\mathrm{in}}
\newcommand{\Inn}{\mathrm{in}}
\newcommand{\Inner}{{\mathrm{in}}}
\newcommand{\rhoInn}{\kappa}
\newcommand{\CInn}{\Theta}
\newcommand{\wCInn}{\widetilde{\Theta}}
\newcommand{\DuInn}{\mathcal{D}^{\mathrm{u}}_{\rhoInn}}
\newcommand{\DuInnR}[1]{\mathcal{D}^{\mathrm{u}}_{#1}}
\newcommand{\DsInn}{\mathcal{D}^{\mathrm{s}}_{\rhoInn}}
\newcommand{\DdInn}{\mathcal{D}^{\diamond}_{\rhoInn}}
\newcommand{\EInn}{\mathcal{E}_{\rhoInn}}
\newcommand{\XcalInn}{\mathcal{X}}
\newcommand{\YcalInn}{\mathcal{Y}}
\newcommand{\ZcalInn}{\mathcal{Z}}
\newcommand{\XcalInnTotal}{\mathcal{X}_{\times}}
\newcommand{\normInn}[1]{\left\lVert#1\right\rVert}
\newcommand{\normInnSmall}[1]{\lVert#1\rVert}
\newcommand{\normInnTotal}[1]{\left\lVert#1\right\rVert_{\times}}
\newcommand{\normInnTotalSmall}[1]{\lVert#1\rVert_{\times}}
\newcommand{\normInnDiff}[1]{\left\lVert#1\right\rVert}
\newcommand{\normInnDiffSmall}[1]{\lVert#1\lVert}
\newcommand{\normInnDiffExp}[1]{\left\llbracket#1\right\rrbracket}
\newcommand{\ZInn}{Z_0}
\newcommand{\ZInnB}{\widetilde{Z}_0}
\newcommand{\ZuInn}{Z^{\mathrm{u}}_0}
\newcommand{\ZsInn}{Z^{\mathrm{s}}_0}
\newcommand{\ZdInn}{Z^{\diamond}_0}
\newcommand{\WInn}{W_0}
\newcommand{\WInnB}{\widetilde{W}_0}
\newcommand{\WdInn}{W^{\diamond}_0}
\newcommand{\XInn}{X_0}
\newcommand{\XInnB}{\widetilde{X}_0}
\newcommand{\XdInn}{X^{\diamond}_0}
\newcommand{\YInn}{Y_0}
\newcommand{\YInnB}{\widetilde{Y}_0}
\newcommand{\YuInn}{Y^{\mathrm{u}}_0}
\newcommand{\YsInn}{Y^{\mathrm{s}}_0}
\newcommand{\YdInn}{Y^{\diamond}_0}
\newcommand{\DZInn}{\Delta Z_0}
\newcommand{\DWInn}{\Delta W_0}
\newcommand{\DXInn}{\Delta X_0}
\newcommand{\DYInn}{\Delta Y_0}
\newcommand{\DZo}{\Delta Z_{\mathrm{init}}}
\newcommand{\DYo}{\Delta Y_{\mathrm{init}}}
\newcommand{\out}{\mathrm{sep}}
\newcommand{\zuOut}{z^{\mathrm{u}}}
\newcommand{\zsOut}{z^{\mathrm{s}}}
\newcommand{\zusOut}{z^{\mathrm{u,s}}}
\newcommand{\zdOut}{z^{\diamond}}
\newcommand{\wdOut}{w^{\diamond}}
\newcommand{\xdOut}{x^{\diamond}}
\newcommand{\ydOut}{y^{\diamond}}
\newcommand{\dzOut}{\Delta z}
\newcommand{\cttInnDerA}{c_1}
\newcommand{\cttInnDerAA}{\gamma_1}
\newcommand{\cttInnDerB}{c_2}
\newcommand{\cttInnDerBB}{\gamma_2}
\newcommand{\cttInnDerC}{b_0}
\newcommand{\cttInnExist}{b_1}
\newcommand{\cttInnDiff}{b_2}
\newcommand{\cttInnExistA}{b_3}
\newcommand{\cttInnExistB}{b_4}
\newcommand{\cttInnExistC}{b_5}
\newcommand{\cttInnDiffA}{b_6}
\newcommand{\cttInnDiffB}{b_7}
\title{Breakdown of homoclinic orbits to $L_3$ in the RPC3BP (I). Complex singularities and the inner equation} 
\author[1,2]{Inmaculada Baldom\'a}
\author[1]{Mar Giralt
	\thanks{Corresponding author.\\
\emph{E-mail adresses:} \href{mailto:immaculada.baldoma@upc.edu}{immaculada.baldoma@upc.edu} (I. Baldom\'a), 
\href{mailto:mar.giralt@upc.edu}{mar.giralt@upc.edu} (M. Giralt),
\href{mailto:marcel.guardia@upc.edu}{marcel.guardia@upc.edu} (M. Guardia).}}
\author[1,2]{Marcel Guardia}
\affil[1]{Departament de Matem\`atiques \& IMTECH, Universitat Polit\`ecnica de Catalunya, Diagonal 647, 08028 Barcelona, Spain}
\affil[2]{Centre de Recerca Matem\`atiques, Campus de Bellaterra, Edifici C, 08193 Barcelona, Spain}
\date{July 21, 2021}
\begin{document}

\maketitle 

%% ABSTRACT
\begin{abstract}
 
The Restricted $3$-Body Problem models the motion of a body of negligible mass under the gravitational influence of two massive bodies, called the primaries. If the primaries perform circular motions and the massless body is coplanar with them, one has the Restricted Planar Circular $3$-Body Problem (RPC$3$BP). In synodic coordinates, it is a two degrees of freedom Hamiltonian system with five critical points, $L_1,..,L_5$, called the Lagrange points.

The Lagrange point $L_3$ is a saddle-center critical point which  is collinear with the primaries and is located beyond the largest of the two.
In this paper and its sequel \cite{articleOuter},  we provide an asymptotic formula for the distance between the one dimensional stable and unstable invariant manifolds of $L_3$ when the ratio between the masses of the primaries $\mu$ is small. It implies that $L_3$ cannot have one-round homoclinic orbits.

If the mass ratio $\mu$ is small,  the hyperbolic eigenvalues are weaker than the elliptic ones by factor of order $\sqrt{\mu}$. This implies that the distance between the invariant  manifolds is exponentially small with respect to $\mu$ and, therefore, the classical  Poincar\'e--Melnikov method cannot be applied.

In this first paper, we approximate the RPC$3$BP by an averaged integrable Hamiltonian system which possesses a saddle center with a homoclinic orbit and we  analyze the complex singularities of its time parameterization. We also derive and study the inner equation associated to the original perturbed problem. 
The difference between certain solutions of the inner equation gives the leading term of the distance between the stable and unstable manifolds of $L_3$.

In the sequel \cite{articleOuter} we complete the proof of the asymptotic formula for the distance between the invariant manifolds.
\end{abstract}

\tableofcontents

%% ESQUEMA 

\section{Introduction} 
\label{section:introduction}
The understanding of the motions of the $3$-Body Problem has been of deep interest  in the last centuries. Since Poincar\'e, see \cite{Poincare1890}, one of the fundamental problems is to understand how the invariant manifolds of its different invariant objects (periodic orbits, invariant tori) structure its global dynamics.
%
% Its significance lies in that, unlike the $2$-Body Problem, no general closed-form solution exists, leading to the existence of chaotic behaviors.
%
Assume that one of the bodies (say the third) has mass zero. Then, one has the Restricted  $3$-Body Problem. In this model, the two first bodies, called the primaries, are not influenced by the massless one. As a result, their motions are governed by the classical Kepler laws. If one further assumes that the primaries perform circular motion and that the third body is coplanar with them, one has the Restricted Planar Circular 3-Body Problem (RPC$3$BP).
% 
% (RPC$3$BP), which is a useful simplification of the $3$-Body Problem while maintaining its complexity.
% 
% The RPC$3$BP models the motion of a body of negligible mass under the gravitational influence of two massive bodies, called the primaries. 
% %
% Since the third body is assumed massless, it does not influence the motion of the two primaries and, 
%
%As a consequence, it is only necessary to study the motion of the body with negligible mass.
%
% In addition, we consider the circular and planar case;
% % 
% namely, that the primaries perform circular motions and that the third body moves on the same plane as the primaries.

Let us name the two primaries $S$ (star) and $P$ (planet).
Normalizing their masses, we can assume that $m_S=1-\mu$ and $m_P=\mu$, with $\mu \in \left( 0, \frac{1}{2} \right]$. 
Since the primaries follow circular orbits, in rotating (usually also called synodic) coordinates, their positions can be fixed at $q_S=(\mu,0)$ and  $q_P=(\mu-1, 0)$.
Then, denoting by $(q,p) \in \reals^2 \times \reals^2$ the position and momenta of the third body and  taking appropriate units, the RPC$3$BP is a $2$-degrees of freedom Hamiltonian system with respect to
\begin{equation}\label{def:hamiltonianInitialNotSplit} 
	\begin{split}
		\HInicial(q,p;\mu) &= \frac{||p||^2}{2} 
		- q^t \left( \begin{matrix} 0 & 1 \\ -1 & 0 \end{matrix} \right) p 
		-\frac{(1-\mu)}{||q-(\mu,0)||} 
		- \frac{\mu}{||q-(\mu-1,0)||}.
	\end{split}
\end{equation}
% Notice that this Hamiltonian is analytic away from $q=q_S$ and $q=q_P$, which correspond to collision with the primaries.

For $\mu>0$, it is a well known fact that $\HInicial$ has five equilibrium points: $L_1$, $L_2$, $L_3$, $L_4$ and $L_5$, called Lagrange points\footnote{
	For $\mu=0$, the system has a circle of critical points $(q,p)$ with $\norm{q}=1$ and $p=(p_1,p_2)=(-q_2,q_1)$.
}
(see Figure~\ref{fig:L3perturbed}(a)). 
On an inertial (non-rotating) system of coordinates, the Lagrange points correspond to periodic orbits with the same period as the two primaries, i.e they lie on a 1:1 mean motion resonance. 
%
%In this work, we focus on the $L_3$ point. 
%
The three collinear points with the primaries, $L_1$, $L_2$ and $L_3$, are of center-saddle type and, for small $\mu$, the triangular ones, $L_4$ and $L_5$, are of center-center type (see for instance \cite{Szebehely}). 

% The invariant manifolds of $L_1$ and $L_2$ have been widely studied 
% due to their proximity to the primaries, the invariant manifolds of  $L_1$ and $L_2$ have been heavily studied for its interest in astrodynamics applications, \textcolor{red}{(see \cite{KLMR00, GLMS01v1, Canalias07})}.
%

Since the points $L_1$ and $L_2$ are rather close to the small primary, their invariant manifolds  have been widely studied 
 for their interest in astrodynamics applications, (see \cite{KLMR00, GLMS01v1, CGMM04}).
The dynamics around the points $L_4$ and $L_5$ have also been considerably studied since, due to its stability, it is common to find objects orbiting around these points (for instance the Trojan and Greek Asteroids associated to the pair Sun-Jupiter, see \cite{GDFGS89,CelGio90, RobGab06}).

On the contrary, the invariant manifolds of the Lagrange point $L_3$ have received somewhat less attention. Still, they  structure the dynamics in regions of the phase space of the RPC3BP. In particular, the horseshoe-shapped  orbits that explain the orbits of Saturn satellites Janus and Epimetheus lie ``close'' to the invariant manifolds of $L_3$ (see \cite{NPR20}). Moreover, the invariant manifolds of  $L_3$ (more precisely its center-stable and center-unstable manifolds) act as effective boundaries of the stability domains around $L_{4,5}$ (see in \cite{SSST13}). See the companion paper \cite{articleOuter} for more references about the dynamics of the RPC3BP in a neighborhood of $L_3$ and its invariant manifolds.
%
% In particular, for small values of $\mu$, the dynamics are rather similar to that of other mean motion resonances, which play an important role in creating instabilities in the Solar system, see \cite{formulesRPC3BP}.

The purpose of this paper and its sequel \cite{articleOuter} is to study the invariant manifolds of $L_3$ and, particularly, show that they do not intersect  for $0< \mu\ll 1$ (at their first round).

\begin{figure}
	\subfloat[$\mu>0$]{%
		\begin{overpic}[scale=0.7]{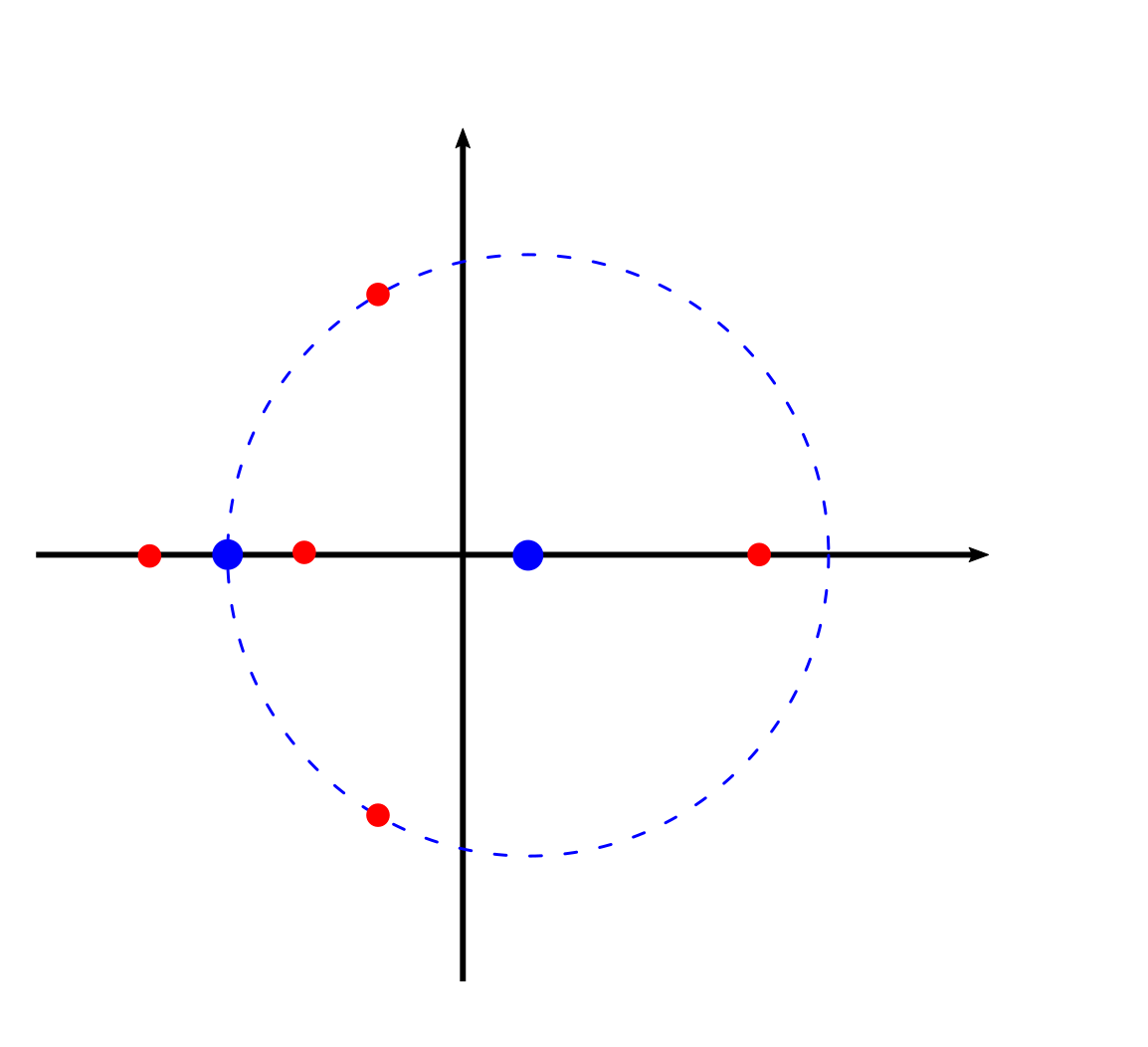}
			\put(43,38){{\color{blue} $q_S$ }}
			\put(14,38){{\color{blue} $q_P$ }}
			\put(23,45){{\color{red} $L_1$ }}
			\put(10,45){{\color{red} $L_2$ }}
			\put(63.5,45){{\color{red} $L_3$ }}
			\put(27,69){{\color{red} $L_5$ }}	
			\put(27,14){{\color{red} $L_4$ }}
			\put(89,42){$q_1$}
			\put(37.5,82.5){$q_2$}
		\end{overpic}
		\vspace{1cm}	
	}\hfill
	\subfloat[$\mu=0.003$]{%
		\begin{overpic}[width=7.5cm]{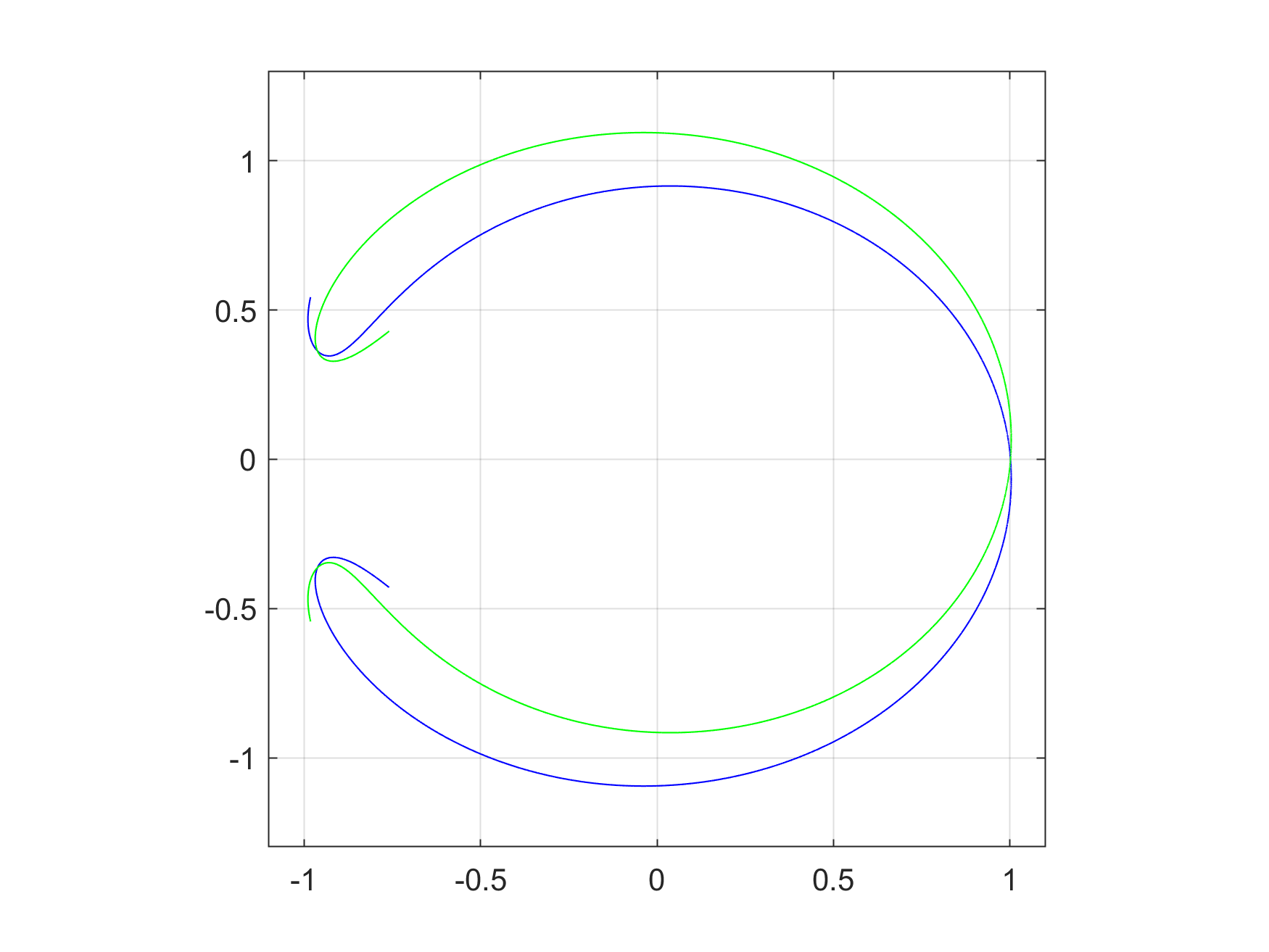}
			\put(50,1){$q_1$}
			\put(12,38){$q_2$}
			\put(71.5,37.5){{\color{red} $ L_3 \h \bullet$ }}
			\put(37,58){{\color{red} $\bullet$ }}
			\put(28,61){{\color{red} $L_5$ }}
			\put(37,17){{\color{red} $\bullet$ }}	
			\put(28,12){{\color{red} $L_4$ }}
			\put(50.5,37.5){{\color{blue} $\bullet$ }}
			\put(50.5,34){{\color{blue} $q_S$ }}
			\put(23,37.5){{\color{blue} $\bullet$ }}
			\put(23,34){{\color{blue} $q_P$ }}
		\end{overpic}
	}
	\caption{(a) Projection onto the $q$-plane of the equilibrium points for the RPC$3$BP on rotating coordinates. (b) Plot of the stable (green) and unstable (blue) manifolds of $L_3$ projected onto the $q$-plane for $\mu=0.003$. }
	\label{fig:L3perturbed}
\end{figure}

\subsection{The unstable and stable invariant manifolds of \texorpdfstring{$L_3$}{L3}}

The eigenvalues of the  the Lagrange point $L_3$
% is a center-saddle critical point, it posseses possesses $1$-dimensional stable and unstable manifolds %(which we denote by $W^{\stable}$ and $W^{\unstable}$, respectively) 
% and a $2$-dimensional center manifold.
%
% Moreover,  its eigenvalues 
satisfy that
\begin{equation}\label{eq:specRot}
	\mathrm{Spec}  = \claus{\pm \sqrt{\mu} \, \rho(\mu), \pm i  \, \omega(\mu) },
	\quad
	\text{with} \quad \left\{
	\begin{array}{l}
		\rho(\mu)=\sqrt{\frac{21}{8}}  + \OO(\mu),\\[0.4em]
		\omega(\mu)=1 + \frac{7}{8}\mu + \OO(\mu^2),
	\end{array}\right.
\end{equation}
as $\mu \to 0$ (see \cite{Szebehely}). Notice that, due to the different size in the eigenvalues, the system posseses two time scales, which translates to rapidly rotating dynamics coupled with a slow hyperbolic behavior around the critical point $L_3$.

The one dimensional unstable and stable invariant manifolds have two branches each (see Figure~\ref{fig:L3perturbed}(b)).
One pair, which we denote  by $W^{\unstable,+}(\mu)$ and $W^{\stable,+}(\mu)$ circumvents $L_5$ whereas the other, denoted as  $W^{\unstable,-}(\mu)$ and $W^{\stable,-}(\mu)$,  circumvents $L_4$.
%
%Due to the symmetry of the system given by Hamiltonian $\HInicial$, we can restrict our study to the positive branches $W^{\unstable,+}$ and $W^{\stable,+}$, since the negative branches are symmetric.
%
Since the Hamiltonian system associated to  $\HInicial$ in \eqref{def:hamiltonianInitialNotSplit} is reversible with respect to the involution
\begin{equation}
	\label{def:involutionCartesians}
	\Phi(q,p;t)=(q_1,-q_2,-p_1,p_2;-t),
\end{equation}
the $+$ branches are symmetric to the $-$ ones. Thus, one can restrict the study to the first ones.
% $+$ branches, since the $-$ branches are symmetric.

As already mentioned, the aim of this paper (and its sequel \cite{articleOuter}) is to give an asymptotic formula for the distance between $W^{\unstable,+}(\mu)$ and $W^{\stable,+}(\mu)$, for $0<\mu\ll 1$ (in an appropriate transverse section). 
However, due to the rapidly rotating dynamics of the system (see \eqref{eq:specRot}), the stable and unstable manifolds of $L_3$ are exponentially close to each other with respect to $\sqrt{\mu}$. This implies that the classical Melnikov Theory \cite{Melnikov1963} cannot be applied and that obtaining the asymptotic formula is a rather involved problem.

The precise statement for the asymptotic formula for the distance is properly stated in the  sequel of this paper \cite{articleOuter}. Let us give here a more informal statement.
Consider the classical symplectic polar coordinates $(r,\theta,R, G)$, where $R$ is the radial momentum and $G$ the angular momentum, and  the section $\Sigma=\{\theta=\pi/2, r>1\}$. Then, if one denotes by $P^\unstable$ and $P^\stable$ the first intersections of the invariant manifolds $W^{\unstable,+}(\mu)$,$W^{\stable,+}(\mu)$ with $\Sigma$,  the distance between these points, is 
% stable and unstable manifolds of $L_3$ in section $\Sigma$ is 
given by
\begin{equation}\label{eq:dist}
	\mathrm{dist}_{\Sigma}(P^\unstable,P^\stable) = \sqrt[3]{4} \, \mu^{\frac13}
	e^{-\frac{A}{\sqrt\mu}} \boxClaus{\vabs{\CInn}
			+ \OO\paren{\frac1{\vabs{\log\mu}}}},
\end{equation}
for $0<\mu\ll1$ and certain constants $A>0$ and $\CInn \in \complexs$.
%
%Broadly speaking, by studying the complex singularities of an homoclinic connection of the system and the so-called inner equation, we are able to obtain expressions for the constants $A$ and $\CInn$ (see Section \ref{subsection:introSplittin}).
%
The proof of this asymptotic formula spans both the present paper and the sequel \cite{articleOuter}. In this first paper, we perform the first steps towards the proof (see Section \ref{sec:schemeproof} below). In particular, we obtain and describe the constants $A$ and $\CInn$ appearing in \eqref{eq:dist}.  
%
% Meanwhile, the sequel paper \cite{articleOuter} is concerned with the obtention of the asymptotic formula \eqref{eq:dist}.

%%%%%%%%%%%

A fundamental problem in dynamical systems is to prove that a model has chaotic dynamics (for instance a Smale Horseshoe). 
For many physically relevant problems, like those in Celestial Mechanics, this is usually a remarkably difficult problem.
%
% One of the methods used is to rely on the Smale-Birkhoff homoclinic theorem (see \cite{Arrowsmith1990}) which, by proving the existence of transverse homoclinic orbits to invariant objects, proves the existence of chaotic dynamics.
%
% Notice that, since the Hamiltonian $h$ in \eqref{def:hamiltonianInitialNotSplit} is autonomous, its unstable and stable manifolds either coincide or do not intersect.
%
% Therefore, \eqref{eq:dist} does not directly implies the existence of chaotic orbits.
%
Certainly, the fact that the invariant manifolds of $L_3$ do not coincide does not lead to chaotic dynamics. However, one should expect the existence of Lyapunov periodic orbits which are exponentially close (with respect to $\sqrt{\mu}$) to $L_3$ and whose stable and unstable invariant manifolds intersect transversally. If so, the Smale-Birkhoff Theorem 
%(see \cite{GuckenheimerHolmes}) 
would imply the existence of a hyperbolic set whose dynamics is conjugated to that of the Bernoulli shift (in particular, with positive topological entropy) exponentially close to the invariant manifolds of $L_3$. 
%
% We plan to prove this result in a forthcoming paper (see \cite{articleChaos}).
% , implying the existence of chaotic dynamics. \textbf{(Posar cites)}.
%
%This would imply the existence of chaotic motions which pass ``exponentially close'' to $L_3$.

\subsection{Exponentially small splitting of separatrices}
\label{subsection:introSplittin}

Even though there is a standard theory to analyze the breakdown of homoclinic and heteroclinic connections, the so called Poincar\'e-Melnikov Theory (see \cite{Melnikov1963} and \cite{GuckenheimerHolmes} for a more
modern exposition),
it cannot be applied to obtain \eqref{eq:dist} due to its exponential smallness.
%
% Indeed, notice that, in \eqref{eq:dist} the difference between manifolds cannot be detected in the truncation of their normal form at any finite order.
%
% It is, what is usually called, a \emph{beyond all orders} phenomenon.
%
Indeed, the breakdown of homoclinic connections to $L_3$ fits into what is usually  referred to as exponentially small splitting of separatrices problems. 
This \emph{beyond all orders} phenomenon was first detected by Poincar\'e in~\cite{Poincare1890} when he  studied the non integrability of the $3$-Body Problem. 

The first obtention of an asymptotic formula for an exponentially small splitting of separatrices did not appear until the 1980's with the pioneering work  by Lazutkin for the \emph{standard map}~\cite{Laz84, Laz05}.
%
% There, the author studied the splitting of separatrices for the standard map, and gave the main tools to obtain an asymptotic formula for it.
%
Even if Lazutkin's work was not complete (the complete proof was achieved by
Gelfreich in~\cite{Gel99}), the ideas he developed have been very influential and  have been the basis of many of the works in the field (and in particular of this paper and the sequel \cite{articleOuter}). Other methods to deal with exponentially small splitting of separatrices are Treschev's \emph{continuous averaging}  (see \cite{Tre97}) or ``direct'' series methods (see \cite{GGM99}).

Usually, the exponentially small splitting of separatrices problems are classified as \emph{regular} or \emph{singular}.

In the regular cases, even if  Melnikov theory cannot be straightforwardly applied, the Melnikov function  gives the leading term for the distance between the perturbed invariant manifolds. That is,   Melnikov theory provides the  first order for the distance but leads to too crude estimates for the higher order terms.
% asymptotic expression of the difference, but is not able to provide small enough estimates for the remainder terms.
%
% Therefore, the relevance of these cases lie in proving that the asymptotic formula is indeed correct. 
%
This phenomenon has been studied in rapidly forced periodic or quasi-periodic perturbations of $1$-degree of freedom Hamiltonian systems, see 
\cite{HMS88, DelSea92, DelSea97, Gel97a, Gel00, BF04, BF05, GGS20}, 
%\cite{HMS88, DelSea92, Fon93, Gel94, Fon95, DelSea97, Gel97a, Tre97, Gel00, BF04, BF05, GOS10}, 
in close to the identity area preserving  maps, see \cite{DelRRR98}, and in Hamiltonian systems with two or more degrees of freedom which have hyperbolic tori with  fast quasiperiodic dynamics \cite{Sau01}.
In particular, \cite{GMS16} provides  the first  prove of exponentially small splitting of separatrices in a Celestial Mechanics problem (see also \cite{GMSS17,GPSV21}).

In the singular cases, the exponentially small   first order for the distance between the invariant manifolds is no longer given by the Melnikov function. Instead, one has to consider an auxiliary equation, usually called  \emph{inner equation}, which does not depend on the perturbative parameter and  provides the first order for the distance. Some results on inner equations can be found on \cite{Gel97b, GS01, OSS03, Bal06, BalSea08, BM12} and
the application of  the inner equation analysis to the original problem can be  found in  \cite{BFGS12, Lom00, GOS10, GaiGel10, MSS11, BCS13,BCS18a, BCS18b}.
%
% In particular, in \cite{} the authors present one of the most general results for rapidly forced $1$-degree of freedom Hamiltonians.

Due to the extreme sensitivity of the exponentially small splitting of separatrices phenomenon on the features of each particular model, most of the available results apply  under quite restrictive hypothesis and,  therefore, cannot be applied to analyze the invariant manifolds of $L_3$.
% are not suitable to be applied to our system.

\subsection{Strategy to obtain an asymptotic formula for the breakdown of the invariant manifolds of \texorpdfstring{$L_3$}{L3}}\label{sec:schemeproof}

For the limit problem $h$ in \eqref{def:hamiltonianInitialNotSplit} with $\mu=0$, the five Lagrange point ``disappear'' into the circle of (degenerate) critical points  $\norm{q}=1$ and $p=(p_1,p_2)=(-q_2,q_1)$. As a consequence, the one-dimensional invariant manifolds of $L_3$ disappear when $\mu=0$ too. 
For this reason, to analyze perturbatively these invariant manifolds, the first step is to perform a singular change of coordinates to obtain a ``new first order'' Hamiltonian which  has a center saddle equilibrium point (close to $L_3$) with stable and unstable manifolds that coincide along a separatrix.
% 
% One of the first issues we face is the degeneracy of the unperturbed system. 
% %
% Indeed, for $\mu=0$, let us recall that the stable and unstable manifolds of $L_3$ become a circle of critical points (see also \eqref{eq:specRot}).
% %
% Therefore, before properly beginning the analysis of their breakdown, 
% %
% we have to reformulate the system in order to obtain a ``good enough'' first order of the Hamiltonian $h$ in \eqref{def:hamiltonianInitialNotSplit}.
% %
% The aim is for this first order to have a saddle and an homoclinic connection, also know as separatrix.
To perform the change of coordinates  we use the Poincar\'e planar elements (see~\cite{MeyerHallOffin}) plus a singular (with respect to $\mu$) scaling.
% which are action-angle variables for the $2$-Body Problem. 
%
% However, they do not have an explicit expression, 
%
% which adds difficulties to our analysis later on (see Section \ref{subsection:reformulation} for the details).
%This new first order is integrable and, therefore, the stable and unstable manifolds of $L_3$ coincide along a separatrix. 

%One of the difficulties faced during this study is that the first order of the difference is not correctly given by the Melnikov function (i.e  singular case, see Section \ref{subsection:introSplittin}).
%%
%Therefore, it is necessary to derive and study the associated inner equation.

In the following list, we present the main steps of our strategy to prove  formula \eqref{eq:dist}.
%
%The strategy to proof the breakdown of the invariant manifolds of $L_3$ is as follows.
%
We split the list in two. First we explain the steps performed in this paper and later those carried out in its sequel \cite{articleOuter}.

In this paper, we complete the following steps: 
\begin{enumerate}[label*=\Alph*.]
	\item We perform a change of coordinates which captures the slow-fast dynamics of the system.
	The new Hamiltonian becomes a (fast) oscillator weakly coupled to a $1$-degree of freedom Hamiltonian with a saddle point and a separatrix associated to it.
	\item We analyze the analytical continuation of a time-parametrization of the separatrix.
	In particular, we obtain its maximal strip of analyticity (centered at the real line).
	We also describe the character and location of the singularities  at the boundaries of this region.
	%
	
	%Finally, for technical reasons, we also analyze the zeros of the derivative of the first component of the separatrix. In this points, the separatrix fails to be a graph over its first component. 
	%
	%This is not necessary for the analysis of the splitting of separatrices. However, since later in Item D we parameterize the perturbed invariant manifolds as graphs, the analysis of the zeros is necessary to control when the graph property breaks down.
	% 	
	
	\item We derive the inner equation, which gives the first order of the original system close to the singularities of the separatrix described in Step B.
	This equation is independent of the perturbative parameter $\mu$.
	
	\item We study two special solutions of the inner equation which are approximations of the perturbed invariant manifolds near the singularities.
	Moreover, we provide an asymptotic formula for the difference between these two solutions of the inner equation.
	We follow the approach presented in~\cite{BalSea08}.
\end{enumerate}

%Roughly speaking, in this article we perform the preliminary change of coordinates  and deal with the results  concerning the analysis of the invariant manifolds near their singularities: the singularity analysis of the separatrix and the derivation and analysis of the inner equation. 
%
%To finish the proof, in \cite{articleOuter}, we prove the existence of the invariant manifolds in a suitable domain, match this results with the ones obtained by the inner equation and compute the distance between the manifolds.

In \cite{articleOuter} we complete the following steps:

\begin{enumerate}
\item[E.] We prove the existence of the analytic continuation of suitable parametrizations of $W^{\unstable,+}(\de)$ and $W^{\stable,+}(\de)$ in appropriate complex domains (and as graphs).
These domains contain a segment of
the real line and intersect a neighborhood sufficiently close to the singularities of the  separatrix.
\item[F.] By using complex matching techniques, we compare the solutions of the inner equation with the graph parametrizations of the perturbed invariant manifolds.
\item[G.] Finally, we prove that the dominant term of the difference between manifolds is given by the term obtained from the difference of the solutions of the inner equation.
\end{enumerate}

The structure of this paper goes as follows. 
In Section \ref{section:mainResults}, we present the main results for the Steps A to D and introduce some heuristics to contextualize them.
Sections \ref{section:proofA-singularities}-\ref{section:proofD-AnalysisInner} are devoted to the proof of the results in Section~\ref{section:mainResults}.

\paragraph{The constants in the asymptotic formula for the distance.}

The constant $A$ in \eqref{eq:dist} is given by the height of the maximal strip of analyticity of the unperturbed separatrix (see Step B).
Therefore, to obtain its value, one has to compute the imaginary part of the singularities of the separatrix which are closer to the real line.

On all the previous mentioned works on splitting of separatrices, either the separatrix of the unperturbed model has an analytic expression (see for example~\cite{DelSea92,GaiGel10,GOS10,BCS13,GMS16}) or otherwise certain properties of its analytic continuation are given as assumptions (see~\cite{DelSea97,Gel97b,BF04, BFGS12}).
%
%From a numerical point of view, in~\cite{SimoGelf08} the authors presented a numerical method to study the singularities of the separatrices of different area preserving maps.
%
In this case, we do not have an explicit expression for the time-parameterization of the separatrix and, to obtain its complex singularities, we need to rely on techniques of analytical continuation to analyze them (see Section \ref{subsection:singularities}). In particular, we describe the parametrization of the separatrix in terms of a multivalued function involving a complex integral and (see Theorem \ref{theorem:singularities} below) we obtain 
%
% Then, by analyzing and classifying all the possible integration paths, we are able to show that the singularities of the unperturbed separatrix which are closest to the real axis have imaginary part obtain an expression for the constant $A$.
% 
\begin{equation*}
	A= \int_0^{\frac{\sqrt{2}-1}{2}} \frac{2}{1-x}\sqrt\frac{x}{3(x+1)(1-4x-4x^2)}  dx\approx 0.177744.
\end{equation*}	
This value agrees with the numerical computations of the distance between the invariant manifolds given in \cite{FontPhD, SSST13}.

Since we are in a singular case, the constant $\CInn$ in \eqref{eq:dist} is not correctly given by the Melnikov function but by the analysis of the  inner equation of the system (see Step C above and also  Sections \ref{subsection:innerDerivation} and \ref{subsection:innerComputations}).
In particular, $\CInn$ corresponds to a Stokes constant 
% that depends on the full jet of the Hamiltonian and, as a result, it
and does not have a closed formula.
%
% Thus, the correct value of $\CInn$ is obtained by the study of the inner equation of the system (see Sections \ref{subsection:innerDerivation} and \ref{subsection:innerComputations}).
%
By a numerical computation,  we see that $\vabs{\CInn}\approx 1.63$ (see Remark \ref{remark:CInnNumerics}). 
%
% Unfortunately, the accuracy of this computation is quite poor due to the exponentially small behavior between the manifolds.
%
We expect that, by means of a computer assisted proof, it would be possible to obtain rigorous estimates and verify that $\vabs{\CInn}\neq 0$, see \cite{BCGS21}.

\section{Main results} 
\label{section:mainResults}
%In this section, we introduce the rigourous statement of the results in this work and give the main ideas to obtain them.

We devote this section to state the main results concerning the Steps A, B, C and D explained in Section \ref{sec:schemeproof}. 
First, in Section~\ref{subsection:reformulation}, we present the changes of coordinates involved  to rewrite the Hamiltonian $h$ in~\eqref{def:hamiltonianInitialNotSplit} as a singular perturbation problem given by a  fast oscillator weakly coupled with a one degree of freedom Hamiltonian with a saddle point and a separatrix (Step A).
%such that, the new Hamiltonian becomes a fast oscillator weakly coupled with a pendulum-like Hamiltonian with a saddle and an homoclinic connection.
%
In Section~\ref{subsection:singularities}, we consider the  time-parametrization of the separatrix and analyze the properties of its analytical continuation (Step B).
In Section~\ref{subsection:innerDerivation} we give some heuristic ideas regarding the parametrization of the perturbed manifolds on certain complex domains (Step E) and deduce the singular change of variables which leads to the inner equation (Step C).
Finally, in Section~\ref{subsection:innerComputations}, we present the study of certain solutions of the inner equation and give an asymptotic formula for their difference (Step D).

\subsection{A singular perturbation formulation of the problem} 
\label{subsection:reformulation}

When studying a close to integrable Hamiltonian system at a resonance, it is usual to ``blow-up'' the ``resonant zone'' to capture the slow-fast time scales.
In this section we present the singular change of coordinates which transforms the Hamiltonian $h$ in \eqref{def:hamiltonianInitialNotSplit} into a pendulum-like Hamiltonian plus a fast oscillator with a small coupling, namely
\[
H(\la,\La,x,y) = 
-\frac{3}{2}\La^2 + V(\la) +
\frac{x y}{\sqrt{\mu}} + o(1),
\]
with respect to the symplectic form $d\la \wedge d\La + i dx \wedge dy$.
In these coordinates, the first order of the  Hamiltonian has a saddle in the $(\la,\La)$--plane and a center in the $(x,y)$--plane.
Notice that the system possesses two time scales ($\sim 1$ and $\sim1/{\sqrt{\mu}}$). Recall that this two time scales are also present in the eigenvalues of $L_3$ in~\eqref{eq:specRot}.

We consider Poincar\'e coordinates for the RPC$3$BP (see~\eqref{def:hamiltonianInitialNotSplit}) in order to write the system as a close to integrable Hamiltonian system and decouple (at first order) the saddle and the center behaviour.
To this end, we first consider the symplectic polar and Delaunay coordinates.
%
%Finally, we perform a singular scaling.

\paragraph{Polar Coordinates.}
%
%Let us consider first Hamiltonian \eqref{eq:RShamiltonianInitial} on polar coordinates. 
%
Let us consider the change of coordinates: 
\[
\phi_{\pol}:(r,\tht,R,G) \mapsto (q,p),
\]
where $r$ is the radius, $\tht$ the argument of $q$, $R$ the linear momentum in the $r$ direction and $G$ is the angular momentum.
Then, the Hamiltonian~\eqref{def:hamiltonianInitialNotSplit}, becomes %defines a Hamiltonian system, with respect to the symplectic form $dr \wedge dR + d\tht \wedge dG$, with Hamiltonian:
\begin{equation}\label{def:hamiltonianPolars}
	H^{\pol} = H_0^{\pol} + \mu H_1^{\pol},
\end{equation}
where
\begin{equation} \label{def:hamiltonianPolarsSplitting}
\begin{split}
H^{\pol}_0(r,R,G) &= \frac{1}{2} \paren{ R^2 + \frac{G^2}{r^2} } - \frac{1}{r} - G,  \\
\mu H^{\pol}_1(r,\theta;\mu) &= 
\frac{1}{r} -
\frac{ 1-\mu}{\sqrt{r^2-2\mu r\cos \tht + \mu^2}}
- \frac{\mu}{\sqrt{r^2+2(1-\mu) r\cos\tht +(1-\mu)^2}}.
\end{split}
\end{equation}
The critical point $L_3$ (see~\cite{Szebehely} for the details) satisfies that, as $\mu \to 0$,
\begin{equation}\label{def:pointL3}
(r,\tht,R,G) = (d_{\mu},0,0,d_{\mu}^2), 
\qquad \text{with} \quad
d_{\mu} = 1 + \frac{5}{12} \mu + \OO(\mu^3).
\end{equation}

%In polar coordinates, the critical point $L_3$ (see~\eqref{def:pointL3}) satisfies
%\begin{equation}\label{eq:pointL3polars}
%(r,\tht,R,G) = (d_{\mu},0,0,d_{\mu}^2) 
%= \big(1+\OO(\mu),0,0,1+\OO(\mu)\big),
%\quad \text{as } \mu \to 0.
%\end{equation}

%\begin{equation} \label{def:hamiltonianPolarsSplitting}
%	\begin{split}
%		H^{\pol}_0(r,R,G) &= \frac{1}{2} \left\{ R^2 + \frac{G^2}{r^2} \right\} - \frac{1}{r} - G,  \\
%		%
%		\mu H^{\pol}_1(r,\theta;\mu) &= D^{\pol}[0] + \left( \mu- 1 \right) D^{\pol}[\mu]
%		- \mu D^{\pol}[\mu-1], 
%	\end{split}
%\end{equation}
%with 
%\begin{equation}\label{def:functionD}
%	D^{\pol}[\zeta](r,\tht)=(r^2-2\zeta r\cos \tht + \zeta^2)^{-1/2}.
%\end{equation}
%Notice that $\mu H_1^{\pol,S}+\mu H_1^{\pol,J}$ is a perturbation of size $\OO(\mu)$ when the third body is ``far'' from the primaries, situation that it includes $L_3$. 
%Notice that we have split the perturbation on two: the $\mu H_1^{\pol,S}$ term concerns the perturbation given by the big primary, and the $\mu H_1^{\pol,J}$ term relates to the perturbation given by the small primary.

\paragraph{Delaunay coordinates.}

The Delaunay elements,
denoted $(\ell,L,\hat{g},G)$,
are action--angle variables for the $2$-Body Problem (for negative energy) in non-rotating coordinates.
The variable $\ell$ is the mean anomaly, 
$\hat{g}$ is the argument of the pericenter,
$L$ is the square root of the semi major axis and $G$ is the angular momentum,
(see~\cite{MeyerHallOffin}).

Let us introduce some formulae to describe these elements from the non-rotating polar coordinates $(r,\hat{\tht},R,G)$, namely $\hat{\tht}={\tht}+t$. 
The action $L$ is defined by
\begin{equation*}
-\frac{1}{2L^2} =
\frac{1}{2} \paren{ R^2 + \frac{G^2}{r^2} } - \frac{1}{r},
\end{equation*}
%However, since $\phi_{\Del}$ does not have an explicit expression, the perturbation term $\mu \wt{H}_1$,as defined in~\eqref{eq:RShamiltonianInitial1}, does not have an explicit form.
%
and the (osculating) eccentricity of the body is expressed as 
\begin{equation}\label{def:eccentricityDelaunay}
e = \sqrt{1-\frac{G^2}{L^2}}
= \frac{\sqrt{(L-G)(L+G)}}{L}.
\end{equation} 
Let us recall now the ``anomalies'': the three angular parameters that define a position at the (osculating) ellipse. These are the mean anomaly $\ell$,  the eccentric anomaly $u$, and the true anomaly $f$, which satisfy
\begin{equation}\label{eq:rthetaDefinition}
r = L^2(1-e\cos u) \qquad
\text{ and } \qquad 
\hat{\tht} = f + \hat{g}.
\end{equation}
To use these elements in a rotating frame, we consider rotating Delaunay coordinates $(\ell,L,g,G)$, where the new
angle is defined as $g=\hat{g}-t$ (the argument of the pericenter with respect to the line defined by the primaries $S$ and $J$).
As a result, %the argument on rotating coordinates is given by  
\begin{equation}\label{eq:thetaDefinition}
{\tht} = f + {g},
\end{equation}
and the unperturbed Hamiltonian $H_0^{\pol}$ becomes
\[
H_0^{\pol} = -\frac{1}{2L^2} - G.
\]
The eccentric and true anomalies are related by 
\begin{equation}\label{eq:fTrigDefinition}
\cos f=\frac{\cos u-e}{1-e\cos u}, 
\qquad
\sin f=\frac{\sqrt{1-e^2}\sin u}{1-e\cos u},
\end{equation}
and the mean anomaly $u$ is given by Kepler's equation
\begin{equation}\label{eq:uImplicitDefinition}
u-e \sin u = \ell.
\end{equation}
The critical point $L_3$ (see~\eqref{def:pointL3}) satisfies $\tht = \ell + g = 0$ and
\begin{equation}\label{eq:pointL3Delaunay}
%\ell+g=0, \quad 
%
L = \sqrt{\frac{d_{\mu}}{2-d^3_{\mu}}}
= 1 + \OO(\mu), \qquad
G = d^2_{\mu}
= 1  + \OO(\mu), \qquad
L- G = \OO(\mu^2).
\end{equation}
Note that the Delaunay coordinates are not well defined for circular orbits ($e=0$), since the pericenter, and as a consequence the angle $g$, are not well defined.

\paragraph{Poincar\'e coordinates.}

To ``blow-down'' the singularity of the Delaunay coordinates at circular motions,
we use the classical Poincar\'e coordinates, which can be expressed  by means of (rotating) Delaunay variables.
Let us define
\begin{equation}\label{def:changePoincare}
\phi_{\Poi}: (\la, L, \eta, \xi) \mapsto
(r, \tht, R,G) ,
\end{equation}
given by
\begin{equation}\label{def:changePoincare2}
\begin{aligned}
\la=\ell+g, \qquad  
\eta=\sqrt{L-G} e^{i g}, \qquad
\xi= \sqrt{L-G} e^{-i g}.
\end{aligned}
\end{equation}
% where $(\ell,g,L,G)$ are the Delaunay variables.
%
These coordinates are symplectic and analytic. 
Moreover, even though they are defined through the Delaunay variables, they are also analytic when the eccentricity tends to zero (i.e at $L=G$), see~\cite{MeyerHallOffin, Fejoz2013}.
%
%Roughly speaking, this change of variables assumes that $(\sqrt{L-G}, g)$ acts similarly to polar coordinates, where the angle $g$ is not defined when the ``radius'' is $\sqrt{L-G} = 0$. To solve the issue, it applies a change back to the corresponding ``Cartesian'' coordinates.

The Hamiltonian equation associated to~\eqref{def:hamiltonianPolars}, expressed in Poincar\'e coordinates, defines a Hamiltonian system with respect to the symplectic form $d\la \wedge dL + i \h d\eta \wedge d\xi$ and
\begin{equation*} %\label{def:hamiltonianPoincare}
H^{\Poi} = H_0^{\Poi}
+ \mu H_1^{\Poi},
\end{equation*}
where
\begin{equation}\label{def:hamiltonianPoincareSplitting}
\begin{split}
H_0^{\Poi}(L,\eta,\xi) = -\frac{1}{2L^2} - L +  \eta \xi 
\qquad \text{and} \qquad
H_1^{\Poi} = H_1^{\pol} \circ \phi_{\Poi}.
\end{split}
\end{equation}
%
%Since Delaunay variables are not explicit, neither is the Poincar\'e change of coordinates: $\phi_{\Poi}$. As a result, $H^{\Poi}_1$ does not have an explicit expression. 
%
%However, we can obtain a series expression on an appropriate domain. See Lemma~\ref{lemma:seriesH1Poi}.
%
%Let us consider the equilibrium point $L_3^{\mu}$, as expressed in~\eqref{def:pointL3} then, applying the formulae in Section~\ref{section:proofB-poincareFormulas}, it can be seen that
%\begin{equation}\label{eq:fixedPointL3Poi}
%\phi_{\Poi}(L_3^{0}) = (0,1,0,0).
%\end{equation}
%
In Poincar\'e coordinates, the critical point $L_3$, as given in~\eqref{eq:pointL3Delaunay}, satisfies
\[
\la=0, \qquad
(L,\eta,\xi) = (1,0,0) + 
\OO(\mu).
\]
%
%The unperturbed Hamiltonian $H_0^{\Poi}$ has a circle of equilibrium point at $(\la,1,0,0)$. 
%
The linearized part of the vector field associated to this point has, at first order, an uncoupled nilpotent and center blocks,
\begin{equation*}
\begin{pmatrix}
0 & -3 & 0 & 0 \\
0 & 0 & 0 & 0 \\
0 & 0 & i & 0 \\
0 & 0 & 0 & -i
\end{pmatrix} + \OO(\mu).
\end{equation*}
The center is found on the projection to coordinates $(\eta,\xi)$ and the degenerate behavior on the projection to $(\la,L)$. 
%
%Moreover, the degeneracy is found on the component $L$ of the vector field.
%
%Therefore, we scale the coordinates to obtain a suitable first order with a non-degenerate saddle.

The perturbative term $\mu H_1^{\Poi}$ is not explicit. 
We overcome this problem by computing the first terms of the series of $\mu H_1^{\Poi}$ in powers of $(\eta,\xi)$, (see Lemma~\ref{lemma:seriesH1Poi}).
%
%We use this result extensively through this document and~\cite{?}.

Notice that, on the original coordinates, Hamiltonian $h$ (see~\eqref{def:hamiltonianInitialNotSplit}) is analytic at points away from collision with the primaries.
However, the collisions are not as easily defined in Poincar\'e coordinates. %(See Remark~\ref{remark:seriesH1Poi}).

%In particular, for $(L,\eta,\xi)\sim(1,0,0)$, $\vabs{\Im \la}$ bounded and $\mu \geq 0$ small enough, we can see that 
%%
%there exists a constant $\cttInnPoiA=\cttInnPoiA(\mu)>0$ such that 
%%
%the Hamiltonian $H^{\Poi}$ is analytic
%%
%for $\vabs{\la-\pi}>\cttInnPoiA$.
%%
%See Corollary~\ref{corollary:HPoiAnalytic} for the details.

%In particular, we consider the Poincar\'e complex change of coordinates $\phi_{\Poi}:(r,\tht,R,G)  \to (\la, L, \eta, \xi) \in \torus \times \reals^3$ given by
%\begin{align}\label{def:changePoincare}
%%\left\{ \begin{array}{ll}
%\la=\ell+g, \hh  
%\eta=\sqrt{L-G} e^{i g}, \hh
% \xi= \sqrt{L-G} e^{-i g},
%%\end{array} \right.
%\end{align}
%we analyze the resonance where the asteroid and the primaries have the same period (mean motion resonance $1:1$), specifically at the $L_3$ point. 
%\begin{align*}
%\phi_{\Poi}: \wt{D}_s \subset \reals^2 \times \reals^2 &\to \torus \times \reals^3  \\
%(q,p) &\mapsto (\la, L, \eta, \xi)
%\end{align*}
%where $\ell$ is the mean anomaly, $g$ the argument of the perihelion, $L$ the mean longitude and $G$ the angular momentum.  
%See Section~\ref{section:proofB-poincareFormulas} for further details on these coordinates. 

\paragraph{A singular scaling.}
%\label{subsection:changeCoordinates}
% Let us take into account the two time scales of the system, as given by the eigenvalues in~\eqref{eq:specRot}, by a factor of order $\sqrt{\mu}$.
%
We consider the parameter
\begin{equation*}%\label{def:delta}
	\de= \mu^{\frac{1}{4}}
\end{equation*}
and we define the  symplectic scaling 
\begin{equation}\label{def:changeScaling}
\phi_{\sca}:
(\la, \La, \xh,  \yh) 
 \mapsto 
(\la,L,\eta,\xi),
\qquad
L = 1 + \de^2 \La , \quad
\eta = \de \xh , \quad
\xi = \de \yh , \quad
\end{equation}
%given by
%\begin{equation*}
%	L = 1 + \de^2 \La , \qquad
%	\eta = \de \xh , \qquad
%	\xi = \de \yh , \qquad
%	t = \de^{-4} \tau,
%\end{equation*}
and the time reparameterization $t = \de^{-2} \tau$. The transformed equations are Hamiltonian with respect to the Hamiltonian
\[
\de^{-4} \paren{ H_0^{\Poi} \circ \phi_{\sca}}+\paren{H_1^{\Poi} \circ \phi_{\sca} }
\]
and the symplectic form $d\la\wedge d\Lambda+idx\wedge dy$.  The Hamiltonian (up to a constant) satisfies
\begin{align*}
\de^{-4} \paren{ H_0^{\Poi} \circ \phi_{\sca}} &=-\frac{3}{2}\La^2 
+ \frac{1}{\de^4} F_{\pend}(\de^2\La)
+ \frac{\xh\yh}{\de^2}, \\
\paren{H_1^{\Poi} \circ \phi_{\sca} }&=
V(\la) + \OO(\de),
\end{align*}
with 
\begin{equation}\label{def:Fpend}
\begin{split}
V(\la) &=  H_1^{\Poi}(\la,1,0,0;0),\\
F_{\pend}(z)& = \paren{-\frac{1}{2(1+z)^2}-(1+z)}+\frac{3}{2} + \frac{3}{2}z^2 = \OO(z^3).
\end{split}
\end{equation}
The function $V(\la)$, which we call the potential, has an explicit formula:
\begin{equation}\label{def:potentialV}
V(\la) =  H_1^{\pol}(1,\la;0)
= 1 - \cos \la - \frac{1}{\sqrt{2+2\cos \la}},
\end{equation}
where $H_1^{\pol}$ is defined in~\eqref{def:hamiltonianPolarsSplitting}.
Indeed, taking $\de=0$ on the change of coordinates~\eqref{def:changeScaling}, we have that $(\la,L,\eta,\xi)=(\la,1,0,0)$.
%
%These coordinates, on Delaunay variables, correspond with a circular orbit, $e=0$, and with the equivalence of the three anomalies, $\ell=u=f$.
%
These coordinates, correspond with a circular orbit, $e=0$, and applying~\eqref{eq:rthetaDefinition} and \eqref{eq:thetaDefinition}, we obtain that $(r,\tht)=(1,\la)$.

We summarize the previous results in the following theorem.

\begin{theorem}\label{theorem:HamiltonianScaling}
The Hamiltonian system given by $h$ in~\eqref{def:hamiltonianInitialNotSplit} expressed in coordinates $(\la,\La,x,y)$
defines a Hamiltonian system with respect to the symplectic form $d\la \wedge d\La + i dx \wedge dy$ 
and the Hamiltonian
\begin{equation} \label{def:hamiltonianScaling}
\begin{split}
{H} = 
{H}_{\pend} + 
{H}_{\osc} + 
H_1,
\end{split}
\end{equation}
with 
\begin{align} 
&{H}_{\pend}(\la,\La) = -\frac{3}{2} \La^2  + V(\la), \qquad
{H}_{\osc}(x,y; \de) = \frac{x y}{\de^2}, 
\label{def:HpendHosc}\\
&H_1(\la,\La,\xh,\yh;\de) =
\paren{H_1^{\Poi} \circ \phi_{\sca}}  - V(\la) + \frac{1}{\de^4} F_{\pend}(\de^2\La),
\label{def:hamiltonianScalingH1}
\end{align}
and $H_1^{\Poi}$, $F_{\pend}$ and $V$ defined in
\eqref{def:hamiltonianPoincareSplitting},
\eqref{def:Fpend}
and
\eqref{def:potentialV}, respectively.
Moreover, the Hamiltonian $H$ is real-analytic\footnote{
	Real-analytic in the sense of $\conj{H(\la,\La,x,y;\de)}= H(\conj{\la},\conj{\La},y,x;\conj{\de}).$} 
away from collision with the primaries. 

Moreover, for $\de > 0$ small enough:
\begin{itemize}
\item The critical point $L_3$ (see~\eqref{def:pointL3}) expressed in  coordinates $(\la,\La,x,y)$ is given by
\begin{equation}\label{def:pointL3sca}
\Ltres(\de)=\paren{0,
	\de^2 \LtresLa(\de),
	\de^3 \Ltresx(\de),
	\de^3 \Ltresy(\de)},
\end{equation}
with $\vabs{\LtresLa(\de)}$, $\vabs{\Ltresx(\de)}$, $\vabs{\Ltresy(\de)} \leq C$, for some constant $C>0$ independent of $\de$.
\item The point $\Ltres(\de)$ is a saddle-center equilibrium point and its linear part is
\begin{equation*}
	\begin{pmatrix}
		0 & -3 & 0 & 0 \\
		-\frac{7}{8} & 0 & 0 & 0 \\
		0 & 0 & \frac{i}{\de^2} & 0 \\
		0 & 0 & 0 & -\frac{i}{\de^2}
	\end{pmatrix} + \OO(\de).
\end{equation*}
Therefore, it possesses one-dimensional stable and unstable manifolds and a two- dimensional center manifold.
%\item For $(L,\eta,\xi) \sim (1,0,0)$, $\vabs{\Im \la}<C$ and $\vabs{\la-\pi}>C$
%the Hamiltonian $H$ is real-analytic in the sense of 
%\[
%\conj{H(\la,\La,x,y;\de)}=
%H(\conj{\la},\conj{\La},y,x;\de).
%\]
%%%%%
%	\item Fix $\varrho\geq 0$.
%	%
%	There exists $\nu_0=\nu_0(\de)>0$ (not necessarily small) such that for all $\nu \in (0,\nu_0)$, 
%	the Hamiltonian $H$ is real-analytic in the sense of 
%	\[
%	\conj{H(\la,\La,x,y;\de)}=
%	H(\conj{\la},\conj{\La},y,x;\de),
%	\]
%	for $\vabs{(\La,x,y)}\leq \nu$, 
%	$\vabs{\Im \la}\leq \varrho$ and 
%	$\vabs{\la-\pi}>\cttInnPoiA$
%	where $\cttInnPoiA=\cttInnPoiA(\de,\nu)$ a positive constant.
\end{itemize}
\end{theorem}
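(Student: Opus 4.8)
\noindent The plan is to obtain $H$ by composing the four explicit changes of variables introduced above — the symplectic polar, Delaunay and Poincar\'e coordinates, followed by the conformally symplectic scaling $\phi_{\sca}$ together with the time reparametrization $t=\de^{-2}\tau$ — and then to deduce the position of $L_3$ and its linear part from the corresponding classical data, for which I would quote \cite{Szebehely}. For the first part: starting from $H^{\Poi}=H_0^{\Poi}+\mu H_1^{\Poi}$ with $\mu=\de^4$, relative to $d\la\wedge dL+i\,d\eta\wedge d\xi$, pulling back by $\phi_{\sca}$ multiplies the symplectic form by $\de^2$, so it is equivalent to working with $\de^{-2}(H^{\Poi}\circ\phi_{\sca})$ relative to $d\la\wedge d\La+i\,dx\wedge dy$; multiplying by a further $\de^{-2}$ (i.e.\ rescaling time by $\de^{-2}$) produces the Hamiltonian $\de^{-4}(H_0^{\Poi}\circ\phi_{\sca})+(H_1^{\Poi}\circ\phi_{\sca})$. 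Inserting $L=1+\de^2\La$, $\eta=\de x$, $\xi=\de y$ into $H_0^{\Poi}=-\tfrac1{2L^2}-L+\eta\xi$ and using the algebraic identity $-\tfrac1{2(1+z)^2}-(1+z)=-\tfrac32-\tfrac32 z^2+F_{\pend}(z)$ from \eqref{def:Fpend} (with $z=\de^2\La$) gives, after discarding the irrelevant constant $-\tfrac32\de^{-4}$, exactly $-\tfrac32\La^2+\de^{-4}F_{\pend}(\de^2\La)+xy/\de^2$. For the perturbative part, $H_1^{\Poi}\circ\phi_{\sca}\to H_1^{\Poi}(\la,1,0,0;0)=H_1^{\pol}(1,\la;0)=:V(\la)$ as $\de\to0$ (the coordinates $(\la,1,0,0)$ describe a circular orbit $e=0$, i.e.\ $(r,\tht)=(1,\la)$), and a direct first--order expansion in $\mu$ of \eqref{def:hamiltonianPolarsSplitting} at $r=1$ yields the explicit formula \eqref{def:potentialV}. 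Collecting the remainder $(H_1^{\Poi}\circ\phi_{\sca})-V(\la)=\OO(\de)$ together with $\de^{-4}F_{\pend}(\de^2\La)=\de^2\OO(\La^3)$ into $H_1$ produces \eqref{def:hamiltonianScaling}--\eqref{def:hamiltonianScalingH1}.

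Next, analyticity. Each map in the chain is real--analytic on its natural domain: polar coordinates away from $r=0$, Delaunay coordinates for negative Kepler energy and $e\in(0,1)$, and — the point that makes the composition work across circular orbits — Poincar\'e coordinates $(\la,L,\eta,\xi)$ extend analytically across $e=0$, i.e.\ $L=G$, see \cite{MeyerHallOffin,Fejoz2013}; the scaling is linear. Hence $H$ is real--analytic precisely where $h$ is, that is, away from collisions of the third body with the primaries. The symmetry $\conj{H(\la,\La,x,y;\de)}=H(\conj\la,\conj\La,y,x;\conj\de)$ is inherited from the fact that for real Delaunay variables $\eta$ and $\xi$ are complex conjugates — so $x=\conj y$ after the real scaling — combined with the reality of $h$: one checks the identity on the real slice and propagates it by the identity principle.

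For the equilibrium and its linearization, note that by \eqref{def:pointL3} and \eqref{eq:pointL3Delaunay} the point $L_3$ sits at $\la=0$ (the reversor \eqref{def:involutionCartesians} becomes $(\la,L,\eta,\xi;t)\mapsto(-\la,L,\xi,\eta;-t)$ and fixes $L_3$, which forces its $\la$--component to be $0$), with $L=1+\OO(\mu)$ and $L-G=\OO(\mu^2)$; writing $\La=(L-1)/\de^2$, $x=\eta/\de$, $y=\xi/\de$ with $\mu=\de^4$ gives $\La=\OO(\de^2)$ and $\vabs{x},\vabs{y}=\sqrt{L-G}/\de=\OO(\de^3)$, which is \eqref{def:pointL3sca}. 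For the linear part, the Hamiltonian vector field of $H_{\pend}=-\tfrac32\La^2+V(\la)$ at $(\la,\La)=(0,\La_0)$ has matrix $\left(\begin{smallmatrix}0&-3\\-V''(0)&0\end{smallmatrix}\right)$ with $V''(0)=1-4^{-3/2}=\tfrac78$ (immediate from \eqref{def:potentialV}), hence eigenvalues $\pm\sqrt{21/8}$; the vector field of $H_{\osc}=xy/\de^2$ contributes the elliptic diagonal block with eigenvalues $\pm i/\de^2$ in the $(x,y)$--variables; and $H_1$ contributes only $\OO(\de)$ to the linearization, since $H_1$ is analytic and $\OO(\de)$ on a fixed complex neighborhood of $\Ltres(\de)\to0$ (which lies away from collisions, as $\Ltres(\de)$ corresponds to $(r,\tht)\approx(1,0)$), so Cauchy estimates bound its Hessian there by $\OO(\de)$. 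This yields the displayed linear part, whose spectrum $\{\pm\sqrt{21/8}+\OO(\de),\ \pm i/\de^2+\OO(\de)\}$ is hyperbolic times elliptic; hence $\Ltres(\de)$ is a saddle--center and the stable/unstable/center manifold theorem provides the one--dimensional stable and unstable manifolds together with the two--dimensional center manifold. (As a consistency check, these eigenvalues are $\de^{-2}=\mu^{-1/2}$ times those of $L_3$ in the original time, cf.\ \eqref{eq:specRot}.)

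The main obstacle is the uniform (in $\de$) control of $H_1$ near $\Ltres(\de)$, since $H_1^{\Poi}$ is not explicit; everything else is bookkeeping with known formulas. This requires expanding $H_1^{\Poi}$ in powers of $(\eta,\xi)$ — equivalently in the eccentricity — and estimating the remainder on a complex neighborhood, which is exactly the content of Lemma \ref{lemma:seriesH1Poi}; it is also what makes the estimates $(H_1^{\Poi}\circ\phi_{\sca})-V(\la)=\OO(\de)$ and $\vabs{\Ltresx(\de)},\vabs{\Ltresy(\de)}\le C$ rigorous.
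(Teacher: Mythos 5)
Your proposal is correct and follows essentially the same route as the paper, which itself only states that the theorem ``follows from the results obtained through Section~\ref{subsection:reformulation}'': you reproduce that chain of changes of variables (polar, Delaunay, Poincar\'e, then the conformally symplectic scaling with the time reparametrization), the extraction of $F_{\pend}$ and $V$, and the standard linearization computation with $V''(0)=\tfrac78$, correctly identifying that the only nontrivial input is the uniform control of $H_1^{\Poi}$ provided by Lemma~\ref{lemma:seriesH1Poi}.
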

%with
%\begin{equation}\label{def:Fpend}
%F_{\pend}(z) = \paren{-\frac{1}{2(1+z)^2}-(1+z)}+\frac{3}{2} + \frac{3}{2}z^2 \sim \OO(z^3).
%\end{equation}
%Notice that $\mathbf{0}$ is saddle-center of this system. 
% specifically a  non-degenerated saddle, as we wanted to obtain.
%Notice that the order of variables for the symplectic form has been altered after this change. See Appendix~\ref{appendixC} for the details.   \\
%
The proof of Theorem~\ref{theorem:HamiltonianScaling} follows from the results obtained through Section~\ref{subsection:reformulation}.

Since the original Hamiltonian is symmetric with respect to the involution ${\Phi}$ in~\eqref{def:involutionCartesians}, the Hamiltonian $H$ is reversible with respect to the involution 
\begin{equation}\label{def:involutionScaling}
	\wt{\Phi}(\la,\La,x,y) = (-\la,\La,y,x).
\end{equation}
From now on, we consider as ``new'' unperturbed Hamiltonian
\begin{equation}\label{def:hamiltonianScalingH0}
{H}_0(\la,\La,\xh,\yh;\de) = {H}_{\pend}(\la,\La) + {H}_{\osc}(\xh,\yh; \de),
\end{equation}
which corresponds to an uncoupled pendulum-like  Hamiltonian ${H}_{\pend}$ and an oscillator ${H}_{\osc}$, and  we refer
to $H_1$ as the perturbation.

\subsection{The Hamiltonian \texorpdfstring{$H_{\pend}$}{Hp} and its separatrices} 
\label{subsection:singularities}

In this section we analyze the 1-degree of freedom Hamiltonian ${H}_{\pend}(\la,\La)$ introduced in~\eqref{def:HpendHosc},
\begin{equation*}
{H}_{\pend}(\la,\La) = -\frac{3}{2} \La^2  + V(\la), \qquad
V(\la) = 1 - \cos \la - \frac{1}{\sqrt{2+2\cos \la}},
\end{equation*}
and the associated Hamiltonian system
\begin{equation} \label{eq:sistemODEs}
\dot \la= -3\La, \qquad
\dot \La=-\sin \la \left(1-\frac{1}{(2+2\cos\la)\efr{3}{2}
}\right),
\end{equation}
This Hamiltonian system has a singularity at $\la=\pi$, which corresponds to the collision with the small primary $P$, and a saddle at $(\la,\La)=(0,0)$ with two homoclinic connections or separatrices, see  Figure~\ref{fig:separatrix}.
\begin{figure} 
	\centering
	\begin{overpic}[scale=0.75]{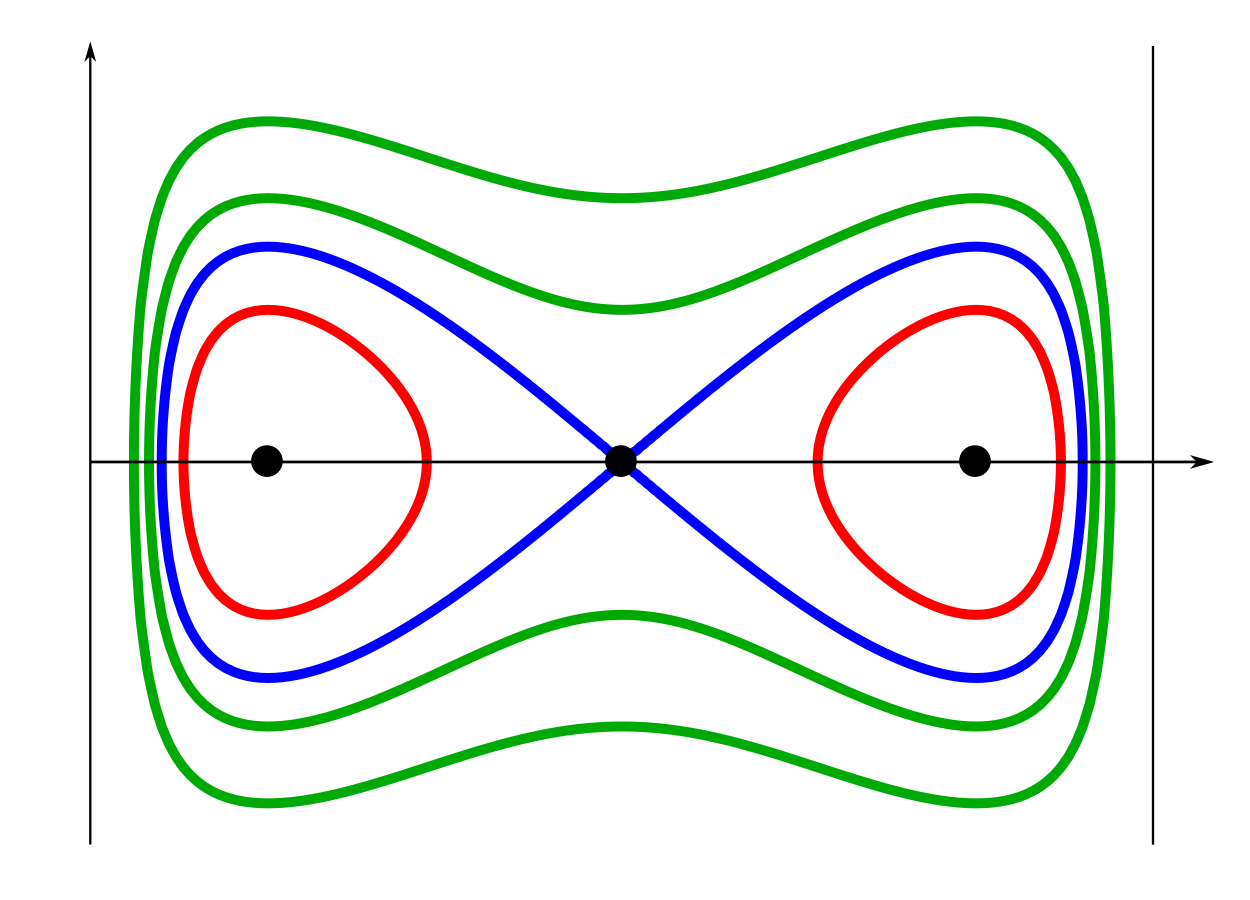}
		\put(93,31){{$\pi$}}
		\put(77,29){{$\frac{2}{3}\pi$}}
		\put(49,29){{$0$}}
		\put(17,29){{$-\frac{2}{3}\pi$}}
		\put(-2,31){{ $-\pi$}}
		\put(98,34){{$\la$}}
		\put(5,70){{$\La$}}
	\end{overpic}
	\bigskip
	\caption{Phase portrait of equation~\eqref{eq:sistemODEs}. On blue the two separatrices.}
	\label{fig:separatrix}
\end{figure}
%Then, considering the trigonometric identity $1+\cos(\la)=2\cos^2(\frac{\la}{2})$, these homoclinic orbits satisfy
%\begin{equation}\label{eq:odeSeparatrix}
%	\dot{\la}^2 = 15 - 12\cos^2\paren{\frac{\la}{2}}
%	- \frac{3}{\cos\paren{\frac{\la}{2}}}.
%\end{equation}
From now on, we only consider the separatrix on the right; by symmetry (see \eqref{def:involutionScaling}), the results obtained below are analogous for the separatrix on the left. 
%We are interested in studying the analytic continuation of the time parametrization of this separatrix.
%Let us only consider the separatrix on the right.

We consider the real-analytic time parametrization of the separatrix,
\begin{equation}\label{eq:separatrixParametrization}
\begin{split}
	\s:\reals &\to \torus \times \reals \\
		t &\mapsto \s(t)=(\la_h(t),\La_h(t)),
\end{split}
\end{equation}
with initial condition 
\[
\s(0)=(\la_0,0) 
\quad \text{ where } \quad 
\la_0 \in \left(\frac{2}{3}\pi,\pi\right).
\]
% This parametrization satisfies equation~\eqref{eq:sistemODEs} and the asymptotic condition
% \begin{equation*} \label{eq:conditionSeparatrix}
% 	\lim_{t\to \pm \infty } \s(t) = (0,0).
% \end{equation*}
%We present the first result which deals with the analyticity domain of $\s(t)$.
\begin{theorem}\label{theorem:singularities}
	The real-analytic time parametrization $\s$ in~\eqref{eq:separatrixParametrization} satisfies:
	\begin{itemize}
	\item It extends analytically to the strip
	\begin{equation}\label{def:PiA}
	\Pi_A = \{ t \in \complexs \st  |\Im t|< A \},
	\end{equation}
	where 
	\begin{equation}\label{def:integralA}
	A= \int_0^{a_+} \frac{1}{1-x}\sqrt\frac{x}{3(x+1)(a_+-x)(x-a_-)}  dx
	\approx 0.177744,
	\end{equation}
	with 
	$ %\label{def:apm}
	a_{\pm}=-\frac{1}{2}\pm\frac{\sqrt{2}}{2}.
	$
%	such that $A \in [\frac{3}{50},\frac{3}{10}]$. 
	\item It has only two singularities in $\partial \Pi_A$ at $t=\pm iA$.
	\item There exists $\nu>0$ such that,  for $t \in \complexs$ with $\vabs{t -iA}<\nu$ and $\arg(t -iA) \in (-\frac{3\pi}{2},\frac{\pi}{2})$, $\s(t)=(\la_h(t),\La_h(t))$ can be expressed as
	\begin{equation}\label{eq:homoclinicaSingularities}
	\begin{split}
	&\la_h(t) =  \pi + 3\al_{+} (t- iA)^{\frac{2}{3}} + \OO(t- iA)^{\frac{4}{3}}, \\[0.5em]
	&\La_h(t) =  -\frac{ 2 \al_{+}}{3} \frac{1}{(t- iA)^{\frac{1}{3}}} + 
	\OO(t- iA)^{\frac{1}{3}}, 
	\end{split}
	\end{equation}
with $\al_{+} \in \complexs$ such that $\al_{+}^3=\frac{1}{2}$. 

An analogous result holds for $\vabs{t+iA}<\nu$, $\arg(t+iA) \in(-\frac{\pi}{2},\frac{3\pi}{2})$ and $\al_- = \conj{\al_+}$.
\end{itemize}
\end{theorem}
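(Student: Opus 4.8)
The plan is to exploit the conservation of energy to reduce the separatrix to a quadrature, and then to analyze the resulting integral. On the separatrix $H_{\pend}(\la_h,\La_h)=H_{\pend}(0,0)=V(0)=-\tfrac12$, so $\tfrac32\La_h^2 = V(\la_h)-V(0) =: W(\la_h)$, giving $\La_h = \pm\sqrt{\tfrac23 W(\la_h)}$ and hence, from $\dot\la = -3\La$,
\[
t = -\int_{\la_0}^{\la_h(t)} \frac{d\la}{3\La_h(\la)}
= \mp\int_{\la_0}^{\la_h(t)} \frac{d\la}{3\sqrt{\tfrac23 W(\la)}}.
\]
First I would make the substitution suggested by the form of $V$: writing $x$ for a variable adapted to $\cos\la$ (the factor $\sqrt{2+2\cos\la}=2\cos(\la/2)$ and $1-\cos\la = 2\sin^2(\la/2)$ suggest $x$ related to $\sin^2(\la/2)$ or to $1-\cos\la$), one rewrites the quadrature as an integral of an algebraic function of $x$. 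The denominator $W(\la)$ vanishes to order one at $\la=0$ (the saddle) producing the logarithmic divergence of the real-time parametrization as $t\to\pm\infty$, and one must identify the branch points of the integrand that are \emph{not} at $\la=0$: these are the zeros of $W(\la)$ away from the origin together with the collision singularity at $\la=\pi$. The computation should show that the integrand, after the substitution, has the form $\frac{1}{1-x}\sqrt{\frac{x}{3(x+1)(a_+-x)(x-a_-)}}$ with $a_\pm = -\tfrac12\pm\tfrac{\sqrt2}{2}$, so that the relevant singularities of $t(\la)$ are at the images of $x=a_+$ (and $x=a_-$, $x=-1$, $x=1$). The value $A$ is then exactly the (real, finite) integral from $x=0$ to $x=a_+$, i.e. the imaginary displacement accumulated when $\la$ travels from the saddle to the nearest genuine branch point; this gives \eqref{def:integralA}, and the numerical value $\approx 0.177744$ follows by quadrature.

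Next I would establish analyticity on the open strip $\Pi_A$ and the location of the two boundary singularities. Here the argument is the standard one for separatrices defined by quadrature: $\s(t)$ solves an analytic ODE, so it extends analytically along any path in $\complexs$ as long as the solution stays away from the singular set of the vector field (here $\la=\pi$) and away from the points where $\La_h=0$ with $\la_h\neq 0$ (turning points, which for a homoclinic loop occur only at $\la=0$, reached as $\Re t\to\pm\infty$). One shows that for $|\Im t|<A$ the trajectory in the complexified $(\la,\La)$ space cannot reach any branch point, by bounding $|\Im t|$ along paths by the same integral $\int_0^{a_+}(\cdots)dx = A$ that defines $A$ — so $A$ is precisely the distance (in the $t$-plane) from the real line to the first obstruction. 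That the only singularities on $\partial\Pi_A$ are at $t=\pm iA$, and no others, uses the symmetry: $\s$ is real-analytic, so singularities come in complex-conjugate pairs, and the reversibility \eqref{def:involutionScaling} together with the monotonicity of $\la_h$ along the real axis forces the first singularity reached by moving vertically from $t=0$ to be unique in the upper half-plane; the collision point $\la=\pi$ is itself reached precisely at $t=\pm iA$ (it is the image of $x=1$... here one must check that the $x$-path from $0$ to the $\la=\pi$ preimage has the same length $A$, i.e. that the collision and the branch point $a_+$ contribute the same imaginary part — this is the content of $a_0$-type coincidences in such problems and should be verified from the explicit integral).

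Finally, for the local expansion \eqref{eq:homoclinicaSingularities} I would analyze the ODE \eqref{eq:sistemODEs} near $\la=\pi$. There $2+2\cos\la = (\pi-\la)^2 + \OO((\pi-\la)^4)$, so $(2+2\cos\la)^{-3/2}\sim |\pi-\la|^{-3}$ dominates, and $\dot\La \sim -\sin\la\,(2+2\cos\la)^{-3/2} \sim -\tfrac{\pi-\la}{|\pi-\la|^{3}}$; combined with $\dot\la=-3\La$ this is a planar system with a leading balance of the form $\la-\pi \sim c(t-iA)^{2/3}$, $\La\sim c'(t-iA)^{-1/3}$. Substituting the ansatz $\la_h = \pi + 3\al_+(t-iA)^{2/3}+\cdots$, $\La_h = -\tfrac{2\al_+}{3}(t-iA)^{-1/3}+\cdots$ into \eqref{eq:sistemODEs} and matching the leading powers fixes $\al_+^3 = \tfrac12$ (the cube root ambiguity corresponds to the choice of branch / direction of approach, which is why the sector $\arg(t-iA)\in(-\tfrac{3\pi}{2},\tfrac{\pi}{2})$ is specified), and a fixed-point / majorant argument on the remainder gives the $\OO$ terms and convergence of the full Puiseux series in $(t-iA)^{1/3}$ on the punctured disk of some radius $\nu$. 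The conjugate statement at $t=-iA$ with $\al_-=\conj{\al_+}$ is immediate from real-analyticity of $\s$.

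\medskip
The main obstacle I expect is the second step: turning the heuristic ``$A$ = length of the $x$-path to the first branch point'' into a rigorous proof that (i) $\s$ really does extend to all of $\Pi_A$ (one must rule out that the complexified trajectory wanders off and hits $\la=\pi$ or a turning point sooner along some non-vertical path — this requires a careful homotopy/monodromy argument controlling $\Im t$ along arbitrary paths, not just vertical segments), and (ii) that $\pm iA$ are the \emph{only} singularities on the boundary, i.e. that the branch point $x=a_+$, the collision $\la=\pi$, and any further singularities of the Riemann surface of the quadrature all project to the same two points $\pm iA$ or else lie strictly outside $\overline{\Pi_A}$. Everything else — the substitution, the energy reduction, the local Puiseux analysis — is routine once the global analytic-continuation picture is pinned down.
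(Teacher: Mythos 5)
Your overall strategy coincides with the paper's: reduce to the quadrature $t=\int f$, change to a variable $q=\cos(\la/2)$ that produces exactly the integrand $\frac{1}{1-x}\sqrt{\frac{x}{3(x+1)(a_+-x)(x-a_-)}}$, do a local Puiseux analysis at the collision, and then control the global analytic continuation. But the step you yourself flag as the main obstacle is not a technicality to be filled in later — it \emph{is} the proof. To show that $\s$ extends to all of $\Pi_A$ and that $\pm iA$ are the \emph{only} boundary singularities, the paper must classify every homotopy class of continuation path on the Riemann surface of $f$ (paths staying in one half-plane, paths first crossing $\reals$ on each of the intervals $(1,\infty)$, $(-\infty,a_-)$, $(a_-,-1)$, $(-1,0)$, $(0,1)$, and paths winding arbitrarily many times around the branch point $a_+$), computing or bounding $\Im t$ along each. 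Your proposed shortcut via reversibility and real-analyticity cannot replace this: those symmetries only pair singularities as $t^*\mapsto \conj{t^*}$ and $t^*\mapsto -\conj{t^*}$, so they are perfectly consistent with a quadruple of boundary singularities with $\Re t^*\neq 0$. Indeed the quadrature has other singularity sources — e.g.\ $\vabs{q}\to\infty$ produces simple poles of $q(t)$ located at $\Im t=\mp\pi\sqrt{2/21}\approx 0.97$ with nonzero real part — and only the explicit path-by-path computation shows they all lie strictly outside $\ol{\Pi_A}$.

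There is also a concrete misidentification of points that propagates through your second paragraph. With $x=q=\cos(\la/2)$, the collision $\la=\pi$ is $x=0$, the saddle $\la=0$ is $x=1$ (never reached in finite complex time since $\int^q_1 f$ diverges), and $x=a_+$ is the turning point $q(0)=a_+$ where $\La_h=0$ at $t=0$. The square-root branch point of the integrand at $x=a_+$ is \emph{not} a singularity of $\s(t)$: there $(t-t^*)^2$ is an analytic invertible function of $q$, so $q(t)$ is regular. Consequently $A=\int_0^{a_+}f\,dx$ is the modulus of the (purely imaginary) time needed to travel from the turning point to the collision, and the only visible singularities are collision points; there is no ``coincidence'' between the contribution of $x=a_+$ and that of $\la=\pi$ to verify, contrary to what you suggest. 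Your local analysis near $\la=\pi$ and the derivation of $\al_+^3=\frac12$ (modulo the branch bookkeeping for $(2+2\cos\la)^{3/2}$, which determines the sign) match the paper's Proposition on the classification of singularities, and the reflection argument for $t=-iA$ is fine.
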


%
% Notice that $A$ is a positive constant that can be computed with any degree of accuracy.
%Using mathematical software. we have obtained $A \approx 0.177744$.
%Moreover, following the same arguments from the proof of Theorem~\ref{theorem:singularities}, see Section~\ref{section:proofA-singularities}, we obtain the next technical result:
%\textcolor{red}{Argumentacio de que $\La_h(u)=0$ nomes per $u=0$ a $\Pi_A$. Per despres poder definir be els dominis outer.}
%

We can also describe the zeroes of $\La_h(t)$ in $\ol{\Pi_A}$. 

% This analysis is not fundamental to analyze the splitting of separatrices. However, since the approach to perform such analysis 

\begin{proposition}\label{proposition:domainSeparatrix}
	Consider the real-analytic time parametrization $\s(t)=(\la_h(t),\La_h(t))$ and the domain $\Pi_A$ defined in~\eqref{eq:separatrixParametrization}
	and \eqref{def:PiA} respectively.
	Then, $\La_h(t)$ has only one zero in $\ol{\Pi_A}$ at $t=0$.
%	\[
%	t=0 \h \text{ and } \h t=\pm B \pm iA,
%	\]
%	where $A$ is the constant given in~\eqref{def:integralA} and
%	%$B$ is a fixed positive real constant.
%	\begin{equation}\label{def:constantB}
%		B= \int_{-1}^{0} \frac{1}{1-x}\sqrt\frac{x}{
%			3(x+1)(x-a_+)(x-a_-)}  dx
%		\approx 0.875896.
%	\end{equation}
\end{proposition}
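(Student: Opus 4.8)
The plan is to reduce the assertion to locating, inside $\ol{\Pi_A}$, the preimages under $\s$ of the zeros of the (complexified) energy level $\{\La=0\}$, and then to read those preimages off from the quadrature representation of $\s$ used in the proof of Theorem~\ref{theorem:singularities}. I would begin by recording the symmetries: \eqref{eq:sistemODEs} is reversible under $(\la,\La)\mapsto(\la,-\La)$ and $V$ is even, so the homoclinic orbit through the fixed point $\s(0)=(\la_0,0)$ of this reversor satisfies $\la_h(-t)=\la_h(t)$, $\La_h(-t)=-\La_h(t)$, and, by real-analyticity, $\la_h(\bar t)=\ol{\la_h(t)}$, $\La_h(\bar t)=\ol{\La_h(t)}$; in particular $\La_h(0)=0$ and the zero set of $\La_h$ in $\ol{\Pi_A}$ is invariant under $t\mapsto -t$ and $t\mapsto\bar t$. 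Since $H_{\pend}$ is a first integral with $H_{\pend}(\s(t))=H_{\pend}(0,0)=-\tfrac12$, analytic continuation gives $\tfrac32\La_h(t)^2=\wt V(\la_h(t))$ for $t\in\ol{\Pi_A}\setminus\{\pm iA\}$, with $\wt V(\la):=V(\la)+\tfrac12$. In the variable $x=\cos(\la/2)$ of Section~\ref{subsection:singularities} one computes $\wt V=\frac{2(1-x)(x-a_+)(x-a_-)}{x}$ with $a_\pm=-\tfrac12\pm\tfrac{\sqrt2}{2}$, so, setting $x_h(t):=\cos(\la_h(t)/2)$,
\[
\La_h(t)=0 \quad\Longleftrightarrow\quad x_h(t)\in\{1,a_+,a_-\}.
\]
Here $x=1$ corresponds to the saddle $\la=0$, $x=a_+$ to the turning point $\la=\la_0$, and $x=a_-$ to a pair of non-real $\la$-values.

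It then remains to see that, on $\ol{\Pi_A}$, $x_h$ takes the value $a_+$ only at $t=0$ and never takes the values $1$ or $a_-$. For this I would invoke the description behind Theorem~\ref{theorem:singularities}: $t\mapsto x_h(t)$ is the inverse of the branch of
\[
\Psi(x)=\frac13\int_{a_+}^{x}\frac{1}{1-x'}\sqrt{\frac{3x'}{(x'+1)(x'-a_+)(x'-a_-)}}\,dx'
\]
used there, and $\Psi$ maps a certain domain $\Omega$ (a slit domain for this multivalued integral) conformally onto $\Pi_A$, with $\Psi(a_+)=0$ attained at the slit tip, the other slit end $x=1$ corresponding to $\Re t\to\pm\infty$, the two boundary accesses to $x=0$ corresponding to the singular points $t=\pm iA$, and $x=a_-$ lying outside $\ol\Omega$ (a branch point separated from $\Omega$ by a cut). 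Hence the displayed equivalence holds, for $t\in\ol{\Pi_A}\setminus\{\pm iA\}$, only at $t=0$, where indeed $\La_h(0)=0$; this zero is simple because $\dot\La_h(0)=-\tfrac13\sin\la_0\bigl(1-(2+2\cos\la_0)^{-3/2}\bigr)\neq0$ for $\la_0\in(\tfrac23\pi,\pi)$.

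The behaviour on the coordinate axes is elementary and serves as a check: on the real axis the phase portrait of \eqref{eq:sistemODEs} (Figure~\ref{fig:separatrix}) shows $\La_h$ changes sign only at the turning point; on the imaginary axis the symmetries force $\la_h(is)$ to be real and $\La_h(is)$ purely imaginary, and since $\s$ can reach neither the equilibrium nor the collision $\la=\pi$ at a finite complex time (and cannot return to $\s(0)$, by a Liouville/periodicity argument) one gets $\la_h(is)\in(\la_0,\pi)$ and hence $\wt V(\la_h(is))<0$ and $\La_h(is)\neq0$ for $0<s<A$. The real obstacle is excluding \emph{off-axis} zeros of $\La_h$ in $\Pi_A$, and this is precisely where one needs the global, injective character of the separatrix map $\Psi^{-1}:\Pi_A\to\Omega$ — i.e.\ that $x_h$ does not revisit $a_+$ and never reaches $a_-$ or $1$ — established alongside Theorem~\ref{theorem:singularities}. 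An alternative that avoids the full conformal picture would be an argument-principle count of zeros of $\La_h$ on the quarter-strip $\{\Re t>0,\ 0<\Im t<A\}$, using the $(t-iA)^{-1/3}$-type behaviour at the corner $iA$, the exponential decay $\La_h(t)=\OO\bigl(e^{-\sqrt{21/8}\,\Re t}\bigr)$ as $\Re t\to+\infty$, and a bound for $\La_h$ on the edge $\Im t=A$.
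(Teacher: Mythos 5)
Your reduction is the same as the paper's: the energy relation gives $\tfrac32\La_h^2=V(\la_h)+\tfrac12=\tfrac{2}{q}(1-q)(q-a_+)(q-a_-)$ with $q=\cos(\la_h/2)$, so zeros of $\La_h$ in $\ol{\Pi_A}$ can only occur where $q(t)\in\{1,a_+,a_-\}$, the value $q=1$ is excluded because the quadrature integral $\int^q_1\fh\,d\g$ diverges (the saddle is reached only as $|\Re t|\to\infty$), and $q=a_\pm$ are regular points of $q(t)$. Up to that point the proposal is sound.

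The gap is in the step you yourself flag as ``the real obstacle'': you assert that $q(t)$ attains $a_+$ in $\ol{\Pi_A}$ only at $t=0$ and never attains $a_-$, and you justify this by appealing to a conformal bijection $\Psi:\Omega\to\Pi_A$ from a slit domain, claimed to be ``established alongside Theorem~\ref{theorem:singularities}''. No such statement is proved there. Theorem~\ref{theorem:singularities} (via Propositions~\ref{proposition:singularitiesSeparatrix} and~\ref{proposition:camins}) only locates the \emph{singularities} of $q(t)$ in $\ol{\Pi_A}$, i.e.\ the visible endpoints of paths on $\mathscr{R}_f$ terminating at $q^*=0$ or $q^*=\infty$; it says nothing about injectivity of $q$ on $\Pi_A$ or about which regular values are omitted. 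Since $G(q)=\int_{a_+}^{q}\fh\,d\g$ is multivalued, the set of times $t$ with $q(t)=a_\pm$ is a priori an infinite list indexed by homotopy classes of paths, and one must actually bound all of them. This is exactly what the paper does in Section~\ref{subsection:propZeroes}: it reuses the quantitative path estimates of Lemmas~\ref{lemma:caminsPol}--\ref{lemma:caminsBranca1c} to show that every path ending at $a_-$, and every path to $a_+$ that crosses $\reals\setminus[0,1]$, produces $|\Im t^*|\geq A$ with the value $A$ attained only non-visibly, while the remaining paths to $a_+$ (loops around the branch point inside $(0,1)$ and $\complexs\setminus\reals$) all integrate to $t^*=0$ by Cauchy's theorem. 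Your alternative argument-principle sketch could in principle replace this, but it is not carried out (in particular you would need genuine upper bounds for the winding of $\La_h$ along $\Im t=A$ near the cube-root singularity at $iA$, which is not supplied). As written, the proposal assumes the conclusion at its only nontrivial point.
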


%Notice that the symmetry of the zeroes of $\La_h(t)$ comes from the real-analyticity of the parametrization $\s(t)$ and the reversibility with respect to the involution $\Phi(\la,\La)=(-\la,\La)$.

We can expand the region of analyticity of the time parametrization $\s$.

\begin{corollary}
% Consider the real-analytic time parametrization $\s(t)$ in~\eqref{eq:separatrixParametrization}.
%
There exists $0<\beta<\frac{\pi}{2}$ such that the real-analytic time parametrization $\s(t)$ extends analytically to
\begin{equation}\label{def:dominiBow}
\begin{split}	
\Pi^{\mathrm{ext}}_{A, \beta} =& \claus{
t \in \complexs \st 
\vabs{\Im t} < \tan \beta \, \Re t + A}
\cup \\
&\claus{
t \in \complexs \st 
\vabs{\Im t} < -\tan \beta \, \Re t + A}.
\end{split}
\end{equation}
(See Figure~\ref{fig:dominiBow}). Moreover, 
\begin{enumerate}
\item $\s$ has only two singularities on $\partial \Pi^{\mathrm{ext}}_{A, \beta}$ at 
$t=\pm iA$.
\item $\La_h$ has only one zero in the closure of  $\Pi^{\mathrm{ext}}_{A, \beta}$ at 
$t=0$.
\end{enumerate}
\end{corollary}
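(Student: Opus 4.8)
\emph{Proof strategy.} The plan is to cover the bow-tie domain $\Pi^{\mathrm{ext}}_{A,\beta}$, for $\beta$ small, by finitely many open sets on each of which the analyticity of $\s$ is already available, and then to patch these pieces together into a single-valued analytic extension.

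I would first gather the extensions of $\s$ beyond the strip $\Pi_A$ that are at hand. (i) Theorem~\ref{theorem:singularities} itself provides, for some $\nu>0$, analyticity of $\s$ on the two slit disks $D_\pm=\{\,t\in\complexs:0<|t\mp iA|<\nu,\ \arg(t\mp iA)\in I_\pm\,\}$, where $I_+=(-\tfrac{3\pi}{2},\tfrac{\pi}{2})$ and $I_-=(-\tfrac{\pi}{2},\tfrac{3\pi}{2})$. (ii) Since $(0,0)$ is a hyperbolic saddle of~\eqref{eq:sistemODEs} with eigenvalues $\pm\rho_0$, $\rho_0=\sqrt{21/8}$ (see Theorem~\ref{theorem:HamiltonianScaling}), its one-dimensional stable and unstable manifolds are analytic and the dynamics restricted to them is analytically linearizable; the separatrix, being the stable orbit as $t\to+\infty$ and the unstable one as $t\to-\infty$, is then a convergent series in $e^{-\rho_0 t}$, resp.\ $e^{\rho_0 t}$, on those tails, so $\s$ extends analytically to half-planes $R_\pm=\{\pm\Re t>T_0\}$ for some $T_0>0$. (iii) By Theorem~\ref{theorem:singularities} the only singularities of $\s$ on $\partial\Pi_A$ are $\pm iA$; hence, by compactness, $\s$ is analytic on an open neighborhood $\mathcal N_0$ of $\partial\Pi_A\cap\{|\Re t|\le T_0+1\}\setminus(D_+\cup D_-)$, and in particular there is $\delta_0>0$ with $\{\,|\,|\Im t|-A\,|<\delta_0,\ \nu/2\le|\Re t|\le T_0+1\,\}\subset\mathcal N_0$.

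Next I would fix $\beta\in(0,\pi/3)$ with $\tan\beta\,(T_0+1)<\delta_0$ and verify the inclusion $\Pi^{\mathrm{ext}}_{A,\beta}\subset\Pi_A\cup R_+\cup R_-\cup D_+\cup D_-\cup\mathcal N_0$ by a short case analysis on $t=u+iv$. Using the symmetries $t\mapsto-t$ (from $\la_h$ even, $\La_h$ odd, which follow from the time-reversal symmetry $(\la,\La,t)\mapsto(\la,-\La,-t)$ of~\eqref{eq:sistemODEs} and $\s(0)=(\la_0,0)$) and $t\mapsto\bar t$ (real-analyticity of $\s$), one may assume $u,v\ge0$; then $v<A$ gives $t\in\Pi_A$, $u>T_0$ gives $t\in R_+$, the defining inequality $v<\tan\beta\,u+A$ gives $|t-iA|<u\sec\beta<\nu$ and $\arg(t-iA)\in[0,\beta)\subset I_+$ (so $t\in D_+$) when $u<\nu\cos\beta$, and $|\Im t|-A<\tan\beta\,(T_0+1)<\delta_0$ (so $t\in\mathcal N_0$) when $\nu\cos\beta\le u\le T_0$. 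Since each of $R_\pm$, $D_\pm$, $\mathcal N_0$ is connected and meets $\Pi_A$ in a nonempty connected open set, each carries the analytic continuation of the same germ as $\s$, so the pieces agree on overlaps and $\s$ extends analytically to their union; equivalently, $\Pi^{\mathrm{ext}}_{A,\beta}$ is star-shaped with respect to $0$, hence simply connected, and $\s$ continues analytically along every path in it, so the monodromy theorem yields the extension.

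For claim~(1), I would check that every $t^\ast\in\partial\Pi^{\mathrm{ext}}_{A,\beta}\setminus\{\pm iA\}$ lies strictly inside one of $R_\pm$, $D_\pm$, $\mathcal N_0$ (the same case analysis applied to the four boundary rays), so $\s$ is analytic at $t^\ast$; and $\pm iA$ are genuine branch points by~\eqref{eq:homoclinicaSingularities}. For claim~(2), Proposition~\ref{proposition:domainSeparatrix} already gives that $t=0$ is the only zero of $\La_h$ in $\overline{\Pi_A}$, while on $\overline{\Pi^{\mathrm{ext}}_{A,\beta}}\setminus\overline{\Pi_A}$ one has $\La_h\neq0$: on $R_\pm$, after enlarging $T_0$, $\La_h(t)=c\,e^{\mp\rho_0 t}\bigl(1+\OO(e^{-\rho_0|\Re t|})\bigr)$ with $c\neq0$, since the hyperbolic eigenvectors of~\eqref{eq:sistemODEs} have nonzero $\La$-component and $\s$ is a nonconstant orbit; on $D_\pm$, after shrinking $\nu$, $\La_h(t)=(t\mp iA)^{-1/3}\bigl(-\tfrac{2\al_\pm}{3}+\OO((t\mp iA)^{2/3})\bigr)\neq0$ by~\eqref{eq:homoclinicaSingularities}; and on $\mathcal N_0$, $\La_h$ is nonzero by continuity and compactness from $\partial\Pi_A$, where $\La_h$ does not vanish off $\{\pm iA\}$, after shrinking $\delta_0$ and hence $\beta$ if necessary. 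The step I expect to be most delicate is~(ii) --- establishing that $\s$ truly extends to the full half-planes $R_\pm$ via the analytic invariant manifolds of the saddle, which is precisely what allows $\Pi^{\mathrm{ext}}_{A,\beta}$ to keep widening as $|\Re t|\to\infty$; everything else reduces to elementary bookkeeping (making a single $\beta$ satisfy $\tan\beta\,(T_0+1)<\delta_0$ and $\nu\cos\beta\ge\nu/2$) together with the expansions already supplied by Theorem~\ref{theorem:singularities} and Proposition~\ref{proposition:domainSeparatrix}.
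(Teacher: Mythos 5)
Your proof is correct and follows essentially the same route as the paper's: the same three-piece decomposition (the slit disks at $\pm iA$ supplied by Theorem~\ref{theorem:singularities}, the far field $\{|\Re t|>T\}$, and a finite-subcover argument yielding a uniform $\eta>0$ of extra analyticity along the compact remainder of $\partial\Pi_A$), assembled with the same choice of slope $\tan\beta$. The only substantive differences are that where the paper simply cites \cite{Fon95} for analyticity in $\{|\Re t|>T\}$ you rederive it via analytic linearization of the flow on the one-dimensional invariant manifolds of the saddle --- a correct, self-contained replacement --- and that you carry out Item 2 explicitly (nonvanishing of $\La_h$ on each of the three pieces via the asymptotics at infinity, the expansion \eqref{eq:homoclinicaSingularities}, and Proposition~\ref{proposition:domainSeparatrix}) where the paper only remarks that it ``follows the same lines''.
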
	

% \textcolor{red}{Revisar:}

\begin{proof}
By~\cite{Fon95}, there exists $T>0$ such that $\s(t)$ is analytic in
$
\claus{\vabs{\Re t} > T}.
$
Moreover, applying Theorem~\ref{theorem:singularities},  $\s(t)$ has two branching points at $t=\pm iA$ and can be expressed as in~\eqref{eq:homoclinicaSingularities} in the domains
\[
\textstyle
D_1= \claus{\vabs{t- iA}<\nu, \,
\arg(t -iA) \in \paren{-\frac{3\pi}{2},\frac{\pi}{2}}}\bigcup \claus{\vabs{t+ iA}<\nu, \, \arg(t +iA) \in \paren{-\frac{\pi}{2},\frac{3\pi}{2}}},
\]
for some $\nu>0$. This implies that the only singularities in $D_1$ are at $t=\pm iA$.

Thus, we only need to check Item 1 in
\[
\left(\Pi^{\mathrm{ext}}_{A, \beta} \cap 
\claus{\vabs{\Re t} \leq T}\right)\setminus D_1.
\]
To this end, note that, by Theorem~\ref{theorem:singularities},  $\s(t)$ is analytic in the compact set
$
D_2 = \paren{\ol{\Pi_A} \cap
\claus{\vabs{\Re t} \leq T} }
\setminus D_1
$. Therefore, there exists a cover of $\partial D_2$ by open balls centered in $\partial D_2$ where $\s(t)$ is analytic.
Moreover, since $\partial D_2$ is compact, it has a finite subcover.
This implies that there exists $\eta>0$ such that we can extend the analyticity domain of $\s(t)$ to
$
\paren{{\Pi_{A+\eta}}\cap
\claus{\vabs{\Re t} \leq T} } \setminus D_1
$.
In particular, taking $\beta=\arctan ({\eta}/{T})$,
$\s(t)$ is analytic in 
$
({\Pi_{A,\beta}^{\mathrm{ext}} \cap
	\claus{\vabs{\Re t} \leq T} }) \setminus D_1.
$
% Then, since $D_1$ has only two singularities at $t=\pm iA$ which also belong to $\partial \Pi_{A,\beta}^{\mathrm{ext}}$, we have that $\s(t)$ extends analytically to ${\Pi_{A,\beta}^{\mathrm{ext}} \cap	\claus{\vabs{\Re t} \leq T} }$ and has only the singularities $\pm iA$ in its boundary.

The prove of Item 2 follows the same lines.
\end{proof}

\begin{figure} 
	\centering
	\begin{overpic}[scale=0.65]{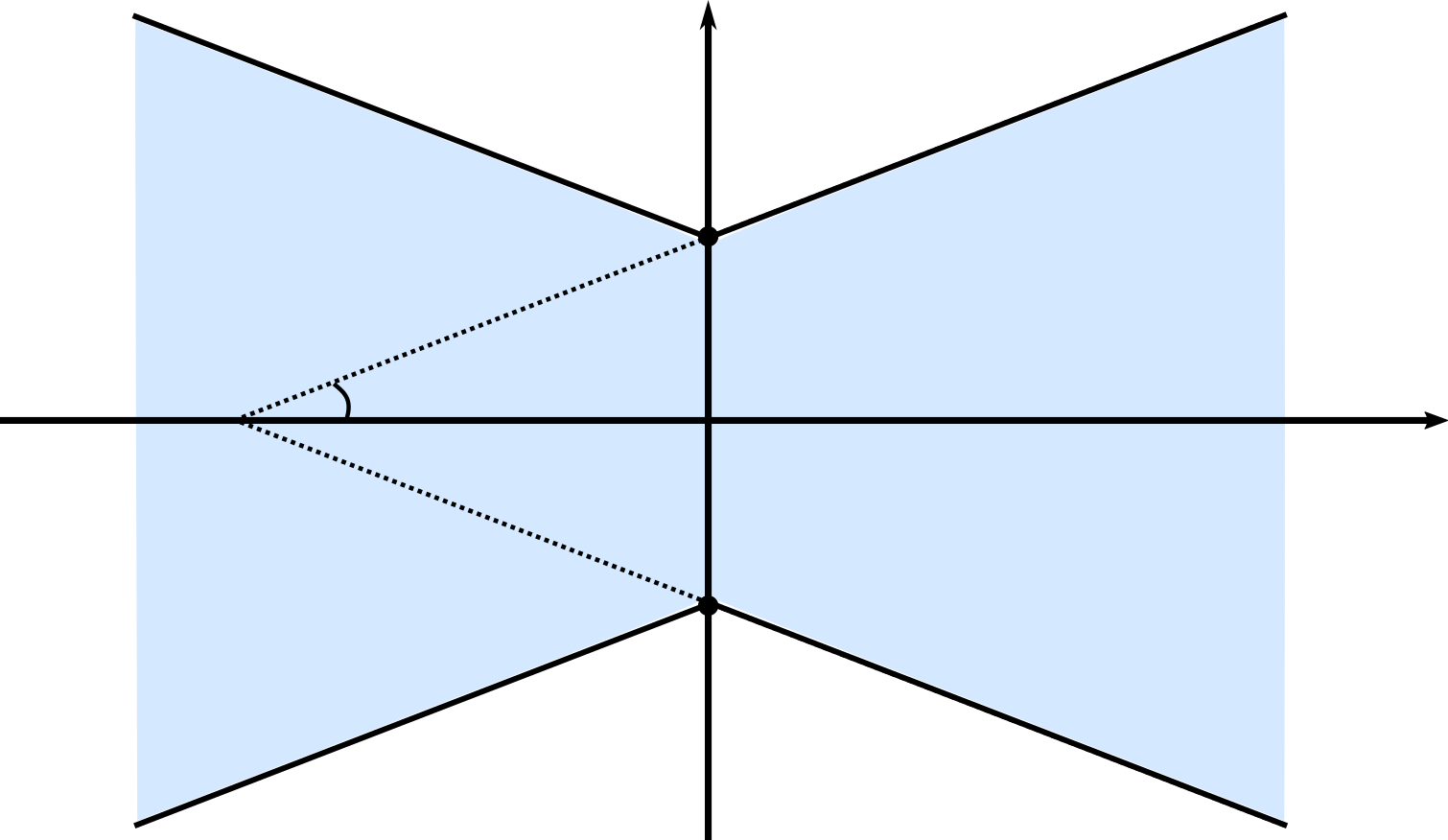}
		\put(101,27.5){{$\Re t$}}
		\put(45.5,59){{$\Im t$}}
		\put(28,30){{$\beta$}}
		\put(43,45){{$iA$}}
		%\put(40,10){{$-iA$}}
	\end{overpic}
	\bigskip
	\caption{Representation of the domain $\Pi_{A,\beta}^{\mathrm{ext}}$ in~\eqref{def:dominiBow}.}
	\label{fig:dominiBow}
\end{figure}

\subsection{Derivation of the inner equation} 	
\label{subsection:innerDerivation}
%\section{Heursitics of the perturbed manifolds} \label{section:outer}

% In this section we present the so-called inner equation associated to the Hamiltonian $H$ in \eqref{def:hamiltonianScaling}.
%
The inner equation  associated to the Hamiltonian $H$ in \eqref{def:hamiltonianScaling} describes the dominant behavior of suitable complex parametrizations of the invariant manifolds  close to (one of) the singularities $\pm iA$ of the unperturbed separatrix. %See~\cite{Bal06, BalSea08}.
%The inner equation provides good approximations of the invariant manifolds near the singularities of the unperturbed problem: in our case $\pm iA$ as given in Theorem~\ref{theorem:singularities}.
Let us explain how this equation arises from the Hamiltonian $H$. 
%
%The study of this inner equation will allow us to capture the asymptotic first order of the difference between the invariant manifolds of critical point $\Ltres(\de)$. See Proposition~\ref{proposition:HamiltonianScaling}.
%
%First, we consider the critical points of Hamiltonian $H$ in~\eqref{def:hamiltonianScaling}.
%fixed point $L_3$ on the set of coordinates $(\la,\La,x,y)$ introduced in~\eqref{def:changeScaling}.
%
%\begin{lemma}
%	For $\de\geq 0$ small enough, the Hamiltonian system associated with $H$, see~\eqref{def:hamiltonianScaling}, has a critical point at
%	\begin{equation}\label{def:PfixPoint}
%		P^*(\de)= \Big(\de^2 \la^*(\de), \de^2 \La^*(\de),
%		\de^3 x^*(\de), \de^3 y^*(\de) \Big) ,
%	\end{equation}
%	with
%	$\vabs{\la^*(\de)},  \vabs{\La^*(\de)} , \vabs{x^*(\de)},  \vabs{y^*(\de)} \leq C
%	$,
%	for some constant $C>0$ independent of $\de$.
%\end{lemma}
%The critical point $P^*(\de)$ is just the $L_3$ point expressed on the set of coordinates $(\la,\La,x,y)$, introduced in~\eqref{def:changeScaling}.
%
%The first step is to study the stable and unstable invariant manifolds of $\Ltres(\de)$ for the complete system given by Hamiltonian $H$ in~\eqref{def:hamiltonianScaling}.
%

First, we consider the translation of the equilibrium point $\Ltres(\de)$ to the origin, 
\begin{equation}\label{def:changeEqui}
	\phi_{\equi}:(\la,\La,x,y) \mapsto (\la,\La,x,y) + \Ltres(\de).
\end{equation}
Second, to measure the distance of the stable and unstable manifolds, we parameterize them as graphs.
In the unperturbed case, we know that the invariant manifolds coincide along the separatrix $(\la_h(t),\La_h(t),0,0)$.
Since we need to involve, in some sense, the time; we consider as a new independent variable $u$ such that $\la_h(u)=\la$. 
Notice that $\dot{u}=1$ for the unperturbed system.
To this end, we consider the symplectic change of coordinates 
\begin{equation}\label{def:changeOuter}
\phi_{\out}:(u,w,\xp,\yp) \to (\la,\La,\xh,\yh),
\qquad 
{\la}= \la_h(u), 
\quad
 {\La}= \La_h(u) - \frac{w}{3\La_h(u)},
\end{equation}
where $\s=(\la_h,\La_h)$ is the parametrization of the separatrix studied in Theorem~\ref{theorem:singularities}. 
%This change is symplectic and therefore the system given by $H \circ \phi_h^{-1}$ is also Hamiltonian.
%
Notice that, except for $u=0$ (see Proposition~\ref{proposition:domainSeparatrix}), the perturbed manifolds can be expressed as a graph and the change \eqref{def:changeOuter} is well defined. 

The Hamiltonian $H$, written in these coordinates and after the translation $\phi_{\equi}$ in~\eqref{def:changeEqui}, becomes
%with respect to the symplectic form ${d u \wedge d w} + i \h {d\xp \wedge d\yp}$ with Hamiltonian
\begin{equation}\label{def:hamiltonianOuter}
H^{\out} =  H^{\out}_0 +  H^{\out}_1,
\end{equation}
with 
\begin{equation*}%\label{def:hamiltonianOuterSplit}
\begin{split}
H^{\out}_0(w,x,y) &= 
w + \frac{xy}{\de^2}, \qquad
%- \frac{w^2}{6 \La_h^2(u)} 
%
H^{\out}_1 =
{H} \circ \paren{\phi_{\equi} \circ \phi_{\out}} -H^{\out}_0.
\end{split}
\end{equation*}
%After all these consideration it seems a good strategy to 
Since we look for the perturbed manifolds as graphs with respect to $u$, we consider
 parametrizations
\[
\zdOut(u) = \left(\wdOut(u),\xdOut(u),\ydOut(u)\right)^T,
\quad \text{ for } \diamond=\unstable,\stable,
\]
such that the unstable  and stable invariant manifolds of $H$ associated to $\Ltres(\de)$ can be expressed as
\begin{equation*}%\label{eq:invariantManifoldsExpression}
W^{\diamond}= \left\{ \paren{\la_h(u), \La_h(u)-\frac{\wdOut(u)}{3\La_h(u)}, \xdOut(u), \ydOut(u)} + \Ltres(\de)\right\}, \quad \text{for } \diamond=\unstable,\stable,
\end{equation*}
%\footnote{In these coordinates, the unperturbed separatrix is  $\{(w,x,y)=(0,0,0)\}$.}
with $u$ belonging to appropriate domains.
The proof of existence of $\zuOut$ and $\zsOut$ defined in appropriate (complex) domains requires a significant amount of technicalities. 
We present this result in the companion paper, see~\cite{articleOuter}.

%
%Therefore, we look for solutions satisfying  the asymptotic conditions 
%\begin{equation}\label{eq:asymptoticConditionsOuter}
%\begin{split}
%\lim_{\Re(u) \to -\infty} \left(\frac{\wuOut(u)}{\La_h(u)},\xuOut(u), \yuOut(u) \right) = %0, \hh
%\lim_{\Re(u) \to +\infty} \left(\frac{\wsOut(u)}{\La_h(u)},\xsOut(u), \ysOut(u) \right) = 0.
%\end{split}
%\end{equation}
%
%Notice that, by Proposition~\ref{proposition:domainSeparatrix}, the invariant manifolds can not be written as a graph globally, as the change of variables $\phi_{\out}$ is not defined for  $u= 0$.

%The difference $\dzOut=\zuOut-\zsOut$ is exponentially small in $\de$. 
%
%To obtain an asymptotic formula for $\dzOut=\zuOut-\zsOut$, we need to analyze $\zuOut$ and $\zsOut$ in a suitable domain of the complex plane. 
%
%These domains must contain a segment of the real line and must intersect a neighborhood sufficiently close to the singularities $\pm iA$ of $(\la_h(u),\La_h(u))$, see~\cite{BFGS12}. 

Due to the slow-fast character of the system, to capture the asymptotic first order of the difference $\dzOut = \zuOut-\zsOut$, we need to give the main terms of this difference close to the singularities, concretely, up to distance of order $\de^2$. 
%
%To compute the splitting of separatrices of this problem, we must detect the asymptotic first order of the difference $\dzOut$.
%where  $\zuOut$ and $\zsOut$ are the graphs of the unstable and unstable manifolds respectively.%, introduced on Proposition~\ref{proposition:existenceOuter}.
%As we mentioned before, our study requires the knowledge of the asymptotics of the parameterizations $\zuOut$ and $\zsOut$, introduced in Proposition~\ref{proposition:existenceOuter}, for $u$ near the singularities $\pm iA$.
%As we mentioned before, the difference $\dzOut(u)$ is exponentially small for real values of $u$.
%
%The main idea is that functions which are of algebraic order with respect to $\de$ near the singularities $\pm iA$, are exponentially small
%for real values of $u$. 
%
%As we mentioned before, the difference $\dzOut$ is exponentially small for real values of $u$. Thus, we want to compute the difference and capture the asymptotic first
%order close to the singularities.
%%, concretely, up to distance of order $\de^2$.
%
To this end, we derive the inner equation, see~\cite{Bal06, BalSea08}, which  contains the first order of the Hamiltonian $H^{\out}$ (see~\eqref{def:hamiltonianOuter}) close to (one of) the singularities and is independent of the small parameter $\de$.
That is, we look, for instance, for a Hamiltonian which is a good approximations of $H^{\out}$ in a neighborhood of $u=iA$. 
Here, we focus on a domain near the singularity $u=iA$, but a similar analysys  can be done near $u=-iA$. 

Since we need to control the difference up to distance of order $\de^2$ of the singularity $u=iA$, we consider $U$ such that
\[
u-iA = \de^2 U.
\] 
Notice that we can take $\vabs{U}\gg 1$ independent of $\de$.
Close to the singularity $u=iA$, the homoclinic connection is not the dominant term of the perturbed invariant manifolds anymore.
Let us be more precise, take $\La=\La_h(u)-\frac{w}{3\La_h(u)}$, and recall that, by Theorem~\ref{theorem:singularities}, we have
\[
\La_h(u) \sim -\frac{2\al_+}{3} (u-iA)^{-\frac{1}{3}}, \quad
\text{for}  \quad \vabs{u-iA} \leq \nu,
\]
or equivalently,
\[
\La_h(iA + \de^2 U) \sim 
-\frac{2\al_+}{3 \de^{\frac{2}{3}} U^{\frac{1}{3}}} \sim
\OO\paren{\frac{1}{\de^{\frac{2}{3}} U^{\frac{1}{3}} }}.
%\text{ for } \quad U \sim \OO(1).
\]
Then,
\begin{equation}\label{eq:approxInnerw}
\wdOut(iA + \de^2 U) \sim 
3 \La_h^2(iA + \de^2 U) \sim
\OO\paren{\frac{1}{\de^{\frac{4}{3}} U^{\frac{2}{3}}}}.
\end{equation}
In addition,  the unperturbed Hamiltonian must have all of its terms of the same order. Therefore,
\begin{equation}\label{eq:approxInnerxy}
\frac{\xdOut(iA + \de^2U) \, \ydOut(iA+\de^2 U)}{\de^2}
\sim \OO\paren{\frac{1}{\de^{\frac{4}{3}}
		U^{\frac{2}{3}}}}.
\end{equation}
By symmetry, $\xdOut(iA + \de^2U)$, $\ydOut(iA + \de^2U) \sim \OO(\de^{\frac{1}{3}} U\mefr{1}{3})$.

To avoid the dependence on the inner equation with respect to $\al_+$ (see Theorem~\ref{theorem:singularities}) and to keep the symplectic character, we perform the scaling
%For the other variables $(w,x,y)$, we consider scaled coordinates $(W,X,Y)$, so that the components of the unperturbed problem, $H^{\out}_0$, are all of the same order with respect to $\de$. That is
%\[
%H^{\out}_0 \sim \de^{p}
%\big( W + XY + \OO(1) \big),
%\]
%for a certain $p\in \reals$.
%% 
%Additionally, we set the scaling of coordinates to be symplectic with respect to the normal form ${dU \wedge dW} + i {dX \wedge dY}$.
\begin{equation}\label{def:changeInner}
\phi_{\Inner}:(U,W,X,Y) \to (u,w,x,y),
\end{equation}
given by
\begin{equation*}
U=\frac{u-iA}{\de^2}, 
\qquad
W=\de^{\frac{4}{3}}\frac{w}{2 \al_+^2}, 
\qquad
X=\frac{x}{\de^{\frac{1}{3}} \sqrt{2} \al_+ }, \qquad
Y=\frac{y}{\de^{\frac{1}{3}}\sqrt{2} {\al_+}},
%
%T = \frac{\tau}{\de^2}.
\end{equation*}
and the time scaling $\tau=\de^2 t$. 
%where  $\al_+ \in \complexs$ was introduced on Theorem~\ref{theorem:singularities}.
%This change has constant determinant and therefore the new system is also Hamiltonian.
%
The heuristics above lead us to assume that $(U,W,X,Y) = \OO(1)$ when $u-iA=\OO(\de^2)$.
In the following proposition, by applying the change of coordinates $\phi_{\Inner}$,  we obtain the inner equation of the Hamiltonian $H^{\out}$.

\begin{proposition}\label{proposition:innerDerivation}
%	Let us consider the inner change of variables $\phi_{\Inner}$ as defined on~\eqref{def:changeInner} and
%	a scaling in time by a factor of ${\de^{-2}}$.
	The Hamiltonian equations associated to~\eqref{def:hamiltonianOuter} expressed in inner coordinates (see~\eqref{def:changeInner}) are Hamiltonian with respect to
	\begin{equation*}%\label{def:hamiltonianInnerComplete}
	H^{\Inner} = \HH + H_1^{\Inner},
	\end{equation*}
	where 
	\begin{equation}\label{def:hamiltonianInner}
	\HH(U,W,X,Y) =\left. H^{\Inner}(U,W,X,Y;\de)\right|_{\de=0} = W + XY + \KK(U,W,X,Y),
	\end{equation}
	with
	\begin{align}
	\KK(U,W,X,Y) &= 
	-\frac{3}{4}U^{\frac{2}{3}} W^2 
	- \frac{1}{3 U^{\frac{2}{3}}}
	\paren{\frac{1}{\sqrt{1+\JJ(U,W,X,Y)}} - 1 } \label{def:hamiltonianK}
	\end{align}
	and
	\begin{equation}\label{def:hFunction}
	\begin{split}
	\JJ(U,W,X,Y) =& \,  
	\frac{4 W^2}{9 U^{\frac{2}{3}} } 
	-\frac{16 W}{27 U^{\frac{4}{3}}}  
	+\frac{16}{81 U^{2}}
	%\nonumber \\
	%
	+\frac{4(X+Y)}{9 U}
	\paren{W -\frac{2}{3 U^{\frac{2}{3}}}} \\[0.5em]
	&- \frac{4i(X-Y)}{3 U^{\frac{2}{3}}}
	-\frac{X^2+Y^2}{3 U^{\frac{4}{3}}}
	+\frac{10 XY}{9 U^{\frac{4}{3}}}. 	
	\end{split}
	\end{equation}
	Moreover, if 
	$\cttInnDerA^{-1} \leq \vabs{U} \leq \cttInnDerA$ and 
	$\vabs{(W,X,Y)} \leq \cttInnDerB$ 
	for some $\cttInnDerA>1$ and $0<\cttInnDerB<1$,
	we have that there exist $\cttInnDerC, \cttInnDerAA, \cttInnDerBB>0$ independent of $\de, \cttInnDerA, \cttInnDerB$ such that
	\[
	\vabs{H_1^{\Inner}(U,W,X,Y;\de)} \leq 
	\cttInnDerC \cttInnDerA^{\cttInnDerAA} \cttInnDerB^{\cttInnDerBB}\de^{\frac{4}{3}}.
	\] 
%	where $\cttInnDerC>0$ is a constant depending {potentially} on $\cttInnDerA$ and $\cttInnDerB$, i.e. there exists $\g_2, \g_3 \geq 0$ such that
%	$\cttInnDerC\leq \cttInnDerA^{\g_2} \cttInnDerB^{\g_3}$.
	%\[
	%\HH^{\Inner}(U,W,X,Y,\de) := \frac{\de^{4/3}}{2\al^2} \HH(\phi^{-1}_{\Inner}(U,W,X,Y);\de).
	%\]
	%such that $\Omega = \left(2\al^2 \de^{2/3} \right) \Omega_{\Inner}$
\end{proposition}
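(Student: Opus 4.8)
The plan is to prove the proposition by a single long computation, carefully tracking orders in $\de$: substitute the chain of changes of variables into $H$, isolate the part that survives the limit $\de\to0$ in the inner scaling, and estimate the remainder.

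First I would put $H^{\out}=H\circ(\phi_{\equi}\circ\phi_{\out})$ in a workable form, using the splitting $H=H_{\pend}+H_{\osc}+H_1$ from Theorem~\ref{theorem:HamiltonianScaling}. For the pendulum part one exploits that the separatrix lies on the level set $-\tfrac32\La_h^2+V(\la_h)\equiv -\tfrac12$ of $H_{\pend}$, which gives, up to an additive constant, $H_{\pend}\bigl(\la_h(u),\La_h(u)-\tfrac{w}{3\La_h(u)}\bigr)=w-\tfrac{w^2}{6\La_h(u)^2}$, the translation by $\Ltres(\de)$ contributing only $O(\de^2\La_h)+O(\de^4)$; the oscillator part gives $\tfrac{xy}{\de^2}+O(\de x,\de y)$. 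For $H_1$ in~\eqref{def:hamiltonianScalingH1} the crucial observation is that, inside $H_1^{\Poi}\circ\phi_{\sca}=H_1^{\pol}(r,\tht;\de^4)$ --- recall $H_1^{\pol}=\tfrac1\mu[\cdots]$ with $\mu=\de^4$, so the $\mu$ in front of the planet potential cancels --- the only genuinely singular term as $u\to iA$ is $-\bigl(r^2+2(1-\de^4)r\cos\tht+(1-\de^4)^2\bigr)^{-1/2}$, and that, combined with the $+(2+2\cos\la)^{-1/2}$ produced by $-V(\la)$, builds the $\JJ$-term. The remaining pieces --- the regular part of $H_1^{\pol}$, the $1-\cos\la$ part of $-V(\la)$, and $\de^{-4}F_{\pend}(\de^2\La)$ --- are analytic and bounded near $u=iA$ and will feed only into the remainder.

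Next I would pass to inner variables. The map $\phi_{\Inner}$ together with the time rescaling is conformally symplectic, so the transformed system is Hamiltonian with respect to $dU\wedge dW+i\,dX\wedge dY$, with Hamiltonian proportional to $H^{\out}\circ\phi_{\Inner}$. To perform the substitution one needs the local behaviour of $\la_h,\La_h$: the same analytic-continuation argument as in Theorem~\ref{theorem:singularities} gives $\la_h(iA+\de^2U)=\pi+3\al_+\de^{4/3}U^{2/3}+O(\de^{8/3})$ and $\La_h(iA+\de^2U)=-\tfrac{2\al_+}{3}\de^{-2/3}U^{-1/3}+O(\de^{2/3})$ uniformly on $\cttInnDerA^{-1}\le|U|\le\cttInnDerA$; one also needs the analyticity and explicit expansion of the Delaunay--Poincar\'e relations ($r=L^2(1-e\cos u)$, $\tht=f+g$, Kepler's equation, $e^{\pm ig}$ expressed through $X,Y$) with $L=1+\de^2\La$, $\eta=\de x$, $\xi=\de y$, to write $r$ and $\tht$ as functions of $(U,W,X,Y;\de)$. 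Collecting the $\de\to0$ limits: $w$ and $\tfrac{xy}{\de^2}$ give $W+XY$; using $\al_+^3=\tfrac12$, $-\tfrac{w^2}{6\La_h^2}$ gives $-\tfrac34U^{2/3}W^2$; and the planet potential together with the $(2+2\cos\la_h)^{-1/2}$ term gives $-\tfrac1{3U^{2/3}}\bigl((1+\JJ)^{-1/2}-1\bigr)$, where $1+\JJ$ is, by construction, the $\de=0$ value of the ratio of $r^2+2(1-\de^4)r\cos\tht+(1-\de^4)^2$ to its leading inner value $9\al_+^2\de^{8/3}U^{4/3}$; expanding that ratio through the Delaunay--Poincar\'e relations should yield exactly~\eqref{def:hFunction}. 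This establishes~\eqref{def:hamiltonianInner}--\eqref{def:hFunction}.

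Finally, for the remainder, $H_1^{\Inner}=H^{\Inner}-H^{\Inner}\vert_{\de=0}$; since the scalings were designed precisely so that the first correction is of order $\de^{4/3}$ (no $\de^{1/3},\de^{2/3},\de$ terms arise), $H_1^{\Inner}=O(\de^{4/3})$. On the set $\cttInnDerA^{-1}\le|U|\le\cttInnDerA$, $|(W,X,Y)|\le\cttInnDerB<1$ every function in the construction is analytic --- the only delicate point being that $(1+\JJ)^{-1/2}$ stays controlled, which holds because $|U|\asymp1$ and $|(W,X,Y)|<1$ keep $1+\JJ$ away from $0$ and the branch cut --- so a Cauchy estimate on each analytic piece, keeping track of the polynomial growth of the bounds for $r$, $\tht$, $\La_h$ and $w=2\al_+^2\de^{-4/3}W$ (which are $O(\cttInnDerA^{k})$, $O(\cttInnDerB^{k})$), gives $\vabs{H_1^{\Inner}}\le\cttInnDerC\,\cttInnDerA^{\cttInnDerAA}\cttInnDerB^{\cttInnDerBB}\de^{4/3}$ with $\cttInnDerC,\cttInnDerAA,\cttInnDerBB$ independent of $\de,\cttInnDerA,\cttInnDerB$. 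I expect the main obstacle to be the identification step: following the distance-to-collision $r^2+2(1-\de^4)r\cos\tht+(1-\de^4)^2$ through the whole chain $\phi_{\Inner}\to\phi_{\out}\to\phi_{\equi}\to\phi_{\sca}\to\phi_{\Poi}$, recognising that the corrections to $r$ and $\tht$ are of the \emph{same} inner order as the leading contribution so that $\JJ=O(1)$ and must be retained, and verifying that the bookkeeping collapses to the compact formula~\eqref{def:hFunction}; once the separatrix expansion is controlled uniformly in the annulus $\vabs{u-iA}\asymp\de^2$, the remainder estimate of the last step is routine.
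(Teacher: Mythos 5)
Your proposal is correct and follows essentially the same route as the paper: it splits off the pendulum part via the separatrix energy level to get $w-\tfrac{w^2}{6\La_h^2}$, isolates the planet‐collision term $-D[\de^4-1]^{-1/2}$ and the $(2+2\cos\la)^{-1/2}$ piece of $-V$ as the sources of $\KK$, expands the Delaunay–Poincar\'e relations in powers of $(\eta,\xi)$ (the paper's Lemma~\ref{lemma:seriesH1Poi}), composes with the inner scaling using the local singularity expansion of $(\la_h,\La_h)$ (the paper's Lemmas~\ref{lemma:PsiMultipleChanges}--\ref{lemma:PhiMultipleChanges}), and bounds the remainder term by term. Your normalization $1+\JJ = D[\de^4-1]\circ\Phi/(9\al_+^2\de^{8/3}U^{4/3})|_{\de=0}$ is exactly the one the paper uses.
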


\begin{remark}\label{remark:innerComputationConjugats}
The change of coordinates \eqref{def:changeInner} allows us to study an approximation of the invariant manifolds $\zusOut(u)$ near the singularity $u=iA$. %Since $x=\conj{y}$ by its definition on~\eqref{def:changeOuter}, 
To obtain an approximation near $u=-iA$, one can proceed analogously by
\begin{equation*}\label{def:changeInnerNegatiu}
U=\frac{u+iA}{\de^2}, 
\qquad
W=\de^{\frac{4}{3}}\frac{w}{2 \al_-^2}, 
\qquad
X=\frac{x}{\de^{\frac{1}{3}} \sqrt{2} \al_-}, \qquad
Y=\frac{y}{\de^{\frac{1}{3}}\sqrt{2} \al_-},
\end{equation*}
where $\al_- = \conj{\al_+}$ (see Theorem \ref{theorem:singularities}). 
\end{remark}

\subsection{The solutions of the inner equation and their difference}
\label{subsection:innerComputations}
We devote this section to study two special solutions of the inner equation given by the Hamiltonian $\HH$ in~\eqref{def:hamiltonianInner}. 
%
%In particular, we study the solutions of the inner equation which approximate $\zuOut$ and $\zsOut$, as given by~\eqref{eq:invariantManifoldsExpression}, near the singularities, after the inner change of variables, $\phi_{\Inner}$. 
%
We introduce $Z=(W,X,Y)$ and the matrix
\begin{equation}\label{def:matrixAAA}
\AAA= \begin{pmatrix}
0 & 0 & 0 \\
0 & i & 0 \\
0 & 0 & -i
\end{pmatrix}.
\end{equation}
Then, the equation associated to the Hamiltonian $\HH$ can be written as
\begin{equation}\label{eq:systemEDOsInner}
\left\{ \begin{array}{l}
\dot{U} = 1 + g(U,Z),\\
\dot{Z} = \AAA Z + f(U,Z),
\end{array} \right.
\end{equation}
where 
$f = \paren{-\partial_U \KK, 
	i \partial_Y \KK, -i\partial_X \KK }^T$ 
and
$g = \partial_{W} \KK$.
%where $f^{\Inner}=(f^{\Inner}_1,f^{\Inner}_2,f^{\Inner}_3)$ and
%\begin{equation}\label{eq:definitionfgInner}
%\begin{split}
%g^{\Inner}(U,Z) = \Dx{\KK}{W}, \quad
%f_1^{\Inner}(U,Z) = -\Dx{\KK}{U}, \quad
%f_2^{\Inner}(U,Z) = i\Dx{\KK}{Y}, \quad 
%f_3^{\Inner}(U,Z) =-i\Dx{\KK}{X}.
%\end{split}
%\end{equation}

We look for solutions of this equation parametrized as graphs with respect to $U$,
namely we look for functions
\begin{equation*}%\label{def:graphInner}
\ZdInn(U) = \big(\WdInn(U),\XdInn(U),\YdInn(U)\big)^T,
\qquad
\text{for } \diamond=\unstable,\stable,
\end{equation*}
satisfying the invariance condition given by~\eqref{eq:systemEDOsInner}, that is
\begin{equation}\label{eq:invariantEquationInner}
\partial_U \ZdInn = 
\AAA \ZdInn + \RRR[\ZdInn],
\qquad
\text{for } \diamond=\unstable,\stable,
\end{equation}
where
\begin{equation}\label{def:operatorRRRInner}
\RRR[\varphi](U)= 
\frac{f(U,\varphi)- g(U,\varphi) \AAA \varphi }{1+g(U,\varphi)}.
% \begin{pmatrix}
%f_1(U,\ZInn) \\
%f_2(U,\ZInn)-i\XInn g(U,\ZInn) \\
%f_3(U,\ZInn)+i\YInn g(U,\ZInn)
%\end{pmatrix},
\end{equation}

In order to ``select'' the solutions we are interested in, we point out that, since we need some uniformity with respect to $\de$
and $U=\de^{-2}(u-iA)$, then $\Re U \to \pm \infty$ as $\de \to 0$, depending on the sign of $\Re U$.
Then, according to~\eqref{eq:approxInnerw} and~\eqref{eq:approxInnerxy}, we deduce that $(W,X,Y) \to 0$ as $\Re U \to \pm \infty$.
For that reason, we look for $\ZdInn$ satisfying the asymptotic conditions
\begin{equation}\label{eq:asymptoticConditionsInner}
\begin{split}
\lim_{\Re U \to -\infty} \ZuInn(U) = 0, 
\qquad 
\lim_{\Re U \to +\infty} \ZsInn(U) = 0. 
\end{split}
\end{equation}
%
%Therefore, applying the inner change of coordinates $\phi_{\Inner}$, $\ZuInn$ and $\ZsInn$ give the first order nearby the singularity $iA$ of the invariant manifolds  $W^\unstable$ and $W^\stable$, as given in~\eqref{eq:invariantManifoldsExpression}. 
%
In fact, for a fixed $\beta_0 \in \paren{0,\frac{\pi}{2}}$, we look for functions $\ZuInn$ and $\ZsInn$ satisfying~\eqref{eq:invariantEquationInner},
\eqref{eq:asymptoticConditionsInner}  defined in the domains
\begin{equation}\label{def:domainInnner}
\begin{split}
&\DuInn = \claus{ U \in \complexs \text{ : }
|\Im U | \geq \tan \beta_0 \,\Re U + \rhoInn}, 
\qquad 
\DsInn = -\DuInn,
\end{split}
\end{equation}
respectively, for some $\rhoInn>0$ big enough (see Figure~\ref{fig:dominiInnerUnstable}).
\begin{figure}[t] 
	\centering
	\vspace{5mm}
	\begin{overpic}[scale=0.8]{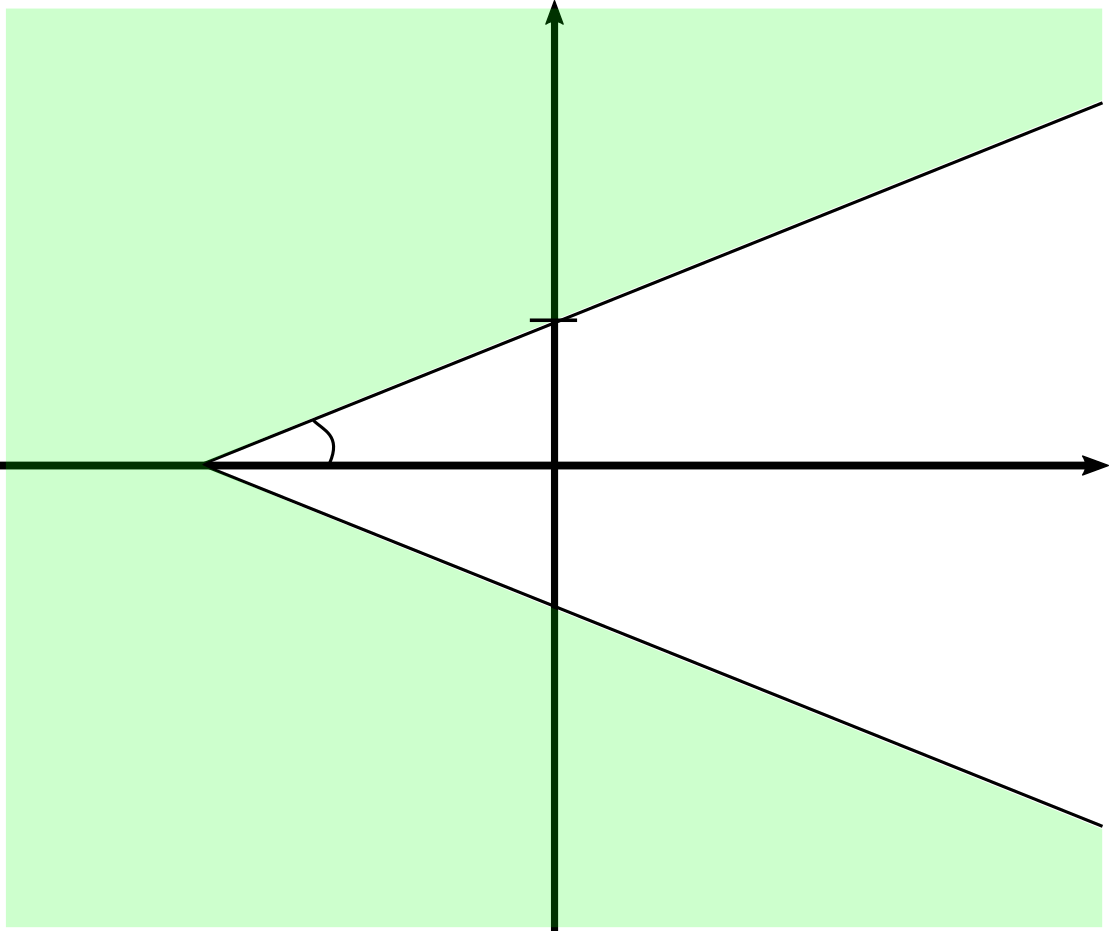}
		\put(10,60){$\DuInn$}
		\put(32.5,44){{$\beta_0$}}
		\put(45,55){{$\rhoInn$}}
		\put(102,41){$\Re U$}
		\put(47,87){$\Im U$}
	\end{overpic}
	\bigskip
	\caption{The inner domain, $\DuInn$, for the unstable case.}
	\label{fig:dominiInnerUnstable}
\end{figure} 
We analyze the  the difference $\DZInn=\ZuInn-\ZsInn$ in the overlapping domain
\begin{equation}\label{def:domainInnnerDiff}
\EInn = \DuInn \cap \DsInn \cap 
\claus{ U \in \complexs \st \Im U< 0}.
\end{equation}

%\begin{remark}
%	Let us remark that for $U \in \DdInn$, where $\diamond=\unstable,\stable$, the distance to $U=0$ in this domain is bounded and
%	\begin{equation}\label{eq:UBounds} 
%		\vabs{U} \geq {\rhoInn}{\cos(\beta_0)}.
%	\end{equation}
%\end{remark}

%On the following Theorem, we state the two main results of the document. In particular, we state the existence of the solutions of the inner equation associated with the unstable and stable manifold of $L_3$ nearby the singularities of the separatrix. We also state the asymptotic expression for the difference between these two solutions.

\begin{theorem}\label{theorem:innerComputations}
	There exist $\rhoInn_0, \cttInnExist>0$
	such that for any $\rhoInn\geq\rhoInn_0$,	
	the equation~\eqref{eq:invariantEquationInner} has analytic solutions
	$
	\ZdInn(U) =(\WdInn(U),\XdInn(U),\YdInn(U))^T, 
	$ for $U \in \DdInn$, $\diamond=\unstable,\stable$, satisfying
	\begin{equation}\label{result:innerExistence}
	| U^{\frac{8}{3}} \WdInn(U)| \leq \cttInnExist, \qquad
	| U^{\frac{4}{3}} \XdInn(U) | \leq \cttInnExist, \qquad
	| U^{\frac{4}{3}} \YdInn(U) | \leq \cttInnExist.
	%\text{for }	U \in \DdInn.
	\end{equation}
	In addition, there exist $\CInn \in \complexs$ and $\cttInnDiff>0$ independent of $\rhoInn$, and a function $\chi=(\chi_1,\chi_2,\chi_3)^T$ 
	such that
	\begin{equation}\label{result:innerDifference}
	\DZInn(U) = \ZuInn(U)-\ZsInn(U) =
	\CInn e^{-iU} \Big(
	(0,0,1)^T + \chi(U) \Big),
	\end{equation}
	and, for $U \in \EInn$,
	\begin{align*}
	| U^{\frac{7}{3}} \chi_1(U)| \leq \cttInnDiff, \qquad
	| U^{2} \chi_2(U) | \leq \cttInnDiff, \qquad
	| U \chi_3(U) | \leq \cttInnDiff.
	%\text{for } U \in \EInn.
	\end{align*}
	%Moreover, $K \neq 0$ if and only if $\DZInn \neq 0$.
\end{theorem}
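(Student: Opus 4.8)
The plan is to set up the invariance equation~\eqref{eq:invariantEquationInner} as a fixed point equation in a suitable Banach space of analytic functions on the inner domains $\DuInn$, $\DsInn$, solve it by a contraction argument, and then extract the leading asymptotics of the difference $\DZInn$ on the overlap $\EInn$ by a second fixed point argument on the ``renormalized'' unknown $e^{iU}\DZInn$.

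\textbf{Step 1: Function spaces and the linear operator.} Following~\cite{BalSea08}, I would introduce, for $U$ in $\DuInn$ (resp.\ $\DsInn$), weighted norms $\normInn{\varphi}_m = \sup |U^{m}\varphi(U)|$ measuring the algebraic decay as $\Re U \to \mp\infty$, and a product norm on $Z=(W,X,Y)$ with weights $(8/3,4/3,4/3)$ dictated by the heuristics~\eqref{eq:approxInnerw}--\eqref{eq:approxInnerxy}. The solution of $\partial_U Z = \AAA Z + h$ with the decay condition~\eqref{eq:asymptoticConditionsInner} is given componentwise by integration: the $W$-component is $\int_{-\infty}^U h_W$, the $X$-component is $\int_{-\infty}^U e^{i(U-s)} h_X(s)\,ds$ (integrating from $-i\infty$ or along a path in $\DuInn$ as appropriate, chosen so that $e^{i(U-s)}$ stays bounded), and similarly for $Y$ with $e^{-i(U-s)}$. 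I would check that this right inverse $\GG$ maps the weighted space boundedly, \emph{gaining} decay where the source has enough of it — the key point being that on the relevant integration paths one power of $U$ is gained for $W$ and none is lost for $X,Y$ because of the oscillatory kernels (Laplace-type estimates, cf.\ the standard lemmas in~\cite{Bal06, BalSea08}).

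\textbf{Step 2: Existence via contraction.} Writing $Z = \GG[\RRR[Z]]$, I would verify that $\RRR$ defined in~\eqref{def:operatorRRRInner} maps a ball of radius $\cttInnExist$ in the weighted space into itself for $\rhoInn$ large: this uses that $\KK$ in~\eqref{def:hamiltonianK}--\eqref{def:hFunction} is, when $Z$ is small and $|U|\geq \rhoInn$, a convergent expansion whose lowest-order terms are $-\tfrac34 U^{2/3}W^2$ and $-\tfrac13 U^{-2/3}\cdot\tfrac12\JJ + O(\JJ^2)$, and $\JJ$ itself carries at least a factor $U^{-4/3}$ times (quadratic in $Z$) plus the ``inhomogeneous'' pieces $U^{-4/3}$, $U^{-2}$, $U^{-1}(X+Y)U^{-2/3}$, $U^{-2/3}(X-Y)$; tracking weights shows $f, g$ land in the right spaces with a small prefactor $\rhoInn^{-\kappa}$. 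Lipschitz estimates are obtained the same way, giving a contraction. This produces $\ZuInn$ on $\DuInn$ and, by the symmetry $U \mapsto -U$, $\ZsInn$ on $\DsInn$, with the bounds~\eqref{result:innerExistence}. Analyticity is automatic since $\GG$ preserves it.

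\textbf{Step 3: The difference and the Stokes constant.} On $\EInn$ both solutions exist; $\DZInn = \ZuInn - \ZsInn$ satisfies the \emph{homogeneous linearized} equation up to a remainder, $\partial_U \DZInn = \AAA \DZInn + \big(\RRR[\ZuInn]-\RRR[\ZsInn]\big)$, and the difference of the nonlinear terms is linear in $\DZInn$ with small variable coefficients. The homogeneous operator $\partial_U - \AAA$ has the three fundamental solutions $1$, $e^{iU}$, $e^{-iU}$; in $\EInn$ (where $\Im U<0$) only $e^{-iU}$ is the recessive one that can match the required decay of a genuine difference, so I would substitute $\DZInn(U) = e^{-iU}\,\Psi(U)$ — wait, here the statement has $e^{-iU}$ appearing with the vector $(0,0,1)^T$, i.e.\ the $Y$-direction, so the relevant exponential for $Y$ is indeed the one that decays as $\Im U \to -\infty$. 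Setting $\DZInn = \CInn e^{-iU}((0,0,1)^T + \chi)$, I would derive an integral fixed point equation for $\chi$ with the constant $\CInn$ fixed by a normalization/limit (e.g.\ $\CInn = \lim e^{iU}\DZInn_{,3}(U)$ along an appropriate ray, which exists and is finite by the estimates), and run a final contraction on $\chi$ in a weighted space with weights $(7/3,2,1)$ to get the bounds claimed. That $\CInn$ is well-defined and independent of $\rhoInn$ follows because enlarging $\rhoInn$ only shrinks the domain and the solutions are unique.

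\textbf{Main obstacle.} The delicate point is Step 1–2: getting the \emph{sharp} weighted estimates for the right inverse $\GG$ on the unbounded domains $\DuInn$, $\DsInn$ — in particular choosing integration paths that stay in the domain, keep the oscillatory factors $e^{\pm i(U-s)}$ bounded, and yield exactly the weight gains $(8/3,4/3,4/3)$ consistent with the singular prefactors $U^{2/3}$, $U^{-2/3}$, $U^{-4/3}$ appearing in $\KK$ and $\JJ$. If the bookkeeping of powers is off by even a fraction, the nonlinearity no longer closes the contraction. The second subtlety is identifying which exponential mode survives in $\EInn$ and proving the limit defining $\CInn$ converges; this is where the restriction $\Im U<0$ in the definition of $\EInn$ is essential and must be used carefully.
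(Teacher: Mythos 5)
Your overall strategy coincides with the paper's: the same weighted spaces with exponents $(8/3,4/3,4/3)$, the same right inverse $\GG$ built from $\int_{-\infty}^U$ with the oscillatory kernels, a fixed point for $\FF=\GG\circ\RRR$, and then variation of parameters plus a second contraction in exponentially weighted norms (weights $(7/3,2,1)$ for $\chi$, with $\CInn=\lim_{\Im U\to-\infty}e^{iU}\DYInn(U)$). However, there is one concrete step in your Step~2 that fails as written. The operator $\FF$ is \emph{not} a contraction in the product norm $\normInn{W}_{8/3}+\normInn{X}_{4/3}+\normInn{Y}_{4/3}$: the Lipschitz constant of $\FF_1$ with respect to the $(X,Y)$-components is $O(1)$ in $\rhoInn$, not small. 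The reason is that $\partial_X\RRR_1$ and $\partial_Y\RRR_1$ only decay like $|U|^{-7/3}$ (coming from $\partial_U$ of the term $-\tfrac{1}{3}U^{-2/3}\bigl((1+\JJ)^{-1/2}-1\bigr)$, where $\JJ$ contains the piece $-\tfrac{4i}{3}U^{-2/3}(X-Y)$, linear in $(X,Y)$); multiplying by $\normInn{X-\widetilde X}_{4/3}$ lands exactly in $\XcalInn_{11/3}$ with no surplus power of $U$ to spare, and $\GG_1$ then returns something in $\XcalInn_{8/3}$ with an $O(1)$ constant. Only the $W$-dependence of $\FF_1$, and all of $\FF_2,\FF_3$, gain a factor $\rhoInn^{-2}$. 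The paper repairs this with a Gauss--Seidel substitution, replacing $\FF$ by $\widetilde\FF[Z]=\bigl(\FF_1[W,\FF_2[Z],\FF_3[Z]],\FF_2[Z],\FF_3[Z]\bigr)$, which has the same fixed points and is contractive; an anisotropic norm weighting the $(X,Y)$-components by a large factor (as the paper itself does later for the uniqueness of $\DZInn$) would also work. Without one of these devices your contraction does not close.

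A second, smaller point: in Step~3 you propose to plug in the ansatz $\DZInn=\CInn e^{-iU}((0,0,1)^T+\chi)$ with $\CInn$ ``fixed by a normalization/limit''. This is mildly circular, since the existence of that limit is part of what must be proved. The paper's route avoids this: it first shows $\DZInn$ is the \emph{unique} solution of $\DZInn=\DZo+\II[\DZInn]$ in the algebraically weighted space (so $\DZInn=\sum_{n\ge0}\II^n[\DZo]$ with $\DZo=(0,0,c_ye^{-iU})^T$), then shows $\II$ is contractive in the exponentially weighted space and that $\II$ improves the weights to $(7/3,2,0)$, extracting $\CInn=c_y+\wCInn(\rhoInn)$ as a by-product rather than as an input. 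Your identification of $e^{-iU}$ as the surviving mode in $\EInn$ (where $\Im U<0$) is correct.
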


%\begin{remark}\label{remark:innerComputationConjugats}
%	The change~\eqref{def:changeInner} allows us to study an approximation of the invariant manifolds $\zusOut(u)$ near the singularity $u=iA$. However, to obtain an approximation near $u=-iA$ we have to consider the change
%	\begin{equation*}\label{def:changeInnerNegatiu}
%	V=\frac{u+iA}{\de^2}, \hh
%	\widehat{W}=\de^{\frac{4}{3}}\frac{w}{2 \al_-^2}, \hh
%	\widehat{X}=\frac{x}{\de^{\frac{1}{3}} \sqrt{2} \al_- } , \hh
%	\widehat{Y}=\frac{y}{\de^{\frac{1}{3}}\sqrt{2} {\al_-}},
%	\end{equation*}
%	where  $\al_- \in \complexs$ was defined on Theorem \ref{theorem:singularities}, such that $\conj{\al_-} = \al_+$. 
%	In this case, for $V \in \conj{\DdInn}$, one can prove the existence of the solutions 
%	\[
%	\ZdInnN(V)=(\WdInnN(V),\XdInnN(V),\YdInnN(V)), \h
%	\text{ where } \diamond = \mathrm{u, s}.
%	\]
%	Notice that, since $x=\conj{y}$ by its definition on~\eqref{def:changeOuter} , then $\widehat{X} = \conj{Y}$. 
%	Therefore, the difference between these two solutions $\DZInnN = \ZuInnN - \ZsInnN$, is given asymptotically for $V \in \conj{\EInn}$ by
%	\begin{equation}\label{result:innerDifferenceNegative}
%	\DZInnN(V) = \conj{\CInn} e^{iV} \Big(
%	(0,1,0) + \zeta(V) \Big),
%	\end{equation}
%	where $\conj{\CInn}$ is the conjugate of the constant $\CInn$ given in Theorem~\ref{theorem:innerComputations} and $\zeta=(\zeta_1,\zeta_2,\zeta_3)$ satisfies 
%	$\vabs{V^{\frac{7}{3}} \zeta_1(V)} \leq C$,
%	$\vabs{V \zeta_2(V)} \leq C$ and
%	$\vabs{V^{2} \zeta_3(V)} \leq C$.
%\end{remark}

\begin{remark}\label{remark:CInnNumerics}
This theorem implies that ${\CInn} = \lim_{\Im U \to - \infty}{\DYInn(U) e^{iU}}$. Thus, we can obtain a numerical approximation of the constant $\CInn$. 
%Indeed, for $U \in \EInn$, we have
%\begin{align*}
%{\CInn} \approx {\DYInn(U) e^{iU}}.
%\end{align*}
Indeed, for $\rho>\kappa_0$, we can define
\begin{equation}\label{def:CInnrho}
\CInn_{\rho} = \vabs{ \DYInn(-i\rho)} e^{\rho},
%\quad \text{for} \quad \rho>0.
\end{equation} 
which, for $\rho$ big enough, satisfies $\CInn_{\rho} \approx \vabs{\CInn}$.

To compute $\DYInn(-i\rho)=\YuInn(-i\rho)-\YsInn(-i\rho)$,
we first look for good approximations of $\ZuInn(U)$ for $\Re U\ll -1$ and of $\ZsInn(U)$ for $\Re U \gg 1$, as power series in $U^{-\frac13}$.
One can easily check that $\ZuInn(U)$ as $\Re U \to -\infty$  and $\ZsInn(U)$ as $\Re U \to +\infty$ have the same asymptotics expansion:
\begin{align*}
\WdInn(U) &= 
\frac{4}{243 \, U^{\frac{8}{3}} } 
- \frac{172}{2187 \, U^{\frac{14}{3}}}
+ \OO \paren{U^{-\frac{20}{3}}}, \\
\XdInn(U) &= -\frac{2i}{9 \, U^{\frac{4}{3}} } 
+\frac{28}{81 \, U^{\frac{7}{3}} }
+ \frac{20i}{27 \, U^{\frac{10}{3}} }
- \frac{16424}{6561 \, U^{\frac{13}{3}} }
+ \OO \paren{U^{-\frac{16}{3}}}, \\
\YdInn(U) &= \frac{2i}{9 \, U^{\frac{4}{3}} } 
+\frac{28}{81 \, U^{\frac{7}{3}} }
- \frac{20i}{27 \, U^{\frac{10}{3}} }
- \frac{16424}{6561 \, U^{\frac{13}{3}} }
+ \OO \paren{U^{-\frac{16}{3}}}.
\end{align*}	
We use these expressions to set up the  initial conditions for the numerical integration  for computing $\DYInn(-i\rho)$.
We take as initial points  the value of the truncated power series at order $U^{-\frac{13}{3}}$ at $U= 1000 -i\rho$ (for $\diamond=\stable$) and $U= -1000 -i\rho$ (for $\diamond=\unstable$).
% , and compute $\YdInn(-i\rho)$ by numerical integration. 
%
(See Table~\ref{table:CInnrho}). 
%we perform numerical integration on the equation in~\eqref{eq:systemEDOsInner} to obtain approximations for $\ZuInn$ and $\ZsInn$ on $U=-i\rho$.
%To compute them, we look for good approximations of $\ZuInn$ for $\Re(U)\ll -1$ and for $\ZsInn$ for $\Re(U) \gg 1$.
%We use them to set up initial conditions for the numerical integration.
%
%Then, as initial condition, we truncate $\DZInn(\pm 1000 - i\rho)$ at order $U^{-\frac{13}{3}}$, and we integrate numerically to compute $\DZInn(-i\rho)$ (see Table~\ref{table:CInnrho}). 
%
We perform the numerical integration for different values of  $\rho\leq 23$ and an  integration solver with tolerance $10^{-12}$.

Table~\ref{table:CInnrho} shows that the constant $\Theta$ is approximately 1.63 which indicates that it is not zero. We expect that this computation method can be implemented rigorously \cite{BCGS21}.

\begin{table}[H]
\begin{equation*}
\begin{array}{|c||c|c|c|}
	\hline
	\rho & \vabs{\DYInn(-i\rho)} & e^{\rho} & \CInn_{\rho}\\
	\hline 
	\hline
	13 & 3.7 \cdot 10^{-6} & 4.4 \cdot 10^{5} & 1.6373 \\
	\hline
	14 & 1.4 \cdot 10^{-6} & 1.2 \cdot 10^{6} & 1.6361 \\
	\hline
	15 & 5.0 \cdot 10^{-7} & 3.3 \cdot 10^{6} & 1.6351 \\
	\hline
	16 & 1.8 \cdot 10^{-7} & 8.9 \cdot 10^{6} & 1.6341 \\
	\hline
	17 & 3.7 \cdot 10^{-8} & 2.4 \cdot 10^{7} & 1.6333 \\
	\hline
	18 & 6.8 \cdot 10^{-8} & 6.6 \cdot 10^{7} & 1.6326 \\
	\hline
	19 & 9.1 \cdot 10^{-9} & 1.8 \cdot 10^{8} & 1.6320 \\
	\hline
	20 & 3.4 \cdot 10^{-9} & 4.9 \cdot 10^{8} & 1.6315 \\
	\hline
	21 & 1.2 \cdot 10^{-9} & 1.3 \cdot 10^{9} & 1.6312 \\
	\hline
	22 & 4.6 \cdot 10^{-10} & 3.6 \cdot 10^{9} & 1.6313 \\
	\hline
	23 & 1.7 \cdot 10^{-10} & 9.7 \cdot 10^{9} & 1.6323 \\
	\hline
\end{array}
\end{equation*}
\caption{Computation of $\CInn_{\rho}$, as defined in~\eqref{def:CInnrho}, for different values of $\rho\leq 23$.}
\label{table:CInnrho}
\end{table}

\end{remark}

%% PART TECNICA

\section{Analytic continuation of the separatrix}
\label{section:proofA-singularities}

In this section we prove Theorem~\ref{theorem:singularities} and Proposition~\ref{proposition:domainSeparatrix},  which deal with the study of the complex singularities and zeroes of the analytic extension of the time-parametrization $\s(t)=(\la_h(t),\La_h(t))$ of the homoclinic connection given in~\eqref{eq:separatrixParametrization}.

Let us recall that $\s(t)$ is a solution of the Hamiltonian system $H_{\pend}$ in~\eqref{def:HpendHosc} and it is found at the energy level ${H}_{\pend}=-\frac{1}{2}$. 
Therefore, 
%considering the trigonometric identity $1+\cos(\la)=2\cos^2(\frac{\la}{2})$, it satisfies
\begin{equation}\label{eq:odeSeparatrix}
(\dot{\la})^2 = 
%\paren{\Dt{\la}{t}}^2= 
15 - 12\cos^2\paren{\frac{\la}{2}}
- \frac{3}{\cos\paren{\frac{\la}{2}}}.
\end{equation}
Equation~\eqref{eq:odeSeparatrix} can be solved as
$t=F(\la)$,
where $F$ is a function defined by means of an integral.
Prove Theorem~\ref{theorem:singularities} and Proposition~\ref{proposition:domainSeparatrix} boils down to studying the analytic continuation of $F^{-1}$.

We divide the proof of Theorem~\ref{theorem:singularities} into three main steps. 
First, in Section~\ref{subsection:changeq}, we perform the change of variables $q=\cos(\frac{\la}{2})$ and
rephrase Theorem~\ref{theorem:singularities} in terms of $q(t)$ (Theorem~\ref{theorem:singularitiesChangeq}).
Then, in Section~\ref{subsection:classificationSingularities}, we analyze all the possibles types of singularities that $q(t)$ may have  (Proposition~\ref{proposition:singularitiesSeparatrix}), which turn out to be poles or branching points.
In addition we prove that all the singularities have to be given by integrals along suitable complex paths.
Finally, in Section~\ref{subsection:computationSingularities},
taking into account all complex paths leading to  singularities, we prove that the singularities of $q(t)$ with smaller imaginary part (in the first Riemann sheet of $q(t)$) are $t=\pm iA$.

Finally, in Section~\ref{subsection:propZeroes}, we use the results obtained in the previous sections, to
analyze the zeroes of $\La_h(t)$ in the strip of analyticity $\Pi_A$ (see \eqref{def:PiA}), thus proving Proposition~\ref{proposition:domainSeparatrix}.

In order to simplify the notation, through the rest of the section we denote by $C$ any positive constant independent of $t$.

\subsection{Reformulation of Theorem~\ref{theorem:singularities}}
\label{subsection:changeq}

To prove Theorem~\ref{theorem:singularities}, it is more convenient to work with the variable $q=\cos\paren{\frac{\la}{2}}$ instead of $\la$.
Notice that this change of coordinates, when restricted to $\la \in (0,\pi)$, is a diffeomorphism. 
%
%Then, proving Theorem~\ref{theorem:singularities} is a consequence to proving the following result.

\begin{theorem} \label{theorem:singularitiesChangeq}
	Consider the real-analytic time parametrization $\s(t)=(\la_h(t),\La_h(t))$ introduced in~\eqref{eq:separatrixParametrization} and denote $a_\pm =-\frac{1}{2}\pm \frac{\sqrt{2}}{2}$.
	Then, $q(t)=\cos \paren{\frac{\la_h(t)}{2}}$ satisfies
	\begin{equation}\label{eq:basicsAboutq}
	q(t) \in [a_+,1) 
	\quad \text{for} \quad t \in  \reals, \qquad
	q(0) = a_+, 
	\end{equation}
	and the differential equation
	\begin{equation} \label{eq:odeSeparatrixQ}
	\qd^2 = \frac{3}{q} (q-1)^2 (q+1)(q-a_-)(q-a_+).
	\end{equation}
	
	Moreover, we have that:
	\begin{itemize}
	\item The function $q(t)$ extends analytically to the strip $\Pi_A$ defined in~\eqref{def:PiA}.
	\item The function $q(t)$ has only two singularities on $\partial \Pi_A$ at $t=\pm iA$. 
	\item There exists $\nu>0$ such that, for $t \in \complexs$ with $\vabs{t -iA}<\nu$ and $\arg(t-iA) \in (-\frac{3\pi}{2},\frac{\pi}{2})$,  we have
	\begin{align}\label{eq:expressionqSingularities}
		q(t) = - \frac{3\al_{+}}{2} (t -  iA)^{\frac{2}{3}} + 
		\OO(t - iA)^{\frac{4}{3}}, 
	\end{align}
	with $\al_{+} \in \complexs$ such that $\al_{+}^3=\frac{1}{2}$.

	An analogous result holds for
	$\vabs{t+iA}<\nu$ and $\arg(t + iA) \in (-\frac{\pi}{2},\frac{3\pi}{2})$
	with $\al_{-}=\conj{\al_+}$.
	\end{itemize} 
\end{theorem}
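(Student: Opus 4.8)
The plan is to follow the classical route for exponentially small splitting problems in which the unperturbed separatrix is not explicit: reduce \eqref{eq:odeSeparatrixQ} to a quadrature $t=F(q)$ and study the analytic continuation of $q=F^{-1}$ through the Riemann surface of $1/\sqrt{R}$, where $R(q)=\tfrac{3}{q}(q-1)^2(q+1)(q-a_-)(q-a_+)$. First I would establish \eqref{eq:basicsAboutq} and \eqref{eq:odeSeparatrixQ} by direct computation: since $\s(t)$ lies on the energy level $H_\pend=-\tfrac12$, one has $(\dot\la)^2=6\bigl(V(\la)+\tfrac12\bigr)$, which, using $2+2\cos\la=4\cos^2(\la/2)$, is exactly \eqref{eq:odeSeparatrix}; substituting $q=\cos(\la/2)$ gives $\dot q=-\tfrac12\sin(\la/2)\,\dot\la$, hence $\dot q^2=\tfrac14(1-q^2)(\dot\la)^2$, and together with the factorization $4q^3-5q+1=4(q-1)(q-a_+)(q-a_-)$ one obtains \eqref{eq:odeSeparatrixQ}. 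Because $\s$ is the right homoclinic orbit, $q=1$ (i.e. $\la=0$) is attained only as $t\to\pm\infty$, the unique turning point of the orbit is the simple zero $q=a_+$ of $R$, and matching with $\s(0)=(\la_0,0)$, $\la_0\in(\tfrac23\pi,\pi)$, yields $q(0)=a_+$ and $q(t)\in[a_+,1)$ for real $t$.

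\emph{Quadrature and classification of singularities.} Near $t=0$, since $R(a_+)=0$ is simple and $R'(a_+)>0$, one has $q(t)=a_+ + \tfrac14 R'(a_+)\,t^2 + O(t^4)$; thus $q$ is analytic at $0$ and, along the imaginary $t$-direction, decreases below $a_+$ toward $0$. Fixing the branch of $\sqrt R$ by $\dot q>0$ for $0<t\ll 1$, the orbit is represented by $t=F(q):=\int_{a_+}^{q}\mathrm{d}\sigma/\sqrt{R(\sigma)}$, so the continuation of $q$ is governed by that of $F$. I would then carry out the standard local (Painlev\'e-type) analysis of $\dot q^2=R(q)$ at each zero and pole of $R$ and at $q=\infty$: at the simple zeros $q=a_+,a_-,-1$ the solution is regular, $q-q_*\sim c(t-t_*)^2$, so these are ramification points of $F$ but not singularities of $q$; the double zero $q=1$ is reached only as $t\to\pm\infty$; at $q=0$, where $R\sim-\tfrac{3}{4q}$, the solution has a branch point of order $\tfrac23$ of the precise form $q(t)=-\tfrac{3\al}{2}(t-t_*)^{2/3}\bigl(1+O((t-t_*)^{2/3})\bigr)$ with $\al^3=\tfrac12$ (matching exponents and the coefficient, which satisfies $c^3=-\tfrac{27}{16}$); and at $q=\infty$, where $R\sim 3q^4$, there is a simple pole. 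This is Proposition~\ref{proposition:singularitiesSeparatrix}, and it also shows that every finite singularity $t_*$ of $q$ equals $\int_\gamma \mathrm{d}\sigma/\sqrt{R(\sigma)}$ for a path $\gamma$ from $a_+$ to $0$ or to $\infty$ in $\complexs\setminus\{0,\pm1,a_\pm\}$, with the branch of $\sqrt R$ obtained by continuation from the one fixed above.

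\emph{The closest singularity.} It then remains to show $\min|\Im t_*|=A$, attained only at $t_*=\pm iA$. Along the segment $[a_+,0]$ of the real $q$-axis, where $R<0$, one computes $F(0)=iA$ with $A$ as in \eqref{def:integralA} (and $F=-iA$ along the complex-conjugate path), which produces the expansion \eqref{eq:expressionqSingularities} via the case $q=0$ above. The task is then to rule out that any other admissible path does better: paths ending at $q=\infty$, and paths ending at $q=0$ that wind around $a_-$, $-1$ or $0$ or reach $0$ from the complex $q$-plane, must all contribute an integral of imaginary part at least $A$ in absolute value. This is the crux of the argument, and I expect it to be the main obstacle: $\mathrm{d}q/\sqrt R$ is an Abelian differential (of the third kind, with logarithmic points over $q=1$) on the genus-one curve $y^2=R(q)$, whose branch points are $\{0,-1,a_+,a_-\}$, so the candidate values $t_*$ are integer combinations of a few (half-)periods; one has to enumerate the homotopy classes of admissible paths, exploit the real-analyticity of $q$ (so that singularities occur in conjugate pairs and it suffices to work in $\Im t>0$) together with the explicit monotone behaviour of $F$ along the real and imaginary segments of the $q$-axis, and bound the imaginary parts of the relevant period integrals from below by $A$. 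Once this is done, $q$ extends analytically to $\Pi_A$ with only the two boundary singularities $\pm iA$ of the claimed form, which is Theorem~\ref{theorem:singularitiesChangeq}; undoing $q=\cos(\la_h/2)$ and $\La_h=-\tfrac13\dot\la_h$ then gives Theorem~\ref{theorem:singularities}.
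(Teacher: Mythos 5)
Your overall route is the same as the paper's: reduce to the quadrature $t=F(q)=\int_{a_+}^q d\sigma/\sqrt{R(\sigma)}$, classify the local behaviour of $q$ at the zeros and poles of $R$ and at $q=\infty$ (this is exactly Proposition~\ref{proposition:singularitiesSeparatrix}), obtain $t^*=\pm iA$ from the segment $[0,a_+]$, and then argue that no other path produces a singularity in $\ol{\Pi_A}$. The parts you actually execute are correct, including the coefficient $c^3=-\tfrac{27}{16}$ at $q=0$ (equivalent to the paper's $-\tfrac{3\al}{2}$ with $\al^3=\tfrac12$) and the simple pole at $q=\infty$.

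The gap is that the step you yourself call ``the crux'' is left entirely as a plan, and the plan as stated is not quite the right one. It does not suffice to bound $|\Im t^*|$ over all homotopy classes of paths from $a_+$ to $0$ or to $\infty$: since $dq/\sqrt{R}$ is a differential of the third kind, closed loops around $q=1$ contribute $\pm 2\pi i\sqrt{2/21}$, and integer combinations of periods and residue terms could in principle yield values $t^*$ with small imaginary part that live on far sheets of $q(t)$ and are irrelevant. What must be excluded are only the \emph{visible} singularities, i.e.\ those reachable by a path $\g$ whose image $T[\g]$ in the $t$-plane stays inside $\Pi_A$ (Definition~\ref{definition:visibleSingularity}); conversely, establishing the analytic extension to all of $\Pi_A$ requires showing that every such path either reaches no singularity or reaches one with $|\Im t^*|\geq A$. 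The paper's mechanism is Remark~\ref{remark:pathsCrossingReals}: it organizes paths by where they first cross the real $q$-axis and shows that the \emph{partial} integral up to that first crossing already satisfies $|\Im t_1(\g)|\geq A$ --- via the residue $\pi\sqrt{2/21}\approx 0.97 > A\approx 0.178$ at $q=1$ for crossings in $(1,+\infty)$ and $(-\infty,a_-)$, the value $A$ itself for crossings in $(-1,0)$, and $A+B(q_1)$ with $B>0$ for crossings in $(a_-,-1)$ --- while paths confined to the slit plane (up to winding around $a_+$, which only flips the sign) give exactly $\pm iA$, and paths to $\infty$ cost at least $\pi\sqrt{2/21}$. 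Without carrying out this exhaustive case analysis (Lemmas~\ref{lemma:caminsZeroSimple}--\ref{lemma:caminsBranca1c}), or an equivalent enumeration with the visibility criterion built in, neither the first nor the second bullet of the theorem is proved.
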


% \begin{remark}
% Since $q(t)$ is the analytic continuation of a real function, if $t^*$ is a singularity of $q(t)$, its conjugate $\conj{t^*}$ is as well. 
% %
% In addition, if $q(t) \sim -\frac{3\al}{2}(t-\conj{t^*})^{\frac{2}{3}}$
% for $\vabs{t-t^*}\ll 1$, then
% $q(t) \sim -\frac{3\conj{\al}}{2}  (t-\conj{t^*})^{\frac{2}{3}}$
% when $|t-\conj{t^*}|\ll 1$.
% \end{remark}

Theorem~\ref{theorem:singularities} is a corollary of Theorem~\ref{theorem:singularitiesChangeq}.

\begin{proof}[Proof of Theorem~\ref{theorem:singularities}]

To obtain Theorem~\ref{theorem:singularities} from Theorem~\ref{theorem:singularitiesChangeq} it is enough to prove that $\La_h(t)$ has no singularities in $\ol{\Pi_A} \setminus \claus{\pm iA}$ and that $(\la_h(t),\La_h(t))$ can be expressed as in~\eqref{eq:homoclinicaSingularities} close to $t=\pm iA$.
	
Since $\dot{\la}_h = -3\La_h$ and using the change of coordinates $q=\cos (\frac{\la}{2})$, we have that
\begin{equation}\label{proof:LahSquared}
\La_h^2(t) = \frac{4}{9}\frac{\dot{q}^2(t)}{\paren{1-q^2(t)}}.
\end{equation}	
We claim that, if $\La_h(t)$ has a singularity at $t=t^*$, then $\La_h^2(t)$ has a singularity at $t=t^*$ as well. 
Indeed, the only case when the previous affirmation could be false is if $t^*$ is a branching point of order $\frac{k}{2}$ with $k\geq 1$ an odd natural number. 
In this case,
\begin{align*}
\la_h(t) = \la_h(t^*) + C (t-t^*)^{\frac{k}{2}-1} \paren{1 + \OO(t-t^*)^{\beta}}, \quad
\text{ when } 0<\vabs{t-t^*} \ll 1,
\end{align*}
for some $\beta>0$.
Replacing this expression in~\eqref{eq:odeSeparatrix} and comparing orders we see that this case is not possible.

Thus, we proceed to prove that $\La_h(t)$ has no singularities in $\ol{\Pi_A} \setminus \claus{\pm iA}$. Let us assume it has. That is, there exists $t^* \in \ol{\Pi_A} \setminus \claus{\pm iA}$ such that $\La_h(t)$ is singular at $t=t^*$. Note that Theorem \ref{theorem:singularitiesChangeq} implies that $q(t)$ and $\dot q(t)$  are analytic in a neighborhood of $t^* \in \ol{\Pi_A} \setminus \claus{\pm iA}$.

\begin{enumerate}
	\item If $q^2(t^*)\neq 1$, $1/(1-q^2(t))$ is analytic for $0<\vabs{t-t^*} \ll 1$. Since $\dot q(t)$ is also analytic in this neighborhood, \eqref{proof:LahSquared} implies that $\La^2_h(t)$ has no singularity at $t=t^*$ and we reach a contradiction.
% 	if an only if $\dot{q}^2(t)$ does.
% 	equation~\eqref{proof:LahSquared}, we have
% 	\begin{align*}
% 		\La^2_h(t) \approx C \qd^2(t),  \quad
% 		\text{ for } 0<\vabs{t-t^*} \ll 1.
% 	\end{align*}
% 	However, by Theorem~\ref{theorem:singularitiesChangeq} this is not possible
% 	, $\qd^2(t)$ has no singularities in $\ol{\Pi_A} \setminus \claus{\pm iA}$ and, as a result, neither does $\La^2_h(t)$ nor $\La_h(t)$. 
	%
% and we have a contradiction.
	
	\item If $q^2(t^*)=1$, by \eqref{eq:odeSeparatrixQ} and~\eqref{proof:LahSquared}, we deduce that
	\begin{align}\label{eq:equationLahSimplified}
		\La_h^2 = \frac{4}{3 q} (1-q)(q-a_+)(q-a_-).
	\end{align}
	Since by Theorem~\ref{theorem:singularitiesChangeq} $q$ is analytic in $\ol{\Pi_A} \setminus \claus{\pm iA}$, then $\La^2_h$ must be as well. 
% 	As a result, $\La_h$ does not have singularities in this region.  
\end{enumerate}
Finally, we notice that, by  equations~\eqref{eq:expressionqSingularities} and~\eqref{proof:LahSquared}, we have
\begin{align*}
\La_h^2(t) = \frac{4}{9}\al^2_{\pm} (t\mp iA)^{-\frac{2}{3}} + \OO(1), \quad
\text{ when } 0<\vabs{t\mp iA} \ll 1.
\end{align*}
Therefore, $\La_h(t)$ has branching points of order $-\frac{1}{3}$ in $t=\pm iA$.
Moreover, integrating the expression for $\La_h(t)$ and 
applying that $q(t)=\cos (\frac{\la_h(t)}{2})$ (and \eqref{eq:expressionqSingularities}), it is immediate to see that $\la_h(t)$ has branching points of order $\frac{2}{3}$ at $t=\pm iA$ and can be expressed as in~\eqref{eq:homoclinicaSingularities} close to $t=\pm iA$.
\end{proof}

We devote Sections \ref{subsection:classificationSingularities} and \ref{subsection:computationSingularities} to prove Theorem~\ref{theorem:singularitiesChangeq}. 
The statements~\eqref{eq:basicsAboutq} and~\eqref{eq:odeSeparatrixQ} are straightforward by applying the change of coordinates $q=\cos(\frac{\la}{2})$ to equation~\eqref{eq:odeSeparatrix}. 

We divide the rest of the proof of Theorem~\ref{theorem:singularitiesChangeq} into two parts. 
In Section~\ref{subsection:classificationSingularities} we classify the singularities of $q(t)$ and introduce a way to compute them using integration in complex paths. 
Finally, in Section~\ref{subsection:computationSingularities} we prove that the singularities of $q(t)$ with smallest imaginary part are $t=\pm iA$ and are branching points of order $\frac{2}{3}$.

\subsection{Classification of the singularities of \texorpdfstring{$q(t)$}{q(t)}} \label{subsection:classificationSingularities}

Equation~\eqref{eq:odeSeparatrixQ} with initial condition $q(0)= a_+ =-\frac{1}{2}+\frac{\sqrt{2}}{2}$ is equivalent to 
\begin{align*}
t = \int_{a_+}^{q(t)} f(s) ds, 
\qquad \text{for }  t\in \reals, 
\end{align*}
%To extend $q(t)$ to the complex plane, we define the real-valued function
where
\begin{equation*}%\label{def:functionf}
 f(q)=\frac{1}{q-1}\sqrt{\frac{q}{3(q+1)(q-a_+)(q-a_-)}},
\end{equation*}
is defined in $\reals \setminus \{ [a_-,-1] \cup (0,a_+] \cup \{1\} \} $
with $a_{\pm}=-\frac{1}{2}\pm\frac{\sqrt{2}}{2}$.
%
%Since we are studying the parametrization $q(t)$ such that $q(0)=a_+$, equation~\eqref{eq:odeSeparatrixQ} for $q(t)$ is equivalent to
%\begin{align*}
%t = \int_{a_+}^{q(t)} f(s) ds.
%\end{align*}
From~\cite{Fon95}, we know that there exist $\upsilon>0$ such that $q(t)$ can be extended to the open complex strip 
\begin{equation*}%\label{def:PiXi}
\Pi_{\upsilon} = \{ t \in \complexs \st  |\Im t|< \upsilon \},
\end{equation*}
and  $q(t)$ has singularities in $\partial \Pi_{\upsilon}$. Namely,
\begin{align}\label{proof:integralReals}
t = \int_{a_+}^{q(t)} f(q) dq, 
\qquad \text{for }  t\in \Pi_{\upsilon}.
\end{align}

Since $f$ is a multi-valued function in the complex plane, in order to analyze the possible values of $\int_{a_+}^{q(t)} f(s) ds$, we consider its complete analytic continuation. That is,
\begin{align}\label{eq:fcompleteanalytic}
\begin{array}{rl}
\fh: \mathscr{R}_f &\to \complexs  \\
\Big(q;\arg \argf(q)\Big) &\mapsto
%q &\to 
\displaystyle \frac{\sqrt{\argf(q)}}{q-1},
\end{array}
\quad \text{where} \quad 
\argf(q)=\frac{q}{3 (q+1)(q-a_+)(q-a_-)},
\end{align}
and $\mathscr{R}_f$ is the Riemann surface associated to $f$.
%see~\cite{Conway}. 
%
We define ${\proj:\mathscr{R}_f \to \complexs}$ as the projection to the complex plane.
We choose the first Riemann sheet to correspond to ${\arg \argf(q) \in (-\pi,\pi]}$. Accordingly the second Riemann sheet corresponds to $(\pi, 3\pi]$.

To integrate $\fh$ along a path $\g \subset \mathscr{R}_f$, we introduce the notation
% \[
% \int_{q_{\ini}}^{q_{\fin}} \fh(s) d\g(s) = \int_{\g} \fh(q) dq,
% \]	
\[
\int_{\g} \fh(q) dq=\int_{q_{\ini}}^{q_{\fin}} \fh d\g =\int_{s_{\ini}}^{s_{\fin}} \fh(\g(s)) \g'(s)ds ,
\]	
such that $\g:(s_{\ini},s_{\fin}) \to \mathscr{R}_f$ where $\lim_{s \to s_{\ini}} \proj \g(s)=q_{\ini}$ and $\lim_{s \to s_{\fin}} \proj \g(s)=q_{\fin}$.
Moreover, we assume that the paths $\g \subset \mathscr{R}_f$ are $\CC^0$ and $\CC^1\text{-piecewise}$.
Therefore, by~\eqref{proof:integralReals}, we have that
\begin{equation}\label{eq:lemmaIntegralEquationGeneral}
t = \int_{a_+}^{q(t)} \fh d \g,
\qquad \text{for }  t\in \Pi_{\upsilon}.
\end{equation}
Now, for $q\in\mathscr{R}_f$ and an integration path $\g \subset \mathscr{R}_f$,  we define the function $G$ as the right hand side of~\eqref{eq:lemmaIntegralEquationGeneral},
\[
G(q)= \int_{a_+}^{q} \hat{f} \h d\g.
\]
Notice that for a given $q\in\mathscr{R}_f$, $G(q)$ may depend on the integration path, and therefore,  $G$ may be multi-valued on $\mathscr{R}_f$.
However, by~\eqref{eq:lemmaIntegralEquationGeneral}, $G$ is single-valued when 
\[
t=G(q(t)), \qquad \text{for } t \in \Pi_{\upsilon}.
\]
We use $G$ to characterize and locate the singularities of $q(t)$.
Indeed, if function $G(q)$ is biholomorphic at $q=q^*$, then $q(t)$ is analytic at a neighborhood of all values of $t$ such that $q(t)=q^*$.
Therefore, $q(t)$ may have singularities at $t=t^*$ when the hypothesis of the Inverse Function Theorem are not satisfied for $G$.
That is, for $q(t^*)=q^*$ such that either
\begin{align}\label{eq:conditionsInverseTheo}
G'(q^*)= 0, \qquad
G \notin \CC^1 \text{ at }
q=q^*, 
\qquad \text{or} \quad
\vabs{q^*} \to \infty.
\end{align}
Namely, when there exist $q^*$ and $\g \subset \mathscr{R}_f$ satisfying~\eqref{eq:conditionsInverseTheo},
 such that 
\begin{equation}\label{eq:lemmaIntegralEquationSingularities}
t^* = G(q^*) = \int_{a_+}^{q^*} \hat{f} \h  d\g.
\end{equation}
Since $G$ is a multivalued function, the values of $t^*$ can, and in fact will, depend on the integration path on $\g \subset \mathscr{R}_f$.
From \eqref{eq:fcompleteanalytic} and \eqref{eq:conditionsInverseTheo}, one deduces that the singularities may take place only if $q(t^*)=q^*$ with
\begin{equation}\label{eq:candidatesSingularities}
q^*=0,1,-1, a_+, a_- 
\qquad \text{and} \qquad 
\vabs{q^*} \to \infty.
\end{equation}
The following proposition proves that we only need to consider $\vabs{q^*} \to \infty$ and $q^*=0$ .

\begin{proposition} \label{proposition:singularitiesSeparatrix}
 Let $q(t)$ be a solution of equation~\eqref{eq:odeSeparatrixQ} with initial condition $q(0)=a_+$. Then, the singularities $t^* \in \complexs$ of the analytic extension of $q(t)$ are characterized by  either
\begin{equation*} %\label{eq:integralPolesSection}
 t^* = \int_{a_+}^{0} \hat{f} \h d\g, 
 \hh  \text{ or } \hh
 t^* = \int_{a_+}^{\infty} \hat{f} \h d\g,
\end{equation*}
for some path $\g \subset \mathscr{R}_f$.

%where $\g\subset \mathscr{R}_f$ is a path from $q=a_+$ to $q=0$ or $\vabs{q} \to \infty$.
Moreover:
\begin{itemize}
%\item If $t^*=t^*_{0}(\g)$, then 
\item If $t^*=\int_{a_+}^0 \fh d\g$
and $\Im t^*>0$ with 
$\arg \paren{t-t^*} \in \paren{-\frac{3\pi}{2},\frac{\pi}{2}}$, then
\begin{equation}
\label{eq:singularitatExpressio}
q(t) = -\frac{3\al}{2}  (t-t^*)^{\frac{2}{3}} + \OO(t-t^*)^{\frac{4}{3}}, \qquad
\text{for } \h 0<|t-t^*|\ll 1,
\end{equation}
where  $\al \in \complexs$ satisfies $\al^3=\frac{1}{2}$. 
If $\Im t^*<0$ and
$\arg\paren{t-t^*} \in \paren{-\frac{\pi}{2},\frac{3\pi}{2}}$, the same holds true.
\item If $t^*=\int_{a_+}^{\infty} \fh d\g$, 
then
\begin{equation}
\label{eq:singularitatExpressioInfinit}
q(t) = - \frac{1}{\sqrt{3}(t-t^*)}\left( 1 + \OO(t-t^*)\right),
 \qquad
 \text{for } \h 0<|t-t^*|\ll 1.
\end{equation}
%where $\beta \in \complexs$ such that $\beta^2=\frac{1}{3}$ and 
\end{itemize}
\end{proposition}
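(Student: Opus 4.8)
The strategy is to first reduce the list of candidate singular values in \eqref{eq:candidatesSingularities} to only $q^*=0$ and $\vabs{q^*}\to\infty$, and then to extract the local behaviour of $q(t)$ near such singularities from the integral representation $t=G(q)=\int_{a_+}^{q}\fh\,d\g$. The key observation is that near each candidate $q^*$ the integrand $\fh$ behaves like a power of $(q-q^*)$, so $G(q)-G(q^*)$ behaves like the next power, and one then inverts locally.

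\medskip
\textbf{Step 1: Ruling out $q^*=1$, $q^*=-1$, $q^*=a_+$, $q^*=a_-$.} Near each of $q^*\in\{-1,a_+,a_-\}$ the function $\argf(q)$ has a simple zero, so $\fh(q)=\frac{\sqrt{\argf(q)}}{q-1}\sim c\,(q-q^*)^{1/2}$ with $c\neq0$; hence $G(q)-G(q^*)\sim \frac{2c}{3}(q-q^*)^{3/2}$, which is a genuine branch point of $G$ with $G'(q^*)=0$, so a priori $q^*$ could produce a singularity of $q(t)$. However, the candidate $q(t^*)=q^*$ must be reached \emph{along a path starting at $a_+$ on the first sheet consistent with} \eqref{proof:integralReals}; since $G(q)-G(q^*)\sim C(q-q^*)^{3/2}$ is locally invertible as a map of $(q-q^*)$ to the $3/2$-power, inverting gives $q(t)-q^*\sim C'(t-t^*)^{2/3}$. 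One must check this is incompatible with being a solution of \eqref{eq:odeSeparatrixQ}: plugging $q=q^*+C'(t-t^*)^{2/3}$ into $\qd^2=\frac{3}{q}(q-1)^2(q+1)(q-a_-)(q-a_+)$ and comparing leading orders. For $q^*=1$ the integrand has a \emph{simple pole}, $\fh(q)\sim \frac{\sqrt{\argf(1)}}{q-1}$, so $G(q)-G(1)\sim \sqrt{\argf(1)}\log(q-1)\to\infty$; thus $q=1$ is never reached at finite $t$, i.e. it produces no singularity. A cleaner packaging of Step 1 (which I would actually use) is: the right-hand side of \eqref{eq:odeSeparatrixQ} together with $H_{\pend}=-\tfrac12$ forces, at any finite-$t$ singularity, either $q\to 0$ or $q\to\infty$, because at $q^*\in\{\pm1,a_\pm\}$ the value $\qd^2$ is finite and nonzero (check: $\tfrac{3}{q^*}(q^*-1)^2(q^*+1)(q^*-a_-)(q^*-a_+)$ is finite and, for $q^*\neq 1$, nonzero; and at $q^*=1$ it vanishes but with a definite order), so $q(t)$ is either regular there (by the existence/uniqueness theory away from the bad set) or the putative singular time is pushed to infinity.

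\medskip
\textbf{Step 2: The behaviour at $q^*=0$.} At $q=0$, $\argf(q)=\frac{q}{3(q+1)(q-a_+)(q-a_-)}\sim \frac{q}{3 a_+ a_-}=\frac{q}{3\cdot(-\tfrac12)}=-\frac{2q}{3}$ (using $a_+a_-=-\tfrac12$), so $\fh(q)\sim \frac{1}{0-1}\sqrt{-\tfrac{2}{3}q}=c_0\,q^{1/2}$ with $c_0^2=\tfrac23$. Then
\[
t-t^* = G(q)-G(0) = \int_0^{q}\fh\,d\g \sim \frac{2c_0}{3}\,q^{3/2},
\]
hence $q\sim\bigl(\tfrac{3}{2c_0}\bigr)^{2/3}(t-t^*)^{2/3}$. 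Writing this as $q(t)=-\tfrac{3\al}{2}(t-t^*)^{2/3}+\OO((t-t^*)^{4/3})$ fixes $\al$ through $\al^3=\tfrac12$; the $\OO((t-t^*)^{4/3})$ correction comes from the next term in the Taylor expansion of $\fh$ at $q=0$ and from inverting the series. The constraints on $\arg(t-t^*)$ in the two cases $\Im t^*\gtrless 0$ are the natural sectors of validity of the $2/3$-power once one fixes the branch so that $q(t)$ stays real on $\reals$ when $t^*$ is real (consistency with \eqref{eq:basicsAboutq}); concretely one picks the sheet of $(t-t^*)^{2/3}$ so that the expansion matches the already-constructed real solution on the segment of $\reals$ adjacent to $t^*$.

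\medskip
\textbf{Step 3: The behaviour at $\vabs{q^*}\to\infty$.} As $q\to\infty$, $\argf(q)=\frac{q}{3(q+1)(q-a_+)(q-a_-)}\sim\frac{1}{3q^2}$, so $\fh(q)\sim \frac{1}{q}\cdot\frac{1}{\sqrt{3}\,q}=\frac{1}{\sqrt3\,q^2}$, giving
\[
t-t^* = \int_{\infty}^{q}\fh\,d\g \sim -\frac{1}{\sqrt3\,q},
\]
i.e. $q(t)\sim -\frac{1}{\sqrt3\,(t-t^*)}$, a simple pole; the $\bigl(1+\OO(t-t^*)\bigr)$ correction again comes from the subleading terms of the Laurent expansion of $\fh$ at infinity and inversion. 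Finally, one records that $t^*$ must equal $\int_{a_+}^{0}\fh\,d\g$ or $\int_{a_+}^{\infty}\fh\,d\g$ along \emph{some} admissible path $\g\subset\mathscr R_f$, which is exactly \eqref{eq:lemmaIntegralEquationSingularities} specialized to $q^*\in\{0,\infty\}$.

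\medskip
\textbf{Main obstacle.} The delicate point is Step~1: showing that $q^*\in\{\pm1,a_\pm\}$ genuinely \emph{cannot} occur as a finite-time singular value, rather than merely being "candidates". The integral/branch-point heuristic ($G'=0$) only says the inverse function theorem fails for $G$; one must rule out these values by a separate argument — either the ODE-scaling incompatibility sketched above, or (more robustly) by tracking which sheet of $\mathscr R_f$ and which homotopy class of path can actually be reached starting from $a_+$ on the first sheet, and checking that the corresponding limiting values of $G$ are either infinite (for $q^*=1$) or coincide with one of the bona fide singularities already accounted for by $q^*=0$ or $q^*=\infty$. Making the bookkeeping of paths and sheets rigorous — so that "for some path $\g$" is an honest characterization and not an over-counting — is the part that requires genuine care; the remaining local expansions (Steps 2–3) are routine Puiseux-inversion computations.
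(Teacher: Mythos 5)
Your overall architecture coincides with the paper's: reduce the candidate list \eqref{eq:candidatesSingularities} to $q^*=0$ and $\vabs{q^*}\to\infty$, then obtain the local behaviour by Puiseux inversion of $t=G(q)=\int_{a_+}^{q}\fh\,d\g$. Steps 2 and 3 are essentially the paper's computations (the paper fixes the constant $C_1=-\tfrac{3\al}{2}$ by substituting the Puiseux series back into \eqref{eq:odeSeparatrixQ} rather than by tracking the constant through the integral; note also that $a_+a_-=-\tfrac14$, not $-\tfrac12$, so your intermediate constants in Step 2 are off even though you land on $\al^3=\tfrac12$).

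The genuine problem is Step 1, and it stems from a concrete error: at $q^*\in\{-1,a_+,a_-\}$ the function $\argf(q)=\frac{q}{3(q+1)(q-a_+)(q-a_-)}$ has a simple \emph{pole}, not a simple zero (the numerator does not vanish there). Hence $\fh(q)\sim c\,(q-q^*)^{-1/2}$, so $G(q)-G(q^*)\sim 2c\,(q-q^*)^{1/2}$, i.e.\ $(t-t^*)^2$ is an analytic function of $q$ with nonvanishing derivative at $q^*$, and inverting gives $q(t)-q^*=\OO\paren{(t-t^*)^2}$ analytic: these points produce \emph{no} singularity at all, with no further argument needed. This is exactly what the paper does. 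Your exponent $(q-q^*)^{3/2}$, the resulting spurious $(t-t^*)^{2/3}$ branch point, and the "ODE-scaling incompatibility" you then flag as the main obstacle are all artifacts of this sign-of-exponent error; the step you identify as the delicate one is in fact the routine one once the local expansion is done correctly. Your alternative "cleaner packaging" is also built on a false premise: the right-hand side of \eqref{eq:odeSeparatrixQ} \emph{vanishes} at $q^*=-1,a_\pm$ (these are simple roots, i.e.\ turning points where $\qd=0$), so the claim that $\qd^2$ is finite and nonzero there fails, and regularity would instead have to come from the second-order equation $\ddot q=\tfrac12 P'(q)$ — an argument you do not make. The treatment of $q^*=1$ (logarithmic divergence of the integral, so $q=1$ is never attained at finite $t$) is correct and matches the paper.
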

\begin{proof}
To prove this result first we need to analyze all the possible values of $q^*$ that may lead to singularities, (see~\eqref{eq:candidatesSingularities}).
We will use the expressions of $t$ and $t^*$ given in~\eqref{eq:lemmaIntegralEquationGeneral} and~\eqref{eq:lemmaIntegralEquationSingularities}, respectively.

\begin{enumerate}
\item If $|q^*| \to  \infty$, we have
\[
t - t^* = \int^{q(t)}_{\infty} \hat{f} \h d\g.
\]
Then, since
\[
\fh(q) = \frac{1}{\sqrt{3} q^2} + \OO\paren{\frac{1}{q^3}}, \qquad \text{ for }\quad |q|\gg 1,
\]
we obtain
\[
t-t^* = -\frac{1}{\sqrt{3}q} + \OO\paren{\frac{1}{q^2}},
\]
which implies~\eqref{eq:singularitatExpressioInfinit}.
%\begin{equation*}%\label{proof:classificationInfty}
%  q(t) = \pm\frac{1}{\sqrt{3}(t-t^*)}\left( 1 + \OO(t-t^*)^{\beta} \right), \h
%  \text{ for } 0<\vabs{t-t^*} \ll 1.
%\end{equation*}
%
\item If $q^*=a_+$, we have that 
\begin{equation*}\label{proof:integralames}
t - t^* = \int^{q}_{a_+} \fh d\g.
\end{equation*}
The function $\fh$ can be written as 
\[
\hat{f}(q)= \frac{h_{a_+}(q)}{\sqrt{q-a_+}},
\]
for some function $h_{a_+}$ which is analytic and non-zero in a neighborhood of $a_+$. Then,  $h_{a_+}$ can be written as 
$h_{a_+}(q) = \sum_{k=0}^{\infty} c_k (q-a_+)^k$, with $c_0 \neq 0$
and,  for 
$0< \vabs{q-a_+} \ll 1$, we obtain from \eqref{proof:integralames}
\[
t-t^*=\sqrt{q-a_+} \sum_{k=0}^{\infty} \frac{c_k}{k+\frac{1}{2}} (q-a_+)^k,
\]
which implies $(t-t^*)^2=g_{a_+}(q)$ with 
\[
g_{a_+}(q)=(q-a_+)\left( \sum_{k=0}^{\infty} \frac{c_k}{k+\frac{1}{2}} (q-a_+)^{k} \right)^2.
\]
The function $g_{a_+}(q)$ is analytic on a neighborhood of $a_+$ and satisfies $g_{a_+}(a_+)=0$, $g'_{a_+}(a_+)=4 c_0^2 \neq 0$. 
Thus, applying the Inverse Function Theorem, 
\begin{equation}\label{proof:qexpressionZeroes}
q(t)=g_{a_+}^{-1}\left((t-t^*)^{2}\right),\qquad \text{for}\quad 0<\vabs{t-t^*}\ll 1.
\end{equation}
Therefore, $q(t)$ is analytic for $\vabs{t-t^*}\ll 1$.
%
%Therefore, taking $q^*=a_+$ does not produce any singularities. 
%
One can analogously prove that the same happens at $q^*=a_-$ and $q^*=-1$. 

\item The value $q^*=1$ corresponds to the saddle point $(\la,\La)=(0,0)$ of $H_{\pend}$ (see~\eqref{def:HpendHosc}). Indeed, 
\begin{equation}\label{proof:classificationSaddle}
 \int^{q}_{1} \fh d\g\quad \text{is divergent.}
\end{equation}
This implies that $q(t)\neq 1$ for any complex $t$.
% 
% %
% Therefore, 
% \begin{equation}\label{proof:classificationSaddle}
%  \lim_{t \to \pm \infty} q(t) = 1, 
%  \qquad \text{for } t \in \reals,
% \end{equation}
%for $t \in \reals$ and $q(t)$ can be expressed as
%\[
% q(t) \sim \OO(e^{\pm \vap t}) , \h \text{ as } |t| \to +\infty,
%\]
%for $\vap= \sqrt{\frac{21}{8}}$. 
% and the analytic extension to the complex plane of $q(t)$ does not have a singularity when $q(t)=q^*=1$. 

%We have that $G'(q) = \hat{f}(q)$. Then, $G'(q^*)= 0$ only for $q^*=0$.
%For the third case, we have that
%\[
% G'(q) = f(q) = \frac{1}{q-1}\sqrt\frac{q}{3(q+1)(q-a_-)(q-a_+)} = 0 \iff q=0.
%\]
%Thus, we consider the case $q^*=0$. 
\item If $q^*=0$, 
\begin{equation*}%\label{proof:integralZero}
t - t^* = \int^{q}_{0} \fh \h d\g.
\end{equation*}
We can introduce $h_0(q)= \frac{1}{\sqrt{q}} \fh(q)$ which is analytic and of the form $h_0(q)=\sum_{k=0}^{\infty}c_k q^k$ with $c_0 \neq 0$. 
Then, for $0<\vabs{q} \ll 1$, 
we obtain
% \[
% t - t^* = q^{\frac{3}{2}} \sum_{k=0}^{\infty} \frac{c_k}{k+\frac{3}{2}} q^k.
% \]
% Introducing
\begin{equation*}
%\label{proog:functiongZero}
(t-t^*)^{\frac{2}{3}} =g_0(q) = q \left(\sum_{k=0}^{\infty} \frac{c_k}{k+\frac{3}{2}} q^k \right)^{\frac{2}{3}}
= q \paren{\frac{2 c_0}{3} + \OO(q) }^{\frac{2}{3}},
\end{equation*}
% we have that $(t-t^*)^{\frac{2}{3}} = g_0(q)$,
%
where $g_0(q)$ is analytic in a neighborhood of $q=0$ and satisfies $g_0(0)=0$, $g'_0(0) = \paren{\frac{2}{3} c_0}^{\frac{2}{3}} \neq 0$.
Thus, applying the Inverse Function Theorem, 
\begin{equation}
\label{proof:seriesqEnZero}
q(t) = g_0^{-1}\paren{(t-t^*)^{\frac{2}{3}}}
= \sum_{k=1}^{\infty} C_k (t-t^*)^{\frac{2k}{3}},\quad \text{for } 0<|t-t^*|<\ll 1,
\end{equation}
for some $C_k \in \complexs$ and choosing  the Riemann sheet $\arg \paren{t-t^*} \in \paren{-\frac{3\pi}{2},\frac{\pi}{2}}$ for $\Im t^*>0$
and  $\arg \paren{t-t^*} \in \paren{-\frac{\pi}{2},\frac{3\pi}{2}}$
for $\Im t^*<0$.
%
%Then, taking into account that $\lim_{t \to t^*} q(t) = 0$, we have that
%\[
%q(t) = \sum_{k=1}^{\infty} C_k (t-t^*)^{\frac{2k}{3}}, \h \text{ for } 0<\vabs{t-t^*}\ll 1,
%\]
%for some $C_k \in \complexs$. 
Replacing~\eqref{proof:seriesqEnZero} in equation~\eqref{eq:odeSeparatrixQ} we obtain
$
C_1 = -\frac{3\al}{2},
$
where $\al \in \complexs$ satisfies $\al^3=\frac{1}{2}$, which implies~\eqref{eq:singularitatExpressio}.
\end{enumerate}
\end{proof}

\subsection{Singularities closest to the real axis}
\label{subsection:computationSingularities}

Proposition~\ref{proposition:singularitiesSeparatrix} provides the type of singularities that $q(t)$ may posses in its first Riemann sheet.
Then, to prove Theorem~\ref{theorem:singularitiesChangeq}, we look for those singularities which are closest to the real axis. 
To do so, we analyze
\begin{equation}\label{eq:integralsIniciSeccio}
\int_{a_+}^{0} \fh  d\g, \qquad
\int_{a_+}^{\infty} \fh  d\g,
\end{equation}
along all paths $\g \subset \mathscr{R}_f$ with such endpoints
and prove that, for all possible paths, the only singularities in the complex strip $\overline{\Pi_A}$ (see~\eqref{def:PiA})  are $t=\pm iA$.
%
% which implies that $q(t)$ can be extended to .

We introduce the following paths
\begin{equation*}
%\label{def:spacesHq}
\begin{split}
&\PP_0 = \claus{ \g: (s_{\ini},s_{\fin}) \to \mathscr{R}_f \st
\lim_{s \to s_{\ini}} (\proj \g(s), \arg \argf( \g(s)))=(a_+,0), \,
\lim_{s \to s_{\fin}} \proj \g(s)= 0}, \\
&\PP_{\infty} = \claus{ \g: (s_{\ini},s_{\fin}) \to \mathscr{R}_f \st
\lim_{s \to s_{\ini}} (\proj \g(s), \arg \argf( \g(s)))=(a_+,0), \h
\lim_{s \to s_{\fin}}\vabs{\proj \g(s)}= \infty},
\end{split}
\end{equation*}
%where $(*) = \CC^0, \CC^1\text{-piecewise}$ and
with the natural projection $\proj:\mathscr{R}_f\to \complexs$.

We have chosen $\tht_{\ini} := \lim_{s \to s_{\ini}} 
\arg \argf( \g(s)) = 0$ without loss of generality. Indeed, since $q(t) \in [a_+,1)$ for $t \in \reals$ (see~\eqref{eq:basicsAboutq}), it could be either $0$ or $2\pi$. The paths with asymptotic argument $2\pi$ can be analyzed analogously and lead to singularities with opposed signed with respect to those given by paths in $\PP_0$, $\PP_\infty$.
% 
% \textcolor{red}{
% Notice that we have specified the asymptotic argument of the paths at the starting point as
% %
% $\tht_{\ini} := \lim_{s \to s_{\ini}} 
% \arg \argf( \g(s)) = 0.$
% %
% Indeed, since $q(t)$ is the analytic continuation of a real function with $q(t) \in [a_+,1)$ for $t \in \reals$ (see~\eqref{eq:basicsAboutq}), the argument $\tht_{\ini}$ must be either $\tht_0=0$ or $\tht_0=2\pi$.
% %
% However, due to the real-analyticity of $q(t)$ in the interval $[a_+,1)$ both arguments lead to the same results for Theorem~\ref{theorem:singularitiesChangeq}.
% %
% Thus, we assume $\tht_0=0$.}

Furthermore, the asymptotic argument of the paths at its endpoints is not specified since it is given by the path itself.

For a given path $\g \in \PP_0 \cup \PP_{\infty}$,
we can define a path $T[\g]:[s_{\ini},s_{\fin}) \to \complexs$ in the $t$-plane (or, more precisely, on the Riemann surface of $q(t)$) as 
\begin{equation}\label{def:operatorT}
T[\g](s)=  
\int_{s_{\ini}}^{s} \fh(\g(\tau)) \g'(\tau) d \tau, \quad
\text{ for } s \in [s_{\ini},s_{\fin}).
\end{equation}
Note that then the value of the integrals in~\eqref{eq:integralsIniciSeccio} is just
% , we use the following notation  
\begin{equation}\label{def:operatorSingularities}
t^*(\g) =
\lim_{s \to s_{\fin}} T[\g](s) =
\int_{\g} \fh(q) dq.
%\int_{s_0}^{s_f} \fh(\g(z)) \g'(z) dz.
\end{equation}
%
% Notice that, $T[\g]:[s_{\ini},s_{\fin}) \to \complexs$ defines a path in the complex $t$-plane or, more precisely, on the Riemann surface of $q(t)$.
%
Since we are interested in the singularities of $q(t)$ on its first Riemann sheet, we only consider the paths $T[\g]$ which belong to the complex strip $\Pi_A$
(Except, of course, the endpoint of the path $t^*(\g)\in\partial \Pi_A$). 
See Figure~\ref{fig:admisiblePaths}.

%Notice that, from \eqref{eq:lemmaIntegralEquationGeneral}, $T[\g]$ could be multivalued outside the region of analyticity of $q(t)$, yielding to different results. 
%%
%Therefore, we need to impose that the path $T[\g]$ stays on the complex strip $\Pi_A$. (Except, of course, the endpoint of the path, the singularity $t^*(\g)$, which should belong to $\partial \Pi_A$). See Figure~\ref{fig:admisiblePaths}.

\begin{figure}[H] 
	\centering
	\begin{overpic}[scale=0.7]{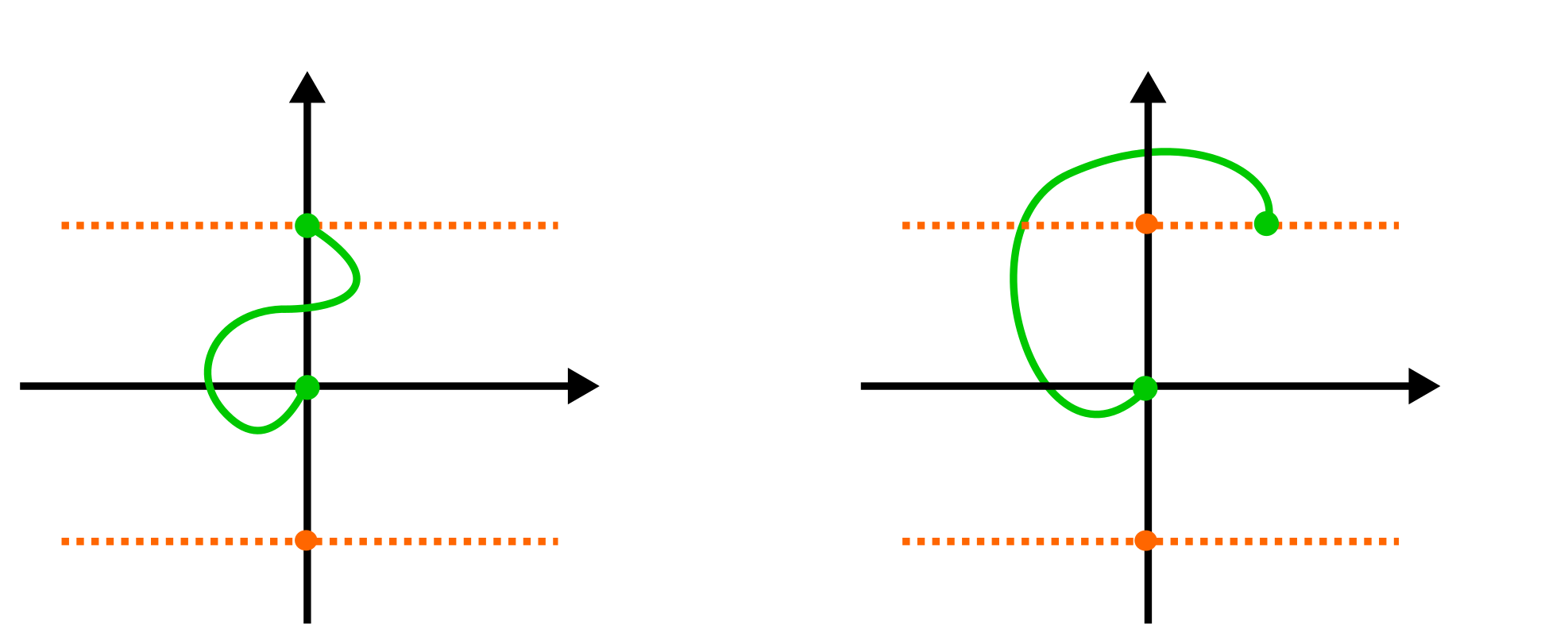}
		\put(21,13){{\color{myGreen} $0$ }}
		\put(74,13){{\color{myGreen} $0$ }}
		\put(15,27){{\color{myOrange} $iA$ }}
		\put(69,27){{\color{myOrange} $iA$ }}
		\put(21,28){{\color{myGreen} $t^*(\g)$ }}
		\put(82,28){{\color{myGreen} $t^*(\g)$ }}
		\put(12,3){{\color{myOrange} $-iA$ }}
		\put(66,3){{\color{myOrange} $-iA$ }}
		\put(30,17.5){{\color{myOrange} $\Pi_A$ }}
		\put(84,17.5){{\color{myOrange} $\Pi_A$ }}
		\put(39,15){$\Re t$}
		\put(94,15){$\Re t$}	
		\put(15,38){$\Im t$}		
		\put(70,38){$\Im t$}
		\put(7,20){{\color{myGreen} $T[\g]$ }}
		\put(58,22){{\color{myGreen} $T[\g]$ }}	
	\end{overpic}
	\caption{Example of paths $\g \in \PP_{0}\cup \PP_{\infty} $. 
	Left: $T[\g]\subset \Pi_A$ and $t^*(\g)=iA$.  
	Right: $T[\g] \not\subset {\Pi_A}$.}
% 	gives rise to an additional singularity $t^*(\g) \in \partial \Pi_A$ by encircling singularity $iA$.}
	\label{fig:admisiblePaths}
\end{figure}

The following definition  characterizes the paths that we consider.
\begin{definition}\label{definition:visibleSingularity}
We say that a singularity $t^*$ of $q(t)$ is \emph{visible} if there exists a path $\g \in \PP_{0} \cup \PP_{\infty}$ such that
\begin{itemize}
	\item $t^*=t^*(\g)$,
	\item $T[\g](s) \in \Pi_A, \,$ for  $s\in[s_{\ini},s_{\fin})$.
\end{itemize}
\end{definition}

\begin{remark}
	In~\cite{SimoGelf08}, the authors use a different definition of visible singularity: $t^* \in \complexs$ is considered a visible singularity if $q(t)$ can be continued from the real axis and then along the vertical line with a path of the form
	\[
	\zeta(t) = \Re t^* + i t, \qquad \text{for} \quad  t \in [0,\Im{t^*}).
	\]
	This condition on the paths is more restrictive than merely imposing that $T[\g] \subset \Pi_{\upsilon}$ for $\upsilon=\Im t^*$. 
	However, to compute $t^*(\g)$, they are equivalent since both paths belong to $\Pi_{\upsilon}$.
	%
	%Nonetheless, we have opted for a less strict condition as it facilitates the task of checking that the paths selected are admissible.
\end{remark}

Theorem~\ref{theorem:singularitiesChangeq} is a consequence of Proposition~\ref{proposition:singularitiesSeparatrix} and the following result.
\begin{proposition}\label{proposition:camins}
There exist two paths $\g_{\pm} \in \PP_{0}$ yielding the visible singularities  $t^*(\g_{\pm})=\pm iA$.
Moreover, these are the only two visible singularities of $q(t)$.
\end{proposition}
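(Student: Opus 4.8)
The plan is to reduce the statement to a concrete complex-analytic computation on the Riemann surface $\mathscr{R}_f$, in three stages: first control the paths $\g\in\PP_0\cup\PP_\infty$ that stay inside $\overline{\Pi_A}$, then evaluate the candidate integrals explicitly, and finally verify that the two candidates $\pm iA$ are actually realized by admissible paths while no competing candidate lies closer to the real axis. The key observation is that, by Proposition~\ref{proposition:singularitiesSeparatrix}, every singularity of $q(t)$ is of the form $t^*(\g)=\int_{a_+}^{0}\fh\,d\g$ or $\int_{a_+}^{\infty}\fh\,d\g$ for some path $\g\subset\mathscr{R}_f$, so the whole problem becomes: among all such path-integrals, which ones have minimal imaginary part in absolute value, and which are visible in the sense of Definition~\ref{definition:visibleSingularity}?

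First I would treat the ``obvious'' path: let $\g_+\in\PP_0$ be the straight segment $[a_+,0]$ traversed along the real axis on the first Riemann sheet (so $\arg\argf(q)$ stays at $0$ for $q\in(a_+, ?)$ and picks up the appropriate phase near the endpoints). On this segment $\argf(q)=\frac{q}{3(q+1)(q-a_+)(q-a_-)}$; one checks that for $q\in(0,a_+)$ the product $(q+1)(q-a_+)(q-a_-)$ is negative (since $q<a_+$, $q>a_-$, $q>-1$), while $q>0$, so $\argf(q)<0$ and $\sqrt{\argf(q)}$ is purely imaginary; combined with $q-1<0$ this makes $\fh(q)$ purely imaginary of a fixed sign along the segment. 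Integrating gives
\[
t^*(\g_+)=\int_{a_+}^{0}\fh\,d\g = i\int_0^{a_+}\frac{1}{1-x}\sqrt{\frac{x}{3(x+1)(a_+-x)(x-a_-)}}\,dx = iA,
\]
which is exactly \eqref{def:integralA}; and $T[\g_+](s)$ traces the segment $[0,iA]$, which lies in $\overline{\Pi_A}$ with only its endpoint on $\partial\Pi_A$, so $iA$ is visible. The conjugate path on the $\tht_{\ini}=2\pi$ sheet (or simply complex conjugation, using that $\s$ is real-analytic) gives $\g_-$ with $t^*(\g_-)=-iA$.

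The main obstacle is the second half: showing these are the \emph{only} visible singularities, i.e. that no path $\g\in\PP_0\cup\PP_\infty$ with $T[\g]\subset\Pi_A$ reaches a point of $\partial\Pi_A$ other than $\pm iA$, and that the path to $\infty$ never produces a singularity with $|\Im t^*|<A$. For this I would argue as follows. Decompose an arbitrary admissible path into its homotopy class relative to the branch points $\{0,-1,a_\pm,1,\infty\}$ of $\fh$; since $T[\g]$ must stay in $\Pi_A$, and $\fh$ has only square-root branching at $-1,a_\pm$ (harmless by Proposition~\ref{proposition:singularitiesSeparatrix}, item 2), a cube-root type obstruction at $0$, and a logarithmic-type divergence at $1$ (by \eqref{proof:classificationSaddle}, which forbids encircling $q=1$ without the integral diverging, hence leaving $\Pi_A$). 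One then reduces to finitely many homotopy classes of paths from $a_+$ to $0$ or $\infty$ that avoid winding around $q=1$, and for each computes (or estimates from below) $|\Im t^*(\g)|$ — using that any loop around $\{-1,a_-,a_+\}$ contributes a purely real period (by a residue/symmetry argument on $\fh$, since the corresponding branch cut is on the real axis), so such loops change only $\Re t^*$, not $\Im t^*$; loops around $0$ or excursions toward $\infty$ are shown to overshoot $A$ in imaginary part by a direct estimate of $\int|\fh|$ along the relevant arcs. The upshot is that the infimum of $|\Im t^*(\g)|$ over all admissible $\g$ equals $A$ and is attained only by $\g_\pm$. I expect the bookkeeping of homotopy classes and the lower bounds on the ``long'' integrals to be the technically heaviest part; the explicit evaluation giving $A$ is routine once the correct sheet and sign are pinned down, and reversibility \eqref{def:involutionScaling} halves the work by pairing the $\pm$ cases.
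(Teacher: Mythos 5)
Your construction of $\g_\pm$ and the evaluation $t^*(\g_\pm)=\pm iA$, together with the observation that $T[\g_\pm]$ traces the segment from $0$ to $\pm iA$ and hence stays in $\Pi_A$ until the endpoint, is correct and is essentially the paper's Lemma~\ref{lemma:caminsZeroSimple}. The gap is in the exhaustiveness argument, and it is not a technicality: the two structural claims it rests on are false. First, the periods of $\fh$ around pairs of branch points on the negative real axis are \emph{not} real. On $(a_-,-1)$ one has $q<0$, $q+1<0$, $q-a_+<0$, $q-a_->0$, so $\argf(q)<0$ and $\fh$ is purely imaginary there; the cycle encircling $\{a_-,-1\}$ therefore has a nonzero, purely \emph{imaginary} period (this is exactly the quantity $-iB(q_1)$ computed in Lemma~\ref{lemma:caminsBranca2}), and likewise the cycle around $\{0,a_+\}$ has period $\pm 2iA$. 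So loops in the homotopy classes you declare harmless do shift $\Im t^*$, and ruling them out requires showing that each such excursion costs at least $A$ in imaginary part --- which is precisely the content of Lemmas~\ref{lemma:caminsNegatius1} and~\ref{lemma:caminsBranca2} (reaching $(-1,0)$ already costs exactly $A$; reaching $(a_-,-1)$ costs $A+B(q_1)>A$). Second, encircling the pole $q=1$ does not make the integral diverge: $\fh$ has a simple pole there, and a loop around it contributes the finite period $2\pi i\,\Res(\fh,(1,0))=2\pi i\sqrt{2/21}$. The divergence in~\eqref{proof:classificationSaddle} concerns integrating \emph{up to} $q=1$, not around it. Such paths are excluded quantitatively: any path crossing $(1,+\infty)$ picks up the half-residue $\pi\sqrt{2/21}\approx 0.97>A\approx 0.178$ already at its first real-axis crossing (Lemma~\ref{lemma:caminsPol}), and paths escaping to $q=\infty$ without crossing $\reals$ pick up the same amount (Lemma~\ref{lemma:caminsInfinitSimple}).

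A second, smaller issue: a lower bound on $|\Im t^*(\g)|$ cannot come from ``a direct estimate of $\int|\fh|$'', which only bounds the integral from above. What the paper does instead is deform each path, via Cauchy's theorem on $\mathscr{R}_f$, onto a canonical path hugging the real axis, on which $\fh$ is alternately real and purely imaginary segment by segment; $\Im t_1(\g)$ is then computed exactly as a sum of explicit real integrals and residue contributions, and non-visibility follows from $|\Im t_1(\g)|\geq A$ at the first real-axis crossing (Remark~\ref{remark:pathsCrossingReals}). Your organizing principle (classification by homotopy class, plus the explicit evaluation of the distinguished integral) is the same as the paper's, but the case analysis must be redone with the correct, imaginary, periods; that bookkeeping is where essentially all of the work of Sections~\ref{subsubsection:pathsB} and~\ref{subsubsection:pathsC} lies.
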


We devote the rest of this section to prove Proposition~\ref{proposition:camins}. 
Let us introduce some tools and considerations to simplify  the analysis of the integrals in~\eqref{eq:integralsIniciSeccio}.
\begin{itemize}
\item If $\g \subset \mathscr{R}_f$ is an integration path then, defining $\eta=\conj{\g}$\footnote{
We define the conjugation on a Riemann surface as the natural continuation of the conjugation in the complex plane. That is, for $z=(x,\tht) \in \mathscr{R}_f$, its conjugated is  $\conj{z}=(\conj{x},-\tht)\in \mathscr{R}_f$.}
, we have that
\begin{equation}\label{eq:reflection}
\int_{\eta} \fh(q) dq = \conj{\int_{\g} \fh(q) dq}.
\end{equation}
\item Notice that the paths considered cannot contain the singularities of $\fh$ (except in their endpoints) since $0 ,a_{\pm}, \pm 1 \notin \mathscr{R}_f$.
\item When saying that a path $\g \in \PP_{0} \cup \PP_{\infty}$ crosses $\reals$, we refer to the two lines whose complex projection onto $\mathscr{R}_f$ coincide with $\reals$. Analogously for any other interval.
\item Instead of detailing the paths $\g  \in \PP_{0} \cup \PP_{\infty}$, 
we only describe their projections $\proj \g \subset \complexs$.
This omission makes sense since paths $\g$ are continuous on $\mathscr{R}_f$ and, as a result, $\arg \argf(\g)$ must be continuous as well (see~\eqref{eq:fcompleteanalytic}).
Therefore, we can let the natural arguments of $\proj \g$ and the initial point $(a_+,0)$ of the path define $\arg \argf(\g)$. 
\end{itemize}

To prove Proposition~\ref{proposition:camins}, we classify the paths as follows.
\begin{enumerate}[label*=\Alph*.]
	\item Paths not crossing $\reals$:
	\begin{enumerate}[label*=\arabic*.]
		\item Paths in $\PP_{0}$ not crossing $\reals$: Lemma~\ref{lemma:caminsZeroSimple}.
		\item Paths in $\PP_{\infty}$ not crossing $\reals$: Lemma~\ref{lemma:caminsInfinitSimple}.
	\end{enumerate}
	\item Paths first crossing the real axis at $\reals \setminus [0,1]$:
	\begin{enumerate}[label*=\arabic*.]
		\item First crossing of $\reals$ at $(1,+\infty)$:  Lemma~\ref{lemma:caminsPol}.
		\item First crossing of $\reals$ at $(-\infty,a_-)$: Lemma~\ref{lemma:caminsNegatius2}.
		\item First crossing of $\reals$ at $(-1,0)$: Lemma~\ref{lemma:caminsNegatius1}.
		\item First crossing of $\reals$ at $(a_-,-1)$: Lemma~\ref{lemma:caminsBranca2}.
	\end{enumerate}
	\item Paths first crossing the real axis at $(0,1)$: 
	\begin{enumerate}[label*=\arabic*.]
		\item Paths in $\PP_{0}$ only crossing $\reals$ at $(0,1)$: Lemma~\ref{lemma:caminsBranca1a}.
		\item Paths in $\PP_{\infty}$  only crossing $\reals$ at $(0,1)$: Lemma~\ref{lemma:caminsBranca1b}.
		\item Paths also crossing $\reals \setminus [0,1]$: Lemma~\ref{lemma:caminsBranca1c}.
	\end{enumerate}
\end{enumerate}

\subsubsection{Paths not crossing the real axis}
In this section, we check the singularities resulting from the paths A.1 and A.2. 
% (on the list in Section~\ref{subsection:computationSingularities}).

\begin{lemma} \label{lemma:caminsZeroSimple}
There exist only two singularities, $t^*_{1,\pm}$, given by the paths $\g \in \PP_{0}$ not crossing the real axis. These singularities are  visible and
\[
 t^*_{1,\pm} = \mp iA,
\]
with $A$  defined in~\eqref{def:integralA} and satisfying $A \in  \left[\frac{3}{50}, \frac{3}{10}\right]$.
%as
%\begin{equation}\label{def:integralALemma}
%A= \int_0^{a_+} \frac{1}{1-x}\sqrt\frac{x}{3(x+1)(a_+-x)(x-a_-)}  dx
%\in \left[\frac{3}{50}, \frac{3}{10}\right].
%%\text{ with } a_{\pm}=-\frac{1}{2}\pm\frac{\sqrt{2}}{2}.
%\end{equation}
%Moreover, there are visible singularities.
%exist a path $\g_2 \in  \HH_{q,0}$ such that $T[\g_2] \in \HH_{t}$.
%As well, there exist a specific integration path ${\g_{2,\pm} \in \HH_{q,0}}$ such that $T[\g_{2,\pm}] \in \HH_t$  is a visible path.
\end{lemma}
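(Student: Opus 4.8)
The plan is to collapse the whole family of non‑crossing paths to a single integral by a homotopy argument, evaluate that integral, and then read off visibility and the numerical bounds. First, by the reflection identity~\eqref{eq:reflection} it suffices to treat paths $\g\in\PP_0$ that, apart from their endpoints $a_+$ and $0$, lie in the open upper half‑plane $\{\Im q>0\}$; the remaining non‑crossing paths in $\PP_0$ lie in the lower half‑plane, are the conjugates $\bar\g$ of the former, and by~\eqref{eq:reflection} give the conjugate singularities. The key structural fact is that every branch point of $\fh$ (namely $0,-1,a_+,a_-$; see~\eqref{eq:fcompleteanalytic}) and its only pole ($q=1$) lies on $\reals$. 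Hence $\fh$, with the branch determined by $\arg\argf=0$ on $(a_+,1)$, is single‑valued and holomorphic on the region $\{\Im q\ge 0\}\setminus\{-1,a_-,1\}$, which is simply connected because a loop contained in the closed upper half‑plane cannot enclose a boundary point. By Cauchy's theorem $\int_\g\fh\,dq$ is then independent of the path within this class, equal to the integral along the path collapsed onto the real segment $[a_+,0]$ approached from the upper half‑plane.

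To evaluate that integral I would track the branch: continuing $\arg\argf$ from $(a_+,1)$ around the branch point $a_+$ through the upper half‑plane onto $(0,a_+)$, the factor $(q-a_+)^{-1}$ forces $\arg\argf$ to drop by $\pi$, so $\arg\argf=-\pi$ and $\sqrt{\argf(q)}=-i\,\vabs{\argf(q)}^{1/2}$ on $(0,a_+)$; since also $q-1<0$ there, this yields
\[
\int_{a_+}^{0}\fh(q)\,dq = -i\int_{0}^{a_+}\frac{1}{1-x}\sqrt{\frac{x}{3(x+1)(a_+-x)(x-a_-)}}\,dx = -iA ,
\]
with $A$ exactly the constant in~\eqref{def:integralA}. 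Hence the non‑crossing paths produce precisely $t^*_{1,+}=-iA$ (upper half‑plane) and $t^*_{1,-}=iA$ (lower half‑plane, via~\eqref{eq:reflection}), and, by the $q^*=0$ case of Proposition~\ref{proposition:singularitiesSeparatrix}, these are genuine branching singularities of $q(t)$ of order $\tfrac23$. Visibility in the sense of Definition~\ref{definition:visibleSingularity} is immediate from the same collapsed path: along $[a_+,0]$ the one‑form $\fh\,d\g$ is a fixed‑sign purely imaginary multiple of a positive density, so $T[\g]$ (see~\eqref{def:operatorT}) moves monotonically down (resp.\ up) the imaginary axis from $0$ to $\mp iA$, and stays in $\Pi_A$ for every $s<s_{\fin}$. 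The bound $A\in[\tfrac{3}{50},\tfrac{3}{10}]$ will then follow from elementary estimates of the integrand on $[0,a_+]$, with $a_+=\tfrac{\sqrt2-1}{2}$: bounding $\tfrac1{1-x}$, $(x+1)$ and $(x-a_-)$ above and below by their values at the endpoints of $[0,a_+]$ reduces $A$, up to explicit constants, to $\int_0^{a_+}\sqrt{\tfrac{x}{a_+-x}}\,dx=\tfrac{\pi a_+}{2}$, which lands comfortably inside the claimed interval.

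The one delicate point is the homotopy/Cauchy step. A priori a non‑crossing path can wander far into the half‑plane and touch the real half‑lines $(1,+\infty)$, $(-\infty,a_-)$, $(-1,0)$, and one must be sure that such excursions neither change the chosen branch inconsistently nor enclose the pole at $q=1$, around which $\fh$ has a simple pole with residue $\sqrt{\argf(1)}\neq 0$ (cf.~\eqref{proof:classificationSaddle}). Precisely this is ruled out by the simple‑connectedness of $\{\Im q\ge 0\}\setminus\{-1,a_-,1\}$: no loop staying in the closed upper half‑plane can wind around the boundary point $q=1$, so every non‑crossing path in $\PP_0$ is homotopic, within that region, to the collapsed segment, and everything else is a routine computation.
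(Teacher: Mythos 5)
Your argument is correct and follows essentially the same route as the paper: reduce to upper half-plane paths by the conjugation identity, use that all branch points and the pole of $\fh$ are real so that Cauchy's theorem collapses every non-crossing path onto the segment $[0,a_+]$, track the argument of $\argf$ around $a_+$ to get the value $-iA$, verify visibility from the monotone decrease of $\Im T[\g]$, and bound $A$ by elementary endpoint estimates of the integrand. The only cosmetic slip is that your domain of single-valued holomorphy should also exclude the branch points $0$ and $a_+$ (or simply be the open upper half-plane), but this does not affect the argument.
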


\begin{proof}
Let us consider paths $\g \in \mathcal{P}_0$ such that $\proj \g \subset \complexs^+ = \claus{\Im z>0}$. 
Notice that, since $\g$ does not cross the real axis, it does not encircle any singularity of $\fh$ and, by Cauchy's Integral Theorem, all the paths considered generate the same singularity $t_{1,+}^*$. 
The singularity $t^*_{1,-}$ is given by the conjugated paths (see~\eqref{eq:reflection}).

Let us consider the path $\g_*=\g_{*}^1 \vee \g_{*}^2 \vee \g_{*}^3$ with
\begin{equation}\label{proof:pathgamma1}
\begin{cases}
\g_{*}^1(q) = (q,0) & \text{ with } q \in (a_+,a_+ + \e], \\
\proj \g_{*}^2(\phi) = a_+ + \e e^{i\phi} & \text{ with } \phi \in [0,\pi], \\
\proj \g_{*}^3(q) = q  & \text{ with } q \in [a_+ -\e,0), \\
\end{cases}
\end{equation}
for $\e>0$ small enough,
(see Figure~\ref{fig:caminsZeroSimpleSuperior}).
\begin{figure}
	\centering
	\begin{overpic}[scale=0.8]{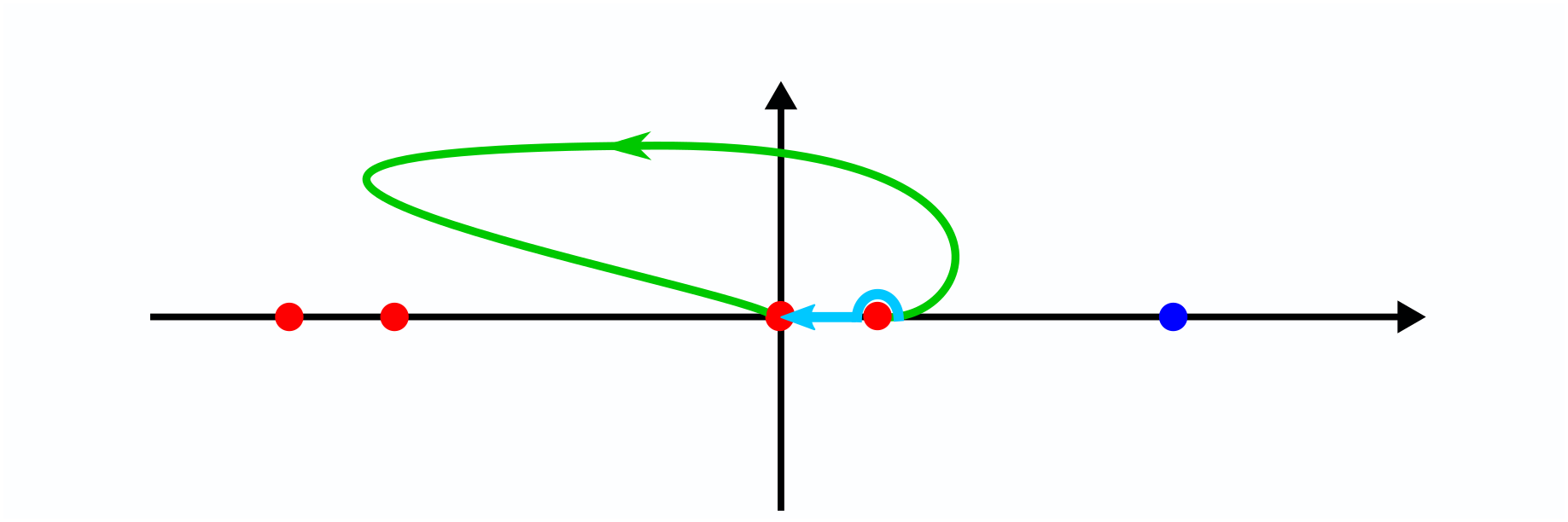}
		\put(48,9.5){{\color{red} $0$ }}	
		\put(56,9.5){{\color{red} $a_+$ }}
		\put(75,9.5){{\color{blue} $1$ }}
		\put(25,9.5){{\color{red} $-1$ }}
		\put(17,9.5){{\color{red} $a_-$ }}
		\put(93,12){$\Re q$}
		\put(47,30){$\Im q$}
		\put(39,26){{\color{myGreen} $\proj \g$ }}	
		\put(52,16){{\color{myBlue} $\proj \g_*$ }}
	\end{overpic}
	\caption{Example of a path $\g \in \PP_{0}$ such that $\proj \g \subset \complexs^+$. Path $\g_*$ as defined in~\eqref{proof:pathgamma1}.}
	\label{fig:caminsZeroSimpleSuperior}
\end{figure}
%
%Integrating $\fh$ along path $\g$ is equivalent to integrating along $\g_*$.
%
Then, the resulting singularity is
\[
t^*_{1,+} 
= t^*(\g_*) 
= \int_{\g_*} \fh(q) dq
= \sum_{j=1}^3 \int_{\g_{*}^j} \fh(q) dq.
\]
%for all values of $\e>0$. 
%
%We perform an analysis of the arguments of $\g_{*}$ with respect the remaining three branching points.
%We have that
%
Since $\int_{\g_*^j} \fh(q) dq = \OO(\sqrt{\e})$ for $j=1,2$,  taking the limit $\e \to 0$, we have
\[
t_{1,+}^* = \lim_{\e \to 0} \int_{\g_*^3} \fh(q) dq.
\]
%Since $\g_*\in \mathcal{P}_0$, the initial point of the path is $(a_+,0)$.
Then, by following the natural arguments of the path $\g_*$, we obtain
\[
\begin{array}{llll}
\arg(\g_{*}^3) = 0, & 
\arg(\g_{*}^3-a_+) = \pi, &
\arg(\g_{*}^3+1) = 0, & 
\arg(\g_{*}^3-a_-) = 0,
\end{array} 
 \]
and, as  a consequence, by the definition of $A$ in~\eqref{def:integralA}, we have
\begin{equation}\label{proof:singularityt1mes}
 t^*_{1,+}  = 
  \lim_{\e \to 0} \int_{\g_*^3} \fh(q) dq =
 \int_{a_+}^0 \frac{1}{x-1}\sqrt\frac{x}{3(x+1) \vabs{x-a_+} e^{i\pi}(x-a_-)} dq  = -iA.
\end{equation}
%where $A$ is the real-valued integral defined on~\eqref{def:integralALemma}. 
%
Moreover,
\begin{align*}
	A \leq & \frac{\sqrt{a_+}}{(1-a_+)\sqrt{3\vabs{a_-}}} 
	\int_0^{a_+} \frac{dx}{\sqrt{a_+-x}}
	= \frac{4\sqrt{3}}{3}\frac{a_+^{3/2}}{1-a_+}
	\leq \frac{3}{10}, \\
	A \geq & \frac{1}{\sqrt{3a_+(a_+ + 1)(a_+ -a_-)}}
	\int_0^{a_+} \sqrt{x} \, dx 
	= \frac{2 \sqrt[4]{74}}{9} {a_+^{3/2}}
	\geq \frac{3}{50}.
\end{align*}
Now, it just remains to see that $t_{1,+}^*$ is visible, namely, we check that $T[\g_*] \subset \Pi_A$. 
%That is, we need to see that $\vabs{\Im T[\g^2_{*}](\tau)} < A$ for $\tau \in (0,a_+)$. 
Indeed, for $s \in (0,a_+]$, we have that
\[
\vabs{\Im T[\g_*](s)} =  
\int_{s}^{a_+} \frac{1}{1-x}\sqrt\frac{x}{3(x+1)(a_+-x)(x-a_-)} dx
< A.
\]
\end{proof}

\begin{lemma} \label{lemma:caminsInfinitSimple}
	There exist only two singularities, $t_{2,\pm}^*$, resulting from the paths ${\g \in \PP_{\infty}}$ not crossing the real axis. These singularities are not visible and have imaginary part
	\[
	\Im (t_{2,\pm}^*) = \mp \pi \sqrt{\frac{2}{21}}.
	\]
\end{lemma}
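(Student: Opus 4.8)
The proof will follow the template of Lemma~\ref{lemma:caminsZeroSimple}: first reduce to essentially two paths, then evaluate the resulting singularities by a residue computation, and finally compare with $A$. For Step~1, I would observe that if $\g\in\PP_\infty$ has projection $\proj\g$ contained in the open upper half–plane $\complexs^+$, then it encircles none of the branch points $a_-,-1,0,a_+$ nor the pole $q=1$ of $\fh$. Since $\complexs^+$ is simply connected and $\fh=\sqrt{\argf}/(q-1)$ is holomorphic there on the first sheet, and since $\fh(q)=\tfrac{1}{\sqrt3\,q^{2}}+\OO(q^{-3})$ as $|q|\to\infty$ (so the improper integral converges independently of the direction of approach to $\infty$), Cauchy's theorem gives that all such $\g$ yield the same value $t_{2,+}^{*}=\int_{\g}\fh\,dq$. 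Paths with $\proj\g\subset\complexs^-$ are the conjugates of these, hence by~\eqref{eq:reflection} they yield $t_{2,-}^{*}=\conj{t_{2,+}^{*}}$. As every path in $\PP_\infty$ not crossing $\reals$ lies (after its initial point) in one of the two half–planes, exactly the two singularities $t_{2,\pm}^{*}$ arise.

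For Step~2 I would compute $t_{2,+}^{*}$ by deforming a representative upper path to the limiting path that hugs the segment $(a_+,\infty)$ from above, exactly as $\g_*$ in the proof of Lemma~\ref{lemma:caminsZeroSimple}: along $\reals$ from $a_+$ to $1-\e$, then over the pole along the upper semicircle $q=1+\e e^{i\phi}$ with $\phi:\pi\to0$, then along $\reals$ from $1+\e$ to $+\infty$. Continuing the square root from its initial value $\arg\argf=0$, one has $\sqrt{\argf(q)}>0$ on $(a_+,1)$ (all factors of $\argf$ are positive), while $q-1<0$ there, so $\fh<0$; crossing the pole does not alter the branch of the square root, so $\sqrt{\argf(q)}>0$ also on $(1,\infty)$, where now $q-1>0$ and $\fh>0$. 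Both improper integrals converge ($\fh\sim(q-a_+)^{-1/2}$ near $a_+$, $\fh\sim q^{-2}$ near $\infty$) and are real, so together they give a real contribution. The pole at $q=1$ is simple with
\[
\Res_{q=1}\fh(q)=\sqrt{\tfrac{1}{3\cdot2\cdot(1-a_+)(1-a_-)}}=\sqrt{\tfrac{2}{21}},
\]
using $a_++a_-=-1$ and $a_+a_-=-\tfrac14$, so $(1-a_+)(1-a_-)=\tfrac74$. Since the semicircle is traversed clockwise through angle $\pi$, in the limit $\e\to0$ it contributes $-i\pi\sqrt{2/21}$. Hence $t_{2,+}^{*}=r-i\pi\sqrt{2/21}$ with $r\in\reals$, and therefore $\Im(t_{2,\pm}^{*})=\mp\pi\sqrt{2/21}$.

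For Step~3, note that $\pi\sqrt{2/21}>\tfrac{3}{10}\ge A$ by the bound on $A$ from Lemma~\ref{lemma:caminsZeroSimple}, so $t_{2,\pm}^{*}\notin\ol{\Pi_A}$. Since for any such $\g$ the curve $T[\g]$ is continuous with $T[\g](s_{\ini})=0\in\Pi_A$ and $\lim_{s\to s_{\fin}}T[\g](s)=t_{2,\pm}^{*}$, it must leave $\Pi_A$; hence neither singularity is visible in the sense of Definition~\ref{definition:visibleSingularity}. The main obstacle here is purely bookkeeping: correctly fixing the branch of $\sqrt{\argf}$ along the path and the orientation of the semicircle over the pole (these together pin down the sign of the imaginary part), together with the justification of the limiting deformation, which rests on the integrability of $\fh$ at $a_+$ and at $\infty$ observed above.
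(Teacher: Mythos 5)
Your proposal is correct and follows essentially the same route as the paper: reduce all upper-half-plane paths to the canonical path hugging $(a_+,\infty)$ via Cauchy's theorem and the $q^{-2}$ decay at infinity, observe the real segments contribute only real values, extract $\Im t^*_{2,+}=-\pi\Res(\fh,(1,0))=-\pi\sqrt{2/21}$ from the clockwise semicircle over the pole, and conclude non-visibility from $\pi\sqrt{2/21}>3/10\ge A$. Your explicit residue computation and branch bookkeeping are correct and merely fill in details the paper leaves implicit.
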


\begin{proof}
Let us consider paths $\g \in \PP_{\infty}$ such that $\proj \g \subset \complexs^+$. Then, since $\fh(q)$ decays with a rate of $\vabs{q}^{-2}$ as $|q| \to \infty$,  all paths considered generate the same singularity, $t^*_{2,+}$.
The singularity $t^*_{2,-}$ is given by the conjugated paths.

Let us consider the path $\g_*=\g_{*}^1 \vee \g_{*}^2 \vee \g_{*}^3$ where
\begin{equation}\label{proof:pathgamma2}
\begin{cases}
\g_{*}^1(q) = (q,0) & \text{ with } q \in (a_+,1-\e], \\
\proj \g_{*}^2(\phi) = 1 + \e e^{i\phi} & \text{ with } \phi \in [\pi,0], \\
\proj \g_{*}^3(q) = q  & \text{ with } q \in [1+\e,+\infty), \\
\end{cases}
\end{equation}
for any small enough $\e>0$. (See Figure~\ref{fig:caminsInfinitSimpleSuperior}).
Then, the resulting singularity is
\[
t_{2,+}^* = t^*(\g_*) 
= \sum_{j=1}^3
\int_{\g_{*}^j} \fh(q) dq .
\]
\begin{figure}[H] 
	\centering
	\begin{overpic}[scale=0.8]{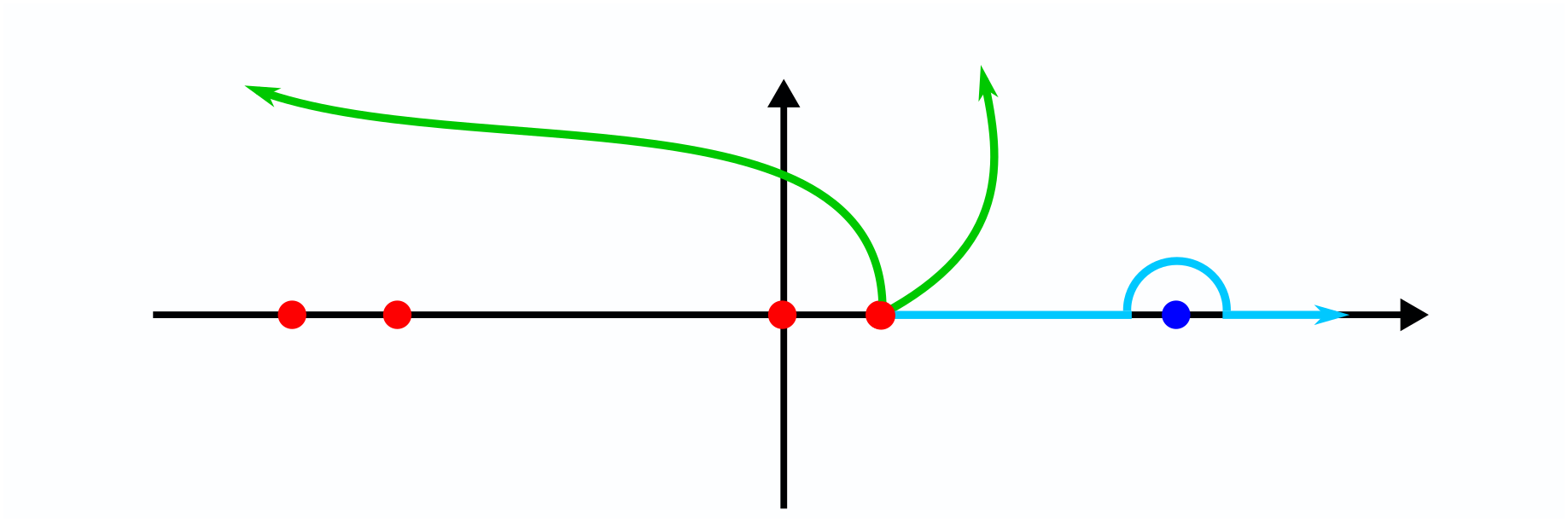}
		\put(48,9.5){{\color{red} $0$ }}	
		\put(56,9.5){{\color{red} $a_+$ }}
		\put(75,9.5){{\color{blue} $1$ }}
		\put(25,9.5){{\color{red} $-1$ }}
		\put(17,9.5){{\color{red} $a_-$ }}
		\put(93,12){$\Re q$}
		\put(47,30){$\Im q$}
		\put(56,20){{\color{myGreen} $\proj\g$ }}	
		\put(66,16){{\color{myBlue} $\proj \g_*$ }}
	\end{overpic}
	\caption{Example of a path $\g \in \PP_{\infty}$ such that $\proj \g \subset \complexs^+$. Path $\g_*$ as defined in~\eqref{proof:pathgamma2}.}
	\label{fig:caminsInfinitSimpleSuperior}
\end{figure}
Since $\fh(q)\in \reals$ when $\proj(q) \in (a_+,1)\cup(1,+\infty) \subset \reals$, the integrals on $\g_{*}^1$ and $\g_{*}^3$ take real values.
Therefore, $\g_{*}^2$ is the only path that contributes to the imaginary part to the singularity.
Notice that the path $\g_*^2$ partially encircles the pole $q=(1,0)$ of $\fh(q)$. Then, one has
\[
\Im (t_{2,+}^*)  = 
\Im \int_{\g_{*}^2} \fh(q) dq = 
- \pi \Res \paren{\fh,(1,0)} =
-\pi \sqrt{\frac{2}{21}}.
\]
%Seeing that, $\Res\paren{\fh,(1,0)}=\sqrt{\frac{2}{21}}$, one deduce
%	\[
%	\Im (t_{2,+}^*)  = 
%	\Im  \int_{\g_{*}} \fh(q) dq  = 
%	%\int_{\g_{1,b}} f = 
%	- \pi \sqrt{\frac{2}{21}}.
%	\]
Since, by Lemma \ref{lemma:caminsZeroSimple},  $|\Im(t^*_{2,+})|>A$, the singularity $t^*_{2,+}$ is not visible.
\end{proof}

\begin{remark} \label{remark:quatreSingularitiest1}
Using mathematical software, one can see that the singularities $t^*_{2,\pm}$ in Lemma~\ref{lemma:caminsInfinitSimple} satisfy
$t^*_{2,\pm} \approx -0.086697 \mp 0.969516i$.
\end{remark}

\subsubsection{Paths first crossing the real axis at \texorpdfstring{$\reals \setminus [0,1]$}{outside [0,1]}}
\label{subsubsection:pathsB}

In this section, we continue the proof of Proposition~\ref{proposition:camins} by checking that
the singularities generated by paths B.1 to B.4 (see the list in Section~\ref{subsection:computationSingularities}) are not visible.

First, we introduce some concepts. Let us consider a path $\g \in \PP_{0}\cup \PP_{\infty}$. 
We define the \emph{parameter of the first crossing} of the real line as
\[
 s_1(\g)= \inf \{s \in(s_{\ini},s_{\fin}) \st \Im \proj \g(s)=0 \},
\]
the \emph{location of the first crossing} as
\[
q_1(\g)= \proj \g(s_1(\g)) \in \reals \setminus \{ a_-, -1, 0, a_+, 1\},
\]
%Therefore, paths $\g$ can be classified depending on the region of the first crossing $q_1(\g)$.
%
the \emph{piece of the path} before the first crossing of the real line as
\[
\g_1(\g) = \claus{\g(s) \st s \in (s_{\ini},s_1(\g))},
\]
and the \emph{time of the first crossing} as
\begin{equation*}
%\label{def:t1FirstCrossing}
t_1(\g) = \int_{\g_1(\g)} \fh(q) dq.
\end{equation*}
In the following lemmas, we focus on the paths that stay in $\complexs^+$ until the first crossing of the real line, that is $\proj \g_1(\g) \subset \complexs^+$
(see~\eqref{eq:reflection} for the conjugate paths, i.e.  $\proj \g_1(\g) \subset \complexs^-$). 
%
%Notice that, if $t^*(\g)$ is not visible, neither is $\conj{t^*(\g)}$.
%
%
\begin{remark}\label{remark:pathsCrossingReals}
	To prove that a singularity $t^*(\g)$ is not visible (see Definition~\ref{definition:visibleSingularity}) it is sufficient to check that $\vabs{\Im t_1(\g)} \geq A$.
	%
%	We use this fact extensively in the proofs of the following lemmas.
\end{remark}

\begin{lemma} \label{lemma:caminsPol}
The singularities $t^*(\g)$ given by paths $\g \in \PP_{0} \cup \PP_{\infty}$ such that 
$q_1(\g) \in (1,+\infty)$ are not visible.
\end{lemma}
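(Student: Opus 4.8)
The goal is to show that any singularity $t^*(\g)$ arising from a path $\g \in \PP_0 \cup \PP_\infty$ whose first crossing of $\reals$ occurs at a point $q_1(\g) \in (1,+\infty)$ is not visible. By Remark~\ref{remark:pathsCrossingReals}, it suffices to prove $\vabs{\Im t_1(\g)} \geq A$, where $t_1(\g) = \int_{\g_1(\g)} \fh(q)\,dq$ is the time accumulated along the piece of the path before the first real crossing. By the reflection symmetry \eqref{eq:reflection}, I may assume $\proj \g_1(\g) \subset \complexs^+$, so that the conjugate case follows automatically. The plan is to deform $\g_1(\g)$, keeping endpoints and homotopy class fixed, to a more tractable reference path and estimate the imaginary part of the integral along it.

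\medskip

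\textbf{Main steps.} First I would record that, since $\g_1(\g)$ starts at $(a_+,0)$ with $\arg\argf = 0$, stays in $\complexs^+$, and ends at a point $q_1 \in (1,+\infty)$, Cauchy's theorem lets me replace $\g_1(\g)$ by the concatenation of: (i) a segment along $\reals$ from $a_+$ to $1-\e$ — contributing a \emph{real} value since $\fh$ is real on $(a_+,1)$; (ii) a half-circle of radius $\e$ around the pole $q=1$ in the upper half plane; and (iii) a segment along $\reals$ from $1+\e$ to $q_1$ — again contributing a real value, since $\fh$ is real on $(1,+\infty)$. The only imaginary contribution in the limit $\e\to 0$ comes from the half-loop around the simple pole at $q=1$, which (as computed in the proof of Lemma~\ref{lemma:caminsInfinitSimple}) gives $\Im = -\pi\,\Res(\fh,(1,0)) = -\pi\sqrt{2/21}$. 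However, one must be careful: the path may wind around $q=1$ several times before settling, or it may pass to a different Riemann sheet. I would argue that each full residue loop around $q=1$ changes $\Im t_1$ by $\pm 2\pi\sqrt{2/21}$ and sends the integrand to the branch with $\arg\argf$ increased by $2\pi$, i.e. multiplied by $-1$; the net effect is that $\Im t_1(\g)$ is of the form $\pm\pi\sqrt{2/21} + (\text{contribution of real-axis segments on the relevant sheet})$, but the real-axis segments always contribute purely real values on whichever sheet $\fh$ is real-valued (first or second), so $\vabs{\Im t_1(\g)}$ equals an odd multiple of $\pi\sqrt{2/21}$, hence $\vabs{\Im t_1(\g)} \geq \pi\sqrt{2/21}$.

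\medskip

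\textbf{Concluding.} Then I invoke Lemma~\ref{lemma:caminsZeroSimple}, which gives $A \leq \tfrac{3}{10} < \pi\sqrt{2/21} \approx 0.9695$, so that $\vabs{\Im t_1(\g)} \geq \pi\sqrt{2/21} > A$, and Remark~\ref{remark:pathsCrossingReals} yields that $t^*(\g)$ is not visible. The same bound holds for paths with $\proj\g_1(\g)\subset\complexs^-$ by \eqref{eq:reflection}.

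\medskip

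\textbf{Expected main obstacle.} The delicate point is the bookkeeping of the argument of $\argf$ along a general path that may loop around $q=1$ (and possibly brush near the other branch points $a_-,-1,a_+,0$ before the first real crossing, though those cannot be crossed). I must verify that \emph{every} homotopy class of $\g_1(\g)$ in $\complexs^+ \setminus \{$branch points$\}$, when deformed to the reference path above, produces an imaginary part that is an odd multiple of $\pi\sqrt{2/21}$ plus a purely real quantity — in particular that no combination of windings can produce cancellation bringing $\vabs{\Im t_1}$ below $A$. The key structural fact making this work is that $q=1$ is the \emph{only} singularity of $\fh$ on $(0,+\infty)$ with a nonzero residue (it is a genuine pole, whereas $a_+$ and $0$ are branch points of the square root / with the square-root structure the loops around them contribute no residue), so the imaginary part is controlled entirely by the (odd) winding number around $q=1$. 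Making this homotopy-invariance argument airtight, rather than checking it on a single representative path, is where the real work lies.
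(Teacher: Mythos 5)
Your proof is correct and follows essentially the same route as the paper: deform $\g_1(\g)$ to the reference path consisting of $(a_+,1-\e]$, the upper half-circle around the pole $q=1$, and $[1+\e,q_1)$, so that the two real segments contribute real values and only the half-residue contributes to the imaginary part, giving $\vabs{\Im t_1(\g)}=\pi\sqrt{2/21}>\tfrac{3}{10}\ge A$. The ``main obstacle'' you flag is, however, vacuous: by definition $\g_1(\g)$ lies in the open upper (or lower) half-plane, which is simply connected and contains no singularity of $\fh$ (all of $a_\pm$, $-1$, $0$, $a_+$, $1$ are real), so there is exactly one homotopy class rel endpoints, no winding around $q=1$ can occur before the first real crossing, and no sheet bookkeeping is needed. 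Incidentally, your claim that a full loop around $q=1$ multiplies the integrand by $-1$ is not right — $q=1$ is a pole of $\fh$ at which $\argf$ is analytic and nonzero, so encircling it does not change the Riemann sheet; it is the loops around the branch point $a_+$ that switch sheets, and those are treated separately in Lemma~\ref{lemma:caminsBranca1c}.
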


\begin{proof}
%Let us fix $q_1 \in (1,+\infty)$ and 
Consider a path $\g \in \PP_{0} \cup 
\PP_{\infty}$ with $q_1=q_1(\g) \in (1,+\infty)$ and $\proj \g_1(\g)\subset \complexs^+$. 
%
%We impose that $\g_1(\g) \subset \complexs^+$ since, by Lemma~\ref{lemma:reflection}, we obtain analogous results for the negative complex plane.
%
% We compute $t_1(\g)$ (see~\eqref{def:t1FirstCrossing}).
%
%Then, we define the auxiliary path
%\[
%\wt{\g}:= \{ \g(\tau) \st  \tau \in (\tau_0,\tau_1(\g))\}.
%\]
%Then
%\begin{equation}\label{proof:equivalentIntegrals1}
%t_1(\g)=\int_{\wt{\g}} \fh.
%\end{equation}
%Notice that path $\g_1(\g)$ does not cross $\reals$, and stays on the first Riemann sheet. 
%
Integrating the function $\fh$ along the path $\g_1(\g)$ is equivalent to integrate $\fh$ along the path
 $ {\eta}={\eta}^{1} \vee {\eta}^{2} \vee \eta^{3}$ where
\begin{equation}\label{proof:pathEtaPol}
\begin{cases}
 {\eta}^{1}(q) = (q,0) & \text{ with } q \in (a_+,1-\e], \\
\proj{\eta}^{2}(\phi) = 1 + \e e^{i\phi} & \text{ with } \phi \in [\pi,0], \\
\proj{\eta}^{3}(q) = q  & \text{ with } q \in [1+\e,q_1),
\end{cases}
\end{equation}
for  $\e>0$ small enough,
(see Figure~\ref{fig:caminsVoltesPol}).
\begin{figure}[b]
\centering
\begin{overpic}[scale=0.8]{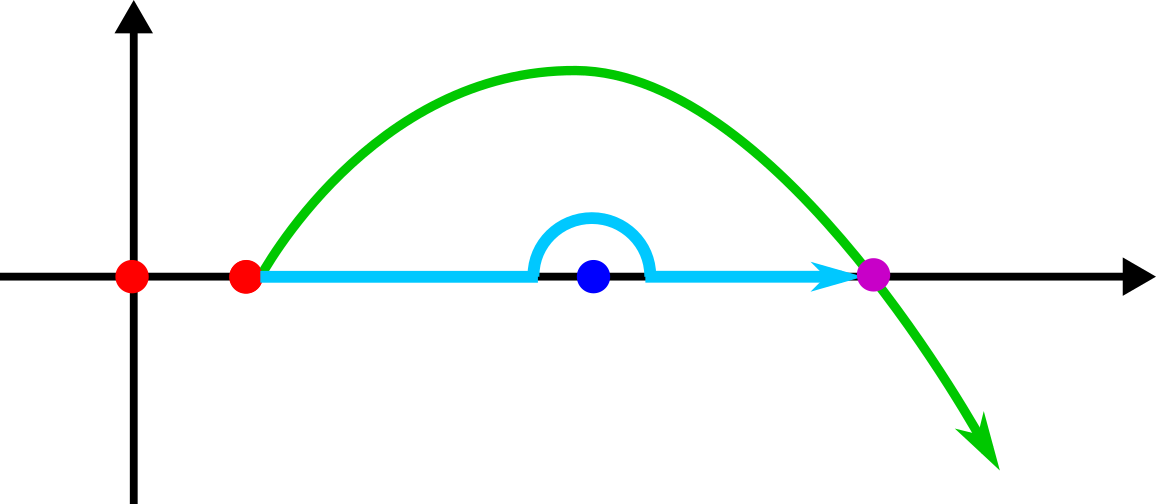}
	\put(8,14){{\color{red} $0$ }}
	\put(51,14){{\color{blue} $1$ }}	
	\put(18,14.5){{\color{red} $a_+$ }}
	\put(77,22){{\color{myPurple} $q_1(\g)$ }}
	\put(24,31){{\color{myGreen} $\proj \g$ }}
	\put(40,22){{\color{myBlue} $\proj \eta$ }}
	\put(101,18){$\Re q$}
	\put(7,44.5){$\Im q$}
\end{overpic}
	\caption{Example of a path $\g \in \HH_{q,\infty}$  such that ${q_1(\g) \in (1,+\infty)}$
	and $\proj \g_1(\g)\subset \complexs^+$. 
	The path $\eta$ has been defined in~\eqref{proof:pathEtaPol}.}
	\label{fig:caminsVoltesPol}
\end{figure}
Then,
\[
t_1(\g) = \int_{\eta} \fh(q) dq 
= \sum_{j=1}^3 \int_{\eta^j} \fh(q) dq,
\]
and the integrals on ${\eta}^{1}$ and ${\eta}^{3}$ take real values since $\fh(q)\in \reals$ when $\proj(q) \in (a_+,1)\cup(1,+\infty) \subset \reals$. Analogously to the proof of Lemma~\ref{lemma:caminsInfinitSimple}, we have that
\[
|\Im t_1 (\g)| = \left|\Im \int_{{\eta}^{2}} \fh(q) dq \right|=
 \pi \left|\Res\paren{\fh,(1,0)}\right| =
 \pi \sqrt{\frac{2}{21}} > A.
\]
Therefore, $t^*(\g)$ is not visible (see Remark~\ref{remark:pathsCrossingReals}).
% since
% \[
% |\Im t_1 (\g)| = 
% \vabs{ \Im {\int_{\eta^2} \fh(q) dq}} =
%  \pi \sqrt{\frac{2}{21}} > A.
% \]
\end{proof}

\begin{lemma} \label{lemma:caminsNegatius2}
The singularities $t^*(\g)$ given by paths $\g \in \PP_{0} \cup \PP_{\infty}$ such that 
$q_1(\g) \in (-\infty,a_-)$ are not visible.
\end{lemma}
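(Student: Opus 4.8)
The plan is to follow the strategy already used for Lemma~\ref{lemma:caminsPol} and Lemma~\ref{lemma:caminsZeroSimple}: reduce the computation of $t_1(\g)$ to an integral of $\fh$ along a reference path hugging the real axis, and show that $|\Im t_1(\g)|\geq A$, so that Remark~\ref{remark:pathsCrossingReals} applies.

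First I would assume, without loss of generality, that $\proj\g_1(\g)\subset\complexs^+$; the case $\proj\g_1(\g)\subset\complexs^-$ follows by conjugation (see~\eqref{eq:reflection}) and produces a singularity with opposite-signed imaginary part. Since all singularities of $\fh$ (the branching points $q=0,-1,a_\pm$ and the pole $q=1$) lie on $\reals$, the function $\fh$ is holomorphic on the simply connected set $\complexs^+$. Hence, by Cauchy's theorem, $\int_{\g_1(\g)}\fh\,dq$ equals the integral of $\fh$ along the path $\eta$ that leaves $a_+$ to the right, makes a semicircle of radius $\e$ over $a_+$ in $\complexs^+$ (as in~\eqref{proof:pathgamma1}), then runs just above $\reals$ towards $q_1$, going over the branching points $0$, $-1$ and $a_-$ through small semicircular detours of radius $\e$ contained in $\complexs^+$, and finally reaches $q_1\in(-\infty,a_-)$ along $\reals$. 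As $\e\to 0$ the little segment near $a_+$ and all the semicircular detours contribute $\OO(\sqrt\e)$ at the simple poles $a_+,-1,a_-$ of $\argf$ and $\OO(\e^{3/2})$ at the simple zero $q=0$ (using $\fh(q)\sim\sqrt{\argf(q)}$ near each of these points and the order of $\argf$ there), so they vanish in the limit.

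It then remains to evaluate the limit of the integrals over the real segments. Tracking the continuous argument of $\argf$ along $\eta$ starting from $\arg\argf=0$ at $a_+$, after the semicircle over $a_+$ one has $\arg\argf=-\pi$; continuing leftwards and crossing $0$, $-1$ and $a_-$ in turn, one obtains $\arg\argf=0$ on $(-1,0)$ and $\arg\argf=-\pi$ on $(a_-,-1)$. From the signs of $q-1$ and of $\argf$ on each interval, $\fh$ is real on $(-1,0)$ and on $(q_1,a_-)$, while on $(0,a_+)$ and on $(a_-,-1)$ one has $\fh(q)=i\,|\argf(q)|^{1/2}/(1-q)$. Thus the segment $(0,a_+)$ contributes $-iA$, exactly as in the proof of Lemma~\ref{lemma:caminsZeroSimple}, and the segment $(a_-,-1)$ contributes $-i\beta$ with
\[
\beta=\int_{a_-}^{-1}\frac{1}{1-x}\sqrt{\left|\frac{x}{3(x+1)(x-a_+)(x-a_-)}\right|}\;dx>0,
\]
a convergent improper integral, whereas $(-1,0)$ and $(q_1,a_-)$ contribute real quantities. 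Therefore $t_1(\g)=(\text{real})-i(A+\beta)$, so $|\Im t_1(\g)|=A+\beta>A$, and by Remark~\ref{remark:pathsCrossingReals} the singularity $t^*(\g)$ is not visible. The main obstacle is the bookkeeping in the last paragraph: correctly continuing $\arg\argf$ across each branching point, resolving the $2\pi$ ambiguities by matching with the sign of $\argf$ on the adjacent real intervals, so that the real and purely imaginary segments are unambiguously identified; everything else is routine.
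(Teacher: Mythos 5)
Your proof is correct, but it follows a genuinely different route from the paper's own proof of this lemma. The paper appends to $\g_1(\g)$ the ray $\wt{\eta}$ running from $q_1$ to $-\infty$ along the real axis, observes that $\fh$ is real there, and thereby identifies $\Im t_1(\g)$ with $\Im t^*_{2,+}=-\pi\sqrt{2/21}$, the value already obtained in Lemma~\ref{lemma:caminsInfinitSimple} from the residue of $\fh$ at the pole $q=1$. You instead push $\g_1(\g)$ down onto the real axis through the branch points $a_+,0,-1,a_-$ and add up the contributions segment by segment; this is exactly the technique the paper itself uses for the neighbouring cases (Lemmas~\ref{lemma:caminsNegatius1} and~\ref{lemma:caminsBranca2}), extended across one more branch point. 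Your argument bookkeeping is right: $\arg\argf=-\pi$ on $(0,a_+)$ and on $(a_-,-1)$, where $\fh=i|\argf|^{1/2}/(1-q)$, and $\arg\argf\in\{0,-2\pi\}$ on $(-1,0)$ and $(q_1,a_-)$, where $\fh$ is real; the semicircular detours vanish as $\e\to 0$ with the orders you state. Hence $\Im t_1(\g)=-(A+\beta)$ with $\beta>0$ a fixed constant, and Remark~\ref{remark:pathsCrossingReals} applies. The paper's reduction is shorter; yours is self-contained (it does not invoke the residue computation of Lemma~\ref{lemma:caminsInfinitSimple}) and, as a by-product, since both compute the imaginary part of the same integral, it yields the identity $A+\beta=\pi\sqrt{2/21}$, a useful consistency check.
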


\begin{proof}
Consider a path $\g \in \PP_{0} \cup \PP_{\infty}$ with $q_1=q_1(\g) \in (-\infty,a_-)$ and $\proj \g_1(\g) \subset \complexs^+$. 
%
% We compute $t_1(\g)$ (see~\eqref{def:t1FirstCrossing}).
%
To compute $t_1(\g)$ we introduce the auxiliary path
$
{\eta}_{\infty} = \g_1(\g) \vee \wt{\eta} 
$,
where
\begin{equation}\label{proof:pathEtaWT}
\proj \wt{\eta}(q)=q \qquad
\text{with }
q \in [q_1, -\infty),
\end{equation}
(see Figure~\ref{fig:caminsNegatius2}).
\begin{figure}
\centering
\begin{overpic}[scale=0.8]{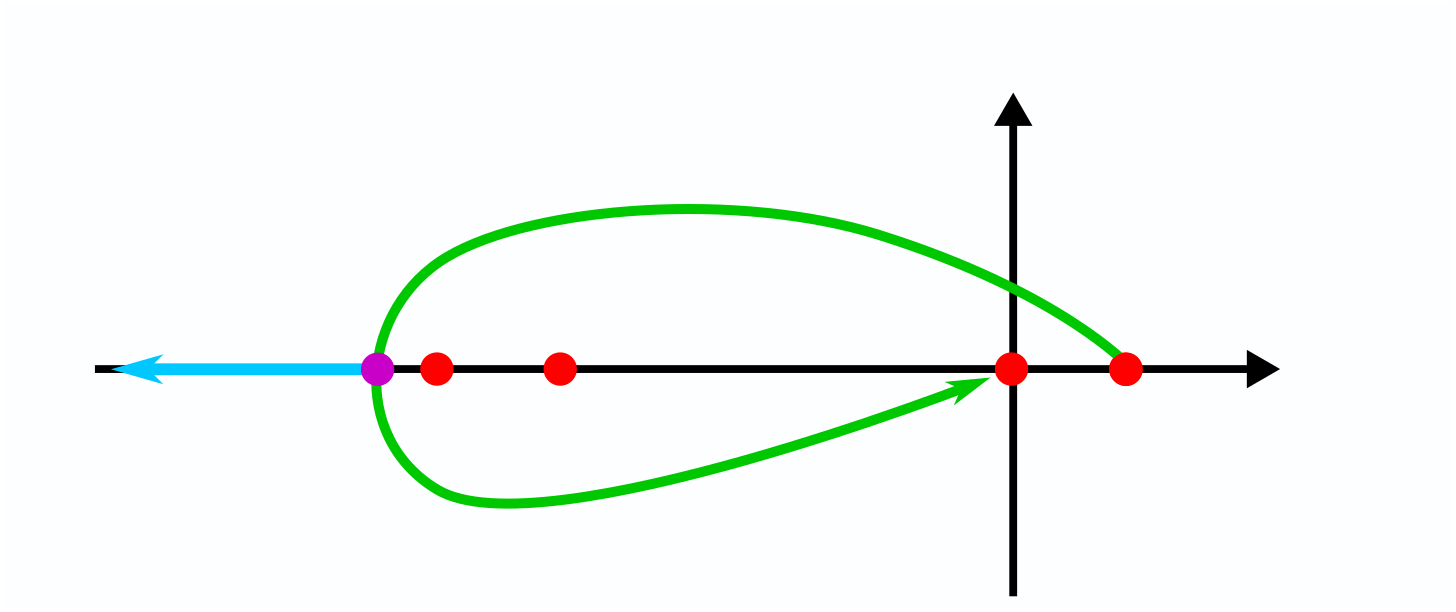}
	\put(71,12){{\color{red} $0$ }}	
	\put(76.5,12){{\color{red} $a_+$ }}
	\put(35,12){{\color{red} $-1$ }}
	\put(28,12){{\color{red} $a_-$ }}
	\put(89,15){$\Re q$}
	\put(66,37){$\Im q$}
	\put(41,29){{\color{myGreen} $\proj \g$ }}
	\put(16.5,12){{\color{myPurple} $q_1(\g)$ }}
	\put(16,19){{\color{myBlue} $\proj \wt{\eta}$ }}
\end{overpic}
\caption{Example of a path $\g \in \PP_{0}$  	
	such that ${q_1(\g) \in (-\infty,a_-)}$ 
	and $\proj \g_1(\g)\subset \complexs^+$. 
	The path $\wt{\eta}$ has been defined in~\eqref{proof:pathEtaWT}.}
\label{fig:caminsNegatius2}
\end{figure}
Then, taking into account that $\fh|_{\wt{\eta}} \subset \reals$,
% Now, we consider the path  $\g^*$ as defined on~\eqref{proof:pathgamma2}. By Cauchy's Integral Theorem and Lemma~\ref{lemma:caminsInfinitSimple}, we have
\begin{equation*}
\vabs{\Im  t_1(\g)}=\left|
\Im \int_{\eta_{\infty}} \fh(q) dq \right|= 
\left|\Im \int_{\g_*} \fh(q) dq \right|=
 \pi \sqrt{\frac{2}{21}}>A,
\end{equation*}
where   $\g^*$ is the path  defined in~\eqref{proof:pathgamma2}.
%path $\eta_{\infty}$ ${\g}_{\infty} \in \HH_{q,\infty}$ and 
%${\g}_{\infty} \subset \complexs^+$. Then, by Lemma~\ref{lemma:caminsInfinitSimple}, we have that
%\[
%\Im \paren{\int_{{\g}_{\infty}} \fh(q) dq} = 
%\Im \paren{ t^*({\g}_{\infty}) } = -\pi \sqrt{\frac{2}{21}},
%\]
%Function $f(q)$ is holomorphic for $q \in \complexs^+$ considered on the same Riemann sheet as path $\wt{\g}_{\infty}$. Furthermore, it decays with a rate of $1/q^2$ as $|q| \to \infty$. Then, by Cauchy integral's theorem,  we have that path $\wt{\g}_{\infty}$  gives the same result for integral~\eqref{eq:integralEquationPole} as path $\g_1$ defined on the proof Lemma~\ref{lemma:caminsInfinitSimple}.
% This implies that
% \begin{align*}
% \vabs{\Im  t_1(\g)} 
%
%= \vabs{\Im  \int_{\eta} \fh(q) dq } 
%
% =\vabs{\Im  \int_{{\eta}_{\infty}}\fh(q) dq } 
% > A,
% \end{align*}
Therefore $t^*(\g)$ is not visible.
\end{proof}

\begin{lemma} \label{lemma:caminsNegatius1}
The singularities $t^*(\g)$ given by paths $\g \in \PP_{0} \cup \PP_{\infty}$ such that 
$q_1(\g) \in (-1,0)$ are not visible.
\end{lemma}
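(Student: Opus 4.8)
The plan is to follow the scheme of Lemmas~\ref{lemma:caminsPol} and~\ref{lemma:caminsNegatius2}: by Remark~\ref{remark:pathsCrossingReals} it suffices to show that $\vabs{\Im t_1(\g)}\geq A$ for every path $\g\in\PP_0\cup\PP_\infty$ with $q_1(\g)\in(-1,0)$, and by the reflection identity~\eqref{eq:reflection} we may restrict to those $\g$ with $\proj\g_1(\g)\subset\complexs^+$, the conjugate paths being handled by symmetry. Note that the analysis only involves $\g_1(\g)$, the portion of the path before the first real crossing, so it is insensitive to whether $\g$ belongs to $\PP_0$ or $\PP_\infty$.

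First I would deform $\g_1(\g)$, which runs in $\complexs^+$ from $a_+$ to $q_1\in(-1,0)$, to a path $\eta$ with the same endpoints that hugs the real axis from above: a segment over $(a_+,a_++\e)$, a small upper semicircle of radius $\e$ around the branch point $a_+$, the segment just above $(0,a_+)$, a small upper semicircle of radius $\e$ around the branch point $q=0$, and the segment just above $(q_1,0)$. Since both $\g_1(\g)$ and $\eta$ lie in $\overline{\complexs^+}$, whose interior carries no singularity of $\fh$, Cauchy's theorem (together with the usual limiting argument pushing the straight pieces down onto $\reals$) gives $t_1(\g)=\int_{\g_1(\g)}\fh\,dq=\int_\eta\fh\,dq$. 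In the limit $\e\to0$ the first three pieces of $\eta$ are exactly the path $\g_*$ from the proof of Lemma~\ref{lemma:caminsZeroSimple} truncated at $q=\e$, so tracking the natural arguments as in that proof yields $\arg\argf=-\pi$ on the segment over $(0,a_+)$; there $\fh(q)=\frac{i}{1-q}\sqrt{\frac{q}{3(q+1)(a_+-q)(q-a_-)}}$ is purely imaginary, and by the definition~\eqref{def:integralA} of $A$ the integral over that segment tends to $-iA$.

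It remains to estimate the other pieces. The segment over $(a_+,a_++\e)$ and the semicircle around $a_+$ give $\OO(\sqrt\e)$, since $\fh\sim(q-a_+)^{-1/2}$ there; the semicircle around $q=0$ gives $\OO(\e^{3/2})$, since $\fh\sim q^{1/2}$ is bounded near the origin; all of these vanish as $\e\to0$. On the final segment over $(q_1,0)\subset(-1,0)$ one checks, by continuing the argument through the semicircle around $0$, that $\arg\argf=0$, so $\fh$ is real there and this piece contributes only to $\Re t_1(\g)$. Putting everything together gives $\Im t_1(\g)=-A$ (and $+A$ for the conjugate paths), hence $\vabs{\Im t_1(\g)}=A$ and, by Remark~\ref{remark:pathsCrossingReals}, $t^*(\g)$ is not visible. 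The only delicate point is the branch bookkeeping for $\sqrt{\argf}$ along $\eta$: one must verify that the upper semicircles around $a_+$ and around $0$ land precisely on the sheets with $\arg\argf=-\pi$ on $(0,a_+)$ and $\arg\argf=0$ on $(q_1,0)$, since this is exactly what forces the imaginary part to equal $A$; everything else is routine.
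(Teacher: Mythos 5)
Your argument is correct and coincides with the paper's own proof: the same deformation of $\g_1(\g)$ onto the boundary path consisting of the two small upper semicircles around $a_+$ and $0$ joined by real segments, the same $\OO(\sqrt{\e})$ (resp.\ smaller) estimates on the short pieces, the identification of the contribution $-iA$ over $(0,a_+)$ via \eqref{proof:singularityt1mes}, and the observation that $\fh$ is real on $(q_1,0)$, yielding $\vabs{\Im t_1(\g)}=A$ and non-visibility by Remark~\ref{remark:pathsCrossingReals}. The branch bookkeeping you flag ($\arg\argf=-\pi$ on $(0,a_+)$ and $\arg\argf=0$ on $(q_1,0)$) checks out exactly as you state.
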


\begin{proof}
Let $\g \in \PP_{0} \cup \PP_{\infty}$ be a path such that $q_1 = q_1(\g) \in (-1,0)$ and $\proj \g_1(\g) \subset \complexs^+$.
%
% We compute $t_1(\g)$ (see~\eqref{def:t1FirstCrossing}).
%
Integrating the function $\fh$ along the path $\g_1(\g)$ is equivalent to integrating  $\fh$ along
% \begin{equation}\label{proof:pathEtaNegatius1}
${\eta}={\eta}^1 \vee {\eta}^2 \vee \eta^3
\vee \eta^4 \vee \eta^5$
% \end{equation}
where
\begin{equation}\label{proof:pathEtaNegatius1}
\begin{cases}
\eta^1(q) = (q,0) & 
\text{ with } q \in (a_+,a_+ + \e], \\
\proj \eta^2(\phi) = a_+ + \e e^{i\phi} & 
\text{ with } \phi \in [0,\pi], \\	
\proj \eta^3(q) = q & 
\text{ with } q \in [a_+-\e,\e], \\
\proj \eta^4(\phi) = \e e^{i\phi} & 
\text{ with } \phi\in [0,\pi], \\
\proj \eta^5(q) = q & \text{ with } q \in [-\e,q_1), 
\end{cases}
\end{equation}
for $\e>0$ small enough,
(see Figure~\ref{fig:caminsNegatius1}).
\begin{figure}
\centering
\begin{overpic}[scale=0.8]{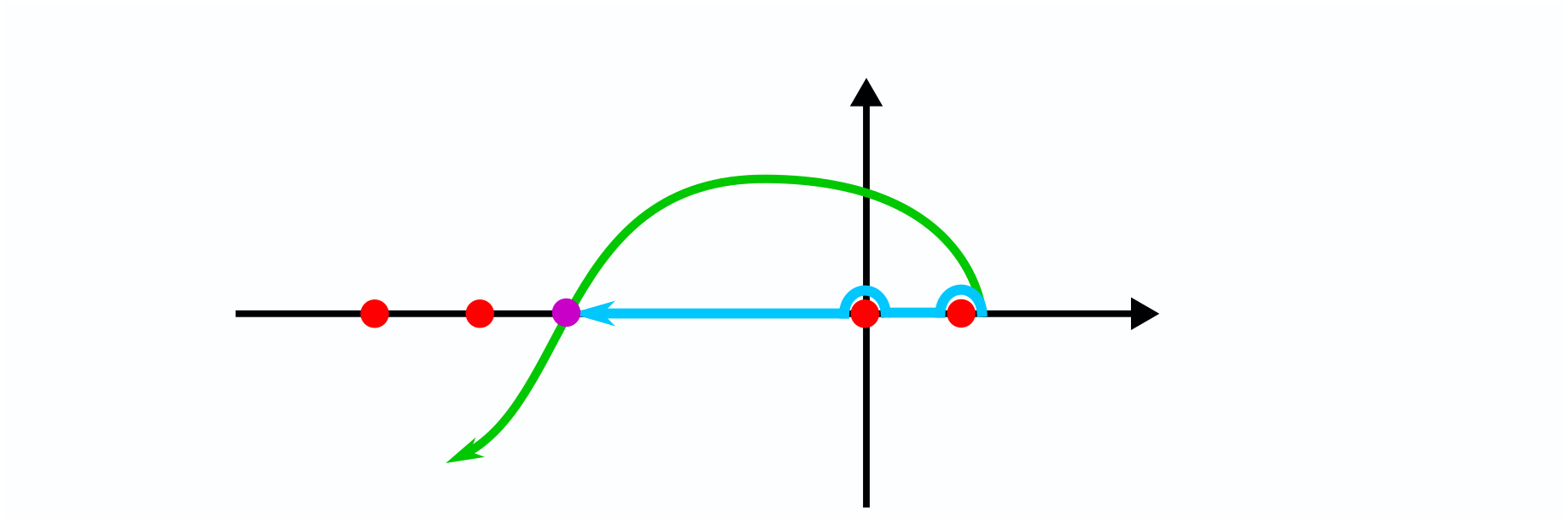}
	\put(53,9.5){{\color{red} $0$ }}	
	\put(61,9.5){{\color{red} $a_+$ }}
	\put(29,9.5){{\color{red} $-1$ }}
	\put(22,9.5){{\color{red} $a_-$ }}
	\put(75,12){$\Re q$}
	\put(52,30){$\Im q$}
	\put(48,23.5){{\color{myGreen} $\proj \g$ }}
	\put(45,15){{\color{myBlue} $\proj \eta$ }}
	\put(37,9.5){{\color{myPurple} $q_1(\g)$ }}
\end{overpic}
\caption{Example of a path $\g \in 	\PP_{\infty}$  such that ${q_1(\g) \in (-1,0)}$ and $\proj \g_1(\g)\subset \complexs^+$. 
The path $\eta$ has been defined in~\eqref{proof:pathEtaNegatius1}.}
\label{fig:caminsNegatius1}
\end{figure}
%
% \[
% t_1(\g) = \int_{\eta} \fh(q) dq
% = \sum_{j=1}^5 \int_{\eta^j} \fh(q) dq.
% \]
% %
Using that $\int_{\eta^j} \fh(q) dq = \OO(\sqrt{\e})$ for $j=1,2,4$, that  $\fh|_{\eta^5} \subset \reals$, and~\eqref{proof:singularityt1mes}, one has that  $t^*(\g)$ is not visible since
\[
\vabs{\Im t_1(\g)}= 
\lim_{\e \to 0} 
\vabs{\int_{{\eta^3}} \fh(q) dq} = A.
\]
\end{proof}

\begin{lemma} \label{lemma:caminsBranca2}
The singularities $t^*(\g)$ given by paths $\g \in \PP_{0} \cup \PP_{\infty}$ such that 
$q_1(\g) \in (a_-,-1)$ are not visible.
\end{lemma}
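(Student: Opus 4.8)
The plan is to follow the template of Lemmas~\ref{lemma:caminsZeroSimple}--\ref{lemma:caminsNegatius1}: reduce the first--crossing piece $\g_1(\g)$ to a canonical path that hugs the real axis, compute $\Im t_1(\g)$, and conclude with Remark~\ref{remark:pathsCrossingReals}. By \eqref{eq:reflection} it suffices to treat $\g$ with $\proj\g_1(\g)\subset\complexs^+$, the other case following by conjugation. Since $\complexs^+$ contains no poles nor branch points of $\fh$ (all of $0,a_\pm,\pm1$ lie on $\reals$) and the closed upper half--plane punctured at this finite set is simply connected, the lift of $\g_1(\g)$ to $\mathscr{R}_f$ is unique and $t_1(\g)=\int_{\g_1(\g)}\fh\,dq$ depends only on the endpoint $q_1=q_1(\g)\in(a_-,-1)$. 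Hence, by Cauchy's theorem, I would replace $\g_1(\g)$ by the canonical path $\eta=\eta^1\vee\cdots\vee\eta^7$: go right from $a_+$, over $a_+$ (a small semicircle of radius $\e$ in $\complexs^+$), left along $(0,a_+)$ towards $0$, over $0$, left along $(-1,0)$ towards $-1$, over $-1$, and finally left along $(a_-,-1)$ down to $q_1$; the starting datum is $\arg\argf=0$ at $q=a_+$. This is exactly the canonical path of the case $q_1\in(-1,0)$ (Lemma~\ref{lemma:caminsNegatius1}) with two legs appended: $\eta^6$ over $-1$ and $\eta^7$ along $(a_-,-1)$.

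Letting $\e\to0$, the semicircular pieces $\eta^2,\eta^4,\eta^6$ encircle branch points of $\fh$ across which the integrand stays integrable, so each is $\OO(\sqrt\e)$ and drops out; thus $t_1(\g)=\lim_{\e\to0}\bigl(\int_{\eta^3}+\int_{\eta^5}+\int_{\eta^7}\bigr)\fh\,dq$. It then remains to carry the branch of $\sqrt{\argf}$ along $\eta$. Going over $a_+$ sets $\arg(q-a_+)=\pi$, so on $\eta^3$ one has $\arg\argf=-\pi$; there $\fh$ is purely imaginary and $\int_{\eta^3}\fh\,dq\to-iA$, which is precisely the integral already evaluated in \eqref{proof:singularityt1mes}. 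Going over $0$ adds $\pi$ to $\arg q$, so on $\eta^5$ the accumulated argument of $\argf$ vanishes, $\fh$ coincides with its real branch on $(-1,0)$, and $\int_{\eta^5}\fh\,dq$ is real. Going over $-1$ adds $\pi$ to $\arg(q+1)$, so on $\eta^7$ one has $\arg\argf=-\pi$ again, i.e.\ $\fh(q)=i\,\psi(q)$ with $\psi(q)=\vabs{q-1}^{-1}\sqrt{\vabs{q}/(3\vabs{q+1}\vabs{q-a_+}(q-a_-))}>0$ on $(a_-,-1)$; integrating from $-1$ down to $q_1$ gives $\int_{\eta^7}\fh\,dq\to-i\int_{q_1}^{-1}\psi(x)\,dx$, purely imaginary with the same sign as the $\eta^3$ contribution. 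Summing the three limits, $\Im t_1(\g)=-A-\int_{q_1}^{-1}\psi(x)\,dx$, so $\vabs{\Im t_1(\g)}=A+\int_{q_1}^{-1}\psi(x)\,dx>A$. Since $q_1\neq0$, the first crossing is reached strictly before the end of $\g$, hence $T[\g]$ leaves $\Pi_A$ and $t^*(\g)$ is not visible by Remark~\ref{remark:pathsCrossingReals}; the conjugate paths $\proj\g_1(\g)\subset\complexs^-$ likewise give $\vabs{\Im t_1(\g)}>A$.

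I expect the main obstacle to be bookkeeping rather than any genuinely new difficulty. One must justify once and for all that every admissible path with interior in $\complexs^+$ is homotopic to the canonical $\eta$ — so that there is a single configuration to analyse and, in particular, no such path can wind around $q=1$ and pick up a residue while still having $q_1\in(a_-,-1)$ — and one must transport the branch of $\sqrt{\argf}$ consistently across the now seven--piece path, keeping in mind that $(0,a_+)$ and $(a_-,-1)$ are cuts on which only the boundary value from $\complexs^+$ is relevant. The structural point that makes the estimate go through is that appending the legs over and beyond $-1$ to the $q_1\in(-1,0)$ configuration of Lemma~\ref{lemma:caminsNegatius1} can only add to $\vabs{\Im t_1(\g)}$ a contribution of the same sign as the $-iA$ coming from the $(0,a_+)$ leg, so $\vabs{\Im t_1(\g)}$ stays (strictly) above $A$.
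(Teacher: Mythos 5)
Your argument is correct and follows essentially the same route as the paper: the first-crossing piece is deformed to the canonical seven-leg path hugging the real axis, the semicircles contribute $\OO(\sqrt{\e})$, the $(0,a_+)$ leg gives $-iA$, the $(-1,0)$ leg is real, and the $(a_-,-1)$ leg gives a further purely imaginary contribution of the same sign, so $\vabs{\Im t_1(\g)}=A+B(q_1)>A$ and the singularity is not visible. Your branch bookkeeping ($\arg\argf=-\pi$ on $\eta^3$ and $\eta^7$, $=0$ on $\eta^5$) matches the paper's computation of the natural arguments exactly.
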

\begin{proof}
Take a path $\g \in \PP_{0} \cup \PP_{\infty}$ with $q_1=q_1(\g)\in (a_-,-1)$ and $\proj \g_1(\g) \subset \complexs^+$.	
The integral of  the function $\fh$ along the path $\g_1(\g)$ coincides with the integral along the path
\begin{equation}\label{proof:pathEtaBranca2}
\eta = \bigvee_{j=1}^7 \eta^j,
\end{equation}
where the paths $\eta^j$, $j=1,2,3,4$, are defined in~\eqref{proof:pathEtaNegatius1} and
\[
\begin{cases}
\proj \eta^5(q) = q & 
\text{ with } q \in [-\e,-1+\e], \\
\proj \eta^6(\phi) = -1 + \e e^{i\phi} & 
\text{ with } \phi \in [0,\pi], \\
\proj \eta^7(q) = q & 
\text{ with } q \in [-1-\e,q_1), \\
\end{cases}
\]
for small enough $\e>0$,
(see Figure~\ref{fig:caminsBranca2}).
\begin{figure}
	\centering
	\begin{overpic}[scale=0.8]{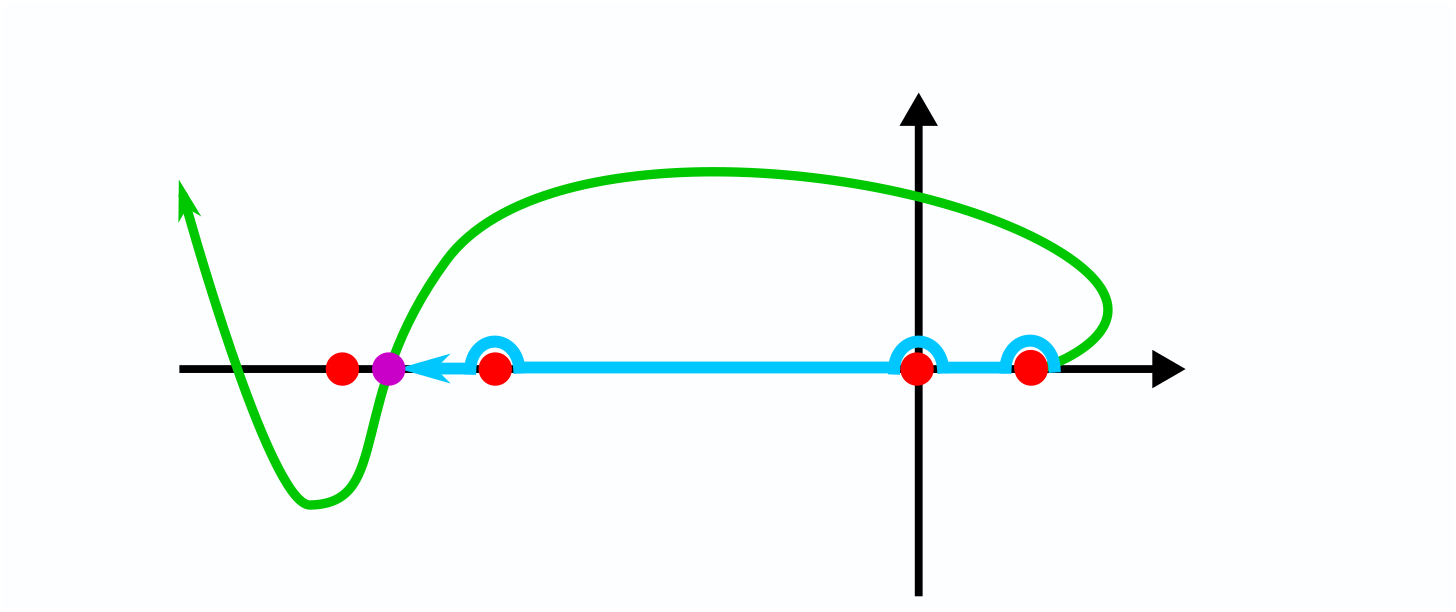}
		\put(64,12){{\color{red} $0$ }}	
		\put(71,12){{\color{red} $a_+$ }}
		\put(30,12){{\color{red} $-1$ }}
		\put(20,12){{\color{red} $a_-$ }}
		\put(83,15){$\Re q$}
		\put(60,37){$\Im q$}
		\put(45,32){{\color{myGreen} $\proj \g$ }}
		\put(50,19){{\color{myBlue} $\proj \eta$ }}
		\put(18,19){{\color{myPurple} $q_1(\g)$ }}
	\end{overpic}
	\caption{Example of a path $\g \in 	\PP_{\infty}$  such that ${q_1(\g) \in (a_-,-1)}$ and $\proj \g_1(\g)\subset \complexs^+$. 
	Path $\eta$ as defined in~\eqref{proof:pathEtaBranca2}.}
	\label{fig:caminsBranca2}
\end{figure}
Then, proceeding analogously to the proof of Lemma \ref{lemma:caminsNegatius1},
\begin{equation}\label{def:eta5to7}
\Im t_1(\g) = \lim_{\e\to 0 }\Im \int_{\eta} \fh(q) dq
= -A+\sum_{j=5}^7\lim_{\e\to 0 } \Im  \int_{\eta^j} \fh(q) dq.
\end{equation}
Since $\fh|_{\eta^5} \subset \reals$ and $\int_{\eta^6} \fh(q) dq = \OO(\sqrt{\e})$,
%
%Moreover, by~\eqref{proof:singularityt1mes}, we have that
%\[
%\lim_{\e \to 0} \int_{{\eta}^3} \fh(q) dq = 
%t^*_{1,+} = - iA.
%\]
it only remains to compute the integral on $\eta^7$.
Following the natural arguments of the path $\eta$, we obtain
\[
\begin{array}{llll}
\arg(\eta^7) = \pi, & 
\arg(\eta^7-a_+) = \pi, &
\arg(\eta^7+1) = \pi, & 
\arg(\eta^7-a_-) = 0
\end{array} 
\]
and, as a consequence,
\begin{align*}
\lim_{\e \to 0} \int_{\eta^7} \fh(q) dq 
&= \int_{-1}^{q_1}  \frac{1}{q-1}\sqrt{\frac{|q|e^{i \pi}}{3|q+1|e^{i \pi}|q-a_+| e^{i\pi}(q-a_-)}} dq 
=- i B(q_1),
\end{align*}
where  $B(q_1)$ a real-valued, positive and strictly decreasing  function for $q_1 \in (a_-,-1)$.
% Notice that, in addition, $B(q_1)$ is for $q_1 \in (a_-,-1)$. 
%
Then, $t^*(\g)$ is not visible since,  by \eqref{def:eta5to7},
% \begin{align*}
$\vabs{\Im t_1(\g)} = 
%
% \lim_{\e \to 0}
% \vabs{\int_{{\eta^3}} \fh(q) dq  
% + \int_{{\eta^7}} \fh(q) dq} =
A + B(q_1)> A$.
%
% \end{align*}
% As a consequence, $t^*(\g)$ is not visible.
\end{proof}

\subsubsection{Paths first crossing the real axis at \texorpdfstring{$[0,1]$}{[0,1]}}
\label{subsubsection:pathsC}
In this section, we check that
the singularities generated by paths C.1 to C.3 (see the list in Section~\ref{subsection:computationSingularities}) are either not visible or $\pm iA$. 

\begin{lemma} \label{lemma:caminsBranca1a}
The singularities $t^*(\g)$ given by paths $\g \in \PP_{0}$ only crossing $\reals$ at $(0,1)$ are either $t^*(\g)=iA$ or $t^*(\g)=-iA$.
\end{lemma}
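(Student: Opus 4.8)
The plan is to classify, up to homotopy on the Riemann surface $\mathscr{R}_f$, the paths $\g\in\PP_0$ whose projection meets $\reals$ only inside the interval $(0,1)$, and to reduce each of them to one of the two standard paths appearing in the proof of Lemma~\ref{lemma:caminsZeroSimple}. Recall that $t^*(\g)=\int_\g\fh\,dq$ depends only on the homotopy class of $\g$ (with fixed endpoints $a_+$ and $0$) in $\mathscr{R}_f$: the integrand $\fh$ is analytic on $\mathscr{R}_f$ away from $\{0,a_\pm,\pm1\}$, and the only endpoint subtlety is the integrable $\sqrt{\cdot}$-type singularity of $\fh$ at $q=0$ (see \eqref{proof:seriesqEnZero}), so the value of $\int_\g\fh\,dq$ varies continuously under admissible deformations.

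First I would fix $\g\in\PP_0$ with $\proj\g$ meeting $\reals$ only at points of $(0,1)$ and, using the reflection identity \eqref{eq:reflection}, assume without loss of generality that the portion of $\g$ before its first crossing of $\reals$ lies in $\overline{\complexs^+}$. The key topological observation is that $\proj\g$ never meets $\reals\setminus(0,1)=(-\infty,0]\cup[1,+\infty)$, so $\g$ has zero winding number about each of $q=-1$, $q=a_-$, $q=0$ and about the pole $q=1$: encircling any of these would force a crossing of one of the excluded real intervals. The only singularity of $\fh$ that $\g$ may wind around is the square-root branch point $q=a_+\in(0,1)$; since $a_+$ has branching order two, a double turn about it is homotopically trivial on $\mathscr{R}_f$. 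Moreover, near $q=a_+$ the function $\argf(q)$ equals a positive real constant times $(q-a_+)^{-1}$ to leading order, so the normalization $\tht_\ini=0$ forces $\arg(q-a_+)\to0$ as $\g$ leaves $a_+$ (equivalently, $\g$ leaves $a_+$ towards $(a_+,1)$). Putting these facts together, $\g$ is homotopic, rel endpoints and within $\mathscr{R}_f$, to exactly one of the two paths $\g_*$ (defined in \eqref{proof:pathgamma1}) and its conjugate $\overline{\g_*}$: the path $\g_*$ runs from $a_+$ rightwards, over $a_+$ by a small semicircle in $\complexs^+$, and then leftwards along $(0,a_+)$ to $0$, and the net turn about $a_+$ distinguishes the two classes.

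Then, by homotopy invariance of $\int_\g\fh\,dq$, Lemma~\ref{lemma:caminsZeroSimple} gives $t^*(\g)=t^*(\g_*)=-iA$ in the first case, and $t^*(\g)=t^*(\overline{\g_*})=\overline{t^*(\g_*)}=iA$ in the second by \eqref{eq:reflection}; in either case $t^*(\g)\in\{iA,-iA\}$, as claimed. The main obstacle is making this homotopy classification fully rigorous: one must check carefully that ``crossing $\reals$ only in $(0,1)$'' together with the normalization $\tht_\ini=0$ at $a_+$ leaves precisely these two homotopy classes, by tracking $\arg\argf$ along an explicit deformation of $\g$ (the bookkeeping differs according to whether $\g$ passes between $\complexs^+$ and $\complexs^-$ through $(a_+,1)$, where $\fh$ is real on the first sheet, or through $(0,a_+)$, where it is purely imaginary), and that the deformations can be chosen to avoid the branch points while the integral at $q=0$ changes continuously.
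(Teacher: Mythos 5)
Your proof is correct and follows essentially the same route as the paper's: both arguments rest on the observation that a path in $\PP_{0}$ meeting $\reals$ only inside $(0,1)$ can wind only around the branch point $q=a_+$, and both reduce the integral to the explicit arc of Lemma~\ref{lemma:caminsZeroSimple}; the paper simply keeps the full winding $\tht_{\fin}(\g)=(2k+1)\pi$, $k\in\integers$, and computes $(-1)^{k+1}iA$ directly instead of reducing modulo $2$ first. One imprecision to fix: a double turn around $a_+$ is \emph{not} null-homotopic in $\mathscr{R}_f$ itself (the ramification point is deleted from the surface); what your argument actually needs, and what is true, is that $\fh\, dq$ extends holomorphically across that point in the uniformizing coordinate $z=\sqrt{q-a_+}$, so the integral over a double loop vanishes exactly --- the same fact the paper invokes when discarding the arcs $\eta^1,\eta^2$ as $\OO(\sqrt{\e})$.
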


\begin{proof}
The paths considered in this lemma can turn around the branching point $q=a_+$, but not around the other branching points nor the pole. 
Therefore, we classify these paths depending on how many turns they perform around $q=a_+$.
In order to do so, we define
\begin{equation}\label{proof:finalArgumentDef}
 \tht_{\fin}(\g) = \lim_{s \to s_{\fin}} \arg(\g(s)-a_+).
\end{equation}
%
%An example of paths classified on relation to $\tht_f(\g)$ is found on Figure~\ref{fig:caminsBranca1zero}.
%
The considered paths satisfy
$\tht_{\fin}(\g)=(2k+1)\pi$
for some $k \in \integers$ (see Figure~\ref{fig:caminsBranca1zero}).
 \begin{figure}
    \centering
        \begin{overpic}[scale=0.68]{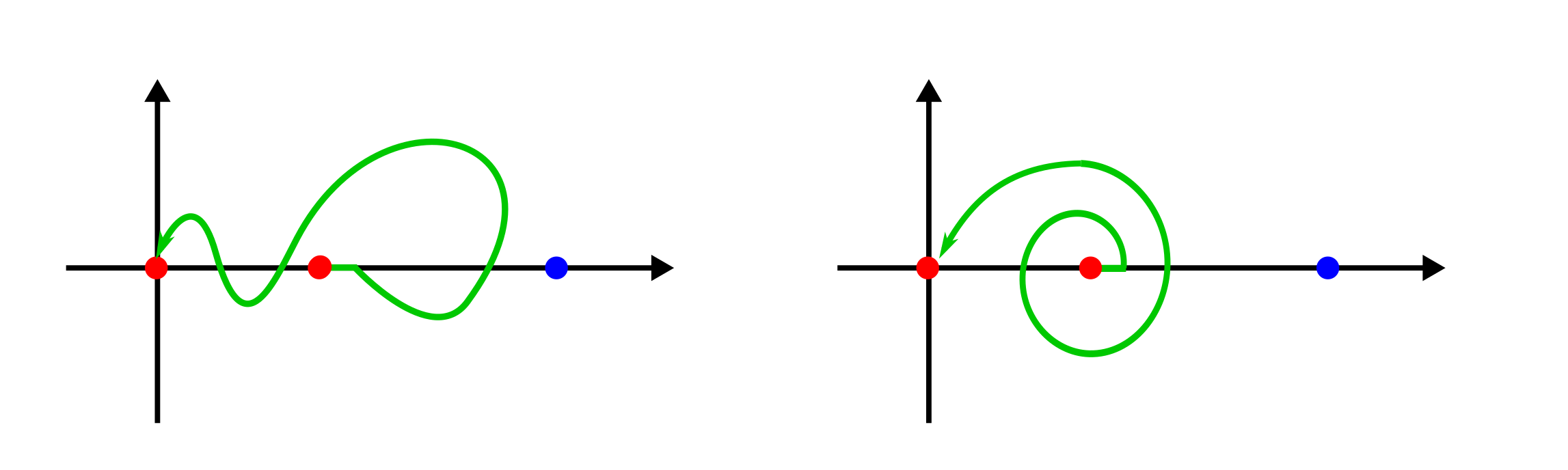}
		\put(8,10){{\color{red} $0$ }}
		\put(57,10){{\color{red} $0$ }}	
		\put(35,9.5){{\color{blue} $1$ }}
		\put(84,9.5){{\color{blue} $1$ }}
		\put(19.5,10){{\color{red} $a_+$ }}
		\put(69,10){{\color{red} $a_+$ }}
		\put(19,20){{\color{myGreen} $\proj \g$ }}
		\put(75,17){{\color{myGreen} $\proj \g$ }}
		\put(43.5,12){$\Re q$}
		\put(93,12){$\Re q$}
		\put(8,26){$\Im q$}
		\put(57,26){$\Im q$}
    \end{overpic}
    \caption{Example of paths $\g \in \PP_{0}$ only crossing $\reals$ at $(0,1)$. Left: $\tht_{\fin}(\g)=\pi$. Right: $\tht_{\fin}(\g)=3\pi$.}
    \label{fig:caminsBranca1zero}
 \end{figure}
%
% We compute now
% \begin{equation*}
% 	%\label{proof:operatorSingularities}
% 	t^*(\g)=  \int_{\g} \fh(q) dq,.
% \end{equation*}
%as defined on~\eqref{def:operatorSingularities}.
%
% By Cauchy's Integral Theorem, i
Integrating the function $\fh$ along the path $\g$ is equivalent to integrating along ${\eta}={\eta}^1 \vee {\eta}^2 \vee \eta^3$ with 
\begin{equation}\label{proof:pathsEtaB}
\begin{cases}
\eta^1(q) = (q,0) & 
\text{ with } q \in 
(a_+,a_+ - \e], \\
\proj \eta^2(\phi) = a_+ + \e e^{i\phi} & 
\text{ with } \phi \in [\pi,(2k+1)\pi], \\
\proj \eta^3(q) = q & 
\text{ with } q \in [a_+-\e,0),
\end{cases}
\end{equation}
for  small enough $\e>0$. Since
$\int_{\eta^j} \fh(q) dq = \OO(\sqrt{\e})$ for $j=1,2$,
% Then,
\begin{equation*}
t^*(\g) =
\lim_{\e \to 0} \int_{\eta^3} \fh(q) dq = 
\int_{a_+}^0 \frac{1}{q-1}\sqrt\frac{q}{3(q+1) \vabs{q-a_+} e^{i(2k+1)\pi}(q-a_-)} dq  = (-1)^{k+1} iA.
\end{equation*}
% where $A$ is the constant given in~\eqref{def:integralA}. 
\end{proof}

\begin{lemma} \label{lemma:caminsBranca1b}
The singularities $t^*(\g)$ given by paths $\g \in \PP_{\infty}$ only crossing $\reals$ at $(0,1)$ are not visible.
\end{lemma}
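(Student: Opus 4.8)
The plan is to mimic the proof of Lemma~\ref{lemma:caminsBranca1a}: reduce an arbitrary such path to a normal form by a homotopy in $\mathscr{R}_f$ that does not change the value of $\int_{\g}\fh\,dq$, and then compute. First I would observe that a path $\g\in\PP_{\infty}$ which meets $\reals$ only at points of $(0,1)$ cannot encircle the branch points $q=0,-1,a_-$ nor the pole $q=1$ — a small loop around any of them would force $\proj\g$ to cross $\reals$ in $(-\infty,a_-]\cup[a_-,-1]\cup[-1,0]\cup[1,+\infty)$ — while it may wind around $q=a_+\approx0.207$, since a small circle around $a_+$ stays inside $(0,1)$. Let $k\in\integers$ be the (net) number of turns $\proj\g$ performs around $a_+$. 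Since $\g$ does not enclose the lifts of the pole $q=1$, I would deform it, without changing the integral, to $\eta=\eta^1\vee\eta^2\vee\eta^3$, where $\eta^1$ runs along $\reals$ from $a_+$ to $a_++\e$, $\eta^2$ is the circle $a_++\e e^{i\phi}$ with $\phi\in[0,2k\pi]$, and $\eta^3$ is the path $\g_*$ of~\eqref{proof:pathgamma2}, i.e.\ it goes along $(a_+,1)$ up to $1-\e$, turns around the pole $q=1$ by an upper half-circle, and escapes to $\infty$ along $(1,+\infty)$. As usual, the conjugate case $\proj\g_1(\g)\subset\complexs^{-}$ would be dealt with by~\eqref{eq:reflection}.

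With this normal form the computation is immediate. Since $\argf$ has a simple pole at $a_+$, each turn around $a_+$ multiplies $\sqrt{\argf(q)}$ by $-1$, while $1/(q-1)$ is single-valued; hence along $\eta^3$ the integrand equals $(-1)^k$ times its value on the first Riemann sheet. Together with $\int_{\eta^j}\fh\,dq=\OO(\sqrt\e)$ for $j=1,2$, letting $\e\to0$ and invoking Lemma~\ref{lemma:caminsInfinitSimple}, I would obtain
\[
t^*(\g)=\lim_{\e\to0}\int_{\eta}\fh(q)\,dq=(-1)^k\,t^*_{2,+},
\qquad\text{so}\qquad
|\Im t^*(\g)|=|\Im t^*_{2,+}|=\pi\sqrt{2/21}.
\]
Since $A\le\frac{3}{10}<\pi\sqrt{2/21}$ by Lemma~\ref{lemma:caminsZeroSimple}, this gives $t^*(\g)\notin\overline{\Pi_A}$, hence $T[\g](s)\notin\Pi_A$ for $s$ near $s_{\fin}$, so the singularity $t^*(\g)$ is not visible.

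The step I expect to be the main obstacle is the homotopy reduction, i.e.\ justifying that an arbitrary $\g\in\PP_{\infty}$ crossing $\reals$ only in $(0,1)$ is homotopic, rel endpoints and inside $\mathscr{R}_f$ minus the lifts of $q=1$, to the normal form $\eta$. The delicate part is the escape to infinity: after its last crossing of $\reals$ the path $\proj\g$ stays in a single half-plane $\complexs^{\pm}$, and one must check that this tail can be pushed onto the segment $[1,+\infty)$ past the pole $q=1$ through the half-circle in $\eta^3$; this uses that a closed half-plane with one boundary point removed is simply connected, together with the fact that $\g$ genuinely does not enclose $q=1$ — which is where the hypothesis that $\g$ never crosses $(1,+\infty)$ enters (winding once around $q=1$ would require crossing both $(0,1)$ and $(1,+\infty)$). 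Everything else — the residue computation $\Im t^*_{2,+}=-\pi\sqrt{2/21}$, the estimates $\int_{\eta^{1,2}}\fh=\OO(\sqrt\e)$ near the branch point, and the monodromy factor $(-1)^k$ — is routine and parallels the proofs of Lemmas~\ref{lemma:caminsInfinitSimple} and~\ref{lemma:caminsBranca1a}.
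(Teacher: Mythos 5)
Your proposal is correct and follows essentially the same route as the paper: classify the paths by how much they wind around the branch point $q=a_+$ (the paper records this via the final argument $\tht_{\fin}(\g)\in(2\pi k,(2k+1)\pi)$, you via the net winding number $k$), deform to a normal form that runs along $(a_+,1)$, circles the pole $q=1$ by a half-circle and escapes along $(1,+\infty)$, and observe that only the half-circle contributes to $\Im t^*(\g)$, giving $|\Im t^*(\g)|=\pi\sqrt{2/21}>A$ up to the sheet-dependent sign $(-1)^k$. The homotopy reduction you flag as the main obstacle is handled in the paper at the same level of rigor as in Lemmas~\ref{lemma:caminsInfinitSimple} and~\ref{lemma:caminsBranca1a}, so no essential gap remains.
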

\begin{proof}
We analyze the paths $\g \in \PP_{\infty}$ such that $\proj \g$ goes to infinity on $\complexs^+$
(by~\eqref{eq:reflection}, the paths on $\complexs^-$ give conjugated results).
Following the proof of Lemma~\ref{lemma:caminsBranca1a}, we classify the paths $\g$ depending on $\tht_{\fin}(\g)$, the final argument with respect to $a_+$ (see~\eqref{proof:finalArgumentDef} and Figure~\ref{fig:caminsBranca1infty}). 
\begin{figure}
\centering
\begin{overpic}[scale=0.7]{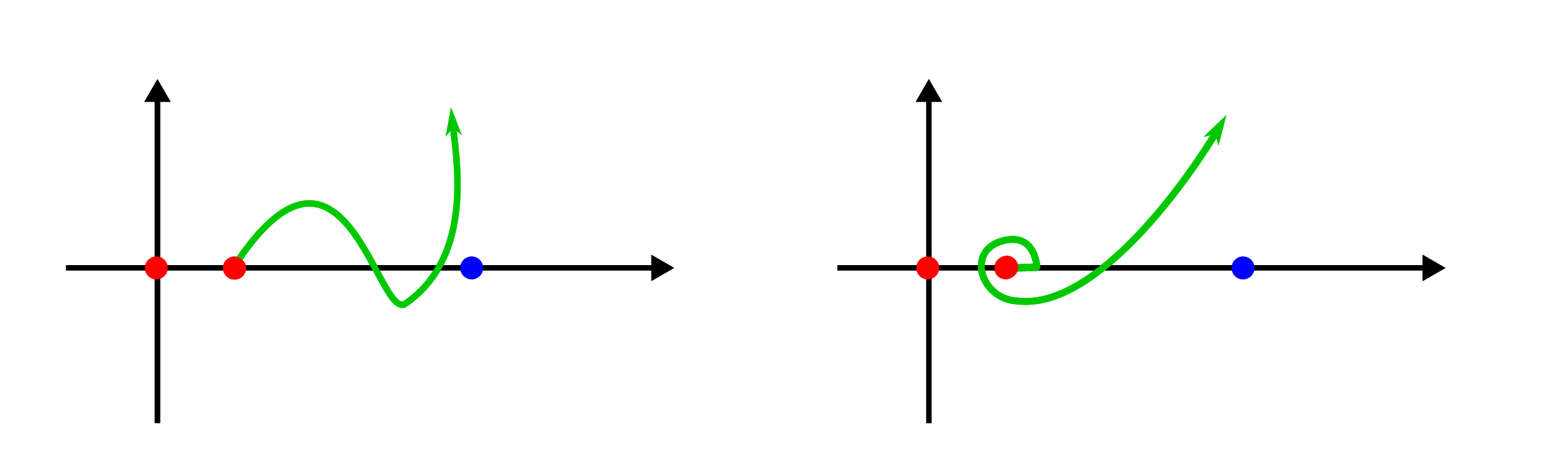}
	\put(8,10){{\color{red} $0$ }}
	\put(57,10){{\color{red} $0$ }}	
	\put(30,9.5){{\color{blue} $1$ }}
	\put(79,9.5){{\color{blue} $1$ }}
	\put(15,10){{\color{red} $a_+$ }}
	\put(63,9){{\color{red} $a_+$ }}
	\put(25,18){{\color{myGreen} $\proj \g$ }}
	\put(71,18){{\color{myGreen} $\proj \g$ }}
	\put(44,12){$\Re q$}
	\put(93,12){$\Re q$}
	\put(8,26){$\Im q$}
	\put(58,26){$\Im q$}
\end{overpic}
\caption{
	Example of paths $\g \in \PP_{\infty}$ only crossing $\reals$ at $(0,1)$ such that $\proj \g$ ends on the positive complex plane.
	Left: $\tht_{\fin}(\g)\in(0,\pi)$.
	Right: $\tht_{\fin}(\g)\in(2\pi,3\pi)$.
	}
\label{fig:caminsBranca1infty}
\end{figure}
The paths considered satisfy 
\[
\tht_{\fin}(\g)\in \big(2\pi k,(2k+1)\pi \big),
\quad \text{ for some } k \in \integers.
\]
We compute $t^*(\g)$ using  the path  ${\eta}={\eta}^1 \vee {\eta}^2 \vee \eta^3 \vee \eta^4
\vee \eta^5$ where the paths $\eta^1$, $\eta^2$ are defined in~\eqref{proof:pathsEtaB} and
\[
\begin{cases}
%\eta^1(q) = (q,0) & 
%\text{ with } q \in (a_+,a_+ +\e], \\
%%
%\proj \eta^2(\phi) = a_+ + \e e^{i\phi} & 
%\text{ with } \phi \in [0,2\pi k], \\
%
\proj \eta^3(q) = q & 
\text{ with } q \in [a_+ + \e,1-\e],\\
\proj \eta^4(\phi) = 1 + \e  e^{i\phi} & 
\text{ with } \phi \in [\pi,0], \\
\proj \eta^5(q) = q &
\text{ with } q \in [1+\e,+\infty),
\end{cases}
\]
for small enough $\e>0$. 
Since the integrals on $\eta^3$ and $\eta^5$ take real values and applying the results in Lemma~\ref{lemma:caminsBranca1a} for $\eta^1$ and $\eta^2$, we obtain
\[
\Im {t^*(\g)} = \Im \int_{\eta^4} \fh(q) dq.
\]
Proceeding as in the proof of Lemma~\ref{lemma:caminsInfinitSimple} and following the natural arguments of the path $\eta$, one deduces that
\[
\Im {t^*(\g)} = -\pi \Res \paren{\fh,(1,2\pi k)} =(-1)^{k+1} \pi \sqrt{\frac{2}{21}}.
\]
Therefore, since $\vabs{\Im t^*(\g)}>A$, the singularity is not visible.
\end{proof}

\begin{lemma} \label{lemma:caminsBranca1c}
	The singularities $t^*(\g)$ given by paths $\g \in \PP_{0} \cup \PP_{\infty}$ 
	both crossing $(0,1)$ and $\reals \setminus [0,1]$
	are not visible.
\end{lemma}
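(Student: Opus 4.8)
The plan is to reduce every path of this kind to one of the configurations already settled in Lemmas~\ref{lemma:caminsPol}--\ref{lemma:caminsBranca2}. Let $\g\in\PP_{0}\cup\PP_{\infty}$ be as in the statement. Since $\g$ belongs to class C, its first crossing of $\reals$ takes place at some $q_1=q_1(\g)\in(0,1)$ (recall that $0$ and $1$ are punctures, so a crossing inside $[0,1]$ must lie in $(0,1)$), and, being of class C.3, the path crosses $\reals\setminus[0,1]$ at some larger parameter. Using the reflection~\eqref{eq:reflection} I may assume that $\proj\g$ enters $\complexs^{+}$ immediately after $s_{\ini}$. Let $s_2=\inf\{s>s_{\ini}:\proj\g(s)\in\reals\setminus[0,1]\}$, let $q_2=\proj\g(s_2)\in\reals\setminus[0,1]$ be the location of this first crossing, and set $\g'=\g|_{[s_{\ini},s_2]}$. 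By the same argument as in Remark~\ref{remark:pathsCrossingReals}, it suffices to show that $\vabs{\Im T[\g](s_2)}=\vabs{\Im\int_{\g'}\fh(q)\,dq}\geq A$: once this is known, $T[\g]$ leaves the open strip $\Pi_A$ before reaching its endpoint, so $t^*(\g)$ is not visible in the sense of Definition~\ref{definition:visibleSingularity}.

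The heart of the matter is a purely topological reduction. Before the parameter $s_2$, the projection $\proj\g$ stays in $(\complexs\setminus\reals)\cup(0,1)$, whose only puncture (i.e.\ singularity of $\fh$) is the branch point $q=a_+$; encircling any of $0$, $\pm1$, $a_-$ would force a crossing of $\reals\setminus[0,1]$, hence a parameter $\geq s_2$. Therefore, exactly as in the proof of Lemma~\ref{lemma:caminsBranca1a}, I would homotope $\proj\g'$ in $\complexs\setminus\{a_-,-1,0,a_+,1\}$, rel endpoints and tracking the argument of $\argf$, into a concatenation of $k$ small circles around $a_+$ — each of which contributes $\OO(\sqrt\e)\to0$ to the integral while changing the branch of $\sqrt{\argf}$ by a sign — followed by a path $\eta\subset\ol{\complexs^{+}}$ from $a_+$ to $q_2$ that does not encircle $a_+$ and runs just above $\reals$ except for small semicircular detours around whichever of the punctures $0$, $1$ it must pass. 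In the limit this gives $\int_{\g'}\fh(q)\,dq=\int_{\eta}\fh(q)\,dq$, and $\eta$ is precisely a path of the type treated in Lemmas~\ref{lemma:caminsPol}--\ref{lemma:caminsBranca2} (possibly on a different Riemann sheet).

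The conclusion is then a case distinction on the interval containing $q_2$, reading off $\vabs{\Im\int_{\eta}\fh(q)\,dq}$ from the relevant lemma: it equals $\pi\sqrt{2/21}>A$ when $q_2\in(1,+\infty)$ or $q_2\in(-\infty,a_-)$ (the semicircular detour over the pole $q=1$; Lemmas~\ref{lemma:caminsPol} and~\ref{lemma:caminsNegatius2}); it equals $A$ when $q_2\in(-1,0)$ (only the integral over $(0,a_+)$ contributes; Lemma~\ref{lemma:caminsNegatius1}); and it equals $A+B(q_2)>A$ when $q_2\in(a_-,-1)$ (Lemma~\ref{lemma:caminsBranca2}, $B$ being the positive function there). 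A change of Riemann sheet merely conjugates $\fh$ along the real segments producing the imaginary part, so none of these equalities is affected by it; hence $\vabs{\Im\int_{\g'}\fh(q)\,dq}\geq A$ in every case, which is what we needed.

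I expect the main obstacle to be the homotopy reduction of the second paragraph: one must check rigorously that up to its first crossing of $\reals\setminus[0,1]$ the path can wind only around $a_+$, and then keep careful track of how an odd number of such windings changes the branch of $\sqrt{\argf}$ — verifying, as above, that the sign‑robust identities imported from Lemmas~\ref{lemma:caminsPol}--\ref{lemma:caminsBranca2} survive this change. A secondary, routine point is confirming that the semicircular detours of $\eta$ around the branch points $q=0$ and $q=-1$ are $\OO(\sqrt\e)$, whereas the detour around the pole $q=1$ — forced exactly when $q_2>1$ or $q_2<a_-$ — yields the residue term $\pm i\pi\sqrt{2/21}$, already larger than $A$.
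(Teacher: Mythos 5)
Your proposal is correct and follows essentially the same strategy as the paper: reduce to the piece of the path before its first crossing of $\reals\setminus[0,1]$, homotope that piece to an integer number of windings around $a_+$ (each contributing $\OO(\sqrt{\e})$ but possibly flipping the branch of $\sqrt{\argf}$, i.e.\ the sign of the integral) followed by one of the standard paths from Lemmas~\ref{lemma:caminsPol}--\ref{lemma:caminsBranca2}, and observe that the sign flip does not affect $\vabs{\Im t_2(\g)}\geq A$. The only cosmetic difference is that the paper writes out just the case $q_2\in(1,+\infty)$ with the even/odd winding dichotomy made explicit via a second-sheet path $\eta^*$, declaring the remaining intervals analogous, whereas you enumerate all four target intervals.
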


\begin{proof}
Let us define the parameter of the first crossing at $\reals \setminus [0,1]$ as 
\[
 s_2(\g)= \inf \{s \in(s_0,s_{\fin}) \st \Im \proj \g(s)=0, \, \Re \proj \g(s) \notin [0,1] \}
\]
and the corresponding point
\[
q_2(\g)=  \proj \g(s_2(\g)) \in 
\reals \setminus \left\{ [0,1], a_-, -1 \right\}.
\]
We consider paths $\g$ with $q_2= q_2(\g) \in (1,+\infty)$ and such that $\proj \g$ approaches $q_2(\g)$ from $\complexs^+$ (see~\ref{eq:reflection}). The cases $q_2 \in (-\infty,a_-)$,  $q_2 \in(a_-,-1)$ and $q_2 \in (-1,0)$ are proved analogously.
% by applying Lemmas~\ref{lemma:caminsNegatius2}, \ref{lemma:caminsNegatius1} and~\ref{lemma:caminsBranca2}, respectively.

%
The strategy is to classify the paths $\g$ depending on how many turns they perform around $q=a_+$ before crossing $\reals \setminus [0,1]$.
To this end, we define $\tht_2(\g) = \arg(q_2(\g)-a_+)$ (see Figure~\ref{fig:caminsBranca1creuament}).
% we can  
 \begin{figure}[t] 
    \centering
    \begin{overpic}[scale=0.7]{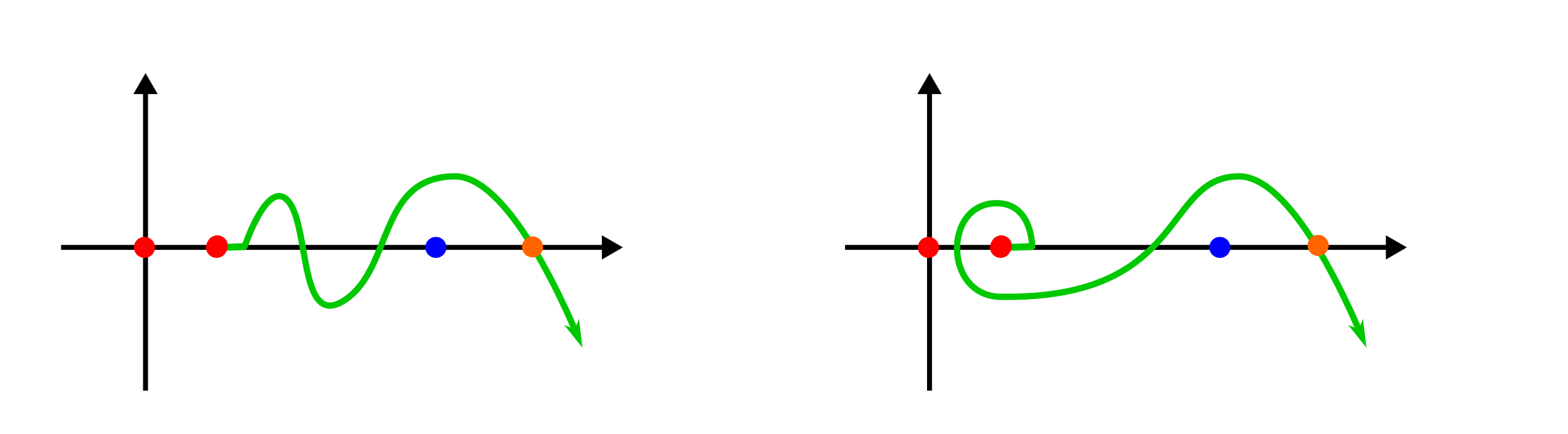}
    	\put(7.5,9.5){{\color{red} $0$ }}
    	\put(57.5,9.5){{\color{red} $0$ }}	
    	\put(27.5,9){{\color{blue} $1$ }}
    	\put(77.5,9){{\color{blue} $1$ }}
    	\put(13,9.5){{\color{red} $a_+$ }}
    	\put(63,9.5){{\color{red} $a_+$ }}
    	\put(40,11){$\Re q$}
    	\put(90,11){$\Re q$}
    	\put(8,24){$\Im q$}
    	\put(58,24){$\Im q$}
    	\put(28,18){{\color{myGreen} $\proj \g$ }}
    	\put(78,18){{\color{myGreen} $\proj \g$ }}
    	\put(34,14){{\color{myOrange} $q_2(\g)$ }}
    	\put(84,14){{\color{myOrange} $q_2(\g)$ }}
    \end{overpic}
    \caption{
    Example of paths $\g \in \PP_{\infty}$ crossing both $(0,1)$ and $\reals \setminus[0,1]$ with $q_2(\g) \in (1,+\infty)$ and such that $\proj \g$ approaches $q_2(\g)$ from $\complexs^+$.
    Left: $\tht_2(\g)=0$.
    Right: $\tht_2(\g)=2\pi$.
    }
    \label{fig:caminsBranca1creuament}
 \end{figure}
The paths we are considering satisfy  $\tht_2(\g)=2\pi k$ for some $k \in \integers$.
We also define the piece of  path before the crossing as
$
\g_2(\g) = \claus{\g(s) \st s \in (s_{\ini},s_2(\g))}
$
and the corresponding time
\begin{equation*}
	%\label{def:t2FirstCrossing}
	t_2(\g) = \int_{\g_2(\g)} \fh(q) dq.
\end{equation*}
To prove that a singularity $t^*(\g)$ is not visible, it is sufficient to check that $\vabs{\Im t_2(\g)} \geq A$.
\begin{enumerate}
\item Consider $\theta_2(\g)=2\pi k$ with $k$ an even number. 
Let us consider the path $\eta$ as defined in~\eqref{proof:pathEtaPol} replacing $q_1$ by $q_2$ in its definition.
This path $\eta$ lies entirely on the first Riemann sheet, that is $\arg \argf(\eta) \in (-\pi,\pi]$. 
% 
%Thus, we rename it $\eta^{1st}=\eta$.
%
Integrating the function $\fh$ along ${\g_2(\g)}$ is equivalent to integrating it along
\[
\xi= \widetilde{\eta}^1 \vee 
\widetilde{\eta}^2 \vee
\widetilde{\eta}^3 \vee
{\eta},
\]
where
\begin{equation}\label{proof:pathsEtaTilde}
\begin{cases}
\widetilde{\eta}^1(q) = (q,0) & 
\text{ with } q \in (a_+,a_+ +\e], \\
\proj \widetilde{\eta}^2(\phi)=a_+ + \e e^{i\phi} &
\text{ with } \phi \in [0,2\pi k], \\
\proj \widetilde{\eta}^3(q) = q & 
\text{ with } q \in [a_+ + \e,a_+),
\end{cases}
\end{equation}
for $\e>0$ small enough. 
Notice that this construction makes sense since the path $\widetilde{\eta}^3$ has argument 
$\arg \argf(\widetilde{\eta}^3)=2\pi k$ (which belongs to the first Riemann sheet).

Then, since
$\int_{\wt{\eta}^j} \fh(q) dq = \OO(\sqrt{\e})$ for $j=1,2,3$ and
applying Lemma~\ref{lemma:caminsPol}, we have
\[
\vabs{\Im t_2(\g)} = 
\lim_{\e \to 0}
\vabs{\Im \int_{\xi} \fh(q)dq} =
\vabs{\Im \int_{\eta} \fh(q)dq} > A.
\]

\item Consider $\theta_2(k)=2\pi k$ with $k$ an odd integer.
We define the  path $\eta^*$, lying on  the second Riemann sheet, as
\[
\eta^* = (\proj \eta, \arg \argf(\eta)+2\pi)\in \mathbb{C}\times (\pi,3\pi],
\]
where $\eta$ is the path introduced in~\eqref{proof:pathEtaPol} (replacing $q_1$ by $q_2$ in its definition).
Note that the path $\eta$ lies on the first Riemann sheet ($\arg \argf(\eta) \in (-\pi,\pi]$). 

It can be easily checked that switching the Riemann sheet implies a change in sign. That is,
\begin{equation}\label{proof:changeSheet}
\int_{\eta^*} \fh(q) dq = - \int_{\eta} \fh(q)dq.
\end{equation}
Then, integrating the function $\fh$ along the path ${\g_2(\g)}$ is equivalent to integrating it over 
\[
{\xi}=\widetilde{\eta}^1 \vee 
\widetilde{\eta}^2 \vee
\widetilde{\eta}^3 \vee
{\eta}^{*},
\]
where paths $\wt{\eta}^j$ for $j=1,2,3,$ are defined on~\eqref{proof:pathsEtaTilde}. 
This construction makes sense since the path $\widetilde{\eta}^3$ has argument $\arg \argf(\wt{\eta}^3)=2\pi k$ (which belongs to the second Riemann sheet).

Then, since
$\int_{\wt{\eta}^j} \fh(q) dq = \OO(\sqrt{\e})$ for $j=1,2,3$ and
applying Lemma~\ref{lemma:caminsPol} and formula~\eqref{proof:changeSheet}, we have
\[
\vabs{\Im t_2(\g)} = 
\lim_{\e \to 0}
\vabs{\Im \int_{\xi} \fh(q)dq} =
\vabs{\Im \int_{\eta} \fh(q)dq} > A.
\]
\end{enumerate}
\end{proof}

\subsection{Proof of Proposition~\ref{proposition:domainSeparatrix}}
\label{subsection:propZeroes}

For $t \in \reals$, $\La_h(t)$ satisfies $\La_h(t)=0$ if and only if $t=0$ (see Figure~\ref{fig:separatrix}).
To prove Proposition~\ref{proposition:domainSeparatrix}, we  follow the same techniques used in the proof of Theorem~\ref{theorem:singularities}.

%\paragraph{Change of variables.}
%
%\paragraph{Classification of zeroes.}
Let us consider $q(t)=\cos(\frac{\la_h(t)}{2})$ as introduced in Theorem~\ref{theorem:singularitiesChangeq}. Then, by~\eqref{eq:equationLahSimplified},
\begin{equation*}
%\label{eq:LahFromq}
\La_h^2(t) = \frac{4}{3 q(t)} 
\big(1-q(t)\big)
\big(q(t)-a_+\big)
\big(q(t)-a_-\big).
\end{equation*}
Let $\La_h(t^*) = 0$ for a given $t^*$. Then, defining $q^*=q(t^*)$, we have three options:
\begin{equation*}%\label{eq:casesZeroes}
q^*=1,a_+,a_-.
% \quad \text{and} \quad
% \vabs{q^*} \to \infty.
\end{equation*}
%\paragraph{Classification of zeroes.}
%These cases have already been examined on Proposition \ref{proposition:singularitiesSeparatrix}.
%
We have seen that 
% $\vabs{q^*} \to \infty$ corresponds to a pole of order $1$,  (see~\eqref{eq:singularitatExpressioInfinit}), and 
$q^*=1$ corresponds to the saddle equilibrium point, namely $\vabs{t^*}\to \infty$, (see~\eqref{proof:classificationSaddle}).
Therefore, it cannot lead to zeroes of $\La_h(t)$.
On the contrary, $q^*=a_{\pm}$ leads to zeroes of $\La_h(t)$, since we have seen that $q(t)$ is well defined and analytic in a neighborhood of such $t^*$ (see~\eqref{proof:qexpressionZeroes}).

%\paragraph{Computation of visible zeroes.}
To prove Proposition~\ref{proposition:domainSeparatrix} it only remains to compute all possible values of $t^* \in \ol{\Pi_A}$ such that $q^*=q(t^*)$ with $q^*=a_+,a_-$.
To do so, we use the techniques and results presented in Section~\ref{subsection:computationSingularities}.

From now on, we consider integration paths $\g:(s_{\ini}, s_{\fin}) \to \mathscr{R}_f$ with initial point $\lim_{s \to s_{\ini}} \g(s) = (a_+,0)$ and endpoint 
$\lim_{s \to s_{\ini}} \proj \g(s) = q^*=a_\pm$.
Moreover, we say that a zero $t^*$ of $\La_h$ is \emph{visible} if there exist a path $\g$ such that  $t^* = t^*(\g) \in \ol{\Pi_A}$ and $T[\g](s) \in \Pi_A$ for $s\in[s_{\ini},s_{\fin})$,  (see~\eqref{def:operatorT} and~\eqref{def:operatorSingularities}).
%
% Therefore, we need to compute $t^*(\g)$ for all possible paths and check if they are visible zeroes.

%
First, we recall some of the results obtained in Sections~\ref{subsubsection:pathsB} and~\ref{subsubsection:pathsC}.
\begin{itemize}
\item Consider $q_1 \in (-\infty, a_-) \cup (a_-,-1) \cup (1,+\infty)$. In the proofs of Lemmas~\ref{lemma:caminsPol}, \ref{lemma:caminsNegatius2}, \ref{lemma:caminsBranca2} and
\ref{lemma:caminsBranca1c} we have seen that
\begin{align}\label{proof:integralNotInPiA}
	\vabs{\Im \int_{a_+}^{q_1} \fh d\g} > A.
\end{align}
\item Consider $q_1\in(-1,0)$. In the proofs of Lemmas~\ref{lemma:caminsNegatius1} and~\ref{lemma:caminsBranca1c}, we have seen that
\begin{align}\label{proof:integralNotInPiA2}
	\vabs{\Im \int_{a_+}^{q_1} \fh d\g} = A.
\end{align}
\end{itemize}
Now, we classify the paths depending on its endpoint $q^*$.
\begin{enumerate}
\item Consider $q^*=a_-$.
Analogously to the proof of \eqref{proof:integralNotInPiA}, it can be seen that
\[
\vabs{\Im t^*(\g)} = 
\vabs{\Im \int_{a_+}^{a_-} \fh d\g} >A.
\]
Therefore, $q^*=a_-$ does not lead to any visible zero.

\item Consider $q^*=a_+$.
Notice that, by~\eqref{proof:integralNotInPiA} and~\eqref{proof:integralNotInPiA2},
any path crossing $\reals\setminus[0,1]$ leads to non-visible zeroes. 
Therefore, we only consider paths $\g$ either crossing $(0,1)$ or not crossing $\reals$.

Since in $(0,1)$ the only singularity of $\fh(q)$ is the branching point $q=a_+$, there exists a homotopic path $\eta=\eta^1 \vee \eta^2 \vee \eta^3$ defined by
\begin{equation*}
\begin{cases}
	\eta^1(q) = (q,0) & 
	\text{ with } q \in (a_+,a_+ + \e], \\
	\proj \eta^2(\phi) = a_+ + \e e^{i\phi} & 
	\text{ with } \phi \in [0, 2\pi k], \\
	\proj \eta^3(q) = q & 
	\text{ with } q \in [a_+ + \e, a_+), 
\end{cases}
\end{equation*}
for some $k \in \integers$ and $\e>0$ small enough. Then,
\begin{equation*}
t^*(\g) = \int_{a_+}^{a_+} \fh d \g = 
\lim_{\e \to 0} \int_{\eta} \fh(q) dq = 0.
\end{equation*}
Therefore, these paths lead to the only visible zero $t^*=0$.
\end{enumerate}
\qed

\section{The inner system of coordinates}
\label{section:proofBC-poincareFormulasInnerDerivation}
This section is devoted to prove Proposition~\ref{proposition:innerDerivation}. That  is we perform the suitable changes of coordinates, described in Section \ref{subsection:innerDerivation}, to Hamiltonian $H$ obtained in Theorem~\ref{theorem:HamiltonianScaling} (see~\eqref{def:hamiltonianScaling}) to obtain the inner Hamiltonian $\mathcal{H}$.
%
% To this end, we need to apply the changes of coordinates introduced in Section~\ref{subsection:reformulation} to the Hamiltonian $H$ obtained in Theorem~\ref{theorem:HamiltonianScaling} (see~\eqref{def:hamiltonianScaling}).
%
%To this end, we need to analyze the changes of coordinates introduced in Sections~\ref{subsection:reformulation} and \ref{subsection:innerDerivation} 
%that are applied to Hamiltonian~\eqref{def:hamiltonianPolars}.
%%
%These changes lead to the Hamiltonian $H^{\Inn}=\HH + H^{\Inn}_1$ where $H^{\Inn}_1 = \OO(\de^{\frac{4}{3}})$.
%%
%Note that, since the Poincar\'e change of coordinates $\phi_{\Poi}$, as defined in~\eqref{def:changePoincare},
%is not explicit, neither is $H_1^{\Poi}$, (see~\eqref{def:hamiltonianPoincareSplitting}).
%
However, recall  that the Hamiltonian $H$ is defined by means of $H_1^{\Poi}$ (see~\eqref{def:hamiltonianPoincareSplitting}) which does not have a closed form.
For this reason, a preliminary step to to prove Proposition~\ref{proposition:innerDerivation} is to provide suitable expansions for $H_1^{\Poi}$ in an appropriate domain. This is done in  Section~\ref{subsection:proofB-poincareFormulas}.
Then, in Section~\ref{subsection:proofC-innerDerivation}, we apply the changes of coordinates introduced in Section~\ref{subsection:innerDerivation} to conclude the proof of the proposition.

\subsection{The Hamiltonian in Poincar\'e variables}
\label{subsection:proofB-poincareFormulas}

%First, we give some classical formulas for the Poincar\'e planar elements.
%and we analyze the Hamiltonian $H_1^{\Poi}$ (given in~\eqref{def:hamiltonianPoincareSplitting}) on an appropiate domain, (see Lemma~\ref{lemma:seriesH1Poi} below).

%%%  POINCARE ELEMENTS FORMULAE %%%

First, we give some formulae to translate the Delaunay variables and other orbital elements into Poincar\'e coordinates (see \eqref{def:changePoincare}). %We apply this set of coordinates, as defined in~\eqref{def:changePoincare}, without mentioning it throughout this section.
\begin{itemize}
	\item[--] \textbf{Eccentricity $e$} (see~\eqref{def:eccentricityDelaunay}):
	It can be written as
	\begin{equation}\label{eq:ePoincare}
	e = 2 \, \tilde{e}(L,\eta,\xi) \sqrt{\eta \xi}, 
	\quad \text{where} \quad
	\tilde{e}(L,\eta,\xi) = \frac{\sqrt{2L-\eta\xi}}{2L}
	= \frac{1}{\sqrt{2L}} + \OO(\eta \xi).
	\end{equation}
%	where
%	\begin{equation}\label{eq:eResta}
%	\tilde{e}(L,\eta,\xi) = \frac{\sqrt{2L-\eta\xi}}{2L}
%	= \frac{1}{\sqrt{2L}} + \OO(\eta \xi).
%	\end{equation}
	Notice that $\tl{e}$ is analytic for $(L,\eta,\xi)\sim(1,0,0)$\footnote{
	This expansion is valid as long as $L\neq 0$.	
	However, since our analysis focuses on $L \sim 1$, to simplify notation we use this more restrictive domain.}.
	%
%	To avoid cumbersome notation, we omit the dependence of the function $\tl{e}$ on its corresponding variables. 
	%
	\item[--] \textbf{Argument of the perihelion $g$:} From the expression of $\eta$ and $\xi$ in \eqref{def:changePoincare2},
	\begin{equation}\label{eq:gTrigPoincare}
	\cos g = \frac{\eta + \xi}{2 \sqrt{\eta \xi}}, \qquad
	\sin g = -i \frac{\eta - \xi}{2 \sqrt{\eta \xi}}. 
	\end{equation}
	\item[--] \textbf{Mean anomaly $\ell$:}
	Since $\la=\ell+g$, we have that 
	\begin{equation*}
	\cos \ell = \frac{1}{2 \sqrt{\eta \xi}} \left( e^{-i \la} \eta + e^{i \la} \xi  \right), \qquad
	\sin \ell = \frac{i}{2 \sqrt{\eta \xi}} \left( e^{-i \la} \eta - e^{i \la} \xi  \right).
	\end{equation*}
	These expressions are not analytic at $(\eta,\xi)=(0,0)$. However, by~\eqref{eq:ePoincare},
	\begin{equation}\label{eq:eTriglPoincare}
	\begin{split}
	e\cos \ell = \tl{e}(L,\eta,\xi)\left( e^{-i \la} \eta + e^{i \la} \xi  \right), \quad
	e\sin \ell = i \tl{e}(L,\eta,\xi) \left( e^{-i \la} \eta - e^{i \la} \xi  \right),
	\end{split}
	\end{equation}
	are analytic for $(L,\eta,\xi) \sim (1,0,0)$.
	\item[--] \textbf{Eccentric anomaly $u$:}
	It can be implicitly defined by $u=\ell+e\sin u$ (see~\eqref{eq:uImplicitDefinition}),
	which implies
	\begin{align*}\label{eq:uSeriesDefinition}
	u = \ell + e\sin \ell + e^2 \cos\ell\sin\ell + 
	\OO(e\sin\ell,e\cos\ell)^3.
	\end{align*}
	Then, by~\eqref{eq:ePoincare} and \eqref{eq:eTriglPoincare},
	\begin{equation}\label{eq:trigoU}
	\begin{split}
	&e\cos u = \frac{1}{\sqrt{2L}}
	\paren{e^{-i\la}\eta+e^{i\la}\xi} 
	+ \frac{1}{2L} \paren{e^{-i\la}\eta-e^{i\la}\xi}^2 
	+ \OO(e^{-i\la}\eta,e^{i\la}\xi)^3, \\
	&e\sin u = 
	\frac{i}{\sqrt{2L}} \paren{e^{-i\la}\eta-e^{i\la}\xi}
	+\frac{i}{2L} \paren{e^{-2i\la}\eta^2-e^{2i\la}\xi^2} 
	+\OO(e^{-i\la}\eta,e^{i\la}\xi)^3,
	\end{split}
	\end{equation}
	which are also analytic for $(L,\eta,\xi)\sim(1,0,0)$.
\end{itemize}
For any  $\zeta \in [-1,1]$,  we define the function
\begin{equation}\label{def:functionD}
D[\zeta] = 
\paren{r^2 -2\zeta r\cos\tht + \zeta^2}
\circ \phi_{\Poi}.
\end{equation}
By the definition of $\mu H_1^{\Poi}$ in~\eqref{def:hamiltonianPoincareSplitting}, we have that
\begin{equation}\label{eq:decompositionH1PoiA}
\mu H_1^{\Poi} = \frac{1}{\sqrt{D[0]}} -\frac{1-\mu}{\sqrt{D[\mu]}}
-\frac{\mu}{\sqrt{D[\mu-1]}}.
\end{equation}
%Thus, we introduce a series expression of function $D[\zeta]$ on an appropriate domain.

\begin{lemma}\label{lemma:seriesH1Poi}
	For $\vabs{(L-1,\eta, \xi)} \ll 1$ and any  $\zeta \in [-1,1]$, one can split $D[\zeta]$ as 
	\begin{align*}
		D[\zeta] =  D_0[\zeta] + D_1[\zeta] + D_2[\zeta] + D_{\geq 3}[\zeta],
	\end{align*}
	where 
	\begin{align*}%\label{def:D0}
	D_0[\zeta](\la,L) =& \, 
	L^4 - 2\zeta L^2 \cos \la  +\zeta^2,  \\
	%%%
	\begin{split}%\label{def:D1}
	D_1[\zeta](\la,L,\eta,\xi) =& \,
	\eta  \frac{\sqrt{2 L^3}}{2}
	\paren{3 \zeta - 2 L^2 e^{-i\la}  -
		\zeta e^{-2i\la}} \\
	&+ \xi  \frac{\sqrt{2 L^3}}{2}
	\paren{3 \zeta - 2 L^2 e^{i\la}   -  \zeta e^{2 i\la}}, 
	\end{split} \\
	\begin{split}%\label{def:D2}
	D_2[\zeta](\la,L,\xi,\eta) =& \,
	- {\eta^2}  \frac{L e^{-i\la}}{4}  \paren{
	\zeta + 2 L^2 e^{-i\la}  + 3 \zeta e^{-2 i\la} } 
	\\
	&- {\xi^2} \frac{L e^{i\la}}{4} \paren{\zeta +
	2 L^2 e^{i\la}  + 3 \zeta e^{2 i\la} }
+\eta \xi L \paren{
	3L^2 + 2\zeta\cos \la}.
	\end{split}
	\end{align*}
	Fix $\varrho\geq 0$.
	Then, for $\vabs{\Im \la}\leq \varrho$, the function
	$D_{\geq 3}[\zeta]$ is analytic and satisfies
	\begin{equation}\label{eq:boundD3mes}
	\vabs{D_{\geq 3}[\zeta](\la,L,\eta,\xi)} \leq 
	C \vabs{(\eta,\xi)}^3,
	\end{equation}
	 with $C=C(\varrho)$ a positive constant independent of $\zeta \in [-1,1]$.
\end{lemma}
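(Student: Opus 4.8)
The statement to prove is Lemma~\ref{lemma:seriesH1Poi}: the Taylor expansion of $D[\zeta]$ in powers of $(\eta,\xi)$ up to order two, with an analytic bound on the remainder $D_{\geq 3}[\zeta]$ uniform in $\zeta\in[-1,1]$. The strategy is direct: start from the definition $D[\zeta]=(r^2-2\zeta r\cos\tht+\zeta^2)\circ\phi_{\Poi}$, rewrite $r^2$, $r\cos\tht$ in terms of the orbital elements, substitute the Poincar\'e-coordinate expansions of those elements (which are collected just before the lemma), and organize the result by homogeneity degree in $(\eta,\xi)$.

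**Key steps.** First I would express the two relevant quantities in Delaunay/Poincar\'e variables. From $r=L^2(1-e\cos u)$ one gets $r^2 = L^4(1-e\cos u)^2 = L^4 - 2L^4 e\cos u + L^4(e\cos u)^2$; here $e\cos u$ has the analytic expansion \eqref{eq:trigoU} in $(\eta,\xi)$ starting at order one, so $r^2$ is analytic near $(L,\eta,\xi)\sim(1,0,0)$ with a known order-$0$, order-$1$, order-$2$ part. Next, $r\cos\tht = r\cos(f+g)$; using $r\cos f = L^2(\cos u - e)$ and $r\sin f = L^2\sqrt{1-e^2}\sin u$ (which follow from \eqref{eq:fTrigDefinition}--\eqref{eq:rthetaDefinition}) together with $\cos g,\sin g$ from \eqref{eq:gTrigPoincare} and the expansions of $e\cos u$, $e\sin u$, $e\cos g = \tfrac12(\eta+\xi)\cdot(\text{something analytic})$, etc., one writes $r\cos\tht$ as an analytic function of $(\la,L,\eta,\xi)$ near $(\cdot,1,0,0)$. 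Then $D[\zeta] = r^2 - 2\zeta(r\cos\tht) + \zeta^2$ is assembled, and one reads off $D_0[\zeta]$ (set $\eta=\xi=0$, where $e=0$, $u=\ell=\la$, $r=L^2$, $\tht=\la$, giving $L^4-2\zeta L^2\cos\la+\zeta^2$), the linear part $D_1[\zeta]$, and the quadratic part $D_2[\zeta]$ by matching the coefficients of $\eta$, $\xi$, $\eta^2$, $\xi^2$, $\eta\xi$ in the substituted expansions. This is the bulk of the computation and I would present the collection of terms as a bookkeeping exercise, quoting \eqref{eq:trigoU} and its analogues rather than re-deriving them.

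**The remainder estimate.** For $D_{\geq 3}[\zeta]$, the point is that $r^2$ and $r\cos\tht$ are \emph{analytic} in $(\eta,\xi)$ on a fixed polydisc around $(1,0,0)$, with all estimates uniform for $\vabs{\Im\la}\le\varrho$ — the $\la$-dependence enters only through bounded trigonometric/exponential factors $e^{\pm ik\la}$ whose modulus is controlled by $e^{k\varrho}$ on that strip, and through $\tl e(L,\eta,\xi)$, $\sqrt{2L}$ etc.\ which are analytic and bounded there. Since $D_{\geq 3}[\zeta]$ is the Taylor remainder of an analytic function after the degree-$2$ terms, Cauchy estimates on the polydisc give $\vabs{D_{\geq 3}[\zeta](\la,L,\eta,\xi)}\le C\vabs{(\eta,\xi)}^3$ with $C=C(\varrho)$ independent of $\zeta$, because $\zeta\in[-1,1]$ enters the formula only polynomially (degree $\le 2$) with bounded coefficients. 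Finally one checks that the splitting holds \emph{for all} $\zeta\in[-1,1]$ simultaneously, which is immediate since every step is a polynomial identity in $\zeta$.

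**Main obstacle.** The genuinely delicate point is not the remainder bound (routine Cauchy estimates) but correctly tracking the second-order terms: the product $L^4(e\cos u)^2$ contributes to $D_2$, and $e\cos u$, $e\sin u$ themselves have order-$2$ corrections in \eqref{eq:trigoU}, so both the ``square of the linear part'' and the ``linear $\times$ order-2 correction'' feed into $D_2[\zeta]$; likewise $r\cos\tht$ mixes the expansions of $e\cos g$, $e\sin g$, $e\cos u$, $e\sin u$ and the $\sqrt{1-e^2}$ factor. Getting the coefficients of $\eta^2$, $\xi^2$, $\eta\xi$ exactly right (matching the stated $-\tfrac{Le^{-i\la}}{4}(\zeta+2L^2e^{-i\la}+3\zeta e^{-2i\la})$ and its conjugate, plus $\eta\xi L(3L^2+2\zeta\cos\la)$) is where care is needed; I would do this by writing everything as a Laurent polynomial in $e^{i\la}$ with coefficients polynomial in $\eta,\xi,L,\zeta$ and collecting terms degree by degree. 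Once $D_0,D_1,D_2$ are verified, the rest is the analyticity/Cauchy-estimate argument sketched above.
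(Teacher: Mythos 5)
Your proposal is correct and follows essentially the same route as the paper: expand $r^2$ and $r\cos\tht$ in powers of $(\eta,\xi)$ via \eqref{eq:rthetaDefinition}, \eqref{eq:fTrigDefinition}, \eqref{eq:ePoincare}, \eqref{eq:gTrigPoincare} and \eqref{eq:trigoU}, assemble $D[\zeta]=r^2-2\zeta r\cos\tht+\zeta^2$, collect by degree, and bound the remainder by analyticity uniformly in $\zeta\in[-1,1]$. The only (cosmetic) difference is that the paper groups $r\cos\tht$ as $L^2\bigl(\cos(g+u)-e\cos g-(\sqrt{1-e^2}-1)\sin u\sin g\bigr)$ so that each piece is manifestly analytic at $\eta\xi=0$ (via $\cos(g+u)=\cos(\la+e\sin u)$), whereas your splitting into $\cos f\cos g-\sin f\sin g$ requires recombining terms before the analyticity is visible — which your Laurent-polynomial bookkeeping would accomplish.
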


\begin{proof}[Proof of Lemma~\ref{lemma:seriesH1Poi}]
In view of the definition of $D[\zeta]$ in \eqref{def:functionD}, we look for expansions for $r^2$ and $r\cos \tht$ (expressed in Poincar\'e coordinates) in powers of $(\eta,\xi)$.

Let us consider first $r^2$. 
Taking into account that $r=L^2(1-e\cos u)$ (see \eqref{eq:rthetaDefinition}) and the expansions in~\eqref{eq:trigoU} we obtain
%\begin{align*}
%r^2 %&= L^4 \claus{ 1-2e\cos u+e^2\cos^2 u} \\ 
%&= L^4 \claus{1 - 2e\cos\ell 
%+ e^2 + e^2 \sin^2\ell + 
%\OO(e\sin\ell,e\cos\ell)^3}.
%\end{align*}
%Since $e\cos\ell$ and $e\sin\ell$ are analytic when $(L,\eta,\xi) \sim (1,0,0)$, the terms of this series are too. Thus, using~\eqref{eq:ePoincare} and~\eqref{eq:eTriglPoincare} we obtain
%\begin{align*}
%r^2 = L^4 &\left\{
%1 - 
%\sqrt{\frac{2}{L}} \paren{e^{-i\la}\eta + e^{i\la} \xi}
%+ \frac{2}{L} \eta \xi 
%\right. \\ &\left. 
%- \frac{1}{2L} \paren{e^{-2i\la}\eta^2 + e^{2i\la} \xi^2 - 2 \eta \xi}
%+ \OO(\eta,\xi)^3 \right\} 
%\end{align*}
%and
\begin{equation}\label{proof:r2Poincare}
\begin{split}
r^2 =& \, L^4 
-L^3 \sqrt{2L} e^{-i\la} \eta
- L^3 \sqrt{2L} e^{i\la}\xi
+ 3 L^3\eta \xi   \\
&- \frac{L^3}{2} e^{-2i\la}\eta^2
- \frac{L^3}{2} e^{2i\la} \xi^2
+\OO(e^{-i\la}\eta, e^{i\la}\xi)^3.
\end{split}
\end{equation}
%This expression is analytic for $(L,\eta,\xi)\sim(1,0,0)$ and, since $\vabs{\Im \la} \leq \varrho$, the terms of order $3$ and higher satisfy estimate \eqref{eq:boundD3mes}.
%
%Now, we consider the term $r\cos \tht$. First, we introduce an expression for the trigonometrics of the true anomaly
%\begin{equation*}
%%\label{eq:trigfDefinition}
%\begin{split}
%\cos f=\frac{\cos u-e}{1-e\cos u}, \hh %\label{eq:cosfDefinition}\\
%\sin f=\frac{\sqrt{1-e^2}\sin u}{1-e\cos u}.
%\end{split}
%\end{equation*}
%
Now, we compute an expansion for $r\cos \tht$.
Taking into account \eqref{eq:thetaDefinition} and \eqref{eq:fTrigDefinition},
\begin{align*}
r \cos \tht
&= L^2 \paren{ \cos(g+u) - e\cos g - \left(\sqrt{1-e^2}-1\right)\sin u\sin g }.
\end{align*}
Notice that, since $\la=\ell+g$ and $u=\ell+e\sin u$, we have that $\cos(g+u) = \cos(\la+e\sin u)$ is analytic at $(\eta,\xi)=(0,0)$.
Then, using \eqref{eq:ePoincare},
\eqref{eq:gTrigPoincare} and \eqref{eq:trigoU},
we deduce
\begin{equation}\label{proof:rcosPoincare}
\begin{split}
r\cos \tht =& \,  L^2 \cos \la  
- \eta \frac{\sqrt{2 L^3}}{2} \paren{1
+ i e^{-i\la} \sin \la  } 
- \xi  \frac{\sqrt{2 L^3}}{2} \paren{ 1 - i e^{i\la} \sin \la  } \\
&- \eta \xi {L\cos \la }
+ \frac{L}{4} \eta^2 \paren{e^{-i\la} 
+ e^{-2i\la} \cos \la 
-2 i e^{-2i\la} \sin \la } \\ 
&+ \xi^2 \frac{L}{4} \paren{e^{i\la}+
 e^{2i\la} \cos \la
+2i e^{2i\la} \sin \la }
+ \OO(e^{i\la}\eta, e^{-i\la}\xi)^3.
\end{split}
\end{equation}
Then, joininig the results in~\eqref{proof:r2Poincare} and~\eqref{proof:rcosPoincare}
with the definition of $D[\zeta]$ in~\eqref{def:functionD}, we obtain its expansion in $(\eta,\xi)$.
Moreover, since $D[\zeta]$ is analytic for $(L,\eta,\xi)\sim(1,0,0)$ and $\vabs{\Im \la} \leq \varrho$, the terms of order $3$  satisfy the estimate in \eqref{eq:boundD3mes}.
\end{proof}

\begin{remark}\label{remark:seriesH1Poi}
Observe that the Hamiltonian $H^{\Poi} = H_0^{\Poi}+\mu H_1^{\Poi}$ in~\eqref{def:hamiltonianPoincareSplitting} is analytic away from collision with the primaries.	
By the decomposition of $\mu H_1^{\Poi}$ in~\eqref{eq:decompositionH1PoiA}, collisions with the primary $S$ are given by the zeroes of the function $D[\mu]$ and collisions with $P$ are given by the zeroes of $D[\mu-1]$.

Since our analysis is performed for $\vabs{(L-1,\eta,\xi)}\leq \e \ll 1$ and $0<\mu\ll 1$, 
by Lemma~\ref{lemma:seriesH1Poi}, one has
% we have that
\begin{align*}
	D[\mu] = 1 + \OO(\mu,\e), \qquad
	D[\mu-1] = 2 + 2\cos \la + \OO(\mu,\e).
%	\quad \text{when } \mu,\e \to 0.
\end{align*}
That is, collisions with $S$ are not possible whereas  collisions with $P$ may take place when $\la \sim \pi$.
\end{remark}

\subsection{Proof of Proposition~\ref{proposition:innerDerivation}}
\label{subsection:proofC-innerDerivation}

To prove Proposition~\ref{proposition:innerDerivation}, we  analyze the Hamiltonian $H^{\Inner}$ which is given (up to a constant) by
\begin{equation*}%\label{proof:hamiltonianInnerA}
% H^{\Inner} = 
\frac{\de^{\frac{4}{3}}}{2 \al_+^2}
\paren{H \circ \phi_{\equi} \circ \phi_{\out} \circ \phi_{\inn}},
\end{equation*}
% where $c=c(\de)$ only depends on $\de$, 
where the changes $\phi_{\equi}$, $\phi_{\out}$ and $\phi_{\inn}$ are defined in~\eqref{def:changeEqui}, \eqref{def:changeOuter} and \eqref{def:changeInner}, and $H=H_0+H_1$ (see \eqref{def:hamiltonianScalingH0} and $H_1$ in~\eqref{def:hamiltonianScalingH1}).

In the rest of the section, when performing changes of coordinates, to simplify notation, we omit the constant terms in the Hamiltonians.
% iterms that are constant or only depending on $\de$ in the equivalences between Hamiltonians.

%The first step is to obtain an expansion of the Hamiltonian $H_1$ with respect to $(x,y)$. 
%
Using the formulas for $H_1^{\Poi}$ in~\eqref{eq:decompositionH1PoiA} and Lemma~\ref{lemma:seriesH1Poi},  
% to analyze $H^{\inn}$.
%
% First, 
we split $H_1^{\Poi}$ into two terms: one for the perturbation coming from the massive primary ($S$) and the other coming from the small primary ($P$),
\begin{align*}
H_1^{\Poi} = H_1^{\Poi,S} + H_1^{\Poi, P},
\end{align*}
which,  recalling that
$\mu = \de^4$, are defined as
\begin{equation}\label{proof:splitH1Poi}
\begin{split}
H_{1}^{\Poi, S} = \frac{1}{\de^4}
\paren{\frac{1}{\sqrt{D[0]}} -\frac{1-\de^4}{ \sqrt{D[\de^4]}}} 
\quad \text{ and } \quad
H_{1}^{\Poi, P} = - \frac{1}{\sqrt{D[\de^4-1]}}.
\end{split}
\end{equation}
%Accordingly, we split $V$ (see~\eqref{def:potentialV}) as $V=V^S+V^P$ with
%\begin{equation}\label{proof:potentialsSJ}
%	\begin{split}
%	&V^S(\la) = 
%	H_1^{\Poi,S}(\la,1,0,0;0) = 1- \cos \la , \\
%	&V^P(\la) = H_1^{\Poi,P}(\la,1,0,0;0) = -\frac{1}{\sqrt{2+2\cos \la }}.
%	\end{split}
%\end{equation}
%
We also define the Hamiltonian $H^{\equi} = H \circ \phi_{\equi}$, which can be split as
\begin{equation*}%\label{proof:Hequi}
	H^{\equi} = H_{0} + R^{\equi} 
	+ H_1^{\equi,P} + H_1^{\equi,S},
\end{equation*}
with
\begin{equation}\label{proof:H1Requi}
\begin{split}
H_1^{\equi, *}(\la,\La,x,y;\de) =& \,
H_1^{\Poi,*}\circ \phi_{\sca} \circ \phi_{\equi},
\qquad \text{for } {*}=S,P,\\
R^{\equi}(\la,\La,x,y;\de) =& 
- V(\la)
+ \frac{1}{\de^4} 
F_{\pend}(\de^2\La + \de^4\LtresLa(\de)) \\
&- 3 \de^2 \La \LtresLa(\de) 
+ \de (x \Ltresy(\de) + y \Ltresx(\de)).
\end{split}
\end{equation}
We recall that  $\phi_{\sca}$ is the scaling given in~\eqref{def:changeScaling}, $V$ is the potential  in~\eqref{def:potentialV},
$F_{\pend}$  is the function ~\eqref{def:Fpend} and
$(\LtresLa,\Ltresx,\Ltresy)$ are introduced in~\eqref{def:pointL3sca}. 

Then, the Hamiltonian $H^{\inn}$ can be written as 
\begin{align}\label{proof:hamiltonianInnerB}
H^{\inn} 
= \frac{\de^{\frac{4}{3}}}{2\al_+^2} 
\paren{
H_0 \circ \Psi
+ R^{\equi} \circ \Psi
+ H_1^{\Poi,P}\circ\Phi 
+ H_1^{\Poi,S}\circ\Phi},
\end{align}
where
\begin{equation*}%\label{proof:changeCoordinatesPhiPsi}
\Psi = \phi_{\out} \circ \phi_{\inn} 
\quad \text{ and } \quad
\Phi = \phi_{\sca} \circ \phi_{\equi} \circ \phi_{\out} \circ \phi_{\Inner}.
\end{equation*}
% Therefore, 
% by \eqref{proof:hamiltonianInnerA} and
% \eqref{proof:H1Requi},
% to prove Proposition~\ref{proposition:innerDerivation}
% we need to compute
% \begin{align}\label{proof:hamiltonianInnerB}
% H^{\inn} 
% = \frac{\de^{\frac{4}{3}}}{2\al_+^2} 
% \paren{
% H_0 \circ \Psi
% + R^{\equi} \circ \Psi
% + H_1^{\Poi,P}\circ\Phi 
% + H_1^{\Poi,S}\circ\Phi}.
% \end{align}
In the following lemmas, we introduce expressions for the changes $\Psi$ and $\Phi$.
% (see~\eqref{proof:changeCoordinatesPhiPsi}).
%

\begin{lemma}\label{lemma:PsiMultipleChanges}
The change of coordinates $\Psi=(\Psi_{\la},\Psi_{\La},\Psi_x,\Psi_y)$	
satisfies
\begin{align*}%\label{proof:changePsi}
\Psi_{\la}(U) &= 
\pi + 3\al_+ \de^{\frac{4}{3}} U^{\frac{2}{3}}
\paren{ 1 + g_{\la}(\de^2 U)}, \\
%%%
\Psi_{\La}(U,W) &= 
-\frac{2\al_+}{3 \de^{\frac{2}{3}} U^{\frac{1}{3}} } 
\paren{ 1 + g_{\La}(\de^2 U)}
+ \frac{\al_+ U^{\frac{1}{3}} W}{\de^{\frac{2}{3}}} 
\paren{1+\wt{g}_{\La}(\de^2 U)}, \\
%%%
\Psi_x(X) &= \de^{\frac{1}{3}} \sqrt{2} \al_+ X, 
\\
\Psi_y(Y) &= \de^{\frac{1}{3}} \sqrt{2} \al_+ Y,
\end{align*}
where $g_{\la}(z)$, $g_{\La}(z)$, $\wt{g}_{\La}(z) \sim \OO(z^{\frac{2}{3}})$. 
Moreover, taking into account the time-parametrization of the separatrix $(\la_h,\La_h)$ given in~\eqref{eq:separatrixParametrization}, we have that
\begin{equation}\label{proof:separatrixInner}
\begin{split}
\La_h \circ \phi_{\Inner} =
-\frac{2\al_+}{3 \de^{\frac{2}{3}} U^{\frac{1}{3}}}
\paren{ 1 + g_{\La}(\de^2 U)}.
\end{split}
\end{equation}
\end{lemma}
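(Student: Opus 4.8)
The plan is to prove Lemma~\ref{lemma:PsiMultipleChanges} by a direct computation: compose the explicit formulas for $\phi_{\out}$ in~\eqref{def:changeOuter} and for $\phi_{\Inner}$ in~\eqref{def:changeInner}, and then insert the local behaviour of the separatrix near $t=iA$ provided by Theorem~\ref{theorem:singularities}. First I would invert $\phi_{\Inner}$,
\begin{equation*}
u=iA+\de^2 U,\qquad w=\frac{2\al_+^2}{\de^{\frac43}}W,\qquad x=\sqrt2\,\al_+\de^{\frac13}X,\qquad y=\sqrt2\,\al_+\de^{\frac13}Y,
\end{equation*}
so that, by~\eqref{def:changeOuter}, the components $\Psi_x=x$ and $\Psi_y=y$ already have the claimed form, whereas $\Psi_\la(U)=\la_h(iA+\de^2U)$ and $\Psi_\La(U,W)=\La_h(iA+\de^2U)-\tfrac{w}{3\La_h(iA+\de^2U)}$. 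Everything then reduces to the behaviour of $\la_h$ and $\La_h$ in a punctured neighbourhood of $iA$; accordingly one restricts $U$ to the region $|\de^2U|=|u-iA|<\nu$, with $\nu$ as in Theorem~\ref{theorem:singularities}, and fixes the branches of the fractional powers of $U$ consistently with the argument sector there. For $\de$ small this region contains the domains of interest.

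The second step upgrades~\eqref{eq:homoclinicaSingularities} to a factorised form with \emph{analytic} correction functions. Here I would use the proof of Proposition~\ref{proposition:singularitiesSeparatrix}: by~\eqref{proof:seriesqEnZero} the function $q(t)=\cos(\la_h(t)/2)$ equals a convergent series in $(t-iA)^{2/3}$, so $q(t)=-\tfrac{3\al_+}{2}(t-iA)^{2/3}\bigl(1+\OO((t-iA)^{2/3})\bigr)$ with the relative error analytic in $(t-iA)^{2/3}$ on the relevant sheet. Since $\la\mapsto\cos(\la/2)$ is a biholomorphism near $\la=\pi$, inverting gives
\begin{equation*}
\la_h(t)=\pi+3\al_+(t-iA)^{\frac23}\bigl(1+g_\la(t-iA)\bigr),
\end{equation*}
with $g_\la$ analytic and $g_\la(z)=\OO(z^{2/3})$. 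Differentiating the series and using $\La_h=-\tfrac13\dot\la_h$ (from~\eqref{eq:sistemODEs}, as $u$ is the time of the unperturbed flow) yields
\begin{equation*}
\La_h(t)=-\frac{2\al_+}{3(t-iA)^{\frac13}}\bigl(1+g_\La(t-iA)\bigr),\qquad g_\La(z)=\OO(z^{2/3}),
\end{equation*}
with $g_\La$ analytic. Setting $t-iA=\de^2U$, so that $(t-iA)^{2/3}=\de^{4/3}U^{2/3}$ and $(t-iA)^{-1/3}=\de^{-2/3}U^{-1/3}$, immediately gives the stated formula for $\Psi_\la$ and identity~\eqref{proof:separatrixInner} for $\La_h\circ\phi_{\Inner}$, which is the first summand of $\Psi_\La$.

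For the remaining summand I would substitute $w=2\al_+^2\de^{-4/3}W$ together with the factorisation of $\La_h$ just obtained, so that
\begin{equation*}
-\frac{w}{3\La_h(iA+\de^2U)}=\frac{\al_+U^{\frac13}W}{\de^{\frac23}}\cdot\frac{1}{1+g_\La(\de^2U)},
\end{equation*}
and set $\wt g_\La(z)=(1+g_\La(z))^{-1}-1$. Because $g_\La$ is analytic and vanishes at $z=0$, the factor $1+g_\La$ is non-vanishing near the singularity, hence $\wt g_\La$ is analytic and $\wt g_\La(z)=\OO(z^{2/3})$; this completes the proof. The computation is routine and there is no genuine obstacle; the only point demanding care is the second step, i.e.\ passing from the bare asymptotics of Theorem~\ref{theorem:singularities} to a factorisation with an analytic relative error of order $z^{2/3}$. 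That justification rests on the convergent Puiseux expansion of Proposition~\ref{proposition:singularitiesSeparatrix}, on keeping track of the sheet and sector on which it is valid, and on the non-vanishing of $1+g_\La$, which is what allows the last inversion.
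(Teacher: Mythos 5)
Your proposal is correct and follows exactly the route the paper intends: the paper omits the proof of Lemma~\ref{lemma:PsiMultipleChanges}, stating it is a straightforward consequence of Theorem~\ref{theorem:singularities} and the definitions of $\phi_{\out}$ and $\phi_{\Inner}$, which is precisely the composition-plus-local-expansion computation you carry out. Your extra care in upgrading the bare asymptotics \eqref{eq:homoclinicaSingularities} to a factorised form with corrections $g_\la,g_\La,\wt g_\La=\OO(z^{2/3})$ via the convergent Puiseux series \eqref{proof:seriesqEnZero}, and in checking the non-vanishing of $1+g_\La$, is exactly the justification the paper leaves implicit.
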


\begin{lemma}\label{lemma:PhiMultipleChanges}
The change of coordinates
$\Phi=  (\Phi_{\la},\Phi_{L},\Phi_{\eta},\Phi_{\xi})$ satisfies
\begin{align*}
\Phi_{\la}(U)&=\Psi_{\la}(U), \quad &
\Phi_{L}(U,W) &= 1+\de^2\Psi_{\La}(U,W) + \de^4 \LtresLa(\de), \\
\Phi_{\eta}(X) &= \de \Psi_{x}(X) + \de^4 \Ltresx(\de),
\quad &
\Phi_{\xi}(Y) &= \de \Psi_{y}(Y) + \de^4 \Ltresy(\de),
\end{align*}
where $\Psi=(\Psi_{\la},\Psi_{\La},\Psi_x,\Psi_y)$ is the change of coordinates given in Lemma~\ref{lemma:PsiMultipleChanges}.
\end{lemma}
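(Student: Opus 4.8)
The plan is to compute the composition $\Psi=\phi_{\out}\circ\phi_{\inn}$ and then $\Phi=\phi_{\sca}\circ\phi_{\equi}\circ\phi_{\out}\circ\phi_{\inn}$ component by component, feeding in the local expansions of the separatrix near its singularity $t=iA$ established in Theorem~\ref{theorem:singularities}. First I would record, from~\eqref{def:changeInner}, that $\phi_{\inn}$ sends $(U,W,X,Y)$ to $(u,w,x,y)$ with $u=iA+\de^2U$, $w=2\al_+^2\de^{-4/3}W$, $x=\de^{1/3}\sqrt2\,\al_+X$, $y=\de^{1/3}\sqrt2\,\al_+Y$; and from~\eqref{def:changeOuter} that $\phi_{\out}$ sends $(u,w,x,y)$ to $(\la,\La,x,y)$ with $\la=\la_h(u)$, $\La=\La_h(u)-\tfrac{w}{3\La_h(u)}$, while $x,y$ are unchanged. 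Composing these gives $\Psi_\la(U)=\la_h(iA+\de^2U)$, $\Psi_\La(U,W)=\La_h(iA+\de^2U)-\tfrac{2\al_+^2\de^{-4/3}W}{3\La_h(iA+\de^2U)}$, and $\Psi_x(X)=\de^{1/3}\sqrt2\,\al_+X$, $\Psi_y(Y)=\de^{1/3}\sqrt2\,\al_+Y$. The last two identities are already exactly the claimed ones, so the real content is in the first two.

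The key step is to insert the singularity expansions~\eqref{eq:homoclinicaSingularities}. Writing $t-iA=\de^2U$, Theorem~\ref{theorem:singularities} gives $\la_h(iA+\de^2U)=\pi+3\al_+(\de^2U)^{2/3}+\OO((\de^2U)^{4/3})=\pi+3\al_+\de^{4/3}U^{2/3}\bigl(1+\OO(\de^2U)^{2/3}\bigr)$, which is precisely the asserted form with $g_\la(z)=\OO(z^{2/3})$; one must be slightly careful about branch choices for $(\cdot)^{2/3}$ in the domain $\arg(t-iA)\in(-\tfrac{3\pi}2,\tfrac\pi2)$, but since $g_\la$ is defined as the (analytic) remainder divided by the leading monomial, analyticity of $\s$ on that sector (Theorem~\ref{theorem:singularities}) guarantees $g_\la$ is a well-defined analytic function of $\de^2U$ vanishing to the stated order. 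Similarly $\La_h(iA+\de^2U)=-\tfrac{2\al_+}{3}(\de^2U)^{-1/3}+\OO((\de^2U)^{1/3})=-\tfrac{2\al_+}{3\de^{2/3}U^{1/3}}\bigl(1+g_\La(\de^2U)\bigr)$ with $g_\La(z)=\OO(z^{2/3})$, which is both the separatrix formula~\eqref{proof:separatrixInner} and the first summand of $\Psi_\La$. For the second summand of $\Psi_\La$, I substitute this expansion into $-\tfrac{2\al_+^2\de^{-4/3}W}{3\La_h(iA+\de^2U)}$: the reciprocal $1/\La_h$ equals $-\tfrac{3\de^{2/3}U^{1/3}}{2\al_+}\bigl(1+g_\La\bigr)^{-1}$, and expanding $(1+g_\La)^{-1}=1+\wt g_\La$ with $\wt g_\La(z)=\OO(z^{2/3})$ (valid since $g_\La$ is small on the inner domain $|U|\gg1$), one gets $-\tfrac{2\al_+^2\de^{-4/3}W}{3}\cdot\bigl(-\tfrac{3\de^{2/3}U^{1/3}}{2\al_+}\bigr)(1+\wt g_\La)=\al_+\de^{-2/3}U^{1/3}W(1+\wt g_\La)$, exactly as claimed.

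For Lemma~\ref{lemma:PhiMultipleChanges} I then post-compose with $\phi_{\equi}$ (translation by $\Ltres(\de)=(0,\de^2\LtresLa,\de^3\Ltresx,\de^3\Ltresy)$, from~\eqref{def:pointL3sca}) and with the scaling $\phi_{\sca}$ from~\eqref{def:changeScaling}, $L=1+\de^2\La$, $\eta=\de x$, $\xi=\de y$. Tracking the $\la$-component: translation adds $0$ and scaling leaves $\la$ fixed, so $\Phi_\la=\Psi_\la$. For $L$: after $\phi_{\out}$ the second coordinate is $\Psi_\La$; adding $\Ltres(\de)$ gives $\Psi_\La+\de^2\LtresLa(\de)$ in the $\La$-slot—wait, more carefully, the order is $\phi_{\equi}\circ\phi_{\out}\circ\phi_{\inn}$ acting first then $\phi_{\sca}$, so the $\La$-slot value is $\Psi_\La(U,W)+\de^2\LtresLa(\de)$ and then $\phi_{\sca}$ turns it into $L=1+\de^2\bigl(\Psi_\La+\de^2\LtresLa\bigr)=1+\de^2\Psi_\La+\de^4\LtresLa$, as stated. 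Likewise the $x$-slot becomes $\Psi_x+\de^3\Ltresx$, then $\eta=\de(\Psi_x+\de^3\Ltresx)=\de\Psi_x+\de^4\Ltresx$, and symmetrically for $\xi$. This is entirely bookkeeping once Lemma~\ref{lemma:PsiMultipleChanges} is in hand. The only genuine obstacle is the first one: controlling the branches of the fractional powers and verifying that the remainders $g_\la,g_\La,\wt g_\La$ are honestly analytic (not just formal) functions of $\de^2U$ of the claimed order throughout the relevant inner domain; this rests on the sharp analyticity and branch-point statements of Theorem~\ref{theorem:singularities}, which I would invoke directly, together with the elementary fact that $(1+h)^{-1}$ is analytic wherever $|h|<1$ so that $\wt g_\La$ inherits the order $\OO(z^{2/3})$ from $g_\La$.
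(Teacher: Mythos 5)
Your proposal is correct and follows exactly the route the paper has in mind: the paper omits the proof, declaring both lemmas a straightforward consequence of Theorem~\ref{theorem:singularities} and the definitions of $\phi_{\sca}$, $\phi_{\equi}$, $\phi_{\out}$, $\phi_{\Inner}$, and your computation supplies precisely that composition together with the substitution of the singularity expansions~\eqref{eq:homoclinicaSingularities}. The bookkeeping for $\Phi$ (translation by $\Ltres(\de)$ followed by the scaling $L=1+\de^2\La$, $\eta=\de x$, $\xi=\de y$) and the factorization yielding $g_\la,g_\La,\wt g_\La=\OO(z^{2/3})$ are both handled correctly, including the caveat about branches of the fractional powers.
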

We omit the proofs of these lemmas  since they  are a straightforward consequence of  Theorem~\ref{theorem:singularities} and the definitions of the changes of coordinates (see \eqref{def:changeScaling}, \eqref{def:changeEqui}, \eqref{def:changeOuter} and \eqref{def:changeInner}).

% Using the formulas provided by these lemmas, one can prove Proposition~\ref{proposition:innerDerivation}.

%%%%%%%
%We apply now the change of coordinates 
%\begin{align*}
%\paren{\phi_{\out}\circ\phi_{\inn} }(U,W,X,Y) 
%= &\left( \la_h(\de^2 U + iA), 
%\h
%\La_h(\de^2 U + iA) - \frac{2 \al^2_+ }{\de^{\frac{4}{3}} 3 \La_h(\de^2 U + iA)}W, \h 
%\right. \\ &\left. \hh
%\de^{\frac{1}{3}} \sqrt{2} \al_+ X, \h
%\de^{\frac{1}{3}} \sqrt{2} \al_+ Y \right),
%\end{align*}
%as defined on~\eqref{def:changeOuter} and~\eqref{def:changeInner}, to  Hamiltonian $H^{\equi}$. 
%%
%Previously, using Theorem~\ref{theorem:singularities}, we have an expression of the parametrization of the separatrix $(\la_h(u),\La_h(u))$ in the inner coordinates
%\begin{equation}\label{proof:separatrixInner}
%\begin{split}
%\la_h(\de^2 U + iA) = 
%\pi + 3\al_+ \de^{\frac{4}{3}} U^{\frac{2}{3}}\left( 1 + g_{\la_h}(\de^2 U)\right), \\
%%
%\La_h(\de^2 U + iA) =
%-\frac{2\al_+}{3} \de^{-\frac{2}{3}} U^{-\frac{1}{3}}\left( 1 + 
%g_{\La_h}(\de^2 U)\right),
%\end{split}
%\end{equation}
%where $g_{\la_h}(x), g_{\La_h}(x) \sim \OO(x^{\frac{2}{3}})$. 
%
%Notice that, since $\vabs{U} \leq \cttInnDerA$, we have $\vabs{g_{\la_h}(\de^2 U)} \leq C(\cttInnDerA)\de^{\frac{4}{3}}$, where $C(\cttInnDerA)$ is a positive constant depending on a power of $\cttInnDerA$.
%
%Analogously, for $\vabs{g_{\La_h}(\de^2 U)}$. 

\begin{proof}[End of the proof of Proposition~\ref{proposition:innerDerivation}]
We analyze each component of \eqref{proof:hamiltonianInnerB}.

We denote by $C(\cttInnDerA,\cttInnDerB)>0$ any constant satisfying that there exist
$\cttInnDerC, \cttInnDerAA, \cttInnDerBB>0$
independent of $\cttInnDerA, \cttInnDerB, \de$ such that $C(\cttInnDerA, \cttInnDerB) \leq \cttInnDerC \cttInnDerA^{\cttInnDerAA} \cttInnDerB^{\cttInnDerBB}$.
\begin{enumerate}
	\item We compute the first term of the Hamiltonian $H^{\inn}$ in~\eqref{proof:hamiltonianInnerB}.
	Since $H_{\pend}(\la_h,\La_h)=H_{\pend}(0,0)= -\frac{1}{2}$
	 and taking into account~\eqref{proof:separatrixInner}, we have
\begin{equation*}%\label{proof:innerDerivationResultA}
\begin{split}	
\frac{\de^{\frac{4}{3}}}{2\al_+^2} 
H_0 \circ \Psi 
=& \frac{\de^{\frac{4}{3}}}{2\al_+^2} \paren{w -\frac{w^2}{6 \La_h^2(u)} + \frac{xy}{\de^2} } \circ \phi_{\inn}\\
=& \, W + XY - 
\frac{3}{4} U^{\frac{2}{3}}  W^2 \left(\frac{1}{1+g_{\La}(\de^2 U)} \right)^2 \\
=& \, W + XY - \frac{3}{4} U^{\frac{2}{3}}  W^2
+ \OO \paren{\de^{\frac{4}{3}} 
U^{\frac{2}{3}} W^2}.
\end{split}	
\end{equation*}	
Since $\vabs{U} \leq \cttInnDerA$ and $\vabs{W} \leq \cttInnDerB$, the error term $\OO (\de^{\frac{4}{3}} U^{\frac{2}{3}} W^2)$
 can be bounded by $C(\cttInnDerA,\cttInnDerB) \de^{\frac{4}{3}}$.

For the other terms in \eqref{proof:hamiltonianInnerB}, to simplify the notation, we are not specifying the dependence of the error terms on the variables $(U,W,X,Y)$.
Moreover, when referring to error terms of order $\OO(\de^{a})$, we mean that they can be bounded by $C(\cttInnDerA,\cttInnDerB)\de^{a}$.

%%%%%%%%%%%%%%%%%%%%%%%%%%%%%%%%%%%%%%%%%%%%%
\item For the second term of the Hamiltonian $H^{\inn}$ in~\eqref{proof:hamiltonianInnerB} (see 
by~\eqref{proof:H1Requi}) we have 
\begin{equation}\label{proof:RequiChange}
\begin{split}
\frac{\de^{\frac{4}{3}}}{2\al_+^2}
R^{\equi} \circ \Psi
=& 
-\frac{\de^{\frac{4}{3}}}{2\al_+^2} V(\Psi_{\la})
+ \frac{1}{2\al_+^2\de^{\frac{8}{3}}} 
F_{\pend} \paren{
\de^2 \Psi_{\La} + \de^4 \LtresLa(\de)} \\
&- \frac{3 \de^{\frac{10}{3}} \LtresLa(\de)}{2 \al^2_+} \Psi_{\La}
+ \frac{\de^{\frac{7}{3}}\Ltresy(\de)}{\sqrt{2} \al_+} \Psi_x 
+ \frac{\de^{\frac{7}{3}}\Ltresx(\de)}{\sqrt{2} \al_+} \Psi_y,
\end{split}
\end{equation}
%\begin{align*}
%\frac{\de^{\frac{4}{3}}}{2\al_+^2}
%R^{\equi} \circ \Psi
%%
%=& \frac{1}{2\al_+^2\de^{\frac{8}{3}}} 
%F_{\pend} \paren{
%\de^2 \La_h(\de^2U + iA)
%-\frac{2 \al^2_+ \de^{\frac{2}{3}}}{3 \La_h(\de^2 U + iA)} W
%+ \de^4 \LtresLa} \\
%%
%&- \de^{\frac{10}{3}} \frac{3 \LtresLa}{2 \al^2_+} \La_h(\de^2 U + iA)
%+ \frac{ \de^{2} \LtresLa W}{ \La_h(\de^2 U + iA)} \\
%&+ \frac{\de^{\frac{8}{3}}}{\sqrt{2} \al_+} 
%\paren{\Ltresy X + \Ltresy Y},
%\end{align*}
where $F_{\pend}(z)=\OO(z^3)$ (see~\eqref{def:Fpend}) and $V(\la)$ is the potential given in \eqref{def:potentialV}.

First we analyze the potential term. 
By Lemma~\ref{lemma:PsiMultipleChanges}, we have that
\begin{align*}
-\frac{\de^{\frac{4}{3}}}{2\al_+^2} V(\Psi_{\la}(U))	
&= \frac{\de^{\frac{4}{3}}}{2\al_+^2 }
\frac{1}{\sqrt{2+2\cos \Psi_{\la}(U)}}
+ \OO(\de^{\frac{4}{3}}) \\
&= \frac{\de^{\frac{4}{3}}}{2\al_+^2 }
\paren{	9 \al^2_+ \de^{\frac{8}{3}} U^{\frac{4}{3}}\left( 1 + g_{\la}(\de^2 U)\right)^2 +
\OO\paren{\de^{\frac{16}{3}}U^{\frac{8}{3}}}}^{-\frac{1}{2}} 
+ \OO(\de^{\frac{4}{3}})
\\
&= \frac{1}{3 U^{\frac{2}{3}}} +
\OO(\de^{\frac{4}{3}}).
\end{align*}
Then, since $\cttInnDerA^{-1} \leq \vabs{U} \leq \cttInnDerA$ and $\vabs{(W,X,Y)} \leq \cttInnDerB$, by~\eqref{proof:RequiChange} and Lemma~\ref{lemma:PsiMultipleChanges}, 
\begin{align*}%\label{proof:innerDerivationResultB}
\vabs{\frac{\de^{\frac{4}{3}}}{2\al_+^2}
R^{\equi} \circ \Psi - \frac{1}{3 U^{\frac{2}{3}}}}
\leq
C(\cttInnDerA,\cttInnDerB){\de^{\frac{4}{3}}}.
\end{align*}
%where $C(\cttInnDerA, \cttInnDerB)$ is a positive constant depending on powers of $\cttInnDerA$ and $\cttInnDerB$.
%%%%%%%%%%%%%%%%%%%%%%%%%%%%%%%%%%%%%%%%%%%%%
\item We deal with the third term  of the Hamiltonian $H^{\inn}$ (see~\eqref{proof:splitH1Poi}).
%
%\begin{equation*}%\label{eq:PhiChange}
%\begin{split}
%&\la = \Phi_{\la}(U) = \pi + 3\al_+ \de^{\frac{4}{3}} U^{\frac{2}{3}}
%+ f_{\la}(U;\de),
%\\%[0.5em]
%%
%&L = \Phi_{L}(U,W) = 1 - 
%\al_+ \de^{\frac{4}{3}} \paren{\frac{2}{3} U^{-\frac{1}{3}} 
%	- U^{\frac{1}{3}} W} - f_L(U,W;\de), \\%[0.5em]
%&\eta = \Phi_{\eta}(X) = \sqrt{2}\al_+\de^{\frac{4}{3}} X
%+ f_{\eta}(\de), \\%[0.5em]
%&\xi = \Phi_{\xi}(Y) = \sqrt{2}\al_+ \de^{\frac{4}{3}} Y +
%f_{\xi}(\de),
%\end{split}
%\end{equation*}
%where
%\begin{align*}
%&f_{\la}(U;\de)= 3\al_+ \de^{\frac{4}{3}} U^{\frac{2}{3}}
%g_{\la_h}(\de^2 U) = \OO(\de^{\frac{8}{3}}), \\
%%
%&f_L(U,W;\de)= \al_+ \de^{\frac{4}{3}}\paren{
%	\frac{2}{3} U^{-\frac{1}{3}} g_{\La_h}(\de^2 U)
%	+ U^{\frac{1}{3}}W \frac{g_{\La_h}(\de^2U)}{1+g_{\La_h}(\de^2U)}}
%+ \de^{4} \LtresLa
%= \OO(\de^{\frac{8}{3}}) ,\\
%&f_{\eta}(\de)= \de^4 \Ltresx, \hh
%f_{\xi}(\de)= \de^4 \Ltresy,
%\end{align*} 
%We also have that
%\begin{align*}%\label{eq:PhiChangeLambda}
%\La= \frac{\Phi_L(U,W)-1}{\de^2} =  
%-\frac{\al_+}{\de^{\frac{2}{3}}}
%\paren{\frac{2}{3} U^{-\frac{1}{3}}- U^{\frac{1}{3}} W}
%- \frac{1}{\de^2}f_L(U,W;\de).
%\end{align*}
%%%%%%%%%%%%%%%%%%%%%%%%%%%%%%%%%%%%%%%%%%%%%
%Now, we compute $H^{\Poi, J}_1 \circ \Phi$, as introduced in equation~\eqref{proof:H1JderivationInner}.
Since
\[ \vabs{\Phi(U,W,X,Y)-(\pi,1,0,0)} 
\leq C(\cttInnDerA, \cttInnDerB) \de^{\frac{4}{3}}
\quad \text{and} \quad 
\vabs{\Im \Phi_{\la}(U)} \leq C(\cttInnDerA) \de^{\frac{4}{3}},
\]
the hypotheses of Lemma~\ref{lemma:seriesH1Poi} hold and therefore:
\begin{equation*}%\label{proof:innerDerivationResultDabans}
\begin{split}
\frac{ \de^{\frac{4}{3}}}{2\al_+^2 }
{H_1^{\Poi,P} \circ \Phi}
&= - \frac{\de^{\frac{4}{3}}}{2\al_+^2}
\frac{1}{\sqrt{D[\de^4-1] \circ \Phi }} \\
&= - \paren{ \frac{2 \al_+}{\de^{\frac{8}{3}}}	
\paren{D_0+D_1+D_2+D_{\geq 3}}[\de^4-1] \circ \Phi }^{-\frac{1}{2}}.
\end{split}
\end{equation*}
We compute every term $D_j[\de^4-1]$, $j=0,1,2, \geq3$.
%
%\paren{ P_0[\de^4-1]+P_1[\de^4-1]+P_2[\de^4-1]+P_3[\de^4-1]}
%where functions $D_i[\de^4-1]$ are defined on Lemma~\ref{lemma:seriesH1Poi}. 
\begin{enumerate}[leftmargin=*, label*=\alph*.]
\item[a)] The term $D_0[\de^4-1]$ satisfies
\begin{equation*}
\begin{split}
D_0[\de^4-1](\la,L;\de) 
=& \, L^4 +2(1-\de^4)L^2\cos \la + (1-\de^4)^2 \\ 
=& \, 2(1+\cos \la)
+ 4(L-1)(1+\cos \la) 
+ 2(L-1)^2 (3+\cos\la) \\
&+ 4(L-1)^3
+(L-1)^4
-2\de^4(1+\cos \la) \\
&-4\de^4(L-1)\cos\la
-2\de^4(L-1)^2\cos\la
+\de^8.
\end{split}
\end{equation*}
Performing the change $\Phi$, by Lemma~\ref{lemma:PhiMultipleChanges}, we have that
\begin{equation*}
%\label{proof:innerDerivationResultD0}
\begin{split}
\frac{2\al_+}{\de^{\frac{8}{3}}} D_0[\de^4-1]\circ \Phi 
&= \frac{2\al_+}{\de^{\frac{8}{3}}} 
\paren{2(1+\cos \Phi_{\la} ) 
+ 4 (\Phi_{L}-1)^2 +
\OO(\de^{\frac{4}{3}})}\\
&=  9 U^{\frac{4}{3}} %+ \OO(\de^{\frac{2}{3}}) 
+ 4 U^{\frac{2}{3}} W^2 - \frac{16}{3} W + \frac{16}{9 U^{\frac{2}{3}}}
 + \OO(\de^{\frac{4}{3}}).
\end{split}
\end{equation*}
\item[b)] Analgously the term $D_1[\de^4-1]$ satisfies
\begin{equation*}%\label{proofFirstIteration:P1S}
\begin{split}
D_1[\de^4-1] 
=& \, \eta \frac{\sqrt{2 L^3}}{2}  
\left[
(-3- 2 e^{-i\la} + e^{-2i\la})
- 4 (L-1) e^{-i\la}
\right. \\ &\left. \qquad \qquad
- 2 (L-1)^2 e^{-i\la}
+\de^4 (3- e^{-2i\la})
\right] \\
&+ \xi \frac{\sqrt{2 L^3}}{2}  \left[
(-3- 2 e^{i\la} + e^{2i\la})
-  4 (L-1) e^{i\la}
\right. \\ &\left. \qquad \qquad \quad
- 2 (L-1)^2 e^{i\la}
+\de^4 (3- e^{2i\la}) \right]
\end{split}
\end{equation*}
% Applying the change $\Phi$, by Lemma~\ref{lemma:PhiMultipleChanges}, we have that
and therefore
\begin{equation*}%\label{proof:innerDerivationResultD1}
\begin{split}
\frac{2\al_+}{\de^{\frac{8}{3}}}D_1[\de^4-1]\circ \Phi
=& \frac{2 \al^2_+}{\de^{\frac{4}{3}}}  X
\paren{-4i\paren{\Phi_{\la}-\pi} 
- 4(\Phi_L-1)
 + \OO(\de^{\frac{4}{3}}) }
\\
&+ \frac{2 \al^2_+}{\de^{\frac{4}{3}}} Y
\paren{4i\paren{\Phi_{\la}- \pi} 
	- 4(\Phi_L-1)
	+ \OO(\de^{\frac{4}{3}}) }
\\
=& \, X\paren{-12 i U^{\frac{2}{3}} + 4U^{\frac{1}{3}} - 
	\frac{8}{3 U^{\frac{1}{3}}}} \\
&+ Y\paren{12 i U^{\frac{2}{3}} + 4U^{\frac{1}{3}} - 
\frac{8}{3 U^{\frac{1}{3}}} }
+ \OO(\de^{\frac{4}{3}}).
\end{split}
\end{equation*}
\item[c)] The term $D_2[\de^4-1]$ satisfies
\begin{align*}
D_2[\de^4-1] =& 
- \eta^2 \frac{L e^{-i \la}}{4} \paren{
	{-1 + 2 L^2 e^{-i\la} -3 e^{-2i\la} } +
	\de^4 \paren{ 1 + 3 e^{-2i\la}} } \\
&- \xi^2 \frac{L e^{-i \la}}{4} \paren{
{-1 + 2 L^2 e^{i\la} -3 e^{2i\la} } +
\de^4 \paren{ 1 + 3 e^{2i\la}} } \\
&+ \eta \xi L \paren{ 
3L^2 - 2\cos \la  +
\de^4 2\cos \la }
\end{align*}
and
% Applying the change $\Phi$, by Lemma~\ref{lemma:PhiMultipleChanges}, we have that
\begin{align*}
%\label{proof:innerDerivationResultD2}
&\frac{2\al_+}{\de^{\frac{8}{3}}} D_2[\de^4-1]\circ \Phi
= - 3 X^2  -3 Y^2 + 5 XY + \OO(\de^{\frac{4}{3}}).
\end{align*}
\item[d)] By the estimates of 
$D_{\geq 3}[\de^4-1]$ in~\eqref{eq:boundD3mes} and Lemma~\ref{lemma:PhiMultipleChanges},
\begin{align*}
	%\label{proof:innerDerivationResultD3}
\vabs{\frac{2\al_+}{\de^{\frac{8}{3}}} 
D_3[\de^4-1]\circ \Phi} 
\leq C(\cttInnDerA,\cttInnDerB)
{\de^{\frac{4}{3}}}.
\end{align*}  
\end{enumerate}
Collecting these results, we  conclude that 
% Applying the results in
% \eqref{proof:innerDerivationResultD0},
% \eqref{proof:innerDerivationResultD1},
% \eqref{proof:innerDerivationResultD2} and
% \eqref{proof:innerDerivationResultD3} 
% to equation
% \eqref{proof:innerDerivationResultDabans} we obtain
\begin{equation*}
%\label{proof:innerDerivationResultC}
	\begin{split}
		\frac{ \de^{\frac{4}{3}}}{2\al_+^2 }
		{H_1^{\Poi,P} \circ \Phi}
		&= - \frac{1}{3 U^{\frac{2}{3}}}
		\frac{1}{\sqrt{1+\JJ(U,W,X,Y)}} 
		+ \OO(\de^{\frac{4}{3}}),
	\end{split}
\end{equation*}
where the function $\JJ$ is given in~\eqref{def:hFunction}. %and the terms of higher order in $\de$ can be bounded by a constant $C(\cttInnDerA,\cttInnDerB)$.
%%%%%%%%%%%%%%%%%%%%%%%%%%%%%%%%%%%%%%%%%%%%%
\item 
% It only remains to compute the term $H^{\Poi,S}_1$ of the Hamiltonian~\eqref{proof:hamiltonianInnerB}.
%
Proceeding analogously as for $H^{\Poi,P}_1$, it can be checked that
\begin{align*}
%\label{proof:innerDerivationResultD}
\vabs{\frac{\de^{\frac{4}{3}}}{2\al_+^2}
H^{\Poi,S}_1 \circ \Phi }
\leq
C(\cttInnDerA,\cttInnDerB)\de^{\frac{4}{3}}.
\end{align*}
\end{enumerate}
% Finally, applying~\eqref{proof:innerDerivationResultA},
% \eqref{proof:innerDerivationResultB},
% \eqref{proof:innerDerivationResultC} and
% \eqref{proof:innerDerivationResultD}
% to equation~\eqref{proof:hamiltonianInnerB} we obtain the result.
\end{proof}

\section{Analysis of the inner equation}
\label{section:proofD-AnalysisInner}

We split the proof of Theorem~\ref{theorem:innerComputations} into two parts. 
In Section~\ref{subsection:innerExistence} we prove the existence of the solutions $\ZuInn$ and $\ZsInn$ and the estimates in~\eqref{result:innerExistence}. 
In Section~\ref{subsection:innerDifference}, we 
provide the asymptotic formula for the difference $\Delta \ZInn = \ZuInn-\ZsInn$ given in \eqref{result:innerDifference}.
For both parts, we follow the approach given in~\cite{BalSea08}.

Throughout this section, we fix the $\beta_0 \in (0,\frac{\pi}{2})$ appearing in the definition of the domains $\DuInn$ and $\DsInn$ in \eqref{def:domainInnner} and $\EInn$ in~\eqref{def:domainInnnerDiff}.
We denote the components of all the functions and operators by a numerical sub-index $f=(f_1,f_2,f_3)^T$, unless stated otherwise.
In order to simplify the notation, we denote by $C$ any positive constant independent of $\kappa$.
%
%In order to simplify the notation, we say that $a, b > 0$ satisfy the inequality $a \lesssim b$ if there exists $C>0$, a constant independent of $\kappa$, such that $a \leq C b$.

\subsection{Existence of suitable solutions of the inner equation}
\label{subsection:innerExistence}

From now on we deal only with the analysis for $\ZuInn$. The analysis for $\ZsInn$  is analogous.

%We prove the first part of Theorem~\ref{theorem:innerComputations} through a fixed point argument in suitable Banach spaces. 
%
%We prove the existence of the solution $\ZuInn$ and its estimates in~\eqref{result:innerExistence}. 
%%
%The proof for $\ZsInn$ is analogous.
%Thus, to simplify notation we drop the super-index $\mathrm{u}$. 

\subsubsection{Preliminaries and set up}

The invariance equation~\eqref{eq:invariantEquationInner}, that is $\partial_U \ZuInn = 
\AAA \ZuInn + \RRR[\ZuInn]$,  can be written as $\LL \ZuInn = \RRR[\ZuInn]$
where $\LL$ is  the linear operator
\begin{equation}\label{def:operatorL}
	\LL \varphi =(\partial_U-\AAA)\varphi.
\end{equation}
Notice that if we can construct  a left-inverse of $\LL$ in an appropriate Banach space, we can write~\eqref{eq:invariantEquationInner} as a fixed point equation to be able apply the Banah fixed point theorem.

%We prove the first part of Theorem~\ref{theorem:innerComputations} through a fixed point argument on suitable Banach spaces.

%The function $\ZInn=(\WInn,\XInn,\YInn)$
%satisfies the equation~\eqref{eq:invariantEquationInner} and the asymptotic condition~\eqref{eq:asymptoticConditionsInner}.
%%
%If we define the linear operator
%\begin{equation}\label{def:operatorL}
%	\LL \ZInn=(\partial_U-\AAA)\ZInn ,
%\end{equation}
%Then, the equation~\eqref{eq:invariantEquationInner} can be written as $\LL \ZInn = \RRR[\ZInn]$.

Given $\nu \in \reals$ and $\rhoInn>0$, we define the norm
\begin{align*}
\normInn{\varphi}_{\nu}= \sup_{U \in \DuInn} \vabs{U^{\nu} \varphi(U) },	
%	
%\normInnD{\zeta}_{\nu}= \sup_{U \in \DdInn} \vabs{U^{\nu} \zeta(U) }, \hh
%
%\normInn{\xi}_{\nu}= \sup_{U \in \EInn} \vabs{U^{\nu} \xi(U) },
\end{align*}
where the domain $\DuInn$ is given in~\eqref{def:domainInnner}, 
and we introduce the Banach space
\begin{equation*}%\label{def:banachSpacesX}
\begin{split}
	\XcalInn_{\nu}&= 
	\left\{ \varphi: \DuInn \to \complexs  \st  
	\varphi \text{ analytic, } 
	\normInn{\varphi}_{\nu} < +\infty \right\}.
\end{split}
\end{equation*}
Next lemma, proven in~\cite{Bal06}, gives some properties of these Banach spaces.
We use this lemma throughout the section  without mentioning it.
%
%We will work with Lemma 4.3 in~\cite{Bal06}.
%
%Its proof follows the same lines as the proof of Lemma 4.3 in~\cite{Bal06}.

\begin{lemma}\label{lemma:sumNorms}
	Let $\rhoInn>0$ and $\nu, \eta \in \reals$. The following statements hold:
	\begin{enumerate}
	\item If $\nu > \eta$, then $\XcalInn_{\nu} \subset \XcalInn_{\eta}$ and
	$
	\normInn{\varphi}_{\eta} \leq \left( 
	{\rhoInn}{\cos \beta_0}
	\right)^{\eta-\nu} 
	\normInn{\varphi}_{\nu}.
	$
	\item If $\varphi \in \XcalInn_{\nu}$ and 
	$\zeta \in \XcalInn_{\eta}$, then the product
	$\varphi \zeta \in \XcalInn_{\nu+\eta}$ and
	$
	\normInn{\varphi \zeta}_{\nu+\eta} \leq \normInn{\varphi}_{\nu} \normInn{\zeta}_{\eta}.
	$
	\end{enumerate}
	%Analogous statements hold for $\XcalInnU$ and $\XcalInnS$.
\end{lemma}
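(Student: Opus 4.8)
The plan is to reduce both statements to one elementary geometric fact: every $U\in\DuInn$ satisfies $|U|\ge\rhoInn\cos\beta_0$. To see this I would note that $\partial\DuInn$ consists of the two lines $\Im U=\pm(\tan\beta_0\,\Re U+\rhoInn)$, whose Euclidean distance to the origin is $\rhoInn/\sqrt{1+\tan^2\beta_0}=\rhoInn\cos\beta_0$, and that $\DuInn$ lies on the side of both lines not containing the origin. I would also record the trivial identity $|U^{\nu}|=|U|^{\nu}$, valid for real $\nu$, so that the particular branch of $U^{\nu}$ fixed in the definition of the spaces plays no role in the estimates.

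For item 1, assuming $\nu>\eta$ and $\varphi\in\XcalInn_{\nu}$, I would write, for $U\in\DuInn$,
\[
|U^{\eta}\varphi(U)|=|U|^{\eta-\nu}\,|U^{\nu}\varphi(U)|\le(\rhoInn\cos\beta_0)^{\eta-\nu}\,\normInn{\varphi}_{\nu},
\]
using that $x\mapsto x^{\eta-\nu}$ is decreasing (as $\eta-\nu<0$) together with the lower bound $|U|\ge\rhoInn\cos\beta_0$. Taking the supremum over $\DuInn$ gives $\normInn{\varphi}_{\eta}\le(\rhoInn\cos\beta_0)^{\eta-\nu}\,\normInn{\varphi}_{\nu}<+\infty$, so $\varphi\in\XcalInn_{\eta}$ and both the inclusion and the stated inequality follow.

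For item 2, given $\varphi\in\XcalInn_{\nu}$ and $\zeta\in\XcalInn_{\eta}$, the product $\varphi\zeta$ is analytic on $\DuInn$, and for $U\in\DuInn$
\[
|U^{\nu+\eta}\varphi(U)\zeta(U)|=|U^{\nu}\varphi(U)|\,|U^{\eta}\zeta(U)|\le\normInn{\varphi}_{\nu}\,\normInn{\zeta}_{\eta};
\]
taking the supremum yields $\varphi\zeta\in\XcalInn_{\nu+\eta}$ with $\normInn{\varphi\zeta}_{\nu+\eta}\le\normInn{\varphi}_{\nu}\,\normInn{\zeta}_{\eta}$.

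There is no genuine difficulty here: the only point requiring any care is the verification of $|U|\ge\rhoInn\cos\beta_0$ on $\DuInn$ (a distance-from-a-point-to-a-line computation), together with the observation that only the modulus $|U|^{\nu}$ enters the bounds, so no branch-cut subtleties arise. The identical statements and arguments hold with $\DuInn$ replaced by $\DsInn=-\DuInn$, which is the form used later for the analysis of $\ZsInn$.
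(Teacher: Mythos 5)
Your argument is correct and is the standard one; the paper itself does not prove this lemma but simply cites \cite{Bal06}, and your reduction to the elementary bound $|U|\ge\rhoInn\cos\beta_0$ on $\DuInn$ together with $|U^{\nu}|=|U|^{\nu}$ is exactly what is needed. One small imprecision: $\DuInn$ is the \emph{union} of the two half-planes $\{\Im U\ge \tan\beta_0\,\Re U+\rhoInn\}$ and $\{\Im U\le -\tan\beta_0\,\Re U-\rhoInn\}$, so a point of $\DuInn$ need only lie on the far side of \emph{one} of the two boundary lines (not both, as you state) --- but being separated from the origin by at least one line already yields $|U|\ge\rhoInn\cos\beta_0$, so your estimates stand.
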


In the following  lemma, we introduce a left-inverse of the operator $\LL$ in~\eqref{def:operatorL}.

\begin{lemma}\label{lemma:Glipschitz}
Consider the operator
\begin{equation*}%\label{def:operatorGG}
	\GG[\varphi](U) = \left(\int_{-\infty}^U \varphi_1(S) dS, 
	\int_{-\infty}^U e^{-i (S-U)} \varphi_2(S) dS , 
	\int_{-\infty}^U e^{i(S-U)} \varphi_3(S) dS \right)^T.
\end{equation*}
Fix $\eta>1$, $\nu>0$ and  $\rhoInn\geq 1$.
Then, $\GG: \XcalInn_{\eta} \times \XcalInn_{\nu} \times \XcalInn_{\nu} \to \XcalInn_{\eta-1} \times \XcalInn_{\nu} \times \XcalInn_{\nu}$ is a continuous linear operator and is a left-inverse of $\LL$. 

Moreover, there exist a constant $C>0$ such that
	\begin{enumerate}
	\item If $\varphi \in \XcalInn_{\eta}$, then $\GG_1[\varphi] \in \XcalInn_{\eta-1}$ and
	$
	\normInn{\GG_1[\varphi]}_{\eta-1} \leq C  \normInn{\varphi}_{\eta}.
	$
	\item If $\varphi \in \XcalInn_{\nu}$ and $j=2,3$, then $\GG_j[\varphi] \in \XcalInn_{\nu}$ and
	$
	\normInn{\GG_j[\varphi]}_{\nu} \leq   C \normInn{\varphi}_{\nu}.
	$
	\end{enumerate}
\end{lemma}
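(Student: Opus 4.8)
\textbf{Proof plan for Lemma~\ref{lemma:Glipschitz}.}

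The plan is to verify each of the three assertions --- that $\GG$ is a left-inverse of $\LL$, that it maps the stated product of Banach spaces into the claimed one, and that the quantitative bounds (1) and (2) hold --- by direct estimation of the three components separately, since they decouple. For the left-inverse property I would first check that, for $\varphi \in \XcalInn_\eta$ with $\eta > 1$, the integral $\int_{-\infty}^U \varphi_1(S)\,dS$ converges: along any admissible path in $\DuInn$ tending to $\Re S \to -\infty$ one has $|\varphi_1(S)| \le \normInn{\varphi_1}_\eta |S|^{-\eta}$, and $\eta>1$ makes the tail integrable. Differentiating under the integral sign gives $\partial_U \GG_1[\varphi] = \varphi_1$, and similarly $\partial_U \GG_j[\varphi] = \mp i \GG_j[\varphi] + \varphi_j$ for $j=2,3$, which is exactly $(\partial_U - \AAA)\GG[\varphi] = \varphi$, i.e. $\LL \GG = \Id$. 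One must also check that the integration path can be taken inside $\DuInn$ so that analyticity of $\GG[\varphi]$ on $\DuInn$ is preserved; here the geometry of $\DuInn$ (a sector-like region closed under $S \mapsto S - t$ for $t>0$, translating leftwards) is what makes the horizontal tail $(-\infty + i\Im U, U]$ admissible, and Cauchy's theorem lets us deform to it.

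The heart of the matter is the norm bounds. For component $1$: fixing $U \in \DuInn$ and integrating along the horizontal ray $S = \sigma + i\,\Im U$, $\sigma \in (-\infty, \Re U]$, one estimates
\[
|U^{\eta-1}\,\GG_1[\varphi](U)| \le |U|^{\eta-1} \normInn{\varphi_1}_\eta \int_{-\infty}^{\Re U} |\sigma + i\Im U|^{-\eta}\,d\sigma.
\]
Since on $\DuInn$ the imaginary part dominates, $|\sigma + i\Im U| \ge c(|\sigma| + |\Im U|) \ge c'|U|$ when $|\sigma| \lesssim |U|$, while for $|\sigma|$ large the integrand decays like $|\sigma|^{-\eta}$; splitting the integral at $|\sigma| \sim |U|$ yields a bound of order $|U|^{1-\eta}$, so that $|U^{\eta-1}\GG_1[\varphi](U)| \le C\normInn{\varphi_1}_\eta$ uniformly, giving (1). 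For $j=2,3$ the oscillating kernel $e^{\mp i(S-U)}$ has modulus $e^{\mp \Im(S-U)}$; along the horizontal ray $\Im S = \Im U$ this modulus is identically $1$, so one cannot gain from decay of the exponential, but one does not need to: $\int_{-\infty}^{\Re U} |\sigma + i\Im U|^{-\nu}\,d\sigma$ is bounded by $C|U|^{1-\nu}$ when $\nu > 1$ --- wait, here $\nu>0$ only is assumed, so for $0<\nu\le 1$ the horizontal tail integral diverges and one must instead choose a path going to infinity in a direction where $e^{\mp\Im(S-U)}$ decays, e.g. a ray into the half-plane where the kernel is a contracting exponential, using that $\DuInn$ contains such rays; then integration by parts / the exponential decay provides convergence and the bound $\normInn{\GG_j[\varphi]}_\nu \le C\normInn{\varphi}_\nu$ without losing a power of $|U|$, which is precisely why components $2,3$ keep the same weight $\nu$ while component $1$ loses one power. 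This choice-of-path dichotomy --- horizontal for the algebraically decaying first component, oblique (into the decay half-plane of the kernel) for the oscillatory components --- is the key technical point, and I expect it to be the main obstacle, since one must check in both cases that the chosen paths stay within $\DuInn$ (using the constraint $|\Im U| \ge \tan\beta_0\,\Re U + \rhoInn$ and $\rhoInn \ge 1$) and that the resulting constants are independent of $\rhoInn$.

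Finally, continuity and linearity of $\GG$ are immediate from linearity of integration together with the bounds just proved, which show $\GG$ is bounded between the stated spaces. I would organize the write-up as: (i) admissibility of integration paths in $\DuInn$ and convergence of the integrals; (ii) the identity $\LL\GG = \Id$ by differentiation under the integral; (iii) the weighted estimates for $\GG_1$ (horizontal path) and for $\GG_2, \GG_3$ (oblique path into the kernel's decay region), extracting the uniform constants; (iv) conclude linearity and boundedness. Throughout, Lemma~\ref{lemma:sumNorms} is used to absorb lower-order weight mismatches, and the only genuinely delicate estimate is the oscillatory one for small $\nu$.
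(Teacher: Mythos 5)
Your plan is correct and is essentially the argument the paper is invoking: the paper gives no proof of its own, only the citation to Lemma 4.6 of \cite{Bal06}, and that proof is exactly your dichotomy — a horizontal leftward path for $\GG_1$ (where $\eta>1$ gives integrability and the loss of one power of $|U|$), and oblique rays in the directions $e^{i(\pi\mp\beta_0)}$, along which $\DuInn$ is invariant and the kernels $e^{\mp i(S-U)}$ decay like $e^{-t\sin\beta_0}$, for $\GG_2,\GG_3$. The only imprecision worth fixing in the write-up is the justification "the imaginary part dominates on $\DuInn$" (false for $U$ deep in the left half-plane with small $\Im U$; the correct reason is that $|\sigma|+|\Im U|\geq c|U|$ for all $\sigma\leq\Re U$, arguing separately for $\Re U\leq 0$ and for the wings), together with a remark that on the oblique rays $|S(t)|$ may dip to order $\rhoInn\ll|U|$, but only for $t\gtrsim c|U|$, where the factor $e^{-t\sin\beta_0}$ already makes the contribution exponentially small.
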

\begin{proof}
It follows the same lines as the proof of Lemma 4.6 in~\cite{Bal06}.
\end{proof}
Let us then define the fixed point operator 
\begin{equation}\label{def:operatorFF}
\FF= \GG \circ \RRR.
\end{equation}
%where the operators $\RRR$ and $\GG$ are defined on~\eqref{def:operatorRRRInner} and~\eqref{def:operatorGG}, respectively. 
%
A solution of 
$
\ZuInn=\FF[\ZuInn]
$
belonging to $\XcalInn_{\eta} \times \XcalInn_{\nu} \times \XcalInn_{\nu}$
with $\eta,\nu>0$
satisfies equation~\eqref{eq:invariantEquationInner} and the asymptotic condition~\eqref{eq:asymptoticConditionsInner}.
Therefore, to prove the first part of Theorem~\ref{theorem:innerComputations} and the asymptotic estimates in \eqref{result:innerExistence}, we look for a fixed point of  the operator $\FF$  in the Banach space
\begin{equation*}%\label{def:banachSpacesXYTotal}
	\XcalInnTotal = \XcalInn_{\frac{8}{3}} \times \XcalInn_{\frac{4}{3}} \times \XcalInn_{\frac{4}{3}}, 
\end{equation*}
endowed with the norm
\[
\normInnTotal{\varphi}= 
\normInn{\varphi_1}_{\frac{8}{3}} + 
\normInn{\varphi_2}_{\frac{4}{3}} + 
\normInn{\varphi_3}_{\frac{4}{3}}.
\] 

\begin{proposition}\label{proposition:innerExistence}	There exists $\rhoInn_0>0$ such that for any $\rhoInn \geq \rhoInn_0$, the fixed point equation $\ZuInn = {\FF}[\ZuInn]$ has a solution $\ZuInn \in \XcalInnTotal$.
Moreover, there exists a constant $\cttInnExistA>0$, independent of $\rhoInn$, such that
\[
\normInnTotal{\ZuInn} \leq \cttInnExistA.
\]
\end{proposition}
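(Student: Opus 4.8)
The strategy is the standard Banach fixed point argument for the operator $\FF = \GG \circ \RRR$ in the space $\XcalInnTotal$, exactly along the lines of \cite{BalSea08, Bal06}. First I would record the structure of the nonlinearity: from the explicit form of $\KK$ and $\JJ$ in \eqref{def:hamiltonianK}--\eqref{def:hFunction}, the operator $\RRR$ in \eqref{def:operatorRRRInner} is built from $f = (-\partial_U\KK, i\partial_Y\KK, -i\partial_X\KK)^T$ and $g = \partial_W\KK$, which are analytic functions of $(U^{-1/3}, W, X, Y)$ (more precisely of $U^{-2/3}$, $U^{-4/3}$, etc., and the three ``small'' variables) on the relevant domain. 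The key quantitative input is a decomposition $\RRR[\varphi] = \RRR[0] + (\RRR[\varphi] - \RRR[0])$ together with weighted estimates: one checks that $\RRR[0] \in \XcalInn_{10/3} \times \XcalInn_{2} \times \XcalInn_{2}$ (the independent term, coming from the lowest-order parts of $f$ and from expanding $1/\sqrt{1+\JJ} - 1$), and that $\RRR$ is Lipschitz on a ball of $\XcalInnTotal$ with a Lipschitz constant that is a negative power of $\rhoInn$.

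\textbf{Main steps.} (1) Show $\FF$ maps a suitable ball $\overline{B}(0, b_3) \subset \XcalInnTotal$ into itself: apply $\GG$ (Lemma~\ref{lemma:Glipschitz}, which gains one unit of weight in the first component and preserves weights in the other two) to the estimate on $\RRR[\varphi]$ for $\|\varphi\|_\times \le b_3$. One must verify that the weights match, i.e. that $\GG$ sends $\XcalInn_{10/3} \times \XcalInn_{2} \times \XcalInn_{2}$ into $\XcalInn_{7/3} \times \XcalInn_{2} \times \XcalInn_{2} \subset \XcalInnTotal$ (using part 1 of Lemma~\ref{lemma:sumNorms} to drop from weight $7/3$ to $8/3$... in fact one needs $7/3 < 8/3$, so the inclusion goes the wrong way — so the correct bookkeeping is that $\RRR[0]_1 \in \XcalInn_{10/3}$ gives $\GG_1[\RRR[0]]_1 \in \XcalInn_{7/3}$, hence in $\XcalInn_{8/3}$ only after multiplying by $(\rhoInn\cos\beta_0)^{-1/3}$, which is fine since $\rhoInn \ge \rhoInn_0$; similarly the nonlinear part must be shown to land in $\XcalInn_{\ge 8/3}$). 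The self-mapping estimate then reads $\|\FF[\varphi]\|_\times \le C\rhoInn^{-a} + C\rhoInn^{-a}b_3 \le b_3$ for $\rhoInn$ large. (2) Show $\FF$ is a contraction on that ball: estimate $\|\RRR[\varphi] - \RRR[\psi]\|$ in the appropriate weighted norms by $C\rhoInn^{-a}\|\varphi - \psi\|_\times$, using that the difference quotients of $f$ and $g$ carry the same decay in $U$, and compose with $\GG$. (3) Conclude by the Banach fixed point theorem that a unique fixed point $\ZuInn \in \overline{B}(0,b_3)$ exists; its membership in $\XcalInnTotal$ gives precisely the bounds \eqref{result:innerExistence} with $\cttInnExist = b_3$, and analyticity is inherited because $\GG$ and $\RRR$ preserve analyticity and the fixed point is a uniform limit of analytic functions on $\DuInn$. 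The case of $\ZsInn$ on $\DsInn = -\DuInn$ is identical after the symmetry $U \mapsto -U$.

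\textbf{Main obstacle.} The delicate point is the careful weighted bookkeeping: one must track the exact powers of $U$ produced by every monomial in $f$, $g$, and in the expansion of $(1+\JJ)^{-1/2}$, and verify that after applying $\GG$ every component lands in a space with weight \emph{at least} as large as the target weights $(8/3, 4/3, 4/3)$, so that Lemma~\ref{lemma:sumNorms}(1) can absorb the mismatch into a gain of a negative power of $\rhoInn$. This is where the precise choice of the weights $8/3$ and $4/3$ is forced — they are exactly the decay rates $|U|^{-8/3}$ and $|U|^{-4/3}$ appearing in \eqref{result:innerExistence}, dictated by the heuristics \eqref{eq:approxInnerw}--\eqref{eq:approxInnerxy}. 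A secondary technical nuisance is controlling $(1+\JJ)^{-1/2}$: one needs $\JJ$ to be small on the domain, which requires $\rhoInn$ large (so that $|U^{-2/3}|$ and the small variables are uniformly small) and is where the quantity $b_0, \gamma_1, \gamma_2$-type constants from Proposition~\ref{proposition:innerDerivation} are used; this forces the threshold $\rhoInn_0$. Everything else is routine, following \cite{Bal06} Lemma~4.6 and its surrounding estimates verbatim.
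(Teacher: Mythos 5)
Your overall strategy (a fixed point argument for $\FF=\GG\circ\RRR$ in the weighted space $\XcalInnTotal$, with a self-map estimate plus a Lipschitz estimate) is the same as the paper's, but your step (2) contains a genuine gap: the claim that $\FF$ is a contraction on a ball of $\XcalInnTotal$ with Lipschitz constant $C\rhoInn^{-a}$ is false. The obstruction sits in the first component. The sharp decay of the cross-derivatives is $\normInn{\partial_X\RRR_1}_{7/3},\normInn{\partial_Y\RRR_1}_{7/3}\leq C$ (the leading term of $\partial_X\partial_U\KK$ is a nonzero constant times $U^{-7/3}$, coming from the $-\tfrac{4i(X-Y)}{3U^{2/3}}$ term in $\JJ$), so by the mean value theorem $\normInn{\RRR_1[\ZInn]-\RRR_1[\ZInnB]}_{11/3}\leq C\normInn{\XInn-\XInnB}_{4/3}+\dots$, and $\GG_1$ maps $\XcalInn_{11/3}$ to $\XcalInn_{8/3}$ \emph{exactly}, with operator norm $O(1)$. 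There is no leftover power of $U$ to convert into a factor $\rhoInn^{-a}$ via Lemma~\ref{lemma:sumNorms}(1); only the $W$-slot of $\FF_1$ and all slots of $\FF_2,\FF_3$ gain the small factor $\rhoInn^{-2}$ (this is precisely the content of Lemma~\ref{lemma:firstIterationInner}). Hence $\FF$ itself is not (obviously) contractive in $\normInnTotal{\cdot}$, and the Banach fixed point theorem cannot be applied to it directly. The paper repairs this with a Gauss--Seidel substitution: it replaces $\FF$ by $\wt{\FF}[\ZInn]=\bigl(\FF_1[\WInn,\FF_2[\ZInn],\FF_3[\ZInn]],\FF_2[\ZInn],\FF_3[\ZInn]\bigr)^T$, which has the same fixed points and whose Lipschitz constant is $O(\rhoInn^{-2})$ because the problematic $O(1)$ dependence of $\FF_1$ on $(X,Y)$ is now composed with the contractive maps $\FF_2,\FF_3$. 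An equivalent fix is an anisotropic norm weighting the $(X,Y)$ components by a large constant (this is what the paper does later, in Lemma~\ref{lemma:innerExistenceDifference}, for the difference equation); either device is needed, and your proposal contains neither.

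A secondary, less damaging inaccuracy: your weights for the independent term are off. One has $\RRR[0]\in\XcalInn_{11/3}\times\XcalInn_{4/3}\times\XcalInn_{4/3}$ (not $\XcalInn_{10/3}\times\XcalInn_{2}\times\XcalInn_{2}$): e.g.\ $\partial_Y\KK(U,0)\sim \tfrac{2i}{9}U^{-4/3}$, so $\FF_2[0],\FF_3[0]$ land exactly in $\XcalInn_{4/3}$ and $\FF_1[0]$ exactly in $\XcalInn_{8/3}$. Consequently $\normInnTotal{\FF[0]}$ is an $O(1)$ constant $\cttInnExistB$, not $O(\rhoInn^{-a})$ as in your self-map inequality; the correct conclusion is that the fixed point lives in a ball of radius comparable to $\normInnTotal{\FF[0]}$, which is how the $\rhoInn$-independent constant $\cttInnExistA$ arises.
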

\begin{remark}\label{remark:kappaBigEnough}
Notice that $\DuInn \subseteq \DuInnR{\rhoInn_0}$ when $\rhoInn\geq\rhoInn_0$ (see~\eqref{def:domainInnner}). 
Then, for some $\nu \in \reals$, if $\zeta \in \XcalInn_{\nu}$ (defined for $\rhoInn$) then $\zeta \in \XX_{\nu}$ (defined for $\rhoInn_0$).
This allows us to take $\rhoInn$ as big as we need.
\end{remark}

\subsubsection{Proof of Proposition~\ref{proposition:innerExistence}}
%
% We prove Proposition~\ref{proposition:innerExistence} through a fixed point argument.
%
%However, we will see that the Lipschitz constant of $\FF$ is greater than $1$ with the norm in $\XcalInnTotal$.
%%
%Nonetheless, by a Gauss-Seidel scheme, we can modify $\FF$ to define a new operator with a small Lipschitz constant and with the same fixed points.
%
We first state a technical lemma whose proof is postponed until Section~\ref{subsubsection:innerTechnicalProofsA}.
For $\varrho>0$, we define the closed ball 
\begin{equation*}%\label{def:ballInner}
B(\varrho) = \claus{\varphi \in \XcalInnTotal \st
\normInnTotal{\varphi}\leq \varrho}.
\end{equation*}

\begin{lemma}\label{lemma:boundsRRR}
Let ${\RRR}$ be the operator defined in~\eqref{def:operatorRRRInner}.
Then, for  $\varrho>0$ and for $\rhoInn>0$ big enough,
there exists a constant $C>0$ 
such that, for any
$\ZInn \in B(\varrho)$, 
\begin{align*}
&\normInn{\RRR_1[\ZInn]}_{\frac{11}{3}} \leq C, \qquad
\normInn{\RRR_j[\ZInn]}_{\frac{4}{3}} \leq C, \qquad
j=2,3,
\end{align*}
	and
	\begin{align*}
	\normInn{\partial_W\RRR_1[\ZInn]}_3 &\leq C, \quad &
	\normInn{\partial_X\RRR_1[\ZInn]}_{\frac{7}{3}} &\leq C, \quad &
	\normInn{\partial_Y\RRR_1[\ZInn]}_{\frac{7}{3}} &\leq C, \\
	\normInn{\partial_W\RRR_j[\ZInn]}_{\frac{2}{3}} &\leq C, \quad &
	\normInn{\partial_X\RRR_j[\ZInn]}_{2} &\leq C , \quad &
	\normInn{\partial_Y\RRR_j[\ZInn]}_{2} &\leq C, \quad j=2,3.
\end{align*} 
\end{lemma}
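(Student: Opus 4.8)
\textbf{Plan of proof for Lemma~\ref{lemma:boundsRRR}.}

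The strategy is to unwind the definition of $\RRR$ in~\eqref{def:operatorRRRInner} and bound each ingredient using the multiplicative and inclusion properties of the spaces $\XcalInn_\nu$ recorded in Lemma~\ref{lemma:sumNorms}. Recall that
\[
\RRR[\varphi]= \frac{f(U,\varphi)-g(U,\varphi)\,\AAA\varphi}{1+g(U,\varphi)},
\qquad
f = \paren{-\partial_U\KK,\; i\partial_Y\KK,\; -i\partial_X\KK}^T,
\qquad
g = \partial_W\KK,
\]
with $\KK$ and $\JJ$ given by~\eqref{def:hamiltonianK} and~\eqref{def:hFunction}. So the first step is bookkeeping: for $\ZInn=(\WInn,\XInn,\YInn)\in B(\varrho)$ one has $\WInn\in\XcalInn_{8/3}$ and $\XInn,\YInn\in\XcalInn_{4/3}$. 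Feeding these into the explicit rational expression for $\JJ$ and using Lemma~\ref{lemma:sumNorms}(2) term by term (e.g. $W^2/U^{2/3}\in\XcalInn_{16/3+2/3}$, $W/U^{4/3}\in\XcalInn_{8/3+4/3}$, $1/U^2\in\XcalInn_2$, $(X+Y)W/U\in\XcalInn_{4/3+8/3+1}$, $(X+Y)/U^{5/3}\in\XcalInn_{4/3+5/3}$, $(X-Y)/U^{2/3}\in\XcalInn_{4/3+2/3}$, $X^2/U^{4/3},XY/U^{4/3}\in\XcalInn_{8/3+4/3}$) one checks that every monomial of $\JJ$ lies in some $\XcalInn_\nu$ with $\nu\ge 2$, hence by the inclusion Lemma~\ref{lemma:sumNorms}(1), $\JJ\in\XcalInn_2$ with $\normInn{\JJ}_2\le C$ for $\rhoInn$ large; in particular $\normInn{\JJ}_0\le C\rhoInn^{-2}$, so $\JJ$ is small in sup norm on $\DuInn$ when $\rhoInn$ is large, which keeps $1+\JJ$ (and hence $\sqrt{1+\JJ}$ and $1/\sqrt{1+\JJ}$) away from $0$ and analytic there, with $\frac1{\sqrt{1+\JJ}}-1\in\XcalInn_2$ of norm $\le C$.

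Next I would differentiate. Each of $\partial_U\KK,\partial_W\KK,\partial_X\KK,\partial_Y\KK$ is, by the chain rule on~\eqref{def:hamiltonianK}, a sum of (a) the explicit polynomial/rational pieces coming from $-\tfrac34 U^{2/3}W^2$ and the prefactor $-\tfrac1{3U^{2/3}}$, and (b) a term of the form $(\text{rational prefactor})\cdot(1+\JJ)^{-3/2}\cdot\partial_\bullet\JJ$. For group (b) one needs the weights of $\partial_U\JJ,\partial_W\JJ,\partial_X\JJ,\partial_Y\JJ$: differentiating in $U$ lowers the decay by $1$ (so $\partial_U\JJ\in\XcalInn_3$), differentiating in $W$ gains weight $8/3$ relative to $\JJ$'s monomials containing $W$ and one checks $\partial_W\JJ\in\XcalInn_{8/3}$ roughly — more carefully, one just re-runs the term-by-term count on the differentiated expression — and similarly $\partial_X\JJ,\partial_Y\JJ\in\XcalInn_{?}$; combined with the $U^{-2/3}$ or $U^{-4/3}$ prefactors and $(1+\JJ)^{-3/2}\in\XcalInn_0$ (bounded), one reads off: $\partial_U\KK\in\XcalInn_{11/3}$, $\partial_W\KK\in\XcalInn_{2/3}$, $\partial_X\KK,\partial_Y\KK\in\XcalInn_2$. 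This gives $f=(-\partial_U\KK,i\partial_Y\KK,-i\partial_X\KK)^T$ with $f_1\in\XcalInn_{11/3}$, $f_2,f_3\in\XcalInn_2$, and $g=\partial_W\KK\in\XcalInn_{2/3}$, with all norms $\le C$ uniformly for $\ZInn\in B(\varrho)$ and $\rhoInn$ large. Then $\AAA\varphi=(0,i\XInn,-i\YInn)^T\in\XcalInn_\infty$ with components in $\XcalInn_{4/3}$, so $g\,\AAA\varphi$ has components in $\XcalInn_{2/3+4/3}=\XcalInn_2$; and since $\normInn{g}_0\le C\rhoInn^{-2/3}\to 0$, the denominator $1+g$ satisfies $|1+g|\ge\tfrac12$ on $\DuInn$ for $\rhoInn$ large, so $1/(1+g)\in\XcalInn_0$ is bounded. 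Multiplying out using Lemma~\ref{lemma:sumNorms}(2) gives $\RRR_1[\ZInn]\in\XcalInn_{11/3}$ (the $f_1$ term dominates; the $g\AAA\varphi$ term only contributes at weight $2$ in the first slot but is $0$ there since $(\AAA\varphi)_1=0$), and $\RRR_j[\ZInn]\in\XcalInn_2$ for $j=2,3$; the latter we then demote via Lemma~\ref{lemma:sumNorms}(1) to $\XcalInn_{4/3}$ as stated, with norm $\le C$.

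Finally, for the derivative bounds $\partial_W\RRR$, $\partial_X\RRR$, $\partial_Y\RRR$, I would differentiate the quotient $\frac{f-g\AAA\varphi}{1+g}$ with respect to $W,X,Y$. Each such derivative produces: $\partial_\bullet f$, $\partial_\bullet g$, a term $-(\partial_\bullet g)\AAA\varphi - g\,\partial_\bullet(\AAA\varphi)$ in the numerator, and a $(1+g)^{-2}\partial_\bullet g\,(f-g\AAA\varphi)$ piece. So the new ingredients are the second derivatives of $\KK$, i.e. $\partial_W^2\KK,\partial_{WX}\KK,\partial_{WY}\KK,\partial_{UX}\KK,\dots$, whose weights are again read off by re-running the monomial count on $\JJ$ and its first and second partials (differentiating in $W$ raises the $U$-power-weight by the appropriate amount each time, differentiating in $X$ or $Y$ likewise, differentiating in $U$ lowers decay by $1$). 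Carrying this out, one finds $\partial_W f_1\in\XcalInn_3$, $\partial_X f_1,\partial_Y f_1\in\XcalInn_{7/3}$, $\partial_W f_j\in\XcalInn_{2/3}$, $\partial_X f_j,\partial_Y f_j\in\XcalInn_2$ for $j=2,3$, and the $\partial_\bullet g$, $g$, $1/(1+g)$ factors are all controlled as above; assembling with Lemma~\ref{lemma:sumNorms} yields exactly the claimed weights $3,7/3,7/3$ (first component) and $2/3,2,2$ ($j=2,3$ components), all with norms $\le C$.

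\textbf{Main obstacle.} None of the steps is deep; the whole lemma is a uniform-estimate exercise of the type carried out in~\cite{Bal06, BalSea08}. The only genuinely delicate point is ensuring that the denominators $1+\JJ$ and $1+g$ stay uniformly bounded away from zero (and that $(1+\JJ)^{-1/2}$ remains single-valued analytic) throughout $\DuInn$ \emph{uniformly} in $\ZInn\in B(\varrho)$; this is where one uses $\rhoInn$ large, via $\normInn{\JJ}_0,\normInn{g}_0\le C\rhoInn^{-2/3}$, which forces a choice of $\rhoInn_0$ depending on $\varrho$ and on the absolute constants in $\KK$. The secondary nuisance is purely organizational: the expression for $\JJ$ has seven monomials, so its first partials have on the order of a dozen terms each and its second partials a few dozen, and one must keep the weight bookkeeping straight (the cleanest way is a short table of $(\text{monomial},\nu)$ pairs for $\JJ,\partial\JJ,\partial^2\JJ$, then apply Lemma~\ref{lemma:sumNorms}(1)–(2) mechanically, always taking the \emph{minimum} weight appearing as the guaranteed space for the sum).
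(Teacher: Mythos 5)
Your plan is the same as the paper's: write $\RRR[\ZInn]$ as a quotient, estimate $\JJ$ and its first and second partials term by term, deduce bounds for $f$, $g$ and the denominators $1+\JJ$, $1+g$, and assemble everything with the algebra of the weighted spaces. The paper's proof (Section~\ref{subsubsection:innerTechnicalProofsA}) is exactly this computation, carried out with $\RRR_j=\wt f_j/(1+g)$ where $\wt f_2=f_2-i\XInn g$, $\wt f_3=f_3+i\YInn g$, which is the same object you obtain by differentiating $\paren{f-g\AAA\varphi}/(1+g)$ directly.

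There is, however, a concrete error in your weight table, and one instance of it would make a step fail. Your heuristic ``differentiating in $W$ (resp.\ $X$, $Y$) gains the weight of that variable'' is only valid for monomials of $\JJ$ that are at least quadratic in the differentiated variable; for the monomials that are \emph{linear} in it, differentiation produces a term independent of $Z$ whose decay is just the explicit power of $U$ in the coefficient. Thus $\partial_W\JJ$ contains $-\tfrac{16}{27}U^{-4/3}$, so $\partial_W\JJ\in\XcalInn_{4/3}$ (not $\XcalInn_{8/3}$), and $\partial_X\JJ,\partial_Y\JJ$ contain $\mp\tfrac{4i}{3}U^{-2/3}$, so they lie only in $\XcalInn_{2/3}$. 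Consequently $f_2,f_3=\pm i\partial_{Y,X}\KK\in\XcalInn_{4/3}$, not $\XcalInn_2$ as you claim; this particular slip is harmless since the lemma only asserts $\normInn{\RRR_j[\ZInn]}_{4/3}\le C$. The consequential slip is your claim $g=\partial_W\KK\in\XcalInn_{2/3}$. The correct composition bound, which the paper proves, is $\vabs{g(U,\ZInn(U))}\le C\vabs{U}^{-2}$: the first term $-\tfrac32 U^{2/3}\WInn$ lies in $\XcalInn_{-2/3+8/3}=\XcalInn_2$ and the second term $\tfrac1{6}U^{-2/3}\partial_W\JJ\,(1+\JJ)^{-3/2}$ lies in $\XcalInn_{2/3+4/3}=\XcalInn_2$. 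This sharp weight is genuinely needed: when you differentiate $g\AAA\varphi$ with respect to $X$ (for $\RRR_2$) or $Y$ (for $\RRR_3$), you produce the bare term $-ig/(1+g)$ in the numerator, and to obtain the claimed $\normInn{\partial_X\RRR_2}_2,\normInn{\partial_Y\RRR_3}_2\le C$ this term must lie in $\XcalInn_2$. With only $g\in\XcalInn_{2/3}$ your bookkeeping would deliver $\partial_X\RRR_2,\partial_Y\RRR_3\in\XcalInn_{2/3}$, which is too weak, and this loss would propagate to the contraction estimates of Lemma~\ref{lemma:firstIterationInner}. The fix is entirely within your own method — redo the monomial count for $g$ correctly — but as written the table does not close.
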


The next lemma gives properties of the operator $\FF$. 
%
% The first one deals with $\FF[0]$, and it is a direct consequence of Lemmas~\ref{lemma:Glipschitz} and~\ref{lemma:boundsRRR}.
%
% The second lemma is about the Lipschitz constant of $\FF$.
%

\begin{lemma}\label{lemma:firstIterationInner}
	Let ${\FF}$ be the operator defined in~\eqref{def:operatorFF}. 
	Then, for $\rhoInn>0$ big enough, 
	there exists a constant $\cttInnExistB>0$ independent of $\kappa$ such that
	\[
	\normInnTotal{{\FF[0]}} \leq \cttInnExistB.
	\]
% % \end{lemma}
% % 
% % \begin{lemma}\label{lemma:FLipschitzInner}
% Let ${\FF}$ be the operator defined in~\eqref{def:operatorFF}.
%

Moreover, for  $\varrho>0$ and $\rhoInn>0$ big enough, 
there exists a constant $\cttInnExistC>0$ independent of $\kappa$  such that, 
for any $\ZInn=(\WInn,\XInn,\YInn)^T$, $\ZInnB=(\WInnB,\XInnB,\YInnB)^T \in B(\varrho) \subset \XcalInnTotal$, 
% the operator $\FF$ satisfies
	\begin{align*} 
	\normInnSmall{{\FF_1}[\ZInn]-{\FF_1}[\ZInnB]}_{\frac{8}{3}} &\leq \cttInnExistC
	\left( \frac{1}{\rhoInn^2} 
	\normInnSmall{\WInn - \WInnB}_{\frac{8}{3}} 
	+ \normInnSmall{\XInn - \XInnB}_{\frac{4}{3}}
	+ \normInnSmall{\YInn - \YInnB}_{\frac{4}{3}} \right), \\
	%
	%\label{result:F23Lipschitz}
	\normInnSmall{{\FF_j}[\ZInn]-{\FF_j}[\ZInnB]}_{\frac{4}{3}} &\leq \frac{\cttInnExistC}{\rhoInn^2} \normInnTotalSmall{\ZInn - \ZInnB},
	\qquad
	j=2, 3.
	\end{align*}
\end{lemma}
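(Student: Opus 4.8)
The plan is to exploit the factorization $\FF = \GG \circ \RRR$ together with the bounds on $\RRR$ from Lemma~\ref{lemma:boundsRRR} and the mapping properties of $\GG$ from Lemma~\ref{lemma:Glipschitz}. For the bound on $\normInnTotal{\FF[0]}$, I would first apply Lemma~\ref{lemma:boundsRRR} with $\varrho$ arbitrary (say $\varrho=1$) and $\ZInn=0 \in B(\varrho)$, which gives $\RRR_1[0] \in \XcalInn_{11/3}$ and $\RRR_j[0]\in\XcalInn_{4/3}$, $j=2,3$, with norms bounded by a constant $C$ independent of $\kappa$. Then Lemma~\ref{lemma:Glipschitz} (with $\eta=11/3>1$ and $\nu=4/3>0$) yields $\GG_1[\RRR_1[0]]\in\XcalInn_{8/3}$ and $\GG_j[\RRR_j[0]]\in\XcalInn_{4/3}$ with the corresponding norm bounds, so that $\normInnTotal{\FF[0]}\leq \cttInnExistB$ for some $\cttInnExistB$ independent of $\kappa$. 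This part is essentially immediate from the two preceding lemmas.

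For the Lipschitz estimates, the key is the mean value / fundamental theorem of calculus applied componentwise. Writing $\RRR[\ZInn]-\RRR[\ZInnB] = \int_0^1 D\RRR[\ZInnB + s(\ZInn-\ZInnB)]\,(\ZInn-\ZInnB)\,ds$, each component of the difference is a sum of terms of the form $\partial_W\RRR_i \cdot (\WInn-\WInnB)$, $\partial_X\RRR_i\cdot(\XInn-\XInnB)$, $\partial_Y\RRR_i\cdot(\YInn-\YInnB)$ evaluated along the segment, which stays inside $B(\varrho)$ by convexity. Using the partial-derivative bounds from Lemma~\ref{lemma:boundsRRR} together with the product rule for the weighted norms (Lemma~\ref{lemma:sumNorms}, item~2), one gets for instance $\normInn{\RRR_1[\ZInn]-\RRR_1[\ZInnB]}$ controlled by: $\normInn{\partial_W\RRR_1}_3 \normInn{\WInn-\WInnB}_{8/3}$ lands in $\XcalInn_{3+8/3}=\XcalInn_{17/3}$; $\normInn{\partial_X\RRR_1}_{7/3}\normInn{\XInn-\XInnB}_{4/3}$ lands in $\XcalInn_{11/3}$; and similarly for the $Y$-term. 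Then apply $\GG_1$ (which lowers the weight by $1$, mapping $\XcalInn_\eta\to\XcalInn_{\eta-1}$) to land in $\XcalInn_{8/3}$ after using Lemma~\ref{lemma:sumNorms}, item~1, to drop the excess weight; the drops by $\XcalInn_{17/3}\subset\XcalInn_{8/3+1}$ and similar produce the gain of negative powers of $\kappa$, namely the factor $\kappa^{-2}$ in front of $\normInn{\WInn-\WInnB}_{8/3}$ (since $17/3 - 11/3 = 2$) while the $X,Y$ terms come with no such gain (since $11/3 - 11/3 = 0$, i.e.\ $\GG_1$ maps the relevant space exactly into $\XcalInn_{8/3}$). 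For $j=2,3$, the components $\GG_j$ preserve the weight, and the derivative bounds $\normInn{\partial_W\RRR_j}_{2/3}$, $\normInn{\partial_X\RRR_j}_2$, $\normInn{\partial_Y\RRR_j}_2$ combined with the factors $\normInn{\WInn-\WInnB}_{8/3}$, $\normInn{\XInn-\XInnB}_{4/3}$, $\normInn{\YInn-\YInnB}_{4/3}$ land in $\XcalInn_{2/3+8/3}=\XcalInn_{10/3}$, $\XcalInn_{2+4/3}=\XcalInn_{10/3}$, $\XcalInn_{10/3}$ respectively; all exceed $\XcalInn_{4/3}$ by at least $2$, so Lemma~\ref{lemma:sumNorms} item~1 produces the uniform $\kappa^{-2}$ factor multiplying the full norm $\normInnTotal{\ZInn-\ZInnB}$.

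The main obstacle is bookkeeping: getting the weight arithmetic exactly right so that each term in each component lands in a space with index strictly above the target index (by at least $2$ where one wants a $\kappa^{-2}$ gain), and making sure the constants produced by Lemma~\ref{lemma:sumNorms} item~1, which involve powers of $(\kappa\cos\beta_0)$, actually shrink rather than grow — this is why one needs $\kappa$ large. There is no deep difficulty beyond carefully tracking indices and invoking convexity of $B(\varrho)$ so the intermediate points where $D\RRR$ is evaluated remain in the ball; the quantitative heart was already done in Lemma~\ref{lemma:boundsRRR}.
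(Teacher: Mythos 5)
Your proposal is correct and follows essentially the same route as the paper: the bound on $\FF[0]$ from Lemmas~\ref{lemma:Glipschitz} and~\ref{lemma:boundsRRR}, the mean value theorem along the segment in $B(\varrho)$, the derivative bounds of Lemma~\ref{lemma:boundsRRR} combined with the algebra and inclusion properties of the weighted spaces, and then the mapping properties of $\GG$. The only cosmetic difference is that you drop the excess weight on the product after multiplying, whereas the paper lowers the weight of the derivative factor first; both yield the same $\kappa^{-2}$ gains by Lemma~\ref{lemma:sumNorms}.
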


\begin{proof}
%Let ${\RRR}=(\RRR_1,\RRR_2,\RRR_3)$ be the operator defined in~\eqref{def:operatorRRRInner}.	
%	
The estimate for  $\FF[0]$ is a direct consequence of Lemmas~\ref{lemma:Glipschitz} and~\ref{lemma:boundsRRR}.

To estimate the Lipschitz constant, we first estimate each component $\RRR_j[\ZInn]-\RRR_j[\ZInnB]$ separately  for $j=1,2,3$. By the mean value theorem we have
\begin{align*}
\RRR_j[\ZInn]-\RRR_j[\ZInnB] 
&=\boxClaus{\int_0^1 D \RRR_j[s \ZInn + (1-s) \ZInnB]  d s }
(\ZInn - \ZInnB) .
\end{align*}
Then, for $j=2,3$, we have
\begin{align*}
\MoveEqLeft[4]{\normInnSmall{\RRR_1[\ZInn]-\RRR_1[\ZInnB]}_{\frac{11}{3}} \leq
\normInnSmall{\WInn - \WInnB}_{\frac{8}{3}}
\sup_{\varphi \in {B(\varrho)}}  \normInn{\partial_W\RRR_1[\varphi]}_{1} }\\
&+ \normInnSmall{\XInn - \XInnB}_{\frac{4}{3}}
\sup_{\varphi \in {B(\varrho)}} \normInn{\partial_X{\RRR_1}[\varphi]}_{\frac{7}{3}} 
+ \normInnSmall{\YInn - \YInnB}_{\frac{4}{3}} \sup_{\varphi \in {B(\varrho)}} 
\normInn{\partial_Y\RRR_1[\varphi]}_{\frac{7}{3}}, \\
%%%%
\MoveEqLeft[4]{\normInnSmall{\RRR_j[\ZInn]-\RRR_j[\ZInnB]}_{\frac{4}{3}} \leq
\normInnSmall{\WInn - \WInnB}_{\frac{8}{3}}
\sup_{\varphi \in {B(\varrho)}} \normInn{\partial_W\RRR_j[\varphi]}_{-\frac{4}{3}} }\\
&+ \normInnSmall{\XInn - \XInnB}_{\frac{4}{3}} 
\sup_{\varphi \in {B(\varrho)}} \normInn{\partial_X{\RRR_j}[\varphi]}_{0} 
+ \normInnSmall{\YInn - \YInnB}_{\frac{4}{3}}
\sup_{\varphi \in {B(\varrho)} } 
\normInn{\partial_Y \RRR_j[\varphi]}_{0}.
\end{align*}
Applying Lemma~\ref{lemma:boundsRRR}, we obtain
\begin{align*}
\normInnSmall{{\RRR_1}[\ZInn]-{\RRR_1}[\ZInnB]}_{\frac{11}{3}} &\leq C
\left( \frac{1}{\rhoInn^2} \normInnSmall{\WInn - \WInnB}_{\frac{8}{3}} 
+ \normInnSmall{\XInn - \XInnB}_{\frac{4}{3}}
+ \normInnSmall{\YInn - \YInnB}_{\frac{4}{3}} \right),\\
\normInnSmall{{\RRR_j}[\ZInn]-{\RRR_j}[\ZInnB]}_{\frac{4}{3}} &\leq \frac{C}{\rhoInn^2} \normInnTotalSmall{\ZInn - \ZInnB},
\quad j=2,3.
\end{align*}
Finally, applying Lemma~\ref{lemma:Glipschitz}, we obtain the estimates in the lemma.
\end{proof}

Lemma~\ref{lemma:firstIterationInner} shows that,
by assuming $\rhoInn>0$ big enough, the operators $\FF_2$ and $\FF_3$ have Lipschitz constant less than $1$.
However, this is not the case for $\FF_1$.
To overcome this problem, we apply a Gauss-Seidel argument and define a new operator 
\begin{equation*} %\label{def:operatorFGaussSeidel}
\wt{\FF}[\ZInn]=
\wt{\FF}[(\WInn,\XInn,\YInn)]=
\begin{pmatrix} 
\FF_1[\WInn,\FF_2[\ZInn],\FF_3[\ZInn]] \\ 
\FF_2[\ZInn] \\
\FF_3[\ZInn] \end{pmatrix},
\end{equation*}
%where ${\FF}=(\FF_1,\FF_2,\FF_3)$ is the operator defined in~\eqref{def:operatorFF}.
%
which has the same fixed points as $\FF$ and turns out to be contractive in a suitable ball. 

\begin{proof}[End of the proof of Proposition \ref{proposition:innerExistence}]
We first obtain an estimate for $\normInnTotalSmall{\wt{\FF}[0]}$. Notice that
\begin{equation*} 
\wt{\FF}[0]= \FF[0]
+ 
\big(\wt \FF[0]-\FF[0]\big)=
\FF[0]
+ 
\big(
\FF_1\left[0,\FF_2[0],\FF_3[0]\right]-\FF_1[0], 0, 0 \big)^T.
\end{equation*} 
%\begin{equation*} 
%\wt{\FF}[0]= \begin{pmatrix} 
%\FF_1[0] \\ 
%\FF_2[0] \\
%\FF_3[0] \end{pmatrix}
%+ 
%\begin{pmatrix} 
%\FF_1[0,\FF_2[0],\FF_3[0]]-\FF_1[0] \\ 
%0\\
%0 \end{pmatrix},
%\end{equation*} 
% and Lemma~\ref{lemma:firstIterationInner} implies that
% \begin{align*}
% \normInnTotalSmall{\paren{0,\FF_2[0],\FF_3[0]}} \leq
% \normInnTotal{\FF[0]} \leq C.
% \end{align*}
Then, by Lemma~\ref{lemma:firstIterationInner}, $(0,\FF_2[0],\FF_3[0])^T\in \XcalInnTotal$ and 
\begin{align*}
\normInnTotalSmall{\wt{\FF}[0]} &\leq
\normInnTotal{\FF[0]} +
\normInn{\FF_1[0,\FF_2[0],\FF_3[0]]-\FF_1[0]}_{\frac{8}{3}} \\
&\leq \normInnTotal{\FF[0]}
+ C \normInn{\FF_2[0]}_{\frac{4}{3}}
+ C \normInn{\FF_3[0]}_{\frac{4}{3}} 
\leq C \normInnTotal{\FF[0]}.
\end{align*}
Thus, we can fix $\varrho>0$ such that
\begin{align*}%\label{eq:fixedpointtilde}
\normInnTotalSmall{\wt{\FF}[0]} \leq \frac{\varrho}{2}. 
\end{align*}
Now, we prove that the operator $\wt{\FF}$ is contractive in $B(\varrho) \subset \XcalInnTotal$. 
By Lemma~\ref{lemma:firstIterationInner} and assuming $\rhoInn>0$ big enough, we have that for $\ZInn, \ZInnB \in B(\varrho)$,
\begin{align*}
\normInnSmall{\wt{\FF}_1[\ZInn]-\wt{\FF}_1[\ZInnB]}_{\frac{8}{3}} \leq& 
\frac{C}{\rhoInn^2} \normInnSmall{\WInn-\WInnB}_{\frac{8}{3}} +
\normInnSmall{\FF_2[\ZInn]-\FF_2[\ZInnB]}_{\frac{4}{3}} +
\normInnSmall{\FF_3[\ZInn]-\FF_3[\ZInnB]}_{\frac{4}{3}} \\
%
%	C \sum_{j=2,3}
%	\normInnSmall{\FF_j[\ZInn]-\FF_j[\ZInnB]}_{\frac{4}{3}} \\
%
\leq& 
\frac{C}{\rhoInn^2} 
\normInnSmall{\WInn-\WInnB}_{\frac{8}{3}} +
\frac{C}{\rhoInn^2}\normInnTotalSmall{\ZInn-\ZInnB}
\leq	
\frac{C}{\rhoInn^2}\normInnTotalSmall{\ZInn-\ZInnB}, \\
%%%
\normInnSmall{\wt{\FF}_j[\ZInn]-\wt{\FF}_j[\ZInnB]}_{\frac{4}{3}}  
\leq&
\frac{C}{\rhoInn^2} \normInnTotalSmall{\ZInn-\ZInnB},
\qquad \text{ for } j=2, 3.
\end{align*}
Then, there exists $\kappa_0>0$ such that for $\kappa\geq\kappa_0$, the operator
$\wt{\FF}:B(\varrho) \to B(\varrho)$ is well defined and contractive. 
Therefore $\wt{\FF}$ has a fixed point 
$\ZuInn \in B(\varrho) \subset \XcalInnTotal$.
\end{proof}

%	Let us consider $\ZInn \in B(\cttInnExistA)$, then applying~\eqref{eq:fixedpointtilde} and~\eqref{eq:fixedpointtilde2} we have that
%	\begin{align*}
%	\normInnTotal{\wt{\FF}[\ZInn]} \leq \normInnTotal{\wt{\FF}[0]} + \normInnTotal{\wt{\FF}[\ZInn]-\wt{\FF}[0]}
%	< \frac{\cttInnExistA}{2}+ \frac{\normInnTotal{\ZInn}}{2} 
%	\leq \cttInnExistA.
%	\end{align*}
%	Therefore, it exists $\ZInn \in B(\cttInnExistA)$ such that $\ZInn = \wt{\FF}[\ZInn]$. %As result, this solution also satisfies that $\ZInn = \FF[\ZInn]$.
%\end{proof}

\subsection{Asymptotic formula for the difference}
\label{subsection:innerDifference}

The strategy to prove the second part of Theorem~\ref{theorem:innerComputations} is divided in three steps. 
In Section~\ref{subsubsection:innerDiffPreliminaries} we characterize $\DZInn= \ZuInn-\ZsInn$ as a solution of a linear homogeneous equation.
In Section~\ref{subsubsection:innerDiffPotential}, we prove that $\DZInn$ is in fact  the unique solution of this linear equation in a suitable Banach space.
Finally, in Section~\ref{subsubsection:innerDiffExponential}, we introduce a  Banach subspace of the previous one (with exponential weights) to obtain
exponentially small estimates for $\DZInn$.

%In this section we prove the second part of Theorem~\ref{theorem:innerComputations}: we provide the asymptotic formula~\eqref{result:innerDifference} for $\Delta \ZInn =\ZuInn-\ZsInn$,
%where $\ZuInn \in \XcalInnUTotal$, $\ZsInn \in \XcalInnSTotal$ are the functions obtained in Proposition~\ref{proposition:innerExistence}. 

\subsubsection{A homogeneous linear equation for \texorpdfstring{$\DZInn$}{the difference}}
\label{subsubsection:innerDiffPreliminaries}
By Theorem~\ref{theorem:innerComputations}, the difference $\DZInn(U) = \ZuInn(U) - \ZsInn(U)$ is well defined for $U \in \EInn$ (see~\eqref{def:domainInnnerDiff}).
Since $\ZuInn$, $\ZsInn$ satisfy the same invariance equation~\eqref{eq:invariantEquationInner}, their difference $\DZInn$  satisfies
\begin{equation*}%\label{eq:invariantEquationDiff}
\partial_U \DZInn = 
\AAA \DZInn + R(U)\DZInn,
\end{equation*}
where 
\begin{align}\label{def:operatorRDiff}
R(U)=
\int_0^1 D_Z\RRR[s\ZuInn + (1-s)\ZsInn](U) ds,
\end{align}
and $\AAA$ and $\RRR$ are given in~\eqref{def:matrixAAA} and~\eqref{def:operatorRRRInner}, respectively.
We denote by $R_{1}, R_{2}$ and $R_{3}$, the rows of the matrix $R$.

By the method of variation of parameters, there exists $c=(c_w,c_x,c_y)^T \in \complexs^3$ such that
\begin{align*}
\DZInn(U) = e^{\AAA U} \paren{ c
+ \int_{U_0}^{U} e^{-\AAA S} R(S) \DZInn(S) d S}.
\end{align*}
By Proposition~\ref{proposition:innerExistence}, 
$\DZInn = \ZuInn - \ZsInn$ satisfies  $\lim_{\Im U \to -\infty}\DZInn(U) =0$. Therefore $\DZInn$ satisfies
% 
% the integral equation~\eqref{eq:integralEqDifference} can be written as
\begin{equation}\label{eq:differenceMainEq}
\DZInn = \DZo + \II[\DZInn],
\end{equation}
where $\II$ is  the linear operator
\begin{equation}\label{def:operatorEEE}
\II[\varphi](U)=
\begin{pmatrix}
\displaystyle
\int_{-i\infty}^{U}  
\langle R_{1}(S), \varphi(S)\rangle d S \\[1em]
\displaystyle
e^{iU}\int_{-i\infty}^{U}  
e^{-iS} 
\langle R_{2}(S), \varphi(S)\rangle d S \\[1em]
\displaystyle
e^{-iU} \int_{-i\rhoInn}^{U} e^{iS} 
\langle  R_{3}(S), \varphi(S)\rangle d S
\end{pmatrix},
\end{equation}
% 
% %
% % Using this fact, we deduce that $\DZInn$
% % must satisfy
% \begin{equation}\label{eq:integralEqDifference}
% \begin{split}
% &\DWInn(U) =  \int_{-i\infty}^{U}  
% \langle R_{1}(S), \DZInn(S)\rangle d S, \\
% &\DXInn(U) =  e^{iU}\int_{-i\infty}^{U}  
% e^{-iS} 
% \langle R_{2}(S), \DZInn(S)\rangle d S, \\
% &\DYInn(U) = e^{-iU} \paren{ c_y
% + \int_{-i\rhoInn}^{U} e^{iS} 
% \langle  R_{3}(S), \DZInn(S)\rangle d S},
% %\quad c_y = e^{\rhoInn} \DYInn(-i\rhoInn)
% \end{split}
% \end{equation}
% where $c_y = e^{\rhoInn} \DYInn(-i\rhoInn)$.
% %
% %We use this equation to obtain an asymptotic expression for $\DZInn$.
%
and $\DZo$ is the function
\begin{equation*}
%\label{def:initialIterationDiff}
\DZo(U) = (0,0,c_y e^{-iU})^T=(0,0,e^{\rhoInn} \DYInn(-i\rhoInn) e^{-iU})^T.
%\begin{pmatrix}
%0 \\
%0 \\
%c_y e^{-iU}
%\end{pmatrix}.
\end{equation*}

%
%In particular, in Lemma~\ref{lemma:operatorEEexponential}, we analyze how the operator $\EE$ acts on this space.

%To simplify notation, we consider the product space 
%$
%\YcalInn_{\nu_1,\nu_2,\nu_3}= 
%\YcalInn_{\nu_1} \times
%\YcalInn_{\nu_2} \times
%\YcalInn_{\nu_3},
%$
%endowed with the product norm 
%$
%\normInn{\zeta}_{\nu_1,\nu_2,\nu_3}=
%\sum_i \normInn{\zeta_i}_{\nu_i}.
%$

%\begin{proposition}\label{proposition:innerDifference}
%	There exists $\rhoInn>0$ big enough such that for $U \in \EInn$, $\DZInn$ satisfies
%	\begin{align*}
%		\DZInn(U) =  \CInn e^{-iU} \Big((0,0,1)^T + \chi(U) \Big),
%	\end{align*}
%	where $\CInn \in \complexs$ is independent of $\rhoInn$ and there exists $\cttInnDiff>0$ independent of $\kappa$ such that
%	%$\chi$ satisfies %$\normInn{\chi}_{\frac{7}{3},2,1} \leq C$.
%	\[
%	\normInn{\chi_1}_{\frac{7}{3}} \leq \cttInnDiff, \quad
%	\normInn{\chi_2}_{2} \leq \cttInnDiff, \quad
%	\normInn{\chi_3}_{1} \leq \cttInnDiff.
%	\]
%	%Moreover, $\CInn \neq 0$ if and only if $\DZInn \neq 0$.
%\end{proposition}

%Moreover, by Proposition~\ref{proposition:innerExistence}, there exists a constant $\cttInnDiffB>0$ such that
%\[
%\DZInn \in B(\cttInnDiffB) \subset \YcalInn_{\frac{8}{3},\frac{4}{3},\frac{4}{3}}.
%\]

\subsubsection{Characterization of \texorpdfstring{$\DZInn$}{the difference} as a fixed point}
\label{subsubsection:innerDiffPotential}

% We define appropriate Banach spaces to deal with the equation~\eqref{eq:differenceMainEq}.
%
Given $\nu \in \reals$ and $\rhoInn>0$, we define the norm
\begin{align*}
\normInnDiff{\varphi}_{\nu}= \sup_{U \in \EInn} \vabs{U^{\nu} \varphi(U) },	
	%	
	%\normInnD{\zeta}_{\nu}= \sup_{U \in \DdInn} \vabs{U^{\nu} \zeta(U) }, \hh
	%
	%\normInn{\xi}_{\nu}= \sup_{U \in \EInn} \vabs{U^{\nu} \xi(U) },
\end{align*}
where the domain $\EInn$ is given in~\eqref{def:domainInnnerDiff}, 
and we introduce the Banach space
\begin{equation*}%\label{def:banachSpacesY}
\begin{split}
		\YcalInn_{\nu}&= 
		\left\{ \varphi: \EInn \to \complexs  \st  
		\varphi \text{ analytic }, 
		\normInnDiff{\varphi}_{\nu} < +\infty \right\}.
	\end{split}
\end{equation*}
Note that  $\YcalInn_{\nu}$ satisfy analogous properties as the ones in Lemma~\ref{lemma:sumNorms}. In this section, 
we use this lemma without mentioning it.
%
%Analogous statements to those in Lemma~\ref{lemma:sumNorms} and Remark~\ref{remark:kappaBigEnough} can be obtained for the domain $\EInn$ and the Banach space $\YcalInn_{\nu}$.

We state a technical lemma, whose proof is postponed to Section~\ref{subsubsection:innerTechnicalProofsB}.
\begin{lemma} \label{lemma:operatorEEpotential}
Let $\II$ be the operator defined in~\eqref{def:operatorEEE}. 
Then, for $\rhoInn>0$ big enough, 
there exists a constant $\cttInnDiffA>0$ independent of $\kappa$ such that,
for
$\Psi \in \YcalInn_{\frac{8}{3}} \times \YcalInn_{\frac{4}{3}} \times \YcalInn_{\frac{4}{3}}$,
\begin{align*}
\normInnDiff{\II_1[\Psi]}_{\frac{8}{3}} 
	&\leq \cttInnDiffA \paren{
	\frac{1}{\rhoInn^2} \normInnDiff{\Psi_1}_{\frac{8}{3}} +
	\normInnDiff{\Psi_2}_{\frac{4}{3}} +
	\normInnDiff{\Psi_3}_{\frac{4}{3}}}, \\
%\label{proof:estimateEE1}\\
%%%%
\normInnDiff{\II_j[\Psi]}_{\frac{4}{3}} 
	&\leq \frac{\cttInnDiffA}{\rhoInn^2} \paren{
	\normInnDiff{\Psi_1}_{\frac{8}{3}} +
	\normInnDiff{\Psi_2}_{\frac{4}{3}} +
	\normInnDiff{\Psi_3}_{\frac{4}{3}} }, \qquad j=2,3.
\end{align*}
\end{lemma}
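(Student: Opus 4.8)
The plan is to reduce the estimate to three ingredients: the componentwise decay bounds on the matrix $R(U)$ already packaged in Lemma~\ref{lemma:boundsRRR}, the multiplicative structure of the weighted spaces $\YcalInn_{\nu}$, and elementary mapping properties of the three integral operators appearing in the definition~\eqref{def:operatorEEE} of $\II$. First, since $\EInn \subset \DuInn \cap \DsInn$ (see~\eqref{def:domainInnnerDiff}), both $\ZuInn$ and $\ZsInn$, and hence every convex combination $s\ZuInn + (1-s)\ZsInn$, are defined on $\EInn$ and, restricted to $\EInn$, lie in the ball $B(\varrho) \subset \XcalInnTotal$ provided by Proposition~\ref{proposition:innerExistence}. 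Therefore Lemma~\ref{lemma:boundsRRR} applies to $D_Z\RRR[s\ZuInn + (1-s)\ZsInn]$, and integrating in $s$ the rows $R_1,R_2,R_3$ of $R = \int_0^1 D_Z\RRR[\,\cdot\,]\,ds$ satisfy, componentwise and with constants independent of $\rhoInn$,
\[
R_1 \in \YcalInn_{3} \times \YcalInn_{7/3} \times \YcalInn_{7/3}, \qquad R_j \in \YcalInn_{2/3} \times \YcalInn_{2} \times \YcalInn_{2}, \quad j = 2,3.
\]
Pairing with $\Psi \in \YcalInn_{8/3} \times \YcalInn_{4/3} \times \YcalInn_{4/3}$ and using the product rule for these norms (the analogue of Lemma~\ref{lemma:sumNorms}(2) on $\EInn$), one finds that $\langle R_1, \Psi\rangle$ splits as a $\YcalInn_{17/3}$ term controlled by $\normInnDiff{\Psi_1}_{8/3}$ plus two $\YcalInn_{11/3}$ terms controlled by $\normInnDiff{\Psi_2}_{4/3}$ and $\normInnDiff{\Psi_3}_{4/3}$ respectively, while for $j=2,3$ the bracket $\langle R_j, \Psi\rangle$ is a sum of three terms in $\YcalInn_{10/3}$, one for each of $\normInnDiff{\Psi_1}_{8/3}$, $\normInnDiff{\Psi_2}_{4/3}$, $\normInnDiff{\Psi_3}_{4/3}$.

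The second step is the mapping properties. Writing the entries of $\II$ schematically as $\int_{-i\infty}^{U} h$, $\int_{-i\infty}^{U} e^{i(U-S)} h(S)\,dS$ and $\int_{-i\rhoInn}^{U} e^{i(S-U)} h(S)\,dS$, I would prove, with constants independent of $\rhoInn$: that $h \in \YcalInn_{\mu}$ with $\mu > 1$ implies $\int_{-i\infty}^{U} h \in \YcalInn_{\mu-1}$; and that $h \in \YcalInn_{\mu}$ implies that both oscillatory integrals lie in $\YcalInn_{\mu}$. For the first two, one integrates along the downward vertical ray $S = U - it$, $t \geq 0$, which stays inside $\EInn$ and on which $|U - it| \gtrsim_{\beta_0} |U| + t$ while the kernel equals $1$, resp. $e^{-t}$; the estimates then follow from $\int_0^{\infty}(|U|+t)^{-\mu}\,dt \lesssim_{\mu} |U|^{1-\mu}$ and $\int_0^{\infty} e^{-t}(|U|+t)^{-\mu}\,dt \leq |U|^{-\mu}$. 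For the third, with a \emph{finite} base point, a naive horizontal/vertical path only yields the weight $\mu - 1$, so one must use a cone-adapted path: from $-i\rhoInn$ along the boundary ray of $\EInn$, then vertically down to $U$; on this path $|e^{i(S-U)}| \leq 1$, the kernel decays exponentially away from $S = U$, and a direct splitting of the integral into these two pieces gives the full $|U|^{-\mu}$ bound (vanishing at $U = -i\rhoInn$ being automatic). These are precisely the estimates of Lemma~4.6 and its companions in~\cite{Bal06, BalSea08}, transcribed to the domain $\EInn$.

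Finally I assemble. For $\II_1$: the $\YcalInn_{17/3}$ piece of $\langle R_1, \Psi\rangle$ maps to $\YcalInn_{14/3}$, and since $|U| \geq \rhoInn$ on $\EInn$ the embedding $\YcalInn_{14/3} \hookrightarrow \YcalInn_{8/3}$ costs a factor $\rhoInn^{-2}$, producing the $\rhoInn^{-2}\normInnDiff{\Psi_1}_{8/3}$ term; the two $\YcalInn_{11/3}$ pieces map directly to $\YcalInn_{8/3}$, giving the $\normInnDiff{\Psi_2}_{4/3} + \normInnDiff{\Psi_3}_{4/3}$ terms. For $j = 2,3$: the oscillatory mapping property sends $\langle R_j, \Psi\rangle \in \YcalInn_{10/3}$ to $\II_j[\Psi] \in \YcalInn_{10/3}$, and the embedding $\YcalInn_{10/3} \hookrightarrow \YcalInn_{4/3}$ again costs $\rhoInn^{-2}$, yielding the global $\rhoInn^{-2}\big(\normInnDiff{\Psi_1}_{8/3} + \normInnDiff{\Psi_2}_{4/3} + \normInnDiff{\Psi_3}_{4/3}\big)$ bound. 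Collecting the constants gives $\cttInnDiffA$, independent of $\rhoInn$. The one genuinely nonroutine point — which I expect to be the main obstacle — is the third mapping property: controlling the oscillatory integral with a finite base point $-i\rhoInn$ uniformly in $\rhoInn$ and with the full weight $\mu = 10/3$, which is exactly what forces the cone-adapted choice of integration path.
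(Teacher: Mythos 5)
Your proposal is correct and follows essentially the same route as the paper: bound the rows of $R$ via Lemma~\ref{lemma:boundsRRR}, use the product property of the weighted norms to estimate $\langle R_j,\Psi\rangle$, apply the mapping properties of the three integral operators (the paper's Lemma~\ref{lemma:operatorBB}, quoted from \cite{Bal06}), and extract the $\rhoInn^{-2}$ factors from the embedding $\YcalInn_{\nu}\hookrightarrow\YcalInn_{\eta}$. The only (immaterial) difference is bookkeeping order: the paper downgrades the weights of the entries of $R$ before pairing and integrating, whereas you apply the operators at full weight and pay the $\rhoInn^{-2}$ at the end.
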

These estimates characterize $\DZInn$ as the unique solution of~\eqref{eq:differenceMainEq} in  $\YcalInn_{\frac{8}{3}} \times \YcalInn_{\frac{4}{3}} \times \YcalInn_{\frac{4}{3}}$.

\begin{lemma}\label{lemma:innerExistenceDifference}
%Let $\Delta \ZInn =\ZuInn-\ZsInn$,where $\ZuInn$ and $\ZsInn$ are the functions given in Proposition~\ref{proposition:innerExistence}.
%	
For $\rhoInn>0$ big enough, 
$\DZInn$ is the unique solution of equation~\eqref{eq:differenceMainEq} belonging to
$\YcalInn_{\frac{8}{3}} \times \YcalInn_{\frac{4}{3}} \times \YcalInn_{\frac{4}{3}}$.
In particular,
\begin{equation*}
\DZInn = \sum_{n \geq 0} \II^n[\DZo].
\end{equation*}
\end{lemma}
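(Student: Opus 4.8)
The plan is to set up equation~\eqref{eq:differenceMainEq}, namely $\DZInn = \DZo + \II[\DZInn]$, as a fixed point equation for the operator $\varphi \mapsto \DZo + \II[\varphi]$ on the Banach space $\YcalInnTotal := \YcalInn_{\frac{8}{3}} \times \YcalInn_{\frac{4}{3}} \times \YcalInn_{\frac{4}{3}}$ endowed with the weighted norm $\normInnDiff{\varphi} = \normInnDiff{\varphi_1}_{\frac{8}{3}} + \normInnDiff{\varphi_2}_{\frac{4}{3}} + \normInnDiff{\varphi_3}_{\frac{4}{3}}$, and to apply the Banach fixed point theorem. The derivation preceding the statement (via variation of parameters, together with $\lim_{\Im U \to -\infty}\DZInn(U) = 0$ from Proposition~\ref{proposition:innerExistence}) already shows that $\DZInn$ \emph{is} a solution of~\eqref{eq:differenceMainEq}; what remains is to show (i) that $\DZInn \in \YcalInnTotal$, (ii) that $\DZo \in \YcalInnTotal$, and (iii) that the fixed point is unique in $\YcalInnTotal$, whence the Neumann series $\sum_{n\geq 0}\II^n[\DZo]$ converges to it.

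First I would verify $\DZo \in \YcalInnTotal$: since $\DZo(U) = (0,0,c_y e^{-iU})^T$ and $\EInn \subseteq \{\Im U < 0\}$, the factor $e^{-iU}$ is bounded on $\EInn$, and any bounded analytic function on $\EInn$ lies in $\YcalInn_\nu$ for every $\nu \le 0$; in particular the third component lies in $\YcalInn_{\frac{4}{3}}$ after noting $|U|\geq \rhoInn\cos\beta_0$ there, so $|U^{4/3}e^{-iU}|$ is controlled. (Alternatively $c_y e^{-iU}$ decays super-polynomially as $\Im U \to -\infty$, which is more than enough.) Next, $\DZInn \in \YcalInnTotal$ follows directly from Theorem~\ref{theorem:innerComputations}: the existence part of that theorem, applied to both $\ZuInn$ and $\ZsInn$ on $\DuInn \cap \DsInn \supseteq \EInn$, gives $|U^{8/3}\WdInn(U)|, |U^{4/3}\XdInn(U)|, |U^{4/3}\YdInn(U)| \le \cttInnExist$, and subtracting gives $\DZInn \in \YcalInnTotal$ with $\normInnDiff{\DZInn} \le 6\cttInnExist$. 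For uniqueness, I would use Lemma~\ref{lemma:operatorEEpotential}: for $\rhoInn$ large enough its estimates show $\normInnDiff{\II[\Psi]} \le C\rhoInn^{-2}\normInnDiff{\Psi}$ — more carefully, the $\II_2,\II_3$ components gain a full $\rhoInn^{-2}$, and the $\II_1$ component gains $\rhoInn^{-2}$ on its own weight while picking up the $\Psi_2,\Psi_3$ weights without a small factor; but composing with the fact that $\Psi_2,\Psi_3$ in turn appear only through $\II_2,\II_3$-type contributions (a Gauss–Seidel / triangular-structure observation, exactly as in the existence proof), one concludes that $\II^2$, or $\II$ on a suitably reordered norm, is a contraction on $\YcalInnTotal$. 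Hence $\Id - \II$ is invertible on $\YcalInnTotal$, the solution of~\eqref{eq:differenceMainEq} in $\YcalInnTotal$ is unique, and it equals $\sum_{n\geq 0}\II^n[\DZo]$.

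The main obstacle I anticipate is the same asymmetry already visible in the existence proof: the operator $\II_1$ is not by itself contractive in the relevant weight (its Lipschitz contribution from $\Psi_2,\Psi_3$ comes with an $\OO(1)$ constant, not an $\OO(\rhoInn^{-2})$ one), so a naive Banach argument on $\YcalInnTotal$ fails. The fix is structural rather than analytic: exploit that the system is triangular — $\DW$ is driven by $\DX,\DY$ but not conversely at leading order — and either iterate $\II$ twice or pass to an equivalent norm $\normInnDiff{\varphi_1}_{\frac{8}{3}} + M(\normInnDiff{\varphi_2}_{\frac{4}{3}} + \normInnDiff{\varphi_3}_{\frac{4}{3}})$ with $M$ large, exactly mirroring the Gauss–Seidel device $\wt{\FF}$ used in the proof of Proposition~\ref{proposition:innerExistence}. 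Once that bookkeeping is arranged, the contraction estimate and hence uniqueness are immediate from Lemma~\ref{lemma:operatorEEpotential}. One should also double-check that the lower integration limit $-i\rhoInn$ in the third component of $\II$ (as opposed to $-i\infty$ in the first two) does not spoil the weighted estimates on $\EInn$; this is handled in Lemma~\ref{lemma:operatorEEpotential} and simply needs to be invoked.
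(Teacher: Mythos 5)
Your proposal is correct and follows essentially the same route as the paper: membership of $\DZInn$ in $\YcalInn_{\frac{8}{3}} \times \YcalInn_{\frac{4}{3}} \times \YcalInn_{\frac{4}{3}}$ from the existence estimates, then uniqueness by making $\II$ contractive in an equivalent weighted norm. The paper implements exactly the second of your two fixes, taking the norm $\normInnDiff{\Psi_1}_{\frac{8}{3}} + \kappa\normInnDiff{\Psi_2}_{\frac{4}{3}} + \kappa\normInnDiff{\Psi_3}_{\frac{4}{3}}$ (i.e.\ your $M=\kappa$) so that Lemma~\ref{lemma:operatorEEpotential} yields a contraction factor $C/\kappa$, and the Neumann series follows.
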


\begin{proof}
By Theorem~\ref{theorem:innerComputations}, for $\rhoInn>0$ big enough, $\DZInn$ is a solution of equation~\eqref{eq:differenceMainEq} which satisfies 
$
\DZInn = \ZuInn - \ZsInn \in \YcalInn_{\frac{8}{3}} \times \YcalInn_{\frac{4}{3}} \times \YcalInn_{\frac{4}{3}}.
$
%Moreover, $\DZInn$ belongs to the ball
%\[
%\DZInn \in 
%B_{\YcalInn} = \claus{\zeta \in \YcalInn_{\frac{8}{3}} \times \YcalInn_{\frac{4}{3}} \times \YcalInn_{\frac{4}{3}}
%\st
%\normInnDiff{\zeta_1}_{\frac{8}{3}},
%\normInnDiff{\zeta_2}_{\frac{4}{3}},
%\normInnDiff{\zeta_3}_{\frac{4}{3}} 
%\leq 2\cttInnExistA}.
%\]
Then, it only remains to prove that  equation~\eqref{eq:differenceMainEq} has a unique solution in $\YcalInn_{\frac{8}{3}} \times \YcalInn_{\frac{4}{3}} \times \YcalInn_{\frac{4}{3}}$.
To this end, it is enough to show that the operator $\II$ is contractive with a suitable norm in $\YcalInn_{\frac{8}{3}} \times \YcalInn_{\frac{4}{3}} \times \YcalInn_{\frac{4}{3}}$.
Taking
\[
\normInnDiff{\Psi}_{\times} = \normInnDiff{\Psi_1}_{\frac{8}{3}}
+ \kappa \normInnDiff{\Psi_2}_{\frac{4}{3}}
+ \kappa \normInnDiff{\Psi_3}_{\frac{4}{3}},
\]
Lemma~\ref{lemma:operatorEEpotential} implies
\[
\normInnDiff{\II[\Psi]}_{\times} \leq 
\frac{C}{\kappa} \normInnDiff{\Psi}_{\times}
\]
and, taking $\kappa$ big enough, the result is proven.
%Thus, $\EE$ is contractive in $\YcalInn_{\frac{8}{3}} \times \YcalInn_{\frac{4}{3}}\times \YcalInn_{\frac{4}{3}}$ and has a unique fixed point.
\end{proof}
%%%%%%%%%%%%
%However, operator $\EE$ is not contractive in $B(\cttInnDiffB)$ (see Lemma~\ref{lemma:operatorEEpotential}) and, as a consequence, the fixed point theorem can not be applied.
%%	
%Nonetheless, as done in the proof of Proposition~\ref{proposition:innerExistence}, by way of a Gauss-Seidel argument, we can define a new linear operator with small Lipschitz constant. Let us define
%\begin{equation*} %\label{def:operatorsGaussSeidel}
%{\tEE}[\Psi]= \begin{pmatrix} 
%\EE_1[\Psi_1,\EE_2[\Psi],\EE_3[\Psi]] \\ 
%\EE_2[\Psi] \\
%\EE_3[\Psi] \end{pmatrix}.
%\end{equation*}
%%Let us consider $\Psi \in B(\cttInnDiffB)$.
%%
%Then, by Lemma~\ref{lemma:operatorEEpotential},
%\begin{align*}
%	\normInn{\tEE_1[\Psi]}_{\frac{8}{3}} 
%	\leq C \paren{
%	\frac{1}{\rhoInn^2} \normInn{\Psi_1}_{\frac{8}{3}} +
%	\normInn{\EE_2[\Psi]}_{\frac{4}{3}} +
%	\normInn{\EE_3[\Psi]}_{\frac{4}{3}} }
%	\leq \frac{C}{\rhoInn^2} \normInn{\Psi}_{\frac{8}{3},\frac{4}{3},\frac{4}{3}}.
%\end{align*}
%and, for $\rhoInn>0$ big enough,
%\begin{align*}
%\normInn{\tEE[\Psi]}_{\frac{8}{3},\frac{4}{3},\frac{4}{3}} 
%\leq \frac{C}{\rhoInn^2} \normInn{\Psi}_{\frac{8}{3},\frac{4}{3},\frac{4}{3}}
%\leq \frac{1}{2} \normInn{\Psi}_{\frac{8}{3},\frac{4}{3},\frac{4}{3}}.
%\end{align*}
%Thus, $\Psi=\tEE[\Psi]$ has a fixed point in $B(\cttInnDiffB)$ and, as a result, so does $\Psi=\EE[\Psi]$. 	

\subsubsection{Exponentially small estimates for \texorpdfstring{$\DZInn$}{the difference}}
\label{subsubsection:innerDiffExponential}

Once we have proved that $\DZInn$ is the unique solution of~\eqref{eq:differenceMainEq} in $\YcalInn_{\frac{8}{3}} \times \YcalInn_{\frac{4}{3}} \times \YcalInn_{\frac{4}{3}}$, 
we use this equation to obtain exponentially small estimates for $\DZInn$. 

For any $\nu \in \reals$, we consider the norm
\begin{equation*}
\normInnDiffExp{\varphi}_{\nu} = \sup_{U \in \EInn} \vabs{U^{\nu} e^{i U} \varphi(U)},
\end{equation*}
and the associated Banach space
\begin{equation*}
\ZcalInn_{\nu} = \left\{
\varphi: \EInn \to \complexs \st 
\varphi \text{ analytic, }
\normInnDiffExp{\varphi}_{\nu} < +\infty
\right\}.
\end{equation*}
Moreover, for  $\nu_1, \nu_2, \nu_3 \in \reals$, we consider the product space 
\begin{align*}%\label{def:productSpaceDiffExp}
\ZcalInn_{\nu_1,\nu_2,\nu_3}= 
\ZcalInn_{\nu_1} \times
\ZcalInn_{\nu_2} \times
\ZcalInn_{\nu_3},
\quad \text{with} \quad
\normInnDiffExp{\varphi}_{\nu_1,\nu_2,\nu_3}=
\textstyle\sum_{j=1}^3 \normInnDiffExp{\varphi_j}_{\nu_j}.
\end{align*}

Next lemma, gives some properties of these Banach spaces. It follows the same lines as Lemma~\ref{lemma:sumNorms}.
%
% We use the results in it without mentioning them along the section.

% \textcolor{red}{He afegit aquest lemma. 
% Nomes fa falta posar o el punt 2 o el punt 3, un sobra.
% }

\begin{lemma}\label{lemma:sumNormsExp}
Let $\rhoInn>0$ and $\nu, \eta \in \reals$. The following statements hold:
\begin{enumerate}
\item If $\nu > \eta$, then $\ZcalInn_{\nu} \subset \ZcalInn_{\eta}$ and
$
\normInnDiffExp{\varphi}_{\eta} \leq \left( 
{\rhoInn}{\cos \beta_0}\right)^{\eta-\nu} 
\normInnDiffExp{\varphi}_{\nu}.
$
\item If $\varphi \in \ZcalInn_{\nu}$ and 
$\zeta \in \YcalInn_{\eta}$, then the product
$\varphi \zeta \in \ZcalInn_{\nu+\eta}$ and
$
\normInnDiffExp{\varphi \zeta}_{\nu+\eta} \leq \normInnDiffExp{\varphi}_{\nu} \normInnDiff{\zeta}_{\eta}.
$
\item If $\varphi \in \ZcalInn_{\nu}$ then 
$e^{iU}\varphi \in \YcalInn_{\nu}$ and
$
\normInnDiffSmall{e^{iU} \varphi}_{\nu} = \normInnDiffExp{\varphi}_{\nu}.
$
\end{enumerate}
\end{lemma}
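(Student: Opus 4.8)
\textbf{Plan for Lemma~\ref{lemma:sumNormsExp}.}

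The statement is the exponentially--weighted analogue of Lemma~\ref{lemma:sumNorms}, so the plan is to imitate that proof line by line, tracking how the extra factor $e^{iU}$ interacts with products and with the comparison of weights. The only geometric input I need about $\EInn$ is that, by its definition in~\eqref{def:domainInnnerDiff} (intersection of $\DuInn$, $\DsInn$ and the lower half-plane) together with~\eqref{def:domainInnner}, every $U\in\EInn$ satisfies $\vabs{U}\geq \rhoInn\cos\beta_0$; this is the same bound used implicitly in Lemma~\ref{lemma:sumNorms}, and I would state it once at the start.

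For item (1): if $\varphi\in\ZcalInn_{\nu}$ then for $U\in\EInn$ one writes
\[
\vabs{U^{\eta} e^{iU}\varphi(U)} = \vabs{U}^{\eta-\nu}\,\vabs{U^{\nu}e^{iU}\varphi(U)} \leq \vabs{U}^{\eta-\nu}\,\normInnDiffExp{\varphi}_{\nu}.
\]
Since $\eta<\nu$, the exponent $\eta-\nu$ is negative, and using $\vabs{U}\geq \rhoInn\cos\beta_0$ gives $\vabs{U}^{\eta-\nu}\leq(\rhoInn\cos\beta_0)^{\eta-\nu}$; taking the supremum over $U\in\EInn$ yields the claimed inequality and in particular $\normInnDiffExp{\varphi}_{\eta}<\infty$, i.e. $\ZcalInn_{\nu}\subset\ZcalInn_{\eta}$. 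For item (2): for $U\in\EInn$,
\[
\vabs{U^{\nu+\eta} e^{iU}\varphi(U)\zeta(U)} = \vabs{U^{\nu}e^{iU}\varphi(U)}\cdot\vabs{U^{\eta}\zeta(U)} \leq \normInnDiffExp{\varphi}_{\nu}\,\normInnDiff{\zeta}_{\eta},
\]
where the second factor is bounded using the definition of the $\normInnDiff{\cdot}_{\eta}$ norm over $\EInn$; taking the supremum gives $\varphi\zeta\in\ZcalInn_{\nu+\eta}$ with the stated bound. Note that only one of the two factors carries the exponential weight, which is exactly why the product lands in $\ZcalInn$ (not $\YcalInn$) and why $\zeta$ is only required to lie in $\YcalInn_{\eta}$. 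For item (3): for $U\in\EInn$, $\vabs{U^{\nu}(e^{iU}\varphi(U))} = \vabs{U^{\nu}e^{iU}\varphi(U)}$, so the defining suprema of $\normInnDiffSmall{e^{iU}\varphi}_{\nu}$ and $\normInnDiffExp{\varphi}_{\nu}$ are literally the same expression; hence $e^{iU}\varphi\in\YcalInn_{\nu}$ with equality of norms. (One should also note analyticity is preserved: $e^{iU}$ is entire, and products of analytic functions are analytic, so membership in the relevant Banach spaces is genuine and not just a norm bound.)

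There is no real obstacle here — the lemma is deliberately routine, a bookkeeping statement whose only purpose is to let later estimates (e.g. the proof of Lemma~\ref{lemma:operatorEEpotential} and the exponentially small estimates for $\DZInn$) be carried out without repeatedly re-deriving these manipulations. The one point requiring a moment's care is keeping straight which norm (plain $\normInnDiff{\cdot}$, exponential $\normInnDiffExp{\cdot}$) attaches to which factor in item (2), and making sure the inequality in item (1) has the direction of the exponent right, since $\eta-\nu<0$ forces the bound $\vabs{U}\geq\rhoInn\cos\beta_0$ to be used rather than an upper bound on $\vabs{U}$ (which would not exist, $\EInn$ being unbounded).
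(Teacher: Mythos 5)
Your proof is correct and is exactly the routine argument the paper has in mind: the paper gives no written proof, stating only that the lemma ``follows the same lines as Lemma~\ref{lemma:sumNorms}'' (itself proven in \cite{Bal06}), and your details — in particular the key geometric fact $\vabs{U}\geq\rhoInn\cos\beta_0$ for $U\in\EInn$, which accounts for the constant in item (1) — are precisely what that reference supplies.
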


The next lemma analyzes how the operator $\II$ acts on the space $\ZcalInn_{\frac{4}{3},0,0}$. Its proof  is postponed to Section~\ref{subsubsection:innerTechnicalProofsB}.

\begin{lemma} \label{lemma:operatorEEexponential}
%Let $R$ and $\II$ be the operators defined in~\eqref{def:operatorRDiff} and~\eqref{def:operatorEEE}, respectively.
%
Let $\II$ be the operator defined in~\eqref{def:operatorEEE}. For $\rhoInn>0$ big enough, 
there exists a constant $\cttInnDiffB>0$ independent of $\kappa$ such that,
for
$\Psi \in \ZcalInn_{\frac{4}{3},0,0}$,
% we have that 
\begin{align*}
\normInnDiffExp{\II_1[\Psi]}_{\frac{7}{3}} 
\leq \cttInnDiffB \normInnDiffExp{\Psi}_{\frac{4}{3},0,0}, 
\quad
\normInnDiffExp{\II_2[\Psi]}_{2} 
\leq  \cttInnDiffB \normInnDiffExp{\Psi}_{\frac{4}{3},0,0},
\quad
\normInnDiffExp{\II_3[\Psi]}_{0}
\leq \frac{\cttInnDiffB}{\kappa}  \normInnDiffExp{\Psi}_{\frac{4}{3},0,0}.
%\normInnExp{\langle R_3,\Psi \rangle}_2
%\leq  C \normInnExp{\Psi}_{\frac{4}{3},0,0}.
\end{align*}
Moreover, there exists $\wCInn(\rhoInn) \in \complexs$ (depending on $\Psi$), such that
\begin{equation*}%\label{result:asymptoticsEE3}
	\II_3[\Psi] - e^{-iU}\wCInn(\rhoInn) \in \ZcalInn_{1}.
\end{equation*}
\end{lemma}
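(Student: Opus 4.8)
The plan is to handle the three components of $\II$ in~\eqref{def:operatorEEE} separately, in each case reducing the relevant quantity to an integral of the form $\int_{-i\infty}^U e^{i\alpha(U-S)}g(S)\,dS$ (or its analogue with a finite lower limit) with $\alpha\in\{0,1,2\}$ and $g$ analytic with a known algebraic decay, and then applying the integral estimates standard for this type of operator, as in~\cite{Bal06,BalSea08}. Throughout I take $\rhoInn$ large; this guarantees, via the obvious analogue on $\DsInn$ of Lemma~\ref{lemma:boundsRRR} together with Proposition~\ref{proposition:innerExistence}, that the bounds on $D_Z\RRR$ along the segment $s\ZuInn+(1-s)\ZsInn$ hold on all of $\EInn\subseteq\DuInn\cap\DsInn$.

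\emph{Decay of the integrands.} For $\Psi\in\ZcalInn_{\frac43,0,0}$ I would first bound each bracket $\langle R_j(S),\Psi(S)\rangle$, with $R$ as in~\eqref{def:operatorRDiff}. Using $\partial_W\RRR_1=\OO(U^{-3})$, $\partial_X\RRR_1,\partial_Y\RRR_1=\OO(U^{-7/3})$ and, for $j=2,3$, $\partial_W\RRR_j=\OO(U^{-2/3})$, $\partial_X\RRR_j,\partial_Y\RRR_j=\OO(U^{-2})$ (Lemma~\ref{lemma:boundsRRR}), together with the weighted-norm product rules (Lemma~\ref{lemma:sumNorms} and Lemma~\ref{lemma:sumNormsExp}, the latter in particular for the mixed product $\ZcalInn_\nu\cdot\YcalInn_\eta\subset\ZcalInn_{\nu+\eta}$), one gets $\langle R_1(S),\Psi(S)\rangle=e^{-iS}h_1(S)$ with $|h_1(S)|\le C|S|^{-7/3}\normInnDiffExp{\Psi}_{\frac43,0,0}$ and $\langle R_j(S),\Psi(S)\rangle=e^{-iS}h_j(S)$ with $|h_j(S)|\le C|S|^{-2}\normInnDiffExp{\Psi}_{\frac43,0,0}$ for $j=2,3$. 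In each bracket the $X,Y$-terms dominate, while the $W$-term picks up an extra factor $|S|^{-4/3}$ from $\Psi_1\in\ZcalInn_{\frac43}$, compensating the weaker decay of $\partial_W\RRR$.

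\emph{The three estimates and the constant.} For $\II_1$, $e^{iU}\II_1[\Psi](U)=\int_{-i\infty}^U e^{i(U-S)}h_1(S)\,dS$, and along a path inside $\EInn$ with $\Im(U-S)\ge0$ the factor $|e^{i(U-S)}|\le1$ localizes the integral at the upper endpoint, giving $|e^{iU}\II_1[\Psi](U)|\le C|U|^{-7/3}\normInnDiffExp{\Psi}_{\frac43,0,0}$, i.e.\ $\normInnDiffExp{\II_1[\Psi]}_{\frac73}\le C\normInnDiffExp{\Psi}_{\frac43,0,0}$. For $\II_2$ the outer $e^{iU}$ and the two inner exponentials combine into $e^{2i(U-S)}$, and the same argument with decay $|S|^{-2}$ gives $\normInnDiffExp{\II_2[\Psi]}_{2}\le C\normInnDiffExp{\Psi}_{\frac43,0,0}$. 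For $\II_3$ the exponentials cancel, $\II_3[\Psi](U)=e^{-iU}\int_{-i\rhoInn}^U h_3(S)\,dS$ with $|h_3(S)|\le C|S|^{-2}\normInnDiffExp{\Psi}_{\frac43,0,0}$, an absolutely convergent integral; by Cauchy's theorem I split the path as $-i\rhoInn\to-i\infty\to U$ within $\EInn$ and set $\wCInn(\rhoInn):=\int_{-i\rhoInn}^{-i\infty}h_3(S)\,dS$, so that $|\wCInn(\rhoInn)|\le C\normInnDiffExp{\Psi}_{\frac43,0,0}\int_{\rhoInn}^{\infty}t^{-2}\,dt=C\kappa^{-1}\normInnDiffExp{\Psi}_{\frac43,0,0}$, while $\II_3[\Psi](U)-e^{-iU}\wCInn(\rhoInn)=e^{-iU}\int_{-i\infty}^U h_3(S)\,dS$ obeys $\big|\int_{-i\infty}^U h_3(S)\,dS\big|\le C|U|^{-1}\normInnDiffExp{\Psi}_{\frac43,0,0}$ and hence lies in $\ZcalInn_1$. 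Since $|U|\ge\kappa$ on $\EInn$, these bounds combine to $\normInnDiffExp{\II_3[\Psi]}_0\le\cttInnDiffB\kappa^{-1}\normInnDiffExp{\Psi}_{\frac43,0,0}$.

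The point I expect to be most delicate is the path-dependent estimate behind the ``localization at the upper endpoint'': one must verify, uniformly in $U\in\EInn$ and in $\kappa$, that an integration path from $U$ (resp.\ from $-i\infty$) can be kept inside $\EInn$ along which $\Re\big(i\alpha(U-S)\big)\le0$ with the right monotonicity. This is precisely where the geometry of the wedge $\EInn$ — its opening angle $\beta_0$ and apex at height of order $\kappa$ — enters, and it is the technical heart of the argument, to be carried out as in~\cite{Bal06,BalSea08}.
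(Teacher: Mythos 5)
Your argument is correct and follows essentially the same route as the paper: you factor out $e^{iU}$ to reduce each component to the model operators $\BB_{\alpha}$ ($\alpha=1,2$) and $\BB_0$, bound the integrands $\Phi_j=e^{iS}\langle R_j,\Psi\rangle$ using the decay of $D_Z\RRR$ from Lemma~\ref{lemma:boundsRRR} together with the product rules, and extract $\wCInn(\rhoInn)$ for $\II_3$ by splitting the contour through $-i\infty$, exactly as in the paper's proof. The endpoint-localization estimate you flag as the delicate step is precisely Lemma~\ref{lemma:operatorBB}, which the paper likewise delegates to~\cite{Bal06}.
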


%Notice that we have to be more accurate with operator $\EE_3$ to obtain the results in Proposition~\ref{proposition:innerDifference}.
%\begin{lemma}\label{lemma:asymptoticsEE}
%Let $\EE$ be the operator defined in~\eqref{def:operatorEEE}.
%%
%Then, there exists $\rhoInn>0$ big enough, such that 
%for $\Psi \in \ZcalInn_{\frac{4}{3},0,0}$
%we have that
%\begin{align}\label{result:desigualtatEE3}
%	\normInnExp{\EE_3[\Psi]}_{0}
%	\leq \frac{C}{\kappa}  \normInnExp{\Psi}_{\frac{4}{3},0,0}.
%\end{align}
%there exists a constant $\wCInn(\rhoInn)$, where
%\begin{equation}\label{result:asymptoticsEE3}
%\EE_3[\Psi] - e^{-iU}\wCInn(\rhoInn) \in \ZcalInn_{1}.
%\end{equation}
%\end{lemma}

\begin{proof}[End of the proof of the second part of Theorem~\ref{theorem:innerComputations}]

% By Proposition~\ref{proposition:innerExistence}, it only remains to prove the asymptotic formula for $\Delta \ZInn$ given in \eqref{result:innerDifference}.
% 	
Lemma~\ref{lemma:operatorEEexponential} implies that operator $\II: \ZcalInn_{\frac{4}{3},0,0} \to \ZcalInn_{\frac{4}{3},0,0}$ is well defined and contractive. 
Indeed, taking $\rhoInn>0$ big enough and $\Psi \in \ZcalInn_{\frac{4}{3},0,0}$, 
\begin{equation}\label{proof:EEcontractive}
\begin{split}
\normInnDiffExp{\II[\Psi]}_{\frac{4}{3},0,0} 
&= \normInnDiffExp{\II_1[\Psi]}_{\frac{4}{3}}
+ \normInnDiffExp{\II_2[\Psi]}_0
+ \normInnDiffExp{\II_3[\Psi]}_0 \\
&\leq
\frac{C}{\rhoInn} \normInnDiffExp{\II_1[\Psi]}_{\frac{7}{3}}+
\frac{C}{\rhoInn^2} \normInnDiffExp{\II_2[\Psi]}_2
+ \normInnDiffExp{\II_3[\Psi]}_0
% \end{align*}
% Applying Lemma~\ref{lemma:operatorEEexponential} we have that, for $\rhoInn>0$ big enough,
% \begin{align}\label{proof:EEcontractive}
% \normInnDiffExp{\II[\Psi]}_{\frac{4}{3},0,0} 
\leq 
\frac{C}{\kappa} 
\normInnDiffExp{\Psi}_{\frac{4}{3},0,0}.
%\leq \frac{1}{4} \normInnDiffExp{\Psi}_{\frac{4}{3},0,0}.
\end{split}
\end{equation}
Therefore, since $\DZo=(0,0,c_y e^{-iU})^T \in \ZcalInn_{\frac{4}{3},0,0}$, 
% then
% $\II^n[\DZo] \in \ZcalInn_{\frac{4}{3},0,0} $, for $n\geq 0$.
% Moreover, by
Lemma~\ref{lemma:innerExistenceDifference} and \eqref{proof:EEcontractive} imply that
\begin{equation*}%\label{proof:differenceAsymptoticMainEquation}
\DZInn =(\DWInn,\DXInn,\DYInn)^T = \sum_{n \geq 0} \II^n[\DZo] \in \ZcalInn_{\frac{4}{3},0,0}.
\end{equation*}
Lemma~\ref{lemma:operatorEEexponential} implies 
$
\II:\ZcalInn_{\frac{4}{3},0,0} \to \ZcalInn_{\frac{7}{3},2,0} \subset 
\ZcalInn_{\frac{4}{3},0,0},
$
which allows to give better estimates for $\DZInn$. Indeed,  we have that ${\DZInn - \DZo} = \II[\DZInn] \in \ZcalInn_{\frac{7}{3},2,0}$,  which implies  
\begin{align*}
	\DWInn=\II_1[\DZInn] \in \ZcalInn_{\frac{7}{3}}, \qquad
	\DXInn=\II_2[\DZInn] \in \ZcalInn_{2}, \qquad
% 	\DYInn - c_y e^{-iU} - \wCInn(\rhoInn) e^{-iU} \in \ZcalInn_{1}.
\end{align*}
Moreover, by the second statement in Lemma \ref{lemma:operatorEEexponential}, 
there exists $\wCInn(\kappa)$ such that
\[	\DYInn - c_y e^{-iU} - \wCInn(\rhoInn) e^{-iU} \in \ZcalInn_{1}.\]
%Then, by Lemma~\ref{lemma:operatorEEexponential}, there exists a constant $\wCInn(\rhoInn)$ such that
%\begin{align*}
%	\DYInn - c_y e^{-iU} - \wCInn(\rhoInn) e^{-iU} \in \ZcalInn_{1},
%\end{align*}
Calling $\CInn = c_y + \wCInn(\kappa)$ we have that
$
\DYInn(U) - \CInn e^{-iU} \in \ZcalInn_{1},
$
and, therefore $\CInn = \lim_{\Im U \to -\infty}  \DYInn(U)e^{iU}$, which is independent of $\rhoInn$.
Then, $\DZInn$ is of the form
\begin{align*}
\DZInn(U) = e^{-iU} \Big((0,0,\CInn)^T + \chi(U) \Big),
\quad \text{with} \quad \chi \in 
\YcalInn_{\frac{7}{3}} \times \YcalInn_{2} \times \YcalInn_{1}.
\end{align*}
%where $\CInn= c_y + \wCInn(\rhoInn)$. 
%
%We observe that, since $\DZInn$ is independent of $\rhoInn$, then the constant $\CInn$ is independent of $\rhoInn$ too.
%%
%Moreover, if $\CInn=0$ implies that $\DYInn \equiv 0$, then $\DZInn$ can be expressed as
%\[
%\DZInn(U) = \CInn e^{-iU} \Big((0,0,1)^T + \chi(U) \Big).
%\]

% \textcolor{red}{Revisar aquest tros:}

Now we prove that, if there exists $U_0 \in \EInn$ such that $\DZInn(U_0) \neq 0$, then $\CInn \neq 0$.
This implies  $\DZInn(U) \neq 0$ for all $U \in \EInn$,
since $\DZInn$ is a solution of an homogeneous linear differential equation.
Therefore  $c_y \neq 0$. Indeed, $c_y=0$ would imply  $\DZo=0$ and, by Lemma~\ref{lemma:innerExistenceDifference},  one could conclude $\DZInn \equiv 0$.

Thus, it only remains to prove  that $c_y \neq 0$ implies $\CInn\neq 0$. By Lemma~\ref{lemma:innerExistenceDifference},
\begin{align*}
\DZInn - \DZo = \sum_{n\geq 1} \II^n[\DZo].
\end{align*} 
In addition, by the estimate \eqref{proof:EEcontractive},  $\normInnDiffExp{\II_3}_{\frac{4}{3}} \leq \frac{1}{4}$  if  $\rhoInn>0$ is big enough. Since $\DYo=c_y e^{-iU}$, we deduce that
\begin{align*}
\normInnDiffExp{\DYInn - \DYo}_{0} 
\leq \sum_{n\geq 1} \frac{1}{4^n} \normInnDiffExp{\DYo}_{0}
= \frac{1}{3} \vabs{c_y}, 
\end{align*}
and, by the definition of the norm $\normInnDiffExp{\cdot}_0$, for any $U \in \EInn$,
\begin{align*}
\vabs{e^{iU}\DYo(U)} -
	\vabs{e^{iU}\DYInn(U)} \leq 
	 \frac{1}{3} \vabs{c_y}.
\end{align*}
%
%On the other hand, if $\DZInn(U) \neq 0$ for all $U \in \EInn$, then $c_y \neq 0$ (on the contrary, $\DZo \equiv 0$ and by~\eqref{proof:differenceAsymptoticMainEquation}, $\DZInn = 0$).
%
Hence, using that $e^{iU}\DYInn=\CInn+\chi_3(U)$ with $\chi_3 \in \YcalInn_{1}$ and $e^{iU}\DYo(U)=c_y$, we have that for all $U \in \EInn$,
\begin{align*}
\vabs{e^{iU}\DYInn(U)} =
\vabs{\CInn + \chi_3(U)} 
\geq
\vabs{e^{iU}\DYo(U)} - \frac{1}{3} \vabs{c_y}
= \frac{2}{3} \vabs{c_y}.
\end{align*}
Finally, taking $\Im(U)\to-\infty$, we obtain that $\vabs{\CInn} \geq \frac{2}{3}\vabs{c_y} > 0$.
%
%Taking $\Im(U)\to-\infty$, we get that $\CInn \geq \frac{2}{3}\vabs{c_y}$ and, since $c_y = e^{\kappa} \DYInn(-i\kappa) \neq 0$ (see~\eqref{eq:integralEqDifference}), we have that $\CInn \neq 0$.
\end{proof}

%Notice that, $c_y = e^{\kappa} \DYInn(-i\kappa) \neq 0$ (see~\eqref{eq:integralEqDifference}).
%%
%Therefore, taking $\Im(U) \to -\infty$, we get that $\CInn \geq \frac{2}{3}\vabs{c_y}> 0$ and thus $\CInn \neq 0$.

% \subsection{\textcolor{red}{Technical Proofs}}
\subsection{Proof of the technical lemmas}
\label{subsection:innerTechnicalProofs}

We devote this section to prove Lemma~\ref{lemma:boundsRRR} of Section~\ref{subsubsection:innerTechnicalProofsA}
and
Lemmas \ref{lemma:operatorEEpotential} and \ref{lemma:operatorEEexponential} 
of Section~\ref{subsubsection:innerTechnicalProofsB}.

%Section~\ref{subsubsection:innerTechnicalProofsA} we prove Lemma~\ref{lemma:boundsRRR} and in Section~\ref{subsubsection:innerTechnicalProofsB} we prove both Lemmas \ref{lemma:operatorEEpotential} and \ref{lemma:operatorEEexponential}.
%\paragraph{Proof of Lemma~\ref{lemma:boundsRRR}}

\subsubsection{Proof of Lemma~\ref{lemma:boundsRRR}}
\label{subsubsection:innerTechnicalProofsA}

Fix $\varrho>0$ and take $\ZInn=(\WInn,\XInn,\YInn)^T \in B(\varrho) \subset \XcalInnTotal$. By the definition \eqref{def:operatorRRRInner} of $\RRR$,
\begin{align}\label{proof:operatorRRRInner}
\RRR[\ZInn](U) = \paren{\frac{f_1(U,\ZInn)}{1+g(U,\ZInn)},\,
 \frac{\wt{f}_2(U,\ZInn)}{1+g(U,\ZInn)}, \,
 \frac{\wt{f}_3(U,\ZInn)}{1+g(U,\ZInn)} },
\end{align}
where
	\begin{equation*}%\label{proof:deff2f3tilde}
	\begin{split}
	\wt{f}_2(U,\ZInn) &= f_2(U,\ZInn) - i{\XInn} g(U,\ZInn), \qquad
	\wt{f}_3(U,\ZInn) = f_3(U,\ZInn) + i{\YInn} g(U,\ZInn), 
	\end{split}
	\end{equation*}
with $g=\partial_W \KK$, $f=(-\partial_U \KK, i\partial_Y \KK,-i\partial_X \KK)^T$ and $\KK$ is the Hamiltonian given in~\eqref{def:hamiltonianK} in terms of the function $\JJ$ (see~\eqref{def:hFunction}).
%
% Therefore, the terms we need to estimate are the first and second derivatives of $\KK$. 

We first estimate $\JJ$ and its derivatives.
For $\rhoInn>0$ big enough, we have 
	\begin{align*}
	& \vabs{\JJ(U,\ZInn)} \leq \frac{C}{U^{2}},\qquad
	%%%%
	\vabs{1+\JJ(U,\ZInn)} \geq 1 - \frac{C}{\rhoInn^2}\geq \frac{1}{2}.
	%%%%
	\end{align*}
Moreover, its  derivatives satisfy
\begin{align*}
	\vabs{\partial_U{\JJ}(U,\ZInn)} 
	&\leq \frac{C}{\vabs{U}^{3}},  &
	%%%%
	\vabs{\partial_W{\JJ}(U,\ZInn)} 
	&\leq \frac{C}{\vabs{U}^{\frac{4}{3}}},  &
	%%%%
	\vabs{\partial_X{\JJ}(U,\ZInn)},
	\vabs{\partial_Y{\JJ}(U,\ZInn)}  
	&\leq \frac{C}{\vabs{U}^{\frac{2}{3}}}, 
\end{align*}
and
\begin{align*}
	\vabs{\partial_{U W}{\JJ}(U,\ZInn)} 
	&\leq \frac{C}{\vabs{U}^{\frac{7}{3}}}, \quad &
	%%%%
	\vabs{\partial_{U X}{\JJ}(U,\ZInn)} 
	&\leq \frac{C}{\vabs{U}^{\frac{5}{3}}}, 
	\quad &
	%%%% 
	\vabs{\partial_{U Y}{\JJ}(U,\ZInn)} 
	&\leq \frac{C}{\vabs{U}^{\frac{5}{3}}},
	\\
	%%%% %%%%
	\vabs{\partial^2_{W}{\JJ}(U,\ZInn)} 
	&\leq \frac{C}{\vabs{U}^{\frac{2}{3}}}, 
	\quad &
	%%%%
	\vabs{\partial_{W X}{\JJ}(U,\ZInn)} 
	&\leq \frac{C}{\vabs{U}}, 
	\quad &
	%%%%
	\vabs{\partial_{W Y}{\JJ}(U,\ZInn)}  
	&\leq \frac{C}{\vabs{U}}, 
	\\
	%%%% %%%%
	\vabs{\partial^2_{X}{\JJ}(U,\ZInn)} 
	&\leq \frac{C}{\vabs{U}^{\frac{4}{3}}},\quad &
	%%%%
	\vabs{\partial_{X Y}{\JJ}(U,\ZInn)} 
	&\leq \frac{C}{\vabs{U}^{\frac{4}{3}}}, 
	\quad &
	%%%%
	\vabs{\partial_{Y}^2{\JJ}(U,\ZInn)}
	&\leq \frac{C}{\vabs{U}^{\frac{4}{3}}}.
\end{align*}
Using these estimates, we obtain the following bounds for $g, f_1,\wt{f}_2$ and $\wt{f}_3$,
\begin{align*}
	\vabs{g(U,\ZInn)} &=
	\vabs{-\frac{3}{2}U^{\frac{2}{3}}\WInn +\frac{1}{6U^{\frac{2}{3}}}
	\frac{\partial_W \JJ}{(1+\JJ)^{\frac{3}{2}}}}
	\leq \frac{C}{\vabs{U}^2},  \\
	%%%%%
	\vabs{f_1(U,\ZInn)} &=
	\vabs{\frac{\WInn^2}{2 U^{\frac{1}{3}}}
		-\frac{2}{9 U^{\frac{5}{3}}}  \frac{\JJ}{\sqrt{1+\JJ}(1+\sqrt{1+\JJ})}
		-\frac{1}{6 U^{\frac{2}{3}}} \frac{\partial_U \JJ}
		{(1+\JJ)^{\frac{3}{2}}}
	} \leq \frac{C}{\vabs{U}^{\frac{11}{3}}}, \\
	%%%%%
	\vabs{\wt{f}_2(U,\ZInn)} &=
	\vabs{\frac{i}{6 U^{\frac{2}{3}}} \frac{\partial_Y \JJ}{(1+\JJ)^{\frac{3}{2}}}
		-i\XInn g(U,\ZInn)}
	\leq \frac{C}{\vabs{U}^{\frac{4}{3}}}, \\
	%%%%%
	\vabs{\wt{f}_3(U,\ZInn)} &=
	\vabs{-\frac{i}{6 U^{\frac{2}{3}}} \frac{\partial_X \JJ}{(1+\JJ)^{\frac{3}{2}}}
		+i\YInn g(U,\ZInn)}
	\leq \frac{C}{\vabs{U}^{\frac{4}{3}}}.
\end{align*}
Analogously, we obtain estimates for the derivatives,
\begin{align*}
	\vabs{\partial_W {g}(U,\ZInn)} 
	&\leq  C \vabs{U}^{\frac{2}{3}}, \quad &
	%%%%%
	\vabs{\partial_X{g}(U,\ZInn)} 
	&\leq \frac{C}{\vabs{U}^{\frac{5}{3}}},
	\quad & 
	%%%%%
	\vabs{\partial_Y{g}(U,\ZInn)} 
	&\leq \frac{C}{\vabs{U}^{\frac{5}{3}}}, \\
	%%%%% %%%%%
	\vabs{\partial_W {f_1}(U,\ZInn)} 
	&\leq \frac{C}{\vabs{U}^{3}}, 
	\quad &
	%%%%%
	\vabs{\partial_X {f_1}(U,\ZInn)} 
	&\leq \frac{C}{\vabs{U}^{\frac{7}{3}}}, 
	\quad &
	%%%%%
	\vabs{\partial_Y {f_1}(U,\ZInn)} 
	&\leq \frac{C}{\vabs{U}^{\frac{7}{3}}}, \\
	%%%%% %%%%
	\vabs{\partial_W {\wt{f}_2}(U,\ZInn)} 
	&\leq \frac{C}{\vabs{U}^{\frac{2}{3}}}, 
	\quad &
	%%%%%
	\vabs{\partial_X {\wt{f}_2}(U,\ZInn)} 
	&\leq \frac{C}{\vabs{U}^{2}}, 
	\quad &
	%%%%
	\vabs{\partial_Y{\wt{f}_2}(U,\ZInn)}
	&\leq \frac{C}{\vabs{U}^{2}},\\
	%%%% %%%%
	\vabs{\partial_W {\wt{f}_3}(U,\ZInn)} 
	&\leq \frac{C}{\vabs{U}^{\frac{2}{3}}},
	\quad &
	%%%%
	\vabs{\partial_X{\wt{f}_3}(U,\ZInn)}
	&\leq \frac{C}{\vabs{U}^{2}},
	\quad &
	%%%%
	\vabs{\partial_Y {\wt{f}_3}(U,\ZInn)} 
	&\leq \frac{C}{\vabs{U}^{2}}.
\end{align*}
Using these results we  estimate the components of $\RRR$ in~\eqref{proof:operatorRRRInner},
\begin{align*}
	&\normInn{\RRR_1[\ZInn]}_{\frac{11}{3}} = 
	\normInn{ \frac{f_1(\cdot,\ZInn)}{1+g(\cdot,\ZInn)} }_{\frac{11}{3}}
	\leq C, 
	&\normInn{\RRR^{\Inn}_j[\ZInn]}_{\frac{4}{3}} = 
	\normInn{ \frac{\wt{f}_j(\cdot,\ZInn)}{1+g(\cdot,\ZInn)} }_{\frac{4}{3}} \leq C,
	%\quad j=2,3.
\end{align*}
for $j=2,3$. Moreover, 
\begin{align*}
	\normInn{\partial_W{\RRR_1}[\ZInn]}_{3} &= 
	\normInn{ 
		\frac{\partial_W f_1}{1+g} - 
		\frac{f_1 \partial_W g}{(1+g)^2} }_3 
	\leq C, &
	%%%%%
	\normInn{\partial_X{\RRR_1}[\ZInn]}_{\frac{7}{3}} &=
	\normInn{ \frac{\partial_X f_1}{1+g} - \frac{f_1 \partial_X g}{(1+g)^2} }_{\frac{7}{3}} 
	\leq C, \\
	%%%%%
	\normInn{\partial_W{\RRR_2}[\ZInn]}_{\frac{2}{3}} &=
	\normInn{\frac{\partial_W \wt{f}_2}{1+g} - \frac{\wt{f}_2 \partial_W g}{(1+g)^2} }_{\frac{2}{3}}  
	\leq C, &
	%%%%%
	\normInn{\partial_X{\RRR_2}[\ZInn]}_{2} &=
	\normInn{ \frac{\partial_X \wt{f}_2}{1+g} - \frac{\tl{f}_2 \partial_X g}{(1+g)^2} }_2 
	\leq C.
	%%%%%
	\end{align*}
Analogously, we obtain the rest of the estimates,
\begin{align*}
	%%%%%
	\normInn{\partial_Y{\RRR_1}[\ZInn]}_{\frac{7}{3}}, \,
	%\leq C, \h
	%%%%%
	\normInn{\partial_Y{\RRR_2}[\ZInn]}_{2}, \,
	%\leq C, \h
	%%%%%
	\normInn{\partial_W{\RRR_3}[\ZInn]}_{\frac{2}{3}}, \,
	%%%%%
	\normInn{\partial_X{\RRR_3}[\ZInn]}_{2}, \,
	%\leq C, \h
	%%%%%
	\normInn{\partial_Y{\RRR^{\Inn}_3}[\ZInn]}_{2} \leq C.
	\end{align*}
\qed

\subsubsection{Proof of Lemmas~\ref{lemma:operatorEEpotential} and \ref{lemma:operatorEEexponential}}
\label{subsubsection:innerTechnicalProofsB}

Let us introduce, for $\rhoInn>0$ and $\al\geq 0$ , the following  linear operators,  
\begin{equation}\label{def:operatorBBB}
\begin{split}
\BB_{\al}[\Psi](U)= e^{i\al U} \int_{-i\infty}^U e^{-i\al S} \Psi(S) dS, 
	\quad
\wt{\BB}[\Psi](U)= e^{-i U} \int_{-i\rhoInn}^U e^{i S} \Psi(S) dS.
\end{split}
\end{equation}

The following lemma is proven in~\cite{Bal06}.

\begin{lemma}\label{lemma:operatorBB}
	Fix $\eta>1$, $\nu>0$, $\al>0$ and $\rhoInn>1$. Then,  the following operators are well defined
	\begin{align*}
		\BB_{0}: \YcalInn_{\eta} \to \YcalInn_{\eta-1}, \qquad
		\BB_{\al}: \YcalInn_{\nu} \to \YcalInn_{\nu}, \qquad
		\wt{\BB}: \YcalInn_{\nu} \to \YcalInn_{\nu}
	\end{align*}
	and there exists a constant $C>0$  such that 
	\begin{align*}
		\normInnDiff{\BB_{0}[\Psi]}_{\eta-1} \leq C \normInnDiff{\Psi}_{\eta}, \hh
		\normInnDiff{\BB_{\al}[\Psi]}_{\nu} \leq C \normInnDiff{\Psi}_{\nu}, \hh
		\normInnDiffSmall{\wt{\BB}[\Psi]}_{\nu} \leq C \normInnDiff{\Psi}_{\nu}.
	\end{align*}
\end{lemma}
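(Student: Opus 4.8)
The plan is to prove the three mapping properties one operator at a time, in each case reducing the estimate to a one-dimensional integral along a path inside $\EInn$ adapted to the exponential factor; this is essentially the argument of~\cite{Bal06}. First I would record the elementary geometry of $\EInn$ from~\eqref{def:domainInnnerDiff}: since $\DuInn$ and $\DsInn$ in~\eqref{def:domainInnner} are complements of right/left opening cones, one has $\EInn = \{U \in \complexs : \Im U \le -\tan\beta_0\,|\Re U| - \rhoInn\}$, which is the intersection of two closed half-planes, hence closed and convex (in particular simply connected); it contains the point $-i\rhoInn$ (its apex); and it is invariant under downward vertical translations, $U \in \EInn \Rightarrow U - it \in \EInn$ for every $t \ge 0$. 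Convexity makes each integral in~\eqref{def:operatorBBB} path-independent, so $\BB_0$, $\BB_\al$ and $\wt\BB$ are well defined on the indicated spaces — once convergence of the two improper integrals to $-i\infty$ is checked, which follows from the polynomial decay of $\Psi$ for $\BB_0$ and from the additional exponential decay for $\BB_\al$ with $\al>0$ — and, by differentiation under the integral sign, analytic on $\EInn$. It then remains to establish the weighted bounds.

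For $\BB_0$ and $\BB_\al$ (with $\al>0$) I would integrate along the downward vertical ray $S = U - it$, $t \in [0,\infty)$, which stays in $\EInn$. The cone geometry gives the pointwise bound $|U - it|^2 = (\Re U)^2 + (|\Im U| + t)^2 \ge |U|^2 + t^2$. Hence, for $\Psi \in \YcalInn_\eta$ with $\eta > 1$,
\[
|\BB_0[\Psi](U)| \le \normInnDiff{\Psi}_\eta \int_0^\infty |U - it|^{-\eta}\,dt \le \normInnDiff{\Psi}_\eta \int_0^\infty (|U|^2 + t^2)^{-\eta/2}\,dt = C_\eta\,|U|^{1-\eta}\,\normInnDiff{\Psi}_\eta,
\]
the last integral converging precisely because $\eta>1$; this is exactly $\normInnDiff{\BB_0[\Psi]}_{\eta-1} \le C\normInnDiff{\Psi}_\eta$. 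For $\BB_\al$ the same parametrization contributes the factor $|e^{i\al(U-S)}| = e^{-\al t}$, so using only $|U - it| \ge |U|$ one gets $|\BB_\al[\Psi](U)| \le \normInnDiff{\Psi}_\nu\,|U|^{-\nu}\int_0^\infty e^{-\al t}\,dt = \al^{-1}\normInnDiff{\Psi}_\nu\,|U|^{-\nu}$, i.e. $\normInnDiff{\BB_\al[\Psi]}_\nu \le C\normInnDiff{\Psi}_\nu$, with $C$ uniform in $\rhoInn$.

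The operator $\wt\BB$ is the delicate one, and it is where I expect the main work. Here the factor $e^{i(S-U)}$ grows toward $-i\infty$, so a ray to $-i\infty$ is useless; instead I would integrate along the straight segment $S(s) = (1-s)(-i\rhoInn) + sU$, $s \in [0,1]$, which lies in $\EInn$ by convexity. Along it $\Im S(s)$ decreases monotonically from $-\rhoInn$ to $\Im U$, so $\Im(S(s)-U) = (1-s)(|\Im U|-\rhoInn) \ge 0$ and $|e^{i(S(s)-U)}| = e^{-(1-s)(|\Im U|-\rhoInn)} \le 1$, with genuine exponential decay as $s \to 0$, i.e. exactly near the apex, where $|S(s)|$ is smallest. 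Inserting $|\Psi(S(s))| \le \normInnDiff{\Psi}_\nu\,|S(s)|^{-\nu}$, the lower bound $|S(s)| \ge |\Im S(s)| = \rhoInn + s(|\Im U|-\rhoInn)$, and the length estimate $|U + i\rhoInn| \le |\Im U|/\sin\beta_0$ (which also uses $|\Re U| \le |\Im U|/\tan\beta_0$, so that $|\Im U|$ and $|U|$ are comparable on $\EInn$ up to a factor depending on $\beta_0$), the bound reduces to controlling $\int_0^a e^{-v}(|\Im U|-v)^{-\nu}\,dv$ with $a = |\Im U|-\rhoInn$. A short case split does this: when $|\Im U| \le 2\rhoInn$ the singular part of the integrand is harmless; when $|\Im U| > 2\rhoInn$ one splits the $v$-integral at $|\Im U|/2$ and, on the piece near $v = a$, trades the elementary antiderivative of $(|\Im U|-v)^{-\nu}$ (treating $\nu<1$, $\nu=1$, $\nu>1$ separately) against the exponential $e^{-|\Im U|/2}$. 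The outcome is $\int_0^a e^{-v}(|\Im U|-v)^{-\nu}\,dv \le C_\nu\,|\Im U|^{-\nu}$ uniformly in $\rhoInn>1$, whence $|\wt\BB[\Psi](U)| \le C\,|U|^{-\nu}\,\normInnDiff{\Psi}_\nu$, i.e. $\normInnDiffSmall{\wt\BB[\Psi]}_\nu \le C\normInnDiff{\Psi}_\nu$. The crux is precisely this balancing act: for $\wt\BB$ there is no convergence margin from a decaying weight (the path is finite) and $|S|^{-\nu}$ blows up relative to $|U|^{-\nu}$ near the apex, so one must extract exactly the right amount of decay from $e^{i(S-U)}$ there; the $\BB_0$, $\BB_\al$ estimates are by comparison routine. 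I would also note, although it is not part of the statement, that $\BB_0$, $\BB_\al$ and $\wt\BB$ are right inverses of $\partial_U$, $\partial_U - i\al$ and $\partial_U + i$ respectively, which is how $\GG$ and $\II$ invert $\LL$ and the difference equation.
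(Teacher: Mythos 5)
Your proposal is correct. The paper gives no proof of this lemma itself (it simply cites \cite{Bal06}), and your argument — vertical rays to $-i\infty$ for $\BB_0$ and $\BB_\al$ using $|U-it|^2\ge |U|^2+t^2$, and for $\wt\BB$ the straight segment from the apex $-i\rhoInn$ with the balancing of $e^{-(1-s)(|\Im U|-\rhoInn)}$ against $|S(s)|^{-\nu}$ and the case split at $|\Im U|=2\rhoInn$ — is precisely the standard one used in that reference, with all the key estimates checking out.
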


It is clear that 
$\II_1[\Psi] = \BB_{0}[\langle R_{1}, \Psi\rangle]$,
$\II_2[\Psi] = \BB_{1}[\langle R_{2}, \Psi\rangle]$ and
$\II_3[\Psi] = \wt{\BB}[\langle R_{3}, \Psi\rangle]$ (see \eqref{def:operatorRDiff} and \eqref{def:operatorEEE}).
Thus, we  use this lemma to prove Lemmas~\ref{lemma:operatorEEpotential} and \ref{lemma:operatorEEexponential}.

\begin{proof}[Proof of Lemma~\ref{lemma:operatorEEpotential}] By the definition of the operator $R$ and Lemma~\ref{lemma:boundsRRR}, we have that
\begin{equation}\label{eq:boundsRDifference}
\begin{aligned}
\normInnDiff{R_{1,1}}_3 &\leq C, 
\quad &
\normInnDiff{R_{1,2}}_{\frac{7}{3}} &\leq C , 
\quad &
\normInnDiff{R_{1,3}}_{\frac{7}{3}} &\leq C , \\
\normInnDiff{R_{j,1}}_{\frac{2}{3}} &\leq C, 
\quad &
\normInnDiff{R_{j,2}}_{2} &\leq C , 
\quad &
\normInnDiff{R_{j,3}}_{2} &\leq C , 
\quad \text{ for } j=2,3.
\end{aligned}
\end{equation}
%for $j=2,3$. 
%
Then, by Lemma~\ref{lemma:operatorBB}, for  $\rhoInn$ big enough and  $\Psi \in \YcalInn_{\frac{8}{3}} \times \YcalInn_{\frac{4}{3}} \times \YcalInn_{\frac{4}{3}}$,  we have that
\begin{equation*}%\label{proofResult:propdiff1}
\begin{split}
\normInnDiff{\II_1[\Psi]}_{\frac{8}{3}} &= 
\normInnDiff{\BB_{0}
[\langle R_{1}, \Psi\rangle]}_{\frac{8}{3}}
\leq C \normInnDiff{\langle R_{1}, \Psi\rangle}_{\frac{11}{3}}\\
 &\leq C \paren{
	\normInnDiff{R_{1,1}}_1 \normInnDiff{\Psi_1}_{\frac{8}{3}} +
	\normInnDiff{R_{1,2}}_{\frac{7}{3}} \normInnDiff{\Psi_2}_{\frac{4}{3}} +
	\normInnDiff{R_{1,3}}_{\frac{7}{3}} \normInnDiff{\Psi_3}_{\frac{4}{3}} }\\
	&\leq 
	C \paren{
	\frac{1}{\rhoInn^2} \normInnDiff{\Psi_1}_{\frac{8}{3}} +
	\normInnDiff{\Psi_2}_{\frac{4}{3}} +
	\normInnDiff{\Psi_3}_{\frac{4}{3}}},
	\end{split}
\end{equation*}
%Then, Lemma~\ref{lemma:operatorBB} implies
%\begin{align*}%\label{proof:estimateEE1}
%	\normInn{\EE_1[\Psi]}_{\frac{8}{3}} &= 
%	\normInn{\BB_{0}^+[\langle R_{1}, \Psi\rangle]}_{\frac{8}{3}}
%	\leq \normInn{\langle R_{1}, \Psi\rangle}_{\frac{11}{3}},
%\end{align*}
which gives the first estimate of the lemma.
Analogously, by Lemma~\ref{lemma:operatorBB},
\begin{align*}
\normInnDiff{\II_2[\Psi]}_{\frac{4}{3}} &= 
\normInnDiff{\BB_{1}[\langle R_{2}, \Psi\rangle]}_{\frac{4}{3}}
\leq C \normInnDiff{
\langle R_{2}, \Psi\rangle}_{\frac{4}{3}},\\
\normInnDiff{\II_3[\Psi]}_{\frac{4}{3}} &=
\normInnDiffSmall{\wt{\BB}[\langle R_{3}, \Psi\rangle]}_{\frac{4}{3}}
\leq C \normInnDiff{\langle R_{3}, \Psi\rangle}_{\frac{4}{3}},
\end{align*}
and applying~\eqref{eq:boundsRDifference},
for $j=2,3$, we have
\begin{align*}
\normInnDiff{\langle R_{j}, \Psi\rangle }_{\frac{4}{3}} &\leq
\normInnDiff{R_{j,1}}_{-\frac{4}{3}} \normInnDiff{\Psi_1}_{\frac{8}{3}} 
+
\normInnDiff{R_{j,2}}_{0} \normInnDiff{\Psi_2}_{\frac{4}{3}} 
+
\normInnDiff{R_{j,3}}_{0} \normInnDiff{\Psi_3}_{\frac{4}{3}} \\ 
&\leq \frac{C}{\rhoInn^2} \paren{
	\normInnDiff{\Psi_1}_{\frac{8}{3}} +
	\normInnDiff{\Psi_2}_{\frac{4}{3}} +
	\normInnDiff{\Psi_3}_{\frac{4}{3}} },
\end{align*}
which gives the second and third estimates of the lemma.
\end{proof}

%and, by Lemma~\ref{lemma:operatorBB},
%\begin{align*}
%\normInnDiff{\II_2[\Psi]}_{\frac{4}{3}} =& 
%%
%\normInnDiff{\BB_{1}[\langle R_{2}, \Psi\rangle]}_{\frac{4}{3}}
%%
%\leq \normInnDiff{
%\langle R_{2}, \Psi\rangle}_{\frac{4}{3}} 
%%
%\leq \frac{C}{\rhoInn^2}
%\normInnDiff{\Psi}_{\frac{8}{3},\frac{4}{3},\frac{4}{3}}
%\\
%%%
%\normInnDiff{\II_3[\Psi]}_{\frac{4}{3}} =&
%% 
%\normInnDiffSmall{\wt{\BB}[\langle R_{3}, \Psi\rangle]}_{\frac{4}{3}}
%%
%\leq \normInnDiff{\langle R_{3}, \Psi\rangle}_{\frac{4}{3}}
%%
%\leq \frac{C}{\rhoInn^2}
%\normInnDiff{\Psi}_{\frac{8}{3},\frac{4}{3},\frac{4}{3}}.
%\end{align*}

\begin{proof}[Proof of Lemma~\ref{lemma:operatorEEexponential}]
Let us consider $\Psi \in \ZcalInn_{\frac{4}{3},0,0}$ and define
\[
\Phi(U)=e^{i U}R(U)\Psi(U),
\]
in such a way that, by the definition of the operator $\BB_{\al}$ in~\eqref{def:operatorBBB}, 
\begin{equation}\label{proof:integralsPsiToPhi}
\begin{split}
&e^{i U} \II_1[\Psi](U) 
= e^{i U} \int_{-i \infty}^{U} 
e^{-i S} \Phi_1(S) d S 
= \BB_{1}[\Phi_1], \\
&e^{i U} \II_2[\Psi](U)
= e^{i2U} \int_{-i\infty}^{U}  
e^{-i2S} 
\Phi_2(S) d S
= \BB_{2}[\Phi_2]  , \\
&e^{i U} \II_3[\Psi](U)	= 
\int_{-i\rhoInn}^{U} \Phi_3(S) d S.
\end{split}
\end{equation}
Since $e^{iU} \Psi \in \YcalInn_{\frac{4}{3}}\times \YcalInn_{0} \times \YcalInn_{0}$, by the estimates in~\eqref{eq:boundsRDifference}, we have that, for $j=2,3$,
\begin{equation}\label{proof:estimatesh1h2h3}
\begin{split}
\normInnDiff{\Phi_1}_{\frac{7}{3}}
&\leq \normInnDiff{R_{1,1}}_{1} 
\normInnDiff{e^{iU} \Psi_1}_{\frac{4}{3}} +
\sum_{k=2,3}
\normInnDiff{R_{1,k}}_{\frac{7}{3}} \normInnDiff{e^{iU}\Psi_k}_{0} 
\leq C \normInnDiffExp{\Psi}_{\frac{4}{3},0,0}, \\
\normInnDiff{\Phi_j}_{2}
&\leq \normInnDiff{R_{j,1}}_{\frac{2}{3}} \normInnDiff{e^{iU}\Psi_1}_{\frac{4}{3}} +
\sum_{k=2,3}
\normInnDiff{R_{j,k}}_{2} \normInnDiff{e^{iU}\Psi_k}_{0}
\leq C \normInnDiffExp{\Psi}_{\frac{4}{3},0,0}.
\end{split}
\end{equation}
Therefore, Lemma~\ref{lemma:operatorBB} and  \eqref{proof:integralsPsiToPhi} imply
\begin{equation*}
\begin{split}
\normInnDiffExp{\II_1[\Psi]}_{\frac{7}{3}}
&= \normInnDiff{\BB_{1}[\Phi_1]}_{\frac{7}{3}}
\leq C \normInnDiff{\Phi_1}_{\frac{7}{3}}
\leq C \normInnDiffExp{\Psi}_{\frac{4}{3},0,0}
, \\
\normInnDiffExp{\II_2[\Psi]}_{2}
&= \normInnDiff{\BB_{2}[\Phi_2]}_{2}
\leq C \normInnDiff{\Phi_2}_{2}
\leq C\normInnDiffExp{\Psi}_{\frac{4}{3},0,0}.
\end{split}
\end{equation*}
Now, we deal with operator $\II_3$. 
Notice that, 
by the definition of the operator $\BB_{\al}$ in~\eqref{def:operatorBBB} and \eqref{proof:integralsPsiToPhi}, we have that
\begin{equation*}%\label{proofDiff:decompositionh3}
e^{i U} \II_3[\Psi](U) =
\int_{-i\rhoInn}^{-i\infty} \Phi_3(S) d S +
\int_{-i\infty}^{U} \Phi_3(S) d S 
= -\BB_0[\Phi_3](-i\kappa)
+ \BB_0[\Phi_3](U).
\end{equation*}
%Then, since $\Phi_3 \in \YcalInn_{2}$, we have that
%\begin{align}\label{proof:resultIntegralAsymptotics}
%\vabs{\BB_0[\Phi_3](U)}
%%
%\leq \int_{-i\infty}^0 \vabs{\Phi_3(U+iS)} dS
%%
%\leq \normInnDiff{\Phi_3}_2 
%\int_{-i\infty}^0 \frac{dS}{\vabs{U + iS}^2} dS 
%%
%= \frac{\pi}{2}\frac{\normInnDiff{\Phi_3}_2}{\vabs{U}},
%\end{align}
%and hence
%\begin{equation*}
%\vabs{e^{i U} \II_3[\Psi](U)} \leq
%C \normInnDiff{\Phi_3}_2
%	\paren{ \frac{1}{\rhoInn} + \frac{1}{\vabs{U}}}.
%\end{equation*}
Then, by Lemma~\ref{lemma:operatorBB} and using the estimates~\eqref{proof:estimatesh1h2h3}, we obtain
\begin{align*}
\normInnDiffExp{\II_3[\Psi]}_{0} 
&\leq \vabs{\BB_0[\Phi_3](-i\kappa)} + \normInnDiff{\BB_0[\Phi_3]}_0
\leq 2 \normInnDiff{\BB_0[\Phi_3]}_0 \\
&\leq \frac{C}{\kappa} \normInnDiff{\BB_0[\Phi_3]}_1
\leq \frac{C}{\rhoInn} \normInnDiff{\Phi_3}_2
\leq \frac{C}{\rhoInn} \normInnDiffExp{\Psi}_{\frac{4}{3},0,0}.
\end{align*}
Finally, taking $\wCInn(\rhoInn)=-\BB_0[\Phi_3](-i\kappa)$, we conclude
\begin{equation*}
\normInnDiffExp{\II_3[\Psi](U)-e^{-iU}\wCInn(\rhoInn)}_1
= \normInnDiff{\BB_0[\Phi_3]}_1
\leq C \normInnDiff{\Phi_3}_2 
\leq C \normInnDiffExp{\Psi}_{\frac{4}{3},0,0}.
\end{equation*}
\end{proof}

\section*{Acknowledgments}
\addcontentsline{toc}{section}{Acknowledgments}
I. Baldom\'a has been partly supported by the Spanish MINECO--FEDER Grant PGC2018 -- 098676 -- B -- 100 (AEI/FEDER/UE) and the Catalan grant 2017SGR1049. 

M. Giralt and M. Guardia have received funding from the European Research Council (ERC) under the European Union's Horizon 2020 research and innovation programme (grant agreement No. 757802). 

M. Guardia is also supported by the Catalan Institution for Research and Advanced Studies via an ICREA Academia Prize 2019.

%% BIBLIOGRAPHY

\addcontentsline{toc}{section}{Bibliography}

\begingroup
%\setstretch{0.8}
\footnotesize
\setlength\bibitemsep{2pt}
\setlength\biblabelsep{4pt}
\printbibliography
\endgroup

%\bibliographystyle{alpha}
%\bibliography{biblio}

\end{document}